\definecolor{aoenglish}{rgb}{0.0, 0.5, 0.0} 
\definecolor{marine}{rgb}{0.0, 0.05, 0.55}
\def\sideremark#1{\ifvmode\leavevmode\fi\vadjust{\vbox to0pt{\vss
 \hbox to 0pt{\hskip\hsize\hskip1em
 \vbox{\hsize2cm\tiny\raggedright\pretolerance10000
  \noindent #1\hfill}\hss}\vbox to8pt{\vfil}\vss}}}
\theoremstyle{plain}
\newtheorem{theorem}{Theorem}
\newtheorem*{theorem*}{Theorem}
\newtheorem*{main_theorem*}{Main Theorem}
\newtheorem{lemma}[theorem]{Lemma}
\newtheorem*{corollary*}{Corollary}
\theoremstyle{definition}
\newtheorem{definition}[theorem]{Definition}
\theoremstyle{remark}
\newtheorem{remark}[theorem]{Remark}
\numberwithin{theorem}{section}
\numberwithin{equation}{section}
\renewcommand{\d}{\mathrm{d}}
\renewcommand{\leq}{\leqslant}
\renewcommand{\geq}{\geqslant}
\renewcommand{\epsilon}{\varepsilon}
\renewcommand{\Im}{\operatorname{Im}}
\newcommand{\D}{\mathrm{D}}
\newcommand{\la}{\lesssim}
\newcommand{\scri}{\mathscr{I}}
\newcommand{\e}{\mathrm{e}}
\newcommand{\R}{\mathbb{R}}
\DeclareFontFamily{U}{MnSymbolC}{}
\DeclareSymbolFont{MnSyC}{U}{MnSymbolC}{m}{n}
\DeclareFontShape{U}{MnSymbolC}{m}{n}{
    <-6>  MnSymbolC5
   <6-7>  MnSymbolC6
   <7-8>  MnSymbolC7
   <8-9>  MnSymbolC8
   <9-10> MnSymbolC9
  <10-12> MnSymbolC10
  <12->   MnSymbolC12}{}
\DeclareMathSymbol{\intprod}{\mathbin}{MnSyC}{'270}
\newcommand*{\defeq}{\mathrel{\vcenter{\baselineskip0.5ex \lineskiplimit0pt
                     \hbox{\scriptsize.}\hbox{\scriptsize.}}}%
                     =}
\newcommand*{\eqdef}{=\mathrel{\vcenter{\baselineskip0.5ex \lineskiplimit0pt
                     \hbox{\scriptsize.}\hbox{\scriptsize.}}}%
                     }
\DeclareFontFamily{U}{BOONDOX-calo}{\skewchar\font=45 }
\DeclareFontShape{U}{BOONDOX-calo}{m}{n}{
  <-> s*[1.05] BOONDOX-r-calo}{}
\DeclareFontShape{U}{BOONDOX-calo}{b}{n}{
  <-> s*[1.05] BOONDOX-b-calo}{}
\DeclareMathAlphabet{\mathcalboondox}{U}{BOONDOX-calo}{m}{n}
\SetMathAlphabet{\mathcalboondox}{bold}{U}{BOONDOX-calo}{b}{n}
\DeclareMathAlphabet{\mathbcalboondox}{U}{BOONDOX-calo}{b}{n}
\def\Xint#1{\mathchoice
{\XXint\displaystyle\textstyle{#1}}%
{\XXint\textstyle\scriptstyle{#1}}%
{\XXint\scriptstyle\scriptscriptstyle{#1}}%
{\XXint\scriptscriptstyle\scriptscriptstyle{#1}}%
\!\int}
\def\XXint#1#2#3{{\setbox0=\hbox{$#1{#2#3}{\int}$ }
\vcenter{\hbox{$#2#3$ }}\kern-.6\wd0}}
\def\dashint{\Xint-}
\newcommand{\filledcircle}[1]{\tikz \filldraw[fill=white, draw=black] (0,0) circle (#1);}
\DeclareMathOperator{\dvol}{dv}
\title[Finite Energy Charged Scalar Fields on the Einstein Cylinder]{Global Finite Energy Solutions of the \mbox{Maxwell-Scalar Field} System on the Einstein Cylinder}
\author{Jean-Philippe Nicolas}
\address[J.-P. Nicolas]{University of Brest, 6 avenue Victor Le Gorgeu, 29238 Brest Cedex 3, France.}
\email{jnicolas@univ-brest.fr}
\author{Grigalius Taujanskas}
\address[G. Taujanskas]{Department of Pure Mathematics and Mathematical Statistics, University of Cambridge, CB3 0WB, UK; Trinity Hall, Trinity Lane, Cambridge, CB2 1TJ, UK.}
\email{taujanskas@dpmms.cam.ac.uk}
\date{\today}
\begin{document}

\maketitle

\begin{abstract}
    We prove the existence and uniqueness of global finite energy solutions of the Maxwell-scalar field system in Lorenz gauge on the Einstein cylinder $\mathbb{R} \times \mathbb{S}^3$. Our method is a combination of a conformal patching argument, the finite energy existence theorem in Lorenz gauge on Minkowski space of Selberg and Tesfahun, a careful localization of finite energy data, and null form estimates of Foschi--Klainerman type. Although we prove that the energy-carrying components of the solution maintain regularity, due to the incompleteness of the null structure in Lorenz gauge and the nature of our foliation-change arguments we find small losses of regularity in both the scalar field and the potential.
\end{abstract}

\setcounter{tocdepth}{1}
\tableofcontents

\section{Introduction}

The aim of this paper is to study the initial value problem for the Maxwell-scalar field system\footnote{Historically and in the broader literature, the system is also called the Maxwell--Klein--Gordon system, however we prefer to reserve this terminology for the analogous massive system, when the scalar curvature $\mathrm{R}$ in the Lagrangian is replaced with a constant $m^2$.} on the Einstein cylinder $\mathbb{R} \times \mathbb{S}^3$ with finite energy initial data. The system to be considered is the simplest classical field theory exhibiting a non-trivial gauge dependence, with action on a four-dimensional spacetime $(\mathcalboondox{M},g)$ given by
\[ S = \int_{\mathcalboondox{M}} - \frac{1}{4} F_{ab} F^{ab} + \frac{1}{2} \D_a \phi \overline{\D^a \phi} - \frac{1}{12} \mathrm{R} |\phi|^2, \]
where $F_{ab}$ is the Maxwell field, $\D_a \phi$ the gauge-covariant derivative of the scalar field $\phi$, and $\mathrm{R}$ the scalar curvature of $\mathcalboondox{M}$. Note that the scalar curvature term in the Lagrangian guarantees the conformal invariance, and hence no preferred length scale, of the associated Euler--Lagrange equations; in other words, the system is massless, and reads
\begin{align*}
    & \nabla^b F_{ab} = \operatorname{Im}(\bar{\phi} \D_a \phi), \\
    & \D^a \D_a \phi + \frac{1}{6}\mathrm{R} \phi = 0, \\
    & \nabla_{[a} F_{bc]} = 0.
\end{align*}

Questions of low-regularity global well-posedness and scattering for the Maxwell-scalar field system have been actively studied since the celebrated works of Eardley and Moncrief \cite{EardleyMoncrief1982a,EardleyMoncrief1982b} (see also \cite{GinibreVelo1981}), building towards a complete rigorous understanding of the global behaviour of dispersive nonlinear PDEs---in particular the closely related Yang--Mills and Yang--Mills--Higgs equations. In the context of the well-posedness in Sobolev spaces $H^s(\mathbb{R}^3)$ on Minkowski space, the scaling properties of the equations predict the critical\footnote{In higher dimensions, i.e. $\mathbb{R}^{1+n}$, $n \geq 4$, $s_c = \frac{n-2}{2}$. Here global well-posedness for $s>s_c$, and in some cases $s=s_c$, is essentially known \cite{KlainermanMachedon1997,Selberg2002,RodnianskiTao2004,KriegerSterbenzTataru2015,OhTataru2016,OhTataru2016b,OhTataru2018,Pecher2018a}.} exponent $s_c = \frac{1}{2}$, with finite energy regularity corresponding to $s=1$. Global well-posedness for $s=1$ was first proved\footnote{See also \cite{PonceSideris1993} for the local well-posedness in the case $s>1$ using Strichartz estimates.} by Klainerman and Machedon \cite{KlainermanMachedon1994} in the Coulomb gauge (cf. \cite{MasmoudiNakanishi2003}), by exploiting the null structure present in this gauge together with improved spacetime estimates for null forms \cite{KlainermanMachedon1993}. After constructing the solution in the Coulomb gauge, Klainerman and Machedon also exhibited global well-posedness in the temporal gauge by employing a gauge change a posteriori. In Lorenz gauge, partial null structure in the equations was first detected by Selberg and Tesfahun \cite{SelbergTesfahun2010}, who proved global well-posedness in the finite energy class and found a small loss\footnote{More precisely, $A_a(t,\cdot) \in H^{1-}(\mathbb{R}^3)$. The loss is related to the fact that, unlike in the Coulomb gauge, in Lorenz gauge not all components of the solution are controlled in $H^1$ by the finite energy norm.} in the regularity of the potential. Global existence for finite energy data has also been treated directly in the temporal gauge by Yuan \cite{Yuan2016} and Pecher \cite{Pecher2015}, relying on more general estimates (which hold also for small data below the energy norm for the Yang--Mills equations) due to Tao \cite{Tao2003}.

Indeed, various forms of well-posedness are by now known below the energy norm: in the Coulomb gauge, local well-posedness for $s > \frac{3}{4}$ due to Cuccagna \cite{Cuccagna1999}, local well-posedness for small data for $s> \frac{1}{2}$ (almost all the way down to criticality $s_c$) due to Machedon and Sterbenz \cite{MachedonSterbenz2004}, global well-posedness for $s > \sqrt{3}/2 \approx 0.866$ due to Keel--Roy--Tao \cite{KeelRoyTao2011}, and unconditional local uniqueness for $s > s_0$, $s_0 \approx 0.914$ due to Pecher \cite{Pecher2017} (also for $s_0 \approx 0.907$ in Lorenz gauge). In Lorenz gauge or the temporal gauge, Pecher has obtained the local well-posedness for asymmetric regularity of the data $(s_\phi, s_A) > (\frac{3}{4}, \frac{1}{2})$, where $s_\phi$ denotes the exponent for the initial data for the scalar field and $s_A$ the exponent for the potential \cite{Pecher2014,Pecher2017a,Pecher2018} (see also \cite{Pecher2020} for regularity in Fourier--Lebesgue spaces $\widehat{H}^{s,r}$ in Lorenz gauge). These results rely invariably on the use of Bourgain--Klainerman--Machedon spaces $X^{s,b}_\pm$, and, for instance, constructions of almost-conservation laws paired with Strichartz, null form, and commutator estimates.

For much more smooth, small, fast-decaying data, global well-posedness for the Maxwell-scalar field system has also been studied using the conformal compactification of Minkowski space and standard a priori energy estimates on the Einstein cylinder $\mathbb{R} \times \mathbb{S}^3$ \cite{ChoquetBruhatChristodoulou1981,ChoquetBruhat1982,ChoquetBruhatPaneitzSegal1983,CagnacChoquetBruhat1984,GeorgievSchirmer1992} (Eardley and Moncrief's methods have also been extended to more general curved spacetimes by Chru\'sciel and Shatah \cite{ChruscielShatah1997} by using Friedlander's representation formula \cite{Friedlander1975} for solutions to the wave equation; see also \cite{Ghanem2016} and \cite{Taujanskas2019} showing in particular that the assumption of the smallness of data may be removed). Although this approach restricts the class of data, these works have the advantage of automatically showing that the solution extends \emph{through} the null infinity $\scri$ of Minkowski space and hence induces scattering data there, something that does not directly follow from global existence results in Minkowski space. 

From a slightly different perspective, with a focus on detailed asymptotics, decay and peeling estimates for the Maxwell-scalar field system on Minkowski space have been obtained by Lindblad and Sterbenz \cite{LindbladSterbenz2006} using Morawetz and Strichartz estimates instead of the conformal compactification (and hence allowing non-zero charge, but still requiring high regularity and smallness); a simplified proof was later given by Bieri--Miao--Shahshahani \cite{BieriMiaoShahshahani2017}. Extensions, first to large Maxwell field, and subsequently to more general large data, were given by Yang \cite{Yang2016,Yang2018}, Yang--Yu \cite{YangYu2019}, and Yang--Wei--Yu \cite{YangWeiYu2024}. As first observed by Candy--Kauffman--Lindblad \cite{CandyKauffmanLindblad2019}, the peeling behaviour of solutions is a consequence of the weak null condition in the equations in Lorenz gauge; more finely, at leading order the scalar field exhibits charge-dependent oscillatory asymptotics, and the potential carries a long-range part essentially proportional to the charge. In the context of the backward scattering problem (the Goursat problem) from $\scri$, the weak null condition and information about asymptotics has recently been used to prove that a global solution can be obtained from certain scattering data at infinity by Lindblad--Schlue \cite{LindbladSchlue2023} and He \cite{He2021} (see also \cite{ChenLindblad2023}). He, in particular, obtains a further refinement of the asymptotics of Candy--Kauffman--Lindblad, and uses weighted conformal Morawetz estimates and a spacetime Hardy inequality to obtain a solution (with loss of derivatives) from small, regular scattering data whose asymptotics are chosen to be consistent with the asymptotics of the forward evolution. This approach has been extended to permit a class of large data, as well as data with slow decay towards $i^+$. \cite{DaiMeiWeiYang2023,DaiYang2024}.

The \textbf{main result} of this paper is to prove that the Maxwell-scalar field system admits global unique solutions for merely finite energy initial data in Lorenz gauge on the Einstein cylinder $\mathbb{R} \times \mathbb{S}^3$: see \Cref{thm:main_theorem} for a precise statement. As far as we know, this constitutes the first curved background well-posedness theorem for the Maxwell-scalar field system at finite energy regularity. Our result extends trivially to all background spacetimes which are globally conformal to $\mathbb{R} \times \mathbb{S}^3$; in particular, we extend the scattering theory of \cite{Taujanskas2018} on de Sitter space $\mathrm{dS}_4$ by one derivative, and remove the assumption of smallness of data in that paper. Similarly, our result enlarges the class of data considered in the classical papers \cite{ChoquetBruhat1982,ChoquetBruhatChristodoulou1981,ChoquetBruhatPaneitzSegal1983,CagnacChoquetBruhat1984} by one derivative. Via a conformal compactification, our result furthermore extends the theorems of Klainerman--Machedon \cite{KlainermanMachedon1994} and Selberg--Tesfahun \cite{SelbergTesfahun2010} in the sense that it implies that finite energy solutions with vanishing charge on Minkowski space extend through null infinity $\scri$, and therefore can be said to \emph{scatter}. Finally, our result removes precisely the obstruction (for the Maxwell-scalar field system) encountered by Baez \cite{Baez1989} in his pioneering attempt\footnote{Due to a lack of a well-posedness theorem for finite energy initial data on $\mathbb{R} \times \mathbb{S}^3$, Baez worked instead with solutions with higher regularity, for which he could not prove the invertibility of the usual scattering operator. By using special features of the null infinity of Minkowski space, however, he was able to represent "scattering" as a distinguished (central) element of the conformal group $\mathrm{SO}(2,4)$.} to construct a scattering operator for the Yang--Mills equations on Minkowski space by using their conformal invariance properties.

\section{Overview of the Method}

Our method relies on the finite energy well-posedness theorem on Minkowski space of Selberg and Tesfahun \cite{SelbergTesfahun2010}, the conformal invariance of the Maxwell-scalar field system, a careful localization procedure, and a patching argument similar to that used in \cite{Taujanskas2019}. The main idea is to use conformal embeddings to cover a strip $I \times \mathbb{S}^3 \subset \mathbb{R} \times \mathbb{S}^3$ for an interval $I$ with two copies of Minkowski space, $\mathbb{M}$ and $\mathbb{M}'$, situated antipodally on $\mathbb{S}^3$ (see \Cref{fig:wrapping_of_Minkowski_spaces}). We then show that finite energy initial data on the Einstein cylinder induces, when suitably modified, finite energy initial data on each of $\mathbb{M}$ and $\mathbb{M}'$. On each of $\mathbb{M}$ and $\mathbb{M}'$ we obtain the existence of a finite energy solution using the theorem of Selberg and Tesfahun \cite{SelbergTesfahun2010}; this step relies crucially on the locality of the construction of \cite{SelbergTesfahun2010}. The locality is a consequence of the Lorenz gauge, in which the Maxwell-scalar field system takes the form of a system of coupled nonlinear wave equations. This is not the case in general; for instance in the Coulomb gauge (which exhibits better null structure properties) the system contains a non-local elliptic equation. The resulting solutions on $\mathbb{M}$ and $\mathbb{M}'$ respect the foliations of $\mathbb{M}$ and $\mathbb{M}'$ induced by the standard timelike Killing fields $\partial_t$ and $\partial_{t'}$. These foliations are (a) not the same (considered as partial foliations of $\mathbb{R} \times \mathbb{S}^3$), and (b) are unsuitable for continuing the solution to the next strip. In order to correctly patch the solutions on the overlap region $\mathbb{M} \cap \mathbb{M}' \subset \mathbb{R} \times \mathbb{S}^3$ and obtain the appropriate regularity, we must change their foliations to the standard spacelike foliation of $\mathbb{R} \times \mathbb{S}^3$. We achieve this by first proving that the right-hand sides of the wave equations for the components of the solution, considered as linear wave equations, possess sufficiently good $L^2$-based spacetime Sobolev regularity. This step relies on the null structure in the equations; as already mentioned, however, in Lorenz gauge this null structure is only partial, and we must therefore contend with small losses of regularity that permeate our argument as a result. These losses are, precisely, an arbitrarily small loss of differentiability in the scalar field $\phi$, and slightly more than one half of a derivative in the potential $A_a$. The sharpness of these losses is not completely clear (see \Cref{rmk:regularity_losses}). Nevertheless, we are able to show that the components which directly carry the energy---the Maxwell field $F_{ab}$ and the gauge-covariant derivative $\D_a \phi$ of $\phi$---do not exhibit any loss; this is analogous to a similar situation in (indeed, is partly inherited from) \cite{SelbergTesfahun2010}, where the authors find an arbitrarily small loss in the potential already on Minkowski space, but nevertheless obtain a global finite energy solution. Finally, we extend the local-in-time solution on the cylinder $\mathbb{R} \times \mathbb{S}^3$ globally by iterating the procedure. 

We outline the method more precisely in the following 5 steps.

\begin{figure}[h]
    \centering
    \begin{tikzpicture}
    \node[inner sep=0pt] (patching) at (2,0)
        {\includegraphics[scale=0.5]{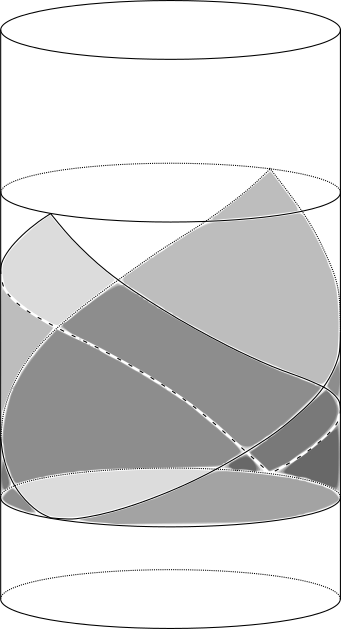}};
    \draw[-latex] (-1.8,-2.85) .. controls (-1.8,-3.8) and (0.43,-3.8) .. (0.45,-2.75);
    \node[label={[shift={(0.45,-3.03)}]\filledcircle{0.05cm}}] {};
    \node[label={[shift={(-2,-3)}]$(i^0)'= O$}] {};
    \draw[-latex] (5.95,-2.85) .. controls (5.8,-3.6) and (3.5,-3.6) .. (3.33,-2.15);
    \node[label={[shift={(3.33,-2.44)}]\filledcircle{0.05cm}}] {};
    \node[label={[shift={(6,-3)}]$i^0= O'$}] {};
    \node[label={[shift={(-2,-3)}]$(i^0)'= O$}] {};
    \draw[-latex] (5.6,2.45) .. controls (5,4) and (2.1,3) .. (2,1);
    \draw[-latex] (5.6,2.45) .. controls (5,2) and (4.5,1) .. (4,1);
    \node[label={[shift={(6.2,2)}]$(\scri^+)'$}] {};
    \node[label={[shift={(0.35,2.7)}, rotate=90]$\mathbb{R} \times \mathbb{S}^3$}] {};
    \draw[-latex] (-1.9,2.1) .. controls (-1.4,2) and (-0.5,1.5) .. (-0.1,1.1);
    \draw[-latex] (-1.9,2.1) .. controls (1,3) and (2,0.3) .. (2.05,0.1);
    \node[label={[shift={(-2.2,1.8)}]$\scri^+$}] {};
    \node[label={[shift={(0.45,0.2)}]$\mathbb{M}_+$}] {};
    \node[label={[shift={(3.25,0.3)}]$\mathbb{M}'_+$}] {};
    \node[label={[shift={(2,-3.45)}]$\Sigma$}] {};
    \node[label={[shift={(0.55,1.2)}]$i^+$}] {};
    \node[label={[shift={(0.425,1.02)}]\filledcircle{0.05cm}}] {};
    \node[label={[shift={(3.33,1.75)}]$(i^+)'$}] {};
    \node[label={[shift={(3.325,1.62)}]\filledcircle{0.05cm}}] {};
    \node[label={[shift={(2,-4.8)}]}] {};
    \end{tikzpicture}
    \caption{A depiction of two copies of Minkowski space (for positive times), $\mathbb{M}_+ = \{ t \geq 0\} \cap \mathbb{M}$ and $\mathbb{M}'_+ = \{t' \geq 0\} \cap\mathbb{M}'$, conformally embedded in $\mathbb{R} \times \mathbb{S}^3$ in a way that situates the origin $O$ of $\mathbb{M}$ at the spatial infinity $(i^0)'$ of $\mathbb{M}'$, and vice-versa. The Einstein cylinder $\mathbb{R} \times \mathbb{S}^3$ is represented by the surface of the cylinder and the two future-pointing halves $\mathbb{M}_+$, $\mathbb{M}_+'$ of Minkowski space are represented by the half-diamonds wrapped around the cylinder. Two spherical dimensions are suppressed. Via the Hopf fibration $\mathbb{S}^1 \hookrightarrow \mathbb{S}^3 \to \mathbb{S}^2$, each point on the surface of the depicted cylinder may be thought of as a 2-sphere. The future timelike infinities $i^+$ and $(i^+)'$, and spatial infinities $i^0$ and $(i^0)'$, are points rather than spheres in the usual Penrose conformal compactification of $\mathbb{M}$ (resp. $\mathbb{M}'$), and are therefore not included in $\mathbb{M}_+$, $\mathbb{M}_+'$. The future null infinities $\scri^+$ and $(\scri^+)'$ are (open) backward lightcones of $i^+$ and $(i^+)'$, with topology $\mathbb{R} \times \mathbb{S}^2$, forming the future null boundaries of $\mathbb{M}_+$ and $\mathbb{M}'_+$ in $\mathbb{R} \times \mathbb{S}^3$. The initial surface $\Sigma \simeq \mathbb{S}^3 \subset \mathbb{R} \times \mathbb{S}^3$ on the cylinder is the one-point compactification of each of the Minkowskian initial surfaces $\tilde{\Sigma} = \{t = 0\}$ and $\tilde{\Sigma}' = \{ t' = 0 \}$, $\Sigma \simeq \tilde{\Sigma} \cup i^0 \simeq \tilde{\Sigma}' \cup (i^0)'$.}
    \label{fig:wrapping_of_Minkowski_spaces}
\end{figure}

\subsection{Step 1. Localization of data.}

We begin with a set of finite energy initial data on $\Sigma \simeq \mathbb{S}^3 \subset \mathbb{R} \times \mathbb{S}^3$. By the conformal invariance of the Maxwell-scalar field system, the embeddings $\mathbb{M}, \, \mathbb{M}' \hookrightarrow \mathbb{R} \times \mathbb{S}^3$ induce conformally scaled data on the initial surfaces $\tilde{\Sigma}$ and $\tilde{\Sigma}'$ in $\mathbb{M}$ and $\mathbb{M}'$. Let us focus on the data on, say, $\tilde{\Sigma}$, the same statements being true on $\tilde{\Sigma}'$. This conformal scaling of the data imposes fast decay rates of all components of the Maxwell data towards spatial infinity, in particular precluding the presence of non-zero electric charge $\tilde{\mathfrak{q}}$ (see \Cref{rmk:electric_charge_zero}). For the scalar field data, however, the conformal scaling imposes fast decay of the time derivative component, but introduces\footnote{This is a direct consequence of the conformal invariance of the system, since the electric constraint equation $\boldsymbol{\nabla} \cdot \mathbf{E} = \operatorname{Im}(\bar{\phi} \D_0 \phi)$ must remain unchanged.} a long-range tail in the Dirichlet component. Hence, although all but one of the components of the induced data have fast decay towards spatial infinity, the long-range tail in the Dirichlet component of the scalar field results in the conformally induced data on $\tilde{\Sigma}$ having \emph{infinite} energy in the Minkowskian sense. To deal with this, we carefully modify the data for the scalar field; crucially, our modification preserves the constraint equations everywhere on $\tilde{\Sigma}$. This modification in effect undoes the conformal transformation---of just the scalar field data---near spatial infinity, and leaves it unchanged elsewhere: see equation \eqref{modification_of_initial_scalar_field_data}. The domain of influence of the support of the modified part of the data is eventually discarded, see \Cref{fig:flat_Minkowski_intersections}. The above statements are made precise and proven in \Cref{sec:induced_data_and_modification} and \Cref{lem:modification_of_induced_Minkowskian_data}.

\subsection{Step 2. Refoliation of local Minkowskian solutions.}

Having obtained finite energy data on each of $\tilde{\Sigma}$ and $\tilde{\Sigma}'$, in \Cref{sec:local_solution} we now deduce the existence of a solution in each of $\mathbb{M}$ and $\mathbb{M}'$ using \cite{SelbergTesfahun2010}. These solutions satisfy their own respective Minkowskian Lorenz gauges $\tilde{\nabla}_a \tilde{A}^a = 0 = \tilde{\nabla}' (\tilde{A}')^a$, and have finite energy regularity (i.e. regularity of the type $\mathcal{C}^0 H^s \cap \mathcal{C}^1 H^{s-1} \cap H^{s,b}$) with respect to their own standard Minkowskian foliations by hypersurfaces $\tilde{\Sigma}_t = \{t = \text{const.} \}$ and $\tilde{\Sigma}_{t'}= \{ t' = \text{const}. \}$. We again remark that at this stage there is already a small loss of regularity in the potential in Lorenz gauge, for which $s=1-\delta$ for any $\delta > 0$, as compared to the case of the Coulomb gauge \cite{KlainermanMachedon1994} ($s=1$). For the scalar field, $s=1$. In \Cref{lem:regularity_with_respect_to_hyperboloidal_foliation} we prove that these solutions have $\mathcal{C}^0 H^s$-type regularity \emph{with respect to a locally hyperboloidal foliation}, the foliation that, locally, under the conformal embedding into the Einstein cylinder will become orthogonal to the standard timelike Killing field $\partial_\tau$ (in fact, our proof works for any uniformly spacelike foliation). This is done using two general lemmas (\Cref{lem:foliation_change_positive_s} and \Cref{lem:foliation_change_negative_s}) for inhomogeneous linear waves on manifolds, which we prove in the Appendix, combined with very explicit decompositions of the nonlinearities into null forms (\Cref{sec:spacetime_estimates_for_nonlinearities}), for which we use spacetime null form estimates of Foschi--Klainerman\footnote{See also the review article \cite{KlainermanSelberg2002} on null form estimates and applications.} and Klainerman--Machedon type \cite{FoschiKlainerman2000,KlainermanMachedon1996}. \Cref{lem:foliation_change_positive_s,lem:foliation_change_negative_s} rely on $L^2$-based spacetime Sobolev regularity of the inhomogeneities. In Lorenz gauge there is no null structure in the equation for the potential $\tilde{A}_a$, and therefore we can only prove weak spacetime regularity for the nonlinearity $\mathcalboondox{N}(\tilde{A}_a, \tilde{\phi})$ in this equation, which eventually results in a loss of a half of a derivative in the regularity of the potential with respect to the new foliation ($s=\frac{1}{2}-\delta$). For the scalar field, although the nonlinearity $\mathcalboondox{M}(\tilde{A}_a, \tilde{\phi})$ verifies the null condition, the small loss $\delta$ feeds into the spacetime regularity of $\mathcalboondox{M}(\tilde{A}_a, \tilde{\phi})$, and eventually results in a small loss ($s=1-\delta$) in the regularity of the scalar field $\tilde{\phi}$ with respect to the new foliation. Despite the lack of null structure in the equation for the potential, the electric and magnetic fields satisfy wave equations with null structure independent of gauge.

\begin{remark}
    \label{rmk:regularity_losses}
    The regularity $\mathcal{C}^0H^{\frac{1}{2}-\delta}$ of the potential is probably not sharp; we expect that it can be improved to $\mathcal{C}^0H^{1-\delta}$, as in the case of the Minkowskian foliation, though we make no attempt to do so here. It is unclear if the regularity $\mathcal{C}^0 H^{1-\delta}$ (for the potential and the scalar field) can be improved further to $\mathcal{C}^0 H^1$. Indeed, the loss $\delta$ is a direct consequence of the loss seen by Selberg and Tesfahun \cite{SelbergTesfahun2010} in Lorenz gauge. Since the Lorenz gauge makes the system local, the equations are, in a loose sense, insensitive to the global geometry of the spacetime; this is in contrast to the Coulomb gauge\footnote{On the other hand, we also mention here that Oh's caloric-temporal gauge \cite{Oh2014,SungJinOh2015} for the Yang--Mills equations appears to at once address the issues of hyperbolicity, null structure, and energy estimates, and therefore combine the advantages of the Coulomb and Lorenz gauges.} \cite{KlainermanMachedon1994} due to the presence of an elliptic equation. 
    But on spatially compact manifolds null form estimates appear to exhibit a similar arbitrarily small loss of regularity \cite{Taujanskas2025}, at least globally \cite{Sogge1993,GeorgievSchirmer1995}. This is also seen in the case of Strichartz estimates for the Schr\"odinger equation \cite{Bourgain1993a,Bourgain1993b,BurqGerardTzvetkov2001}.
\end{remark}

\subsection{Step 3. Improved regularity and change of gauge.}

Next, we conformally transport the local solution to a domain of dependence on the Einstein cylinder, change the gauge to the cylindrical Lorenz gauge $\nabla_a A^a = 0$, and obtain the regularity $\mathcal{C}^0 H^{s-1}$ of the time derivatives of the potential and scalar field (\Cref{lem:local_solution_cylinder}). For the potential, this is done by using the regularity of the electric and magnetic fields and the gauge condition to express time derivatives in terms of spatial derivatives; for the scalar field, we return to the corresponding wave equation and use the conservation of energy for more regular solutions and the Aubin--Lions lemma. We prove that the change of gauge from Minkowskian to cylindrical Lorenz preserves regularity by using Sobolev composition estimates (\Cref{lem:composition_Sobolev_functions_1,lem:composition_Sobolev_functions_2,lem:composition_Sobolev_functions_3}) and product estimates in Sobolev spaces (\Cref{lem:product_Sobolev_estimates}).

\subsection{Step 4. Conformal patching on a strip.}

In \Cref{sec:two_copies_of_Minkowski} we cover a thin strip of the Einstein cylinder with two copies of Minkowski space (cf. \Cref{fig:wrapping_of_Minkowski_spaces}) and give a precise meaning to the priming operation used above as the interchange of the two antipodally placed Minkowski spaces. In \Cref{thm:local_strip_solution_cylinder} we prove the existence of a local-in-time finite energy solution on the strip by showing that on the overlap region the solutions obtained in \textbf{Step 3} and \textbf{(Step 3)'} are the same up to a sufficiently well-behaved gauge transformation, and prove the cylindrical conservation of energy by approximating the solution by smooth solutions and utilising domain of dependence properties and convergence in the Minkowskian energy norm. Here we also show that the solution obtained in \Cref{thm:local_strip_solution_cylinder} is a limit of more regular solutions in the \emph{cylindrical} energy norm.

\subsection{Step 5. Iteration.}

Finally, in \Cref{sec:global_solution} we extend the solution globally by inducting on the number of strips of the local solution. Here we must look after a residual gauge transformation which allows us to restart the argument on the next strip, and which, due to the loss of regularity in the potential, is rough. We observe that the regularity of the energy-carrying components $\mathbf{E}$, $\mathbf{B}$ and $\D_a \phi$ is nevertheless preserved due to the gauge-invariance of their $L^2$ norms.

\section{Notation}

We work in four dimensions and our spacetime metric signature is $(+,-,-,-)$. We denote the standard coordinates on the Einstein cylinder $\mathbb{R} \times \mathbb{S}^3$ by $(\tau, \zeta, \theta, \varphi)$, with metric $\d \tau^2 - \d \zeta^2 - \sin^2\zeta(\d \theta^2 + \sin^2 \theta \d \varphi^2)$. We denote the standard coordinates on Minkowski space $\mathbb{R}^{1+3}$ by $(t,r,\theta, \varphi)$ or $(t,x)$, $x \in \mathbb{R}^3$, with metric $\d t^2 - \d r^2 - r^2(\d \theta^2 + \sin^2 \theta \d \varphi^2) = \d t^2 - |\d x|^2$. Throughout the paper we distinguish between quantities on $\mathbb{R} \times \mathbb{S}^3$ and $\mathbb{R}^{1+3}$ by denoting quantities on Minkowski space with a tilde, and quantities on the Einstein cylinder plainly. For instance, $\tilde{\phi}$ denotes the scalar field on $\mathbb{R}^{1+3}$, and $\phi$ the scalar field on $\mathbb{R} \times \mathbb{S}^3$. We denote projections to spatial slices with respect to the $\tau$-coordinate on $\mathbb{R} \times \mathbb{S}^3$ and the $t$-coordinate on $\mathbb{R}^{1+3}$ in bold. For example, $\tilde{\boldsymbol{\nabla}}$ denotes the projection of the Levi-Civita connection $\tilde{\nabla}$ on $\mathbb{R}^{1+3}$ to the slices $\tilde{\Sigma}_t = \{ t = \text{const.} \}$. Due to the form of the metric on $\mathbb{R}^{1+3}$, $\tilde{\boldsymbol{\nabla}}$ coincides with the Levi-Civita connection on $\tilde{\Sigma}_t$. Similarly, untilded bold quantities denote projections to $\Sigma_\tau = \{ \tau = \text{const.} \} \subset \mathbb{R} \times \mathbb{S}^3$.

In Fourier space, we denote by $\xi \in \mathbb{R}^n$ the spatial Fourier frequencies dual to $x \in \mathbb{R}^n$. On Minkowski space, we denote by $\xi_0$ the temporal frequency dual to $t$. We use the shorthand $\langle \xi \rangle = (1+|\xi|^2)^{1/2}$.

Spaces of Schwartz functions and tempered distributions on $\mathbb{R}^n$ are denoted by $\mathscr{S}(\mathbb{R}^n)$ and $\mathscr{S}'(\mathbb{R}^n)$ respectively, where $'$ denotes the topological dual also more generally; $\mathscr{D}'(\mathbb{R}^n)$ denotes the space of distributions on $\mathbb{R}^n$ and $\mathscr{D}(
\mathbb{R}^n)$ the space of smooth compactly supported test functions. The standard Lebesgue spaces are denoted by $L^p$, and $L^2$-based Sobolev spaces by $H^s$ (see \Cref{sec:Sobolev_spaces}). Local Sobolev spaces are denoted by $H^s_{loc}(\Omega)$; that is, $f\in H^s_{loc}(\Omega)$ if and only if $f \in H^s(\Omega')$ for all $\Omega' \subset \subset \Omega$. Spaces of $k$ times continuously differentiable functions are denoted by $\mathcal{C}^k$. We write $f \in X \cdot Y$ to mean that $f = g h$ for some $g \in X$ and $h \in Y$. For Banach spaces $X$ and $Y$, $X \subset \subset Y$ denotes compact inclusion and $X \hookrightarrow Y$ continuous inclusion. The notation $f \la g$ means that there exists a constant $C>0$ such that $f \leq C g$.

\section{Preliminaries}

\subsection{Sobolev spaces of fractional order}
\label{sec:Sobolev_spaces}

We work with $L^2$-based Sobolev spaces of fractional order. As our analysis in the main text requires a careful treatment of regularity, below we include our definitions of the function spaces used and record a number of their properties.

\subsubsection{Sobolev spaces on the whole space}

Let $n \in \mathbb{N}$, $n \geq 3$, and let  $\mathscr{F}$ denote the Fourier transform on $\mathbb{R}^n$. For any given $s \in \mathbb{R}$ we define the fractional Sobolev\footnote{These are also found in the wider literature by the name of Bessel-potential spaces, denoted $H^s_2 = F^s_{2,2}$ \cite{RunstSickel1996,TriebelI,TriebelII}.} space 
\begin{equation} \label{definition_fractional_Sobolev_spaces} H^s(\mathbb{R}^n) \defeq \left\{ f \in \mathscr{S}'(\mathbb{R}^n) ~ : ~ \|f\|_{H^s} \defeq \| \mathscr{F}^{-1}(\langle \xi \rangle^s\mathscr{F}(f))\|_{L^2}  < \infty \right\}.
\end{equation}
The definition \eqref{definition_fractional_Sobolev_spaces} is equivalent to defining $H^s(\mathbb{R}^n)$ as the completion of $\mathscr{S}(\mathbb{R}^n) \ni f$  with respect to the norm $\| \langle \xi \rangle^s \mathscr{F}(f) \|_{L^2}$, or equivalently $\| (1-\Delta)^{\frac{s}{2}} f \|_{L^2}$, since for $s \in 2 \mathbb{N}$, $(1-\Delta)^{\frac{s}{2}} f = \mathscr{F}^{-1} (\langle \xi \rangle^{s} \mathscr{F}(f))$ in the classical sense, and for $s \notin 2\mathbb{N}$ it is precisely the definition of $(1-\Delta)^\frac{s}{2}$, where $\Delta$ is the Laplacian on $\mathbb{R}^n$ \cite[\S2]{RunstSickel1996}. It is immediate from the definition \eqref{definition_fractional_Sobolev_spaces} that the dual spaces of $H^s$ are given by
\[ (H^s(\mathbb{R}^n))' = H^{-s}(\mathbb{R}^n) \quad \forall s \in \mathbb{R}.\]
Moreover, we have the Sobolev embeddings
\begin{align}
    \label{Sobolev_embedding_continuous}
    & H^s(\mathbb{R}^n) \hookrightarrow \mathcal{C}^0(\mathbb{R}^n) \quad \text{for} ~~ s > \frac{n}{2}, \\
    \label{Sobolev_embedding_Lp}
    & H^s(\mathbb{R}^n) \hookrightarrow L^p(\mathbb{R}^n) \quad \text{for} ~~ s \geq n \left( \frac{1}{2} - \frac{1}{p} \right) ~~~ \text{and} ~~~ p \geq 2.
\end{align}

We also define, for $|s| < \frac{n}{2}$, the homogeneous fractional Sobolev space $\dot{H}^s(\mathbb{R}^n)$ as the completion of $\mathscr{S}(\mathbb{R}^n) \ni f$ with respect to the norm
\begin{equation}
    \label{homogeneous_Sobolev_norm_definition}
    \| |\xi|^s \mathscr{F}(f) \|_{L^2} = \| (-\Delta)^\frac{s}{2} f\|_{L^2},
\end{equation}
with $\|f\|_{\dot{H}^s} = \| \mathscr{F}^{-1}(|\xi|^s \mathscr{F}(f))|_{L^2}$. Note in particular that $1 \notin \dot{H}^1(\mathbb{R}^n)$ according to this definition, and, for instance, the Sobolev inequality
\begin{equation}
    \label{Sobolev_embedding_homogeneous_H1}
    \| f \|_{L^{\frac{2n}{n-2}}} \la \| f \|_{\dot{H}^1}
\end{equation} 
holds for all $f \in \dot{H}^1(\mathbb{R}^n)$. For $s \geq \frac{n}{2}$, this definition needs to modified\footnote{It is possible to define homogeneous Sobolev spaces for any $s \in \mathbb{R}$ by $\left\{ f \in \mathscr{S}'(\mathbb{R}^n) / \mathscr{P}(\mathbb{R}^n) \, : \, \| f \|_{\dot{H}^s} \defeq \| \mathscr{F}^{-1}(| \xi |^s \mathscr{F}(f)) \|_{L^2} < \infty \right\}$, where $\mathscr{P}(\mathbb{R}^n)$ is the space of polynomials on $\mathbb{R}^n$. If $|s| \geq \frac{n}{2}$, elements of this space consist of equivalence classes modulo non-trivial polynomials, and therefore the embedding \eqref{Sobolev_embedding_homogeneous_H1} and its analogues do not hold with respect to this definition. Furthermore, products of elements of this space are not well-defined \cite{Bourdaud2013,BahouriCheminDanchin2011,Grafakos2008,Grafakos2014,Soga1983}.} since \eqref{homogeneous_Sobolev_norm_definition} no longer separates polynomials of degree up to $s-\frac{n}{2}$. Using Plancherel's theorem, one has
\[ (\dot{H}^s(\mathbb{R}^n))' = \dot{H}^{-s}(\mathbb{R}^n) \quad \text{for} ~ |s| < \frac{n}{2}. \]

\subsubsection{Sobolev spaces on bounded Lipschitz domains}

Next, let $\Omega \subset \mathbb{R}^n$ be a bounded Lipschitz domain. We then define, for $s \in \mathbb{R}$,
\begin{equation}
    \label{definition_fractional_Sobolev_spaces_on_domain}
    H^s(\Omega) \defeq \left\{ f \in \mathscr{D}'(\Omega) \, : \, \exists g\in H^s(\mathbb{R}^n) ~ \text{s.t.} ~g|_{\Omega} = f \right\}
\end{equation}
with the norm
\[ \| f \|_{H^s(\Omega)} \defeq \inf_g \| g \|_{H^s}. \]
There exists a bounded linear extension operator $\operatorname{ext}: H^s(\Omega) \to H^s(\mathbb{R}^n)$ such that $(\operatorname{ext}f)(x) = f(x)$ for almost every $x \in \Omega$ (see \S5 \cite{DiNezzaPalatucciValdinoci2012} or \cite{Triebel2002}; also \cite{Stein1970,AdamsFournier2003,RunstSickel1996}), and the embeddings \eqref{Sobolev_embedding_continuous}, \eqref{Sobolev_embedding_Lp} remain true on $\Omega$ as a direct consequence of the definition \eqref{definition_fractional_Sobolev_spaces_on_domain}. The dual spaces of $H^s(\Omega)$ are:
\begin{align*}
    & (H^s(\Omega))' = H^{-s}(\Omega) \quad \text{for} ~~ -\frac{1}{2} < s< \frac{1}{2}, \\
    & (H^s_0(\Omega))' = H^{-s}(\Omega) \quad \text{for} ~~ s > \frac{1}{2} ~~~ \text{and} ~~~ s - \frac{1}{2} \notin \mathbb{N},
\end{align*}
where $H^s_0(\Omega)$ is the closure of $\mathscr{D}(\Omega)$ in $H^s(\Omega)$. Provided $s > \frac{1}{2}$ and $s -\frac{1}{2} \notin \mathbb{N}$, there exists a bounded linear extension-by-zero operator $\operatorname{ext}_0 : H^s_0(\Omega) \to H^s(\mathbb{R}^n)$ such that $(\operatorname{ext}_0f)(x) = f(x)$ for almost every $x \in \Omega$, and $(\operatorname{ext}_0 f )(x) = 0$ for $x \notin \Omega$. Moreover, provided $\frac{1}{2} < s < \frac{3}{2}$, there exists a bounded linear trace operator
\begin{equation}
    \label{trace_operator}
    \mathrm{tr}:H^s(\Omega) \to H^{s-\frac{1}{2}}(\partial \Omega)
\end{equation}
such that for $f \in H^s(\Omega) \cap \mathcal{C}^0(\bar{\Omega})$, $\operatorname{tr}f = f|_{\partial\Omega}$ \cite{Besovetal1977,Costabel1988,Zhonghai1996}. When the boundary $\partial \Omega$ of $\Omega$ is smooth, the upper bound $ s < \frac{3}{2}$ in \eqref{trace_operator} can be removed \cite{Tartar2007}.

\subsubsection{Sobolev spaces on compact manifolds} Now let $\mathcalboondox{N}$ be a smooth compact Riemannian manifold and $E \to \mathcalboondox{N}$ a Hermitian vector bundle\footnote{It is worth noting that if $\mathcalboondox{N}$ is contractible, as is the case for $\mathcalboondox{N}=\mathbb{S}^3 \simeq  \mathrm{SU}(2)$, then all vector bundles over $\mathcalboondox{N}$ are trivial.} over $\mathcalboondox{N}$. Choose a finite open cover of charts $\{ \Omega_i \}_i$ of $\mathcalboondox{N}$ such that $\Omega_i$ embeds smoothly into $\mathbb{R}^n$, $\phi_i : \Omega_i \to \mathbb{R}^n$, and $E$ is trivial on $\Omega_i$, and choose a partition of unity $\{ \rho_i \}_i $ subordinate to the cover $\{\Omega_i \}_i$. We then define, for $s \geq 0$, $H^s(\mathcalboondox{N};E)$ in the natural way by localization:
\begin{equation}
    \label{Sobolev_spaces_on_manifolds}
    H^s(\mathcalboondox{N};E) \defeq \left\{ f \in L^1_{\text{loc}}(\mathcalboondox{N};E) \, : \, \|f \|_{H^s(\mathcalboondox{N})} \defeq \left( \sum_i \| (\rho_i f) \circ \phi_i^{-1} \|^2_{H^s(\mathbb{R}^n)} \right)^{1/2} < \infty \right\}.
\end{equation}
The definition \eqref{Sobolev_spaces_on_manifolds} is independent of the choice of open cover $\{ \Omega_i \}_i$, the choice of partition of unity $\{ \rho_i \}_i$, and the Riemannian metric on $\mathcalboondox{N}$ \cite{Hebey1996,Aubin1998,Triebel2002}, and produces a Hilbert space for each $s$. For negative exponents, we define
\[ H^{-s}(\mathcalboondox{N};E) \defeq (H^s(\mathcalboondox{N};E))', \]
and suppress the vector bundle $E$ from now on.

\subsection{Composition estimates}

We record a pair of lemmas regarding the composition of a smooth real-valued function $G$ (with bounded derivative) with a Sobolev function $f \in H^s (\Omega)$ on a smooth bounded domain $\Omega \subset \mathbb{R}^n$. In our analysis this is needed due to the nonlinear change of the scalar field $\phi$, $\phi \rightsquigarrow \e^{-i\chi} \phi$, under a gauge transformation\footnote{The fact that $x \mapsto \e^{-ix}$ is complex-valued is not an obstruction as the real and imaginary parts may be considered separately.} by $\chi$. A somewhat subtle point is that when the Sobolev function $f$ is bounded, or $s > \frac{n}{2}$, the composition inherits the same regularity \cite{Moser1966}; however, when $s < \frac{n}{2}$, a small amount of regularity is (in general) lost \cite{RunstSickel1996} (for the threshold case $s=\frac{n}{2}$, $s>1$, see \cite{BrezisMironescu2001}; in this case the regularity is preserved provided the function $G$ itself and its higher derivatives are bounded). Denote by $\mathcal{C}_b^\infty(\mathbb{R}) = \bigcap_{m=0}^\infty \mathcal{C}^m_b(\mathbb{R})$ the space of smooth and bounded functions on $\mathbb{R}$ with bounded derivatives. 

\begin{lemma}[Theorem 2, \S5.3.6, \cite{RunstSickel1996}]
    \label{lem:composition_Sobolev_functions_1}
    Let $G' \in \mathcal{C}^\infty_b(\mathbb{R})$ and $G(0) = 0$. Suppose
    \[ 1 \leq s < \mu  \]
    and $f \in H^s \cap L^\infty$ on $\Omega$. Then there exists a constant $c>0$ such that
    \[ \| G(f) \|_{H^s} \leq c \|f\|_{H^s} \left( 1 + \| f \|_{L^\infty}^{\mu-1} \right). \]
\end{lemma}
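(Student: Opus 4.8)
The plan is to recognize this as a classical Moser-type composition (Nemytskii operator) estimate, so that the cleanest ``proof'' is really the citation \cite[Theorem 2, \S5.3.6]{RunstSickel1996} (together with \cite{Moser1966} for integer $s$); but let me sketch the argument one would reproduce. First I would reduce to $\mathbb{R}^n$. Since $\Omega$ is a bounded Lipschitz domain, Stein's extension operator $E$ is bounded simultaneously $H^s(\Omega) \to H^s(\mathbb{R}^n)$ and $L^\infty(\Omega) \to L^\infty(\mathbb{R}^n)$; choosing $\chi \in \mathscr{D}(\mathbb{R}^n)$ with $\chi \equiv 1$ on $\Omega$, one has $G(f) = (\chi\, G(Ef))|_\Omega$, hence $\|G(f)\|_{H^s(\Omega)} \leq \|\chi\, G(Ef)\|_{H^s(\mathbb{R}^n)} \la \|G(Ef)\|_{H^s(\mathbb{R}^n)}$ by the product estimate \Cref{lem:product_Sobolev_estimates}, and it remains to bound $\|G(g)\|_{H^s(\mathbb{R}^n)}$ for $g \in H^s \cap L^\infty$ on $\mathbb{R}^n$, where the normalization $G(0) = 0$ is exactly what makes $G(g) \in L^2$.

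For integer $s$ I would argue directly: the chain and Leibniz rules write $\partial^\alpha G(g)$, $|\alpha| = s$, as a finite sum of terms $G^{(k)}(g)\prod_{i=1}^k \partial^{\beta_i} g$ with $|\beta_i| \geq 1$ and $\sum_i |\beta_i| = s$; estimating $\|G^{(k)}(g)\|_{L^\infty} \leq \|G^{(k)}\|_{L^\infty}$, applying H\"older with $\sum_i 1/p_i = 1/2$, and invoking the Gagliardo--Nirenberg inequalities $\|\partial^{\beta_i} g\|_{L^{p_i}} \la \|g\|_{H^s}^{\theta_i}\|g\|_{L^\infty}^{1-\theta_i}$ with $\sum_i \theta_i = 1$, together with $\|G(g)\|_{L^2} \leq \|G'\|_{L^\infty}\|g\|_{L^2}$ for the zeroth-order piece, closes the estimate with power $s-1$ (hence \emph{a fortiori} with $\mu-1$). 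For fractional $s$ I would instead pass to the Littlewood--Paley pieces $g = \sum_j \Delta_j g$ and use the telescoping identity $G(g) = \sum_j\big(G(S_{j+1}g) - G(S_j g)\big) = \sum_j \Delta_j g \cdot m_j$, $m_j := \int_0^1 G'(S_j g + t\,\Delta_j g)\,\d t$, where $\|m_j\|_{L^\infty} \leq \|G'\|_{L^\infty}$ and, by Bernstein's inequality and the near-localization of $S_{j+1}g$ to frequencies $\la 2^j$, the higher derivatives of $m_j$ are controlled by $\|g\|_{L^\infty}$ and $\|g\|_{H^s}$; feeding a Bony paraproduct decomposition of each product $\Delta_j g\cdot m_j$ into the square-function characterization of $H^s$ and summing the three paraproduct terms then gives the bound.

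I expect the main obstacle to be the frequency summation in the fractional case, which is also where the hypothesis $1 \leq s < \mu$ is spent: note first that, since $g$ is assumed bounded, there is no loss in the Sobolev index itself (the composition stays in $H^s$), the only imperfection being the power of $\|g\|_{L^\infty}$; balancing the bound $\|G'\|_{L^\infty}$ on the multipliers $m_j$ against the $2^{js}$ weights and the mild smoothing afforded by the low--high and high--high paraproducts produces a geometric-type series that converges only with a little room to spare, which is precisely what forces $\mu$ strictly above $s$ and yields $\|g\|_{L^\infty}^{\mu-1}$ in place of the (sharp, integer-case) $\|g\|_{L^\infty}^{s-1}$; at $\mu = s$ the estimate fails in general for merely $G' \in \mathcal{C}^\infty_b$. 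As all of this is standard, my actual write-up would simply cite \cite[\S5.3.6]{RunstSickel1996} rather than reproduce the Littlewood--Paley bookkeeping.
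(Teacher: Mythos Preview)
Your proposal is correct and matches the paper's approach exactly: the paper does not prove this lemma at all but simply cites it as \cite[Theorem 2, \S5.3.6]{RunstSickel1996}, and you correctly identify that the cleanest ``proof'' is precisely this citation. Your accompanying sketch of the Moser-type argument is accurate and standard, but unnecessary for matching the paper.
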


\begin{lemma}[Theorem 3, \S5.3.6, \cite{RunstSickel1996}]
    \label{lem:composition_Sobolev_functions_2}
    Let $G' \in \mathcal{C}^\infty_b(\mathbb{R})$ and $G(0) = 0$. Suppose 
    \[ 1 < s < \frac{n}{2} \]
    and $f \in H^s$ on $\Omega$. Then there exists a constant $c > 0$ such that
    \[ \| G(f) \|_{H^{\varrho_1}} \leq c \left( \|f\|_{H^s} + \|f\|_{H^s}^{\varrho_1} \right), \]
    where
    \[ \varrho_1 = \frac{\frac{n}{2}}{\frac{n}{2}-s + 1}. \]
\end{lemma}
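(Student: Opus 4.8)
The statement is due to Runst and Sickel, as indicated, so one may simply invoke the cited theorem; here I sketch how I would prove it directly. The first move would be to reduce to the whole space: since composition with $G$ is pointwise and $G(0)=0$, one has $G(f) = G(\operatorname{ext}f)\big|_\Omega$, so by the definition of the norm on $H^{\varrho_1}(\Omega)$ it suffices to establish $\|G(g)\|_{H^{\varrho_1}(\mathbb{R}^n)} \la \|g\|_{H^s} + \|g\|_{H^s}^{\varrho_1}$ for $g = \operatorname{ext}f \in H^s(\mathbb{R}^n)$, and then apply $\|\operatorname{ext}f\|_{H^s(\mathbb{R}^n)} \la \|f\|_{H^s(\Omega)}$. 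I would first record that the hypothesis $1 < s < \tfrac n2$ forces $1 \le \varrho_1 < s < \tfrac n2$, so all the spaces appearing are in the admissible range and no polynomial-modding is needed.

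On $\mathbb{R}^n$ the plan is a Bony-type paralinearization. Fix a Littlewood--Paley family $\{\Delta_k\}_{k\geq -1}$ with low-pass projections $S_k = \sum_{j<k}\Delta_j$ and telescope
\[ G(g) = G(S_0 g) + \sum_{k\geq 0} m_k\,\Delta_k g, \qquad m_k \defeq \int_0^1 G'\!\big(S_k g + t\,\Delta_k g\big)\,\d t. \]
The low-frequency term $G(S_0 g)$ is harmless: $S_0 g$ and all its derivatives are bounded in $L^\infty \cap L^2$ by $\|g\|_{L^2} \le \|g\|_{H^s}$, so $G(S_0 g) \in H^m$ for every $m$ with norm polynomial in $\|g\|_{H^s}$, which is the bounded/smooth regime (essentially \Cref{lem:composition_Sobolev_functions_1}). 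The work is in the sum, and its engine is a set of bounds on $m_k$ and its derivatives: since $G' \in \mathcal{C}^\infty_b(\mathbb{R})$, Faà di Bruno's formula combined with Bernstein's inequalities and the low-regularity tail estimates $\|\nabla^b S_k g\|_{L^\infty} + \|\nabla^b \Delta_k g\|_{L^\infty} \la 2^{k(n/2 - s + b)}\|g\|_{H^s}$ (finite because $s < \tfrac n2$) yields $\|m_k\|_{L^\infty} \le \|G'\|_{L^\infty}$ and, for $a\geq 1$, $\|\nabla^a m_k\|_{L^\infty} \la_a 2^{ka}\sum_{p=1}^{a}\big(2^{k(n/2-s)}\|g\|_{H^s}\big)^{p}$, together with the sharper mixed $L^p$ versions obtained by placing some of the derivatives in $L^2$ via Gagliardo--Nirenberg.

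I would then sum the dyadic pieces $\Delta_\ell(m_k\Delta_k g)$. For the high-frequency-dominated interactions, $k \gtrsim \ell$, the crude bound $\|\Delta_\ell(m_k\Delta_k g)\|_{L^2} \le \|G'\|_{L^\infty}\|\Delta_k g\|_{L^2}$ together with $\varrho_1 < s$ already closes the estimate by Schur's test. For $k \ll \ell$ the product reaches frequency $\sim 2^\ell$ only through the tail of $m_k$, so one writes $\|\Delta_\ell m_k\| \la 2^{-\ell a}\|\nabla^a m_k\|$, producing a gain $2^{-(\ell-k)a}$ that is summable in $\ell$ once $a > \varrho_1$, at the price of a $k$-growth $2^{k(\varrho_1 - s + a(n/2-s))}\|g\|_{H^s}^a$ which is summable once $a \le (s-\varrho_1)/(\tfrac n2 - s)$. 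The naive $L^\infty$ bookkeeping meets both constraints only for $\varrho_1 < s/(\tfrac n2 - s + 1)$; recovering the sharp value $\varrho_1 = \tfrac{n/2}{n/2-s+1}$, and the power $\varrho_1$ on the right-hand side, requires the optimal $L^2$/$L^\infty$ split of the derivatives of $m_k$ and a careful balancing of the two dyadic sums. Summing the resulting $\ell^2$-in-$\ell$ estimate, adding the $G(S_0 g)$ contribution, and restricting to $\Omega$ then finishes the proof.

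The main obstacle is exactly this last optimization: the sharp exponent $\varrho_1$ is strictly better than what a straightforward Bernstein-in-$L^\infty$ argument delivers, and extracting it — distributing the derivatives of the nonlinear factor $m_k$ between $L^2$ and $L^\infty$ in the right proportions, via a Moser/Gagliardo--Nirenberg estimate — is the technical heart of Runst and Sickel's proof. An alternative I would consider, which makes the value of $\varrho_1$ more transparent, is a (nonlinear) real-interpolation argument for Nemytskii operators between the endpoint regimes $G\colon H^\sigma \cap L^\infty \to H^\sigma \cap L^\infty$ for $\sigma > \tfrac n2$ and $G\colon L^\infty \to L^\infty$; in either approach the interpolation-theoretic structure $\tfrac1{\varrho_1} = 1 - \tfrac{2(s-1)}{n}$ is the underlying reason for the loss.
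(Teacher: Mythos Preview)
The paper does not prove this lemma at all; it is stated with a citation to Runst--Sickel and used as a black box. Your opening remark that one may simply invoke the cited theorem is therefore exactly what the paper does, and everything after that is additional content not present in the paper.

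Your sketch is a reasonable outline of the standard paraproduct/Moser approach to such composition estimates, and you correctly identify that the sharp value of $\varrho_1$ is where the work lies (the crude $L^\infty$ Bernstein bookkeeping undershoots). One small correction: under the hypothesis $1<s<\tfrac{n}{2}$ one has strictly $\varrho_1>1$, not merely $\varrho_1\geq 1$. Otherwise the structure---reduction to $\mathbb{R}^n$ via extension, telescoping $G(g)=G(S_0g)+\sum_k m_k\Delta_k g$, Fa\`a di Bruno bounds on $\nabla^a m_k$, and a high/low dyadic split---is the right skeleton, and the honest acknowledgment that the optimal $L^2/L^\infty$ distribution of derivatives (or equivalently the nonlinear interpolation viewpoint) is the technical crux matches how the result is actually obtained in the cited reference.
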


\noindent For completeness, we note that the case $0 < s \leq 1$ is in fact better than the case $1 < s < \frac{n}{2}$.

\begin{lemma}[Proposition 4.1, \cite{Taylor2007}, or Theorem 1 and Remark 2, \S5.3.6, \cite{RunstSickel1996}]
    \label{lem:composition_Sobolev_functions_3}
    Let $G' \in \mathcal{C}^\infty_b(\mathbb{R})$ and $G(0) = 0$. Suppose
    \[ 0 < s < 1 \]
    and $f \in H^s$ on $\Omega$. Then there exists a constant $c>0$ such that
    \[ \| G(f) \|_{H^s} \leq c \| f \|_{H^s}. \]
    Moreover, if in addition $G'' \in L^1(\mathbb{R})$, then the above estimate also holds for $s=1$.
\end{lemma}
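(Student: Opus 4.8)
The plan is to reduce the statement to a single quantitative property of $G$---global Lipschitz continuity with constant $M \defeq \|G'\|_{L^\infty(\mathbb{R})}$---and then to exploit, for $0 < s < 1$, the Gagliardo--Slobodeckij description of $H^s(\Omega)$ by first-order difference quotients, handling the endpoint $s = 1$ separately via the chain rule for weakly differentiable functions. To begin, $G(0) = 0$ together with $G' \in \mathcal{C}^\infty_b(\mathbb{R}) \subset L^\infty(\mathbb{R})$ gives, by the mean value theorem, $|G(t)| \leq M|t|$ and $|G(t) - G(t')| \leq M|t - t'|$ for all $t, t' \in \mathbb{R}$; in particular $G$ maps $L^2(\Omega)$ boundedly into itself with $\|G(f)\|_{L^2(\Omega)} \leq M\|f\|_{L^2(\Omega)}$, which already controls the $L^2$ part of the $H^s$ norm for every $s \in [0,1]$.

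For $0 < s < 1$, recall that on the bounded Lipschitz domain $\Omega$ the restriction space $H^s(\Omega)$ of \eqref{definition_fractional_Sobolev_spaces_on_domain} agrees, with equivalent norms, with the Gagliardo--Slobodeckij space, so that
\[
\|g\|_{H^s(\Omega)}^2 \ \sim\ \|g\|_{L^2(\Omega)}^2 + [g]^2_{s,\Omega}, \qquad [g]^2_{s,\Omega} \defeq \int_\Omega \int_\Omega \frac{|g(x) - g(y)|^2}{|x - y|^{n + 2s}}\,\d x\,\d y
\]
(see \S5 of \cite{DiNezzaPalatucciValdinoci2012}; the Lipschitz regularity of $\partial\Omega$ enters here). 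Applying the Lipschitz bound for $G$ pointwise in $(x,y)$ gives $|G(f(x)) - G(f(y))| \leq M\,|f(x) - f(y)|$, whence $[G(f)]_{s,\Omega} \leq M\,[f]_{s,\Omega}$; combined with the $L^2$ estimate this yields $\|G(f)\|_{H^s(\Omega)} \leq c\,M\,\|f\|_{H^s(\Omega)}$ with $c$ depending only on $n$, $s$ and the Lipschitz character of $\Omega$. Equivalently, one may extend $f$ to $g = \operatorname{ext}f \in H^s(\mathbb{R}^n)$, run the identical argument on $\mathbb{R}^n$ using the difference-quotient norm of $H^s(\mathbb{R}^n)$, and restrict, since $G(g)|_\Omega = G(f)$ almost everywhere.

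For $s = 1$ the first-order difference seminorm no longer characterises $H^1$, so one argues via the chain rule instead. Since $\Omega$ is Lipschitz, $H^1(\Omega) = W^{1,2}(\Omega)$ with $\|g\|_{H^1(\Omega)}^2 \sim \|g\|_{L^2(\Omega)}^2 + \|\nabla g\|_{L^2(\Omega)}^2$, and for $f \in W^{1,2}(\Omega)$ and $G \in \mathcal{C}^1(\mathbb{R})$ with $G' \in L^\infty(\mathbb{R})$ the classical chain rule for weakly differentiable functions gives $G(f) \in W^{1,2}(\Omega)$ with $\nabla(G(f)) = G'(f)\,\nabla f$ almost everywhere; hence $\|\nabla(G(f))\|_{L^2(\Omega)} \leq M\,\|\nabla f\|_{L^2(\Omega)}$ and, together with the $L^2$ bound, $\|G(f)\|_{H^1(\Omega)} \leq c\,M\,\|f\|_{H^1(\Omega)}$. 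The additional hypothesis $G'' \in L^1(\mathbb{R})$ is the one needed in \cite{RunstSickel1996} to push the corresponding estimate through via second-order differences in the general Triebel--Lizorkin scale; for the Hilbert scale $H^1$ treated here it is not required beyond $G' \in L^\infty$.

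I anticipate no genuine obstacle. For $0 < s < 1$ the only point of care is the identification of the restriction norm with the intrinsic difference-quotient norm on a Lipschitz domain; the real---but minor---subtlety is the endpoint $s = 1$, where first-order differences must be abandoned and one either invokes the chain rule as above or, following \cite{RunstSickel1996,Taylor2007}, carries out a second-order-difference argument, which is precisely what makes the integrability hypothesis on $G''$ relevant.
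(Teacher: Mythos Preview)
Your argument is correct. The paper does not supply its own proof of this lemma: it is stated with attribution to \cite{Taylor2007} and \cite{RunstSickel1996} and used as a black box. Your route---Lipschitz bound on $G$ combined with the Gagliardo--Slobodeckij difference-quotient characterisation of $H^s(\Omega)$ for $0<s<1$, and the weak chain rule in $W^{1,2}(\Omega)$ for $s=1$---is the standard elementary proof and is entirely self-contained given the identification of the restriction space $H^s(\Omega)$ with the intrinsic Slobodeckij space on a bounded Lipschitz domain (which you correctly flag and source). Your closing observation is also accurate: for the Hilbert endpoint $H^1$ the hypothesis $G''\in L^1(\mathbb{R})$ is not actually needed, since the chain rule in $W^{1,2}$ requires only $G'\in L^\infty$; the extra condition is an artefact of the second-difference machinery used in \cite{RunstSickel1996} to treat the full Triebel--Lizorkin scale uniformly.
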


\subsection{Product estimates in Sobolev spaces}

We frequently make use of the following product estimates in Sobolev spaces (see Theorem 2.2 in \cite{DAnconaFoschiSelberg2012}).
\begin{lemma} 
\label{lem:product_Sobolev_estimates}
Let $s_0$, $s_1$, $s_2 \in \mathbb{R}$. The product estimate
\[ \| fg \|_{H^{-s_0}(\mathbb{R}^n)} \la \| f \|_{H^{s_1}(\mathbb{R}^n)} \|g\|_{H^{s_2}(\mathbb{R}^n)} \]
holds for all $f, \, g \in \mathscr{S}(\mathbb{R}^n)$ if and only if
    \begin{align}
        \label{product_sobolev_estimates_condition_1}
        & s_0 + s_1 + s_2 \geq \frac{n}{2}, \\ 
        \label{product_sobolev_estimates_condition_2}
        & s_0 + s_1 \geq 0, \\
        \label{product_sobolev_estimates_condition_3}
        & s_0 + s_2 \geq 0, \\
        \label{product_sobolev_estimates_condition_4}
        & s_1 + s_2 \geq 0, \\
        \label{product_sobolev_estimates_condition_5}
        & \text{if \eqref{product_sobolev_estimates_condition_1} is an equality, then \eqref{product_sobolev_estimates_condition_2}--\eqref{product_sobolev_estimates_condition_4} must be strict.}
    \end{align}
\end{lemma}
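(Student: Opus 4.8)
The statement is the bilinear Sobolev multiplication theorem of D'Ancona--Foschi--Selberg, and I would prove both implications by passing to the Fourier side and running a Littlewood--Paley (dyadic) argument.

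\emph{Sufficiency.} By duality the claimed bound is equivalent to the trilinear estimate $\bigl|\int_{\R^n} fg\,\overline{h}\,\d x\bigr| \la \|f\|_{H^{s_1}}\|g\|_{H^{s_2}}\|h\|_{H^{s_0}}$, and expressing $\widehat{fg\overline{h}}$ as an integral over the plane $\xi_1+\xi_2+\xi_3=0$ shows that this trilinear form is symmetric in the three pairs $(f,s_1)$, $(g,s_2)$, $(h,s_0)$; in particular \eqref{product_sobolev_estimates_condition_2}--\eqref{product_sobolev_estimates_condition_4} are permutations of one another, so it is enough to track one of them. I would then write $f=\sum_{N_1}f_{N_1}$, $g=\sum_{N_2}g_{N_2}$ as Littlewood--Paley sums, noting that $P_{N_3}(f_{N_1}g_{N_2})$ vanishes unless the two largest of $N_1,N_2,N_3$ are comparable. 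The key elementary input is the single-block estimate
\[ \|P_{N_3}(f_{N_1}g_{N_2})\|_{L^2(\R^n)} \la \min(N_1,N_2,N_3)^{n/2}\,\|f_{N_1}\|_{L^2}\|g_{N_2}\|_{L^2}, \]
which follows from Bernstein's inequality applied to whichever (input or output) block has the lowest frequency, together with H\"older's inequality (placing the low input block in $L^\infty$ and the other in $L^2$; or estimating the low-frequency output in $L^2$ through $L^1$).

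With this in hand the problem reduces to summing a dyadic series: setting $a_{N_1}=N_1^{s_1}\|f_{N_1}\|_{L^2}$, $b_{N_2}=N_2^{s_2}\|g_{N_2}\|_{L^2}$, one must show that the trilinear kernel $N_3^{-s_0}N_1^{-s_1}N_2^{-s_2}\min(N_1,N_2,N_3)^{n/2}$, restricted to the admissible set where the two largest indices are comparable, defines a bounded map $\ell^2\otimes\ell^2\to\ell^2$. I would split into the three frequency regimes (low--high, high--low, high--high), use comparability of the two largest frequencies to collapse the corresponding two weights into a single power, and in each regime perform a Cauchy--Schwarz in the summed index followed by a geometric summation in the remaining dyadic index. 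Collecting the convergence conditions of these geometric sums reproduces exactly \eqref{product_sobolev_estimates_condition_1}--\eqref{product_sobolev_estimates_condition_4}; the only genuine obstructions are the logarithmically divergent borderline cases, which occur precisely when \eqref{product_sobolev_estimates_condition_1} is an equality and one of the pair-sums also vanishes, and these are excluded by \eqref{product_sobolev_estimates_condition_5}.

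\emph{Necessity.} Here I would test the estimate against explicit families of Schwartz functions concentrated appropriately in frequency: taking $\widehat f=\widehat g=\mathbbm{1}_{\{|\xi|\leq N\}}$ concentrates the factors and their product at low frequency and, as $N\to\infty$, forces \eqref{product_sobolev_estimates_condition_1} (in the regime where it is a genuine constraint; otherwise it follows from \eqref{product_sobolev_estimates_condition_4}); taking $\widehat f,\widehat g$ to be unit bumps centred at $\xi_0$ and $-\xi_0$ with $|\xi_0|=N$ places the product at frequency $O(1)$ while the factors sit at frequency $N$, forcing \eqref{product_sobolev_estimates_condition_4} and hence, by the symmetry above, \eqref{product_sobolev_estimates_condition_2}--\eqref{product_sobolev_estimates_condition_3}; and a lacunary superposition $\widehat f=\widehat g=\sum_{j=1}^{J}\mathbbm{1}_{B(\xi_j,1)}$ with $|\xi_j|=2^j$ arranged so that order-$J$ cross-terms accumulate near the origin produces a factor $\log J$, which diverges as $J\to\infty$ whenever \eqref{product_sobolev_estimates_condition_1} holds with equality together with a vanishing pair-sum, giving the sharpness asserted by \eqref{product_sobolev_estimates_condition_5}. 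The block estimate and the counterexamples are routine; the main obstacle is the bookkeeping of the dyadic summation across the three interaction regimes, and in particular checking that the sole obstruction to convergence in each is the logarithmically critical case ruled out by \eqref{product_sobolev_estimates_condition_5}. (For the full details we refer to Theorem~2.2 of \cite{DAnconaFoschiSelberg2012}.)
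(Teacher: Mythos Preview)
The paper does not give its own proof of this lemma: it simply states the result and cites Theorem~2.2 of \cite{DAnconaFoschiSelberg2012}. Your proposal sketches precisely the argument found in that reference (duality to a symmetric trilinear form, Littlewood--Paley decomposition with the Bernstein-type block estimate, dyadic summation across the three interaction regimes, and frequency-localized counterexamples for necessity), so there is nothing to compare---your outline is correct and aligned with the cited source.
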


By definition of restriction spaces \eqref{definition_fractional_Sobolev_spaces_on_domain} to bounded domains in $\mathbb{R}^n$, the same estimates hold with $\mathbb{R}^n$ replaced with $\Omega \subset \mathbb{R}^n$ bounded with Lipschitz boundary.

\subsection{Bourgain--Klainerman--Machedon dispersive Sobolev spaces}
\label{sec:dispersive_Sobolev_spaces}

For $s$, $b \in \mathbb{R}$, we denote by $X^{s,b} = X^{s,b}_\pm(\mathbb{R}^{1+3})$ the spaces\footnote{The spaces $X^{s,b}_\pm$ first appear in connection with questions of propagation of singularities of nonlinear wave equations in \cite{Bourgain1993a,Bourgain1993b,KlainermanMachedon1995a}, and are also sometimes called Fourier restriction spaces. See also \cite{KenigPonceVega1994}.} defined as the completion of the Schwartz space $\mathscr{S}(\mathbb{R}^{1+3})$ with respect to the norms
\[ \| u \|_{X^{s,b}_\pm} = \| \langle \xi \rangle^s \langle - \tau \pm |\xi| \rangle^b \hat{u}(\tau, \xi) \|_{L^2_{\tau,\xi}}, \]
where $\hat{u}(\tau, \xi)$ denotes the spacetime Fourier transform of $u(t,x)$. Let 
\begin{equation}
\label{free_wave_propagator}
U(\pm t) = \e^{\pm i |\nabla| t} \end{equation} 
be the free wave propagator defined by $\mathscr{F}_{x \to \xi}(U(\pm t) u(t,x)) = \e^{\pm i |\xi| t} \mathscr{F}_{x \to \xi}(u(t,x))$. Then the spaces $X^{s,b}_\pm$ are equivalently characterised as
\begin{equation} \label{propagator_characterisation_Bourgain_spaces} X^{s,b}_\pm(\mathbb{R}^{1+3}) = U(\pm t) H^b(\mathbb{R} ; H^s(\mathbb{R}^3)), \end{equation}
i.e. any $u \in X^{s,b}_\pm$ can be written as $u(t,x) = U(\pm t) f(t,x) $ for some $f(t,x) \in H^b(\mathbb{R}; H^s(\mathbb{R}^3))$. Indeed, this follows from the fact that if $u(t,x) = U(\pm t) f(t,x)$, then the spacetime Fourier transforms of $u(t,x)$ and $f(t,x)$ are related by $\hat{u}(\tau,\xi) = \hat{f}(\tau \mp |\xi|, \xi)$.  For $X \subset \mathbb{R}^{1+3}$, we denote the restriction spaces to $X$ by $X^{s,b}(X)$.
If $X= S_T = (-T, T) \times \mathbb{R}^3$, for $b > \frac{1}{2}$ one has the important embedding
\[ X^{s,b}_\pm(S_T) \hookrightarrow \mathcal{C}^0([-T,T]; H^s(\mathbb{R}^3)). \]
In addition, we denote by $H^{s,b}(\mathbb{R}^{1+3})$ the closely related wave-Sobolev spaces of Klainerman--Machedon, defined as the completion of $\mathscr{S}(\mathbb{R}^{1+3})$ with respect to the norm
\[  \| u \|_{H^{s,b}} = \| \langle \xi \rangle^s \langle |\tau| - |\xi| \rangle^b \hat{u}(\tau, \xi) \|_{L^2_{\tau,\xi}}. \]
It is straightforward to see, by splitting $\hat{u}(\tau,\xi)$ as $\hat{u}(\tau,\xi) = \hat{u}_-(\tau,\xi) + \hat{u}_+(\tau,\xi)$, where $\hat{u}_-(\tau,\xi) = 0 $ for $\tau >0$ and $\hat{u}_+(\tau,\xi) = 0 $ for $\tau < 0$, that any $u \in H^{s,b}$ may be written as $u = u_- + u_+$, where $u_\pm \in X^{s,b}_\pm$, with $\| u \|_{H^{s,b}} = \| u_- \|_{X_-^{s,b}} + \| u _+ \|_{X^{s,b}_+}$, i.e. $H^{s,b} \subset X^{s,b}_- + X^{s,b}_+$. One also has
\begin{align*}
    & \| u \|_{H^{s,b}} \leq \| u \|_{X_\pm^{s,b}} \qquad \text{for } b \geq 0, \\
    & \| u \|_{X_\pm^{s,b}} \leq \| u \|_{H^{s,b}} \qquad \text{for } b \leq 0, 
\end{align*}
i.e. for $b \geq 0$ we have the reverse inclusion $X^{s,b}_\pm \subset H^{s,b}$. Hence for $b \geq 0$, $H^{s,b} = X^{s,b}_- + X^{s,b}_+$. As before, we denote by $H^{s,b}(X)$ the restriction spaces to $X \subset \mathbb{R}^{1+3}$, defined exactly analogously to \eqref{definition_fractional_Sobolev_spaces_on_domain}.

\subsection{Product estimates in dispersive Sobolev spaces}

Analogous to the product estimates in Sobolev spaces of \Cref{lem:product_Sobolev_estimates}, one has the following lemma for product estimates in wave-Sobolev spaces on $\mathbb{R}^{1+3}$ (see Theorem 4.5, \cite{SelbergTesfahun2010}, or \cite{DAnconaFoschiSelberg2012}).

\begin{lemma}
    \label{lem:product_estimates_wave_Sobolev}
    Let $s_0, \, s_1, \, s_2, \, b_1, \, b_2 \in \mathbb{R}$ and $b_1, \, b_2 > \frac{1}{2}$. The product estimate
    \[ \| fg \|_{H^{-s_0,0}(\mathbb{R}^{1+3})} \la \| f \|_{H^{s_1,b_1}(\mathbb{R}^{1+3})} \| g \|_{H^{s_2,b_2}(\mathbb{R}^{1+3})}  \]
    holds for all $f, \, g \in \mathscr{S}(\mathbb{R}^{1+3})$ if
    \begin{align}
        & \label{416} s_0 + s_1 + s_2 \geq 2 - (b_1 + b_2), \\
        & \label{417} s_0 + s_1 + s_2 \geq \frac{3}{2} - b_1, \\
        & \label{418} s_0 + s_1 + s_2 \geq \frac{3}{2} - b_2, \\
        & \label{419} s_0 + s_1 + s_2 \geq 1, \\
        & \label{420} s_0 + 2(s_1 + s_2) \geq \frac{3}{2}, \\
        & \label{421} s_1 + s_2 \geq 0, \\
        & \label{422} s_0 + s_2 \geq 0, \\
        & \label{423} s_0 + s_1 \geq 0,
    \end{align}
    where \eqref{420} is strict if $s_0 \in \{ \frac{1}{2}, \frac{3}{2}, \frac{3}{2} - 2b_1, \frac{3}{2} - 2b_2, \frac{5}{2} - 2(b_1 + b_2) \}$, and \eqref{421}, \eqref{422}, \eqref{423} are strict if one of \eqref{416}, \eqref{417}, \eqref{418}, or \eqref{419} is an equality.
\end{lemma}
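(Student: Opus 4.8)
The plan is to establish the estimate in its dual, trilinear form and then run a dyadic Fourier analysis adapted to the light cone. First I would dualize: by the duality $(H^{s_0,0})' = H^{-s_0,0}$ and Plancherel's theorem, the asserted bound is equivalent to
\[
 \left| \int_{\mathbb{R}^{1+3}} f g \bar{h} \, \d t \, \d x \right| \la \| f \|_{H^{s_1,b_1}} \| g \|_{H^{s_2,b_2}} \| h \|_{H^{s_0,0}}
\]
for all Schwartz $f,g,h$ (the general case then follows by density). Passing to the spacetime Fourier side, writing $X_j = (\tau_j,\xi_j)$ and $\mathfrak{h}_j = |\tau_j| - |\xi_j|$ for the modulation, splitting each of $\hat f,\hat g,\hat h$ into the parts supported in $\{\tau>0\}$ and $\{\tau<0\}$ (which identifies the relevant pieces with $X^{s,b}_\pm$ functions, via $H^{s,b}\subset X^{s,b}_+ + X^{s,b}_-$), and substituting $F_1(X_1) = \langle\xi_1\rangle^{s_1}\langle\mathfrak{h}_1\rangle^{b_1}|\hat f(X_1)|$, $F_2$ analogously, and $F_0(X_0) = \langle\xi_0\rangle^{s_0}|\hat h(X_0)|$ (with no modulation weight, since $b_0 = 0$), the estimate becomes the positive weighted convolution bound
\[
 \int_{X_0 = X_1 + X_2} \frac{F_0(X_0)\, F_1(X_1)\, F_2(X_2)}{\langle\xi_0\rangle^{s_0}\langle\xi_1\rangle^{s_1}\langle\xi_2\rangle^{s_2}\langle\mathfrak{h}_1\rangle^{b_1}\langle\mathfrak{h}_2\rangle^{b_2}}\, \d X_1 \, \d X_2 \la \|F_0\|_{L^2}\,\|F_1\|_{L^2}\,\|F_2\|_{L^2}.
\]

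Next I would decompose each factor dyadically, simultaneously in spatial frequency $|\xi_j|\sim N_j$ and in modulation $\langle\mathfrak{h}_j\rangle\sim L_j$, with $N_j,L_j\geq 1$ dyadic. On each block the three weights are essentially constant, so the problem reduces to establishing, for every admissible dyadic tuple $(N_j,L_j)_{j=0,1,2}$, an estimate
\[
 \int_{X_0 = X_1 + X_2} \prod_{j=0}^2 \mathbf{1}_{|\xi_j|\sim N_j}\,\mathbf{1}_{\langle\mathfrak{h}_j\rangle\sim L_j}\, G_j(X_j) \, \d X_1 \, \d X_2 \la C(N_\ast,L_\ast)\prod_{j=0}^2\|G_j\|_{L^2}
\]
with a constant $C(N_\ast,L_\ast)$ decaying fast enough that, after multiplication by $N_0^{-s_0}N_1^{-s_1}N_2^{-s_2}L_1^{-b_1}L_2^{-b_2}$, the sum over all dyadic tuples converges; encoding this convergence is precisely the role of the conditions \eqref{416}--\eqref{423}. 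By Cauchy--Schwarz --- freezing $X_0$ and integrating in $X_1$, or one of the two symmetric variants --- the dyadic estimate reduces in turn to a purely geometric bound on the square root of the volume of the intersection of three thickened light cones, i.e. on $\sup_{X_0}|\{\, X_1 : |\xi_j|\sim N_j,\ \langle\mathfrak{h}_j\rangle\la L_j,\ X_2 = X_0 - X_1 \,\}|$. The algebraic input that makes this effective is the resonance identity (hyperbolic Leibniz rule) $\mathfrak{h}_0 = \mathfrak{h}_1 + \mathfrak{h}_2 + \mathcal{A}$, in which the angular defect $\mathcal{A}$ is comparable to a frequency-weighted power of the interior angle between $\xi_1$ and $\xi_2$: when all three modulations are small the spatial frequencies must be nearly (anti)parallel, confining the integration to a thin neighbourhood of a lower-dimensional set. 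This is exactly the mechanism behind the Klainerman--Machedon and Foschi--Klainerman $L^2$ bilinear estimates for the wave cone.

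The remaining work is bookkeeping. One splits into cases according to which of $L_0,L_1,L_2$ is largest (the high-modulation cases are the most favourable, as they supply extra decay) and which of $N_0,N_1,N_2$ is smallest, applies the geometric bound in each regime, and sums the dyadic series. One then finds that \eqref{419} controls the frequency-diagonal regime $N_0\sim N_1\sim N_2$; \eqref{416}--\eqref{418} are what is needed to carry out the summation over the modulation parameters once the $b_1,b_2>\tfrac12$ weights have been spent (in the present range $b_1,b_2>\tfrac12$ these three are in fact consequences of \eqref{419} and need not be checked separately); \eqref{420} is the $L^4$-Strichartz-type constraint governing the near-parallel resonant interaction where the angular defect degenerates; and \eqref{421}--\eqref{423} permit transferring a derivative from the output onto an input when one spatial frequency dominates. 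The strictness clauses arise because an equality in \eqref{416}--\eqref{420} turns a convergent geometric sum into a logarithmically divergent harmonic one --- this is also why the absence of a modulation weight on $h$ (i.e. $b_0=0$) costs at most a logarithm, since the identity above bounds $L_0$ by $\max(L_1,L_2,N_{\min})$ up to constants --- and summability is recovered only by borrowing an $\epsilon$ of room from a strict \eqref{421}--\eqref{423}. Finally, the statement on a bounded Lipschitz domain $X\subset\mathbb{R}^{1+3}$ is immediate from the definition of $H^{s,b}(X)$ by restriction, since extensions of $f$ and $g$ off $X$ furnish one of $fg$.

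The step I expect to be the main obstacle is the geometric cone-intersection estimate in the resonant regime, where $\xi_1$ is (anti)parallel to $\xi_2$ and the naive volume bound degenerates: there one either accepts the loss of the endpoint --- the origin of the strictness hypotheses --- or must decompose the cone into angular caps and argue more delicately. Organising the many case distinctions so that the terminal dyadic sum converges under precisely \eqref{416}--\eqref{423}, with the sharp strictness conditions, is the genuinely technical part; it is carried out in full in \cite{DAnconaFoschiSelberg2012}, and for the parameter range needed here also in \cite{SelbergTesfahun2010}.
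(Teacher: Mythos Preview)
The paper does not prove this lemma at all: it is stated as a known result with a citation to Theorem~4.5 of \cite{SelbergTesfahun2010} and to \cite{DAnconaFoschiSelberg2012}, and no argument is given in the paper itself. Your proposal is a faithful outline of the standard proof carried out in those references (duality to a trilinear form, dyadic decomposition in frequency and modulation, reduction via Cauchy--Schwarz to cone-intersection volume bounds exploiting the hyperbolic Leibniz rule, and case-by-case dyadic summation), so there is nothing to compare beyond noting that you have supplied what the paper deliberately omits.
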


As in the case of \Cref{lem:product_Sobolev_estimates}, the same estimates hold with $\mathbb{R}^{1+3}$ replaced with a domain $X \subset \mathbb{R}^{1+3}$ with Lipschitz boundary, by definition of restriction spaces.

\subsection{Linear waves}

Let $S_T = (-T , T) \times \mathbb{R}^3$. For any $s \in \mathbb{R}$ and $b > \frac{1}{2}$ it is classical that for a given $F \in X^{s,b-1}_\pm(S_T)$ the linear initial value problem 
\[ (-i \partial_t \pm |\nabla| ) u = F \in X^{s,b-1}_\pm(S_T), \qquad u|_{t=0} = u_0 \in H^s(\mathbb{R}^3), \]
has a unique solution $u \in X^{s,b}_\pm(S_T)$ satisfying
\begin{equation}
    \label{basic_linear_Xsb_estimate}
    \| u \|_{X^{s,b}_\pm(S_T)} \la \| u_0 \|_{H^s(\mathbb{R}^3)} + \| F \|_{X^{s,b-1}_\pm(S_T)},
\end{equation} 
see e.g. \cite{KlainermanSelberg2002}. Moreover, the initial value problem for the linear wave equation 
\[ \Box_g u = f, \qquad (u, \partial_t u)|_{t = 0} = (u_0, u_1) \]
on a globally hyperbolic Lorentzian manifold $(\mathcalboondox{M} = \mathbb{R}_t \times \Sigma, g)$ satisfies the energy inequality \cite{Sogge1993}
    \begin{align}
    \begin{split}
    \label{energy_inequality}
    \|u(t,\cdot)\|_{H^{\sigma+1}(\Sigma_t)} + \| \partial_t u(t,\cdot) \|_{H^\sigma(\Sigma_t)} &\la \| u_0 \|_{H^{\sigma+1}(\Sigma_0)} + \| u_1 \|_{H^\sigma(\Sigma_0)} + \int_0^t \| f(t', \cdot) \|_{H^\sigma(\Sigma_t)} \, \d t' 
    \end{split}
    \end{align}
for all $\sigma \in \mathbb{R}$, where $\Sigma_t = \{ t \} \times \Sigma$.

\subsection{Conformal embedding of $\mathbb{M}$ into $\mathbb{R} \times \mathbb{S}^3$}
\label{sec:conformal_embedding}

For notational completeness we recap briefly the conformal embedding of Minkowski space $\mathbb{M} = \mathbb{R}^{1+3}$, with the standard metric $\eta_{ab} = \d t^2 - \d r^2 - r^2 g_{\mathbb{S}^2}$, into the Einstein cylinder $\mathbb{R}\times \mathbb{S}^3$ with metric $g_{ab} = \d \tau^2 - \d \zeta^2 - \sin^2\zeta g_{\mathbb{S}^2}$. This is achieved using the conformal factor 
\begin{equation}
    \label{standard_conformal_factor} \Omega = 2 \langle t-r \rangle^{-1} \langle t+r \rangle^{-1}
\end{equation}
and the relationships between the Minkowskian and cylindrical coordinates
\begin{align}
    \label{tau_definition_in_physical_coordinates}
    & \tau = \arctan(t+r) + \arctan(t-r), \\
    \label{zeta_definition_in_physical_coordinates}
    & \zeta = \arctan(t+r) - \arctan(t-r),
\end{align}
with $(\theta, \varphi)$ unchanged. In these coordinates the conformal factor \eqref{standard_conformal_factor} takes the form
\begin{equation}
    \label{standard_conformal_factor_cylinder_coordinates}
    \Omega = 2 \cos \left( \frac{\tau+\zeta}{2} \right) \cos \left( \frac{\tau - \zeta}{2} \right),
\end{equation}
and if $\eta_{ab}$ is multiplied by $\Omega^2$, one obtains exactly $g_{ab}$,
\[ \Omega^2 \eta_{ab} = g_{ab}. \]
The ``physical'' range of the Minkowskian coordinates $(t, r, \theta, \varphi)$ is represented in $\mathbb{R} \times \mathbb{S}^3$ by the set
\[ \iota_\Omega(\mathbb{M}) \defeq \{ (\tau, \zeta) \, : \, |\tau| + \zeta < \pi, ~ \zeta \geq 0 \} \times \mathbb{S}^2. \]
We denote this conformal embedding by
\begin{equation}
    \label{conformal_embedding}
    \iota_\Omega : \mathbb{M} \to \iota_\Omega (\mathbb{M}) \hookrightarrow \mathbb{R} \times \mathbb{S}^3,
\end{equation}
and frequently simply write\footnote{We occasionally abuse notation more generally, and write $X$ to denote $\iota_\Omega(X) \subset \mathbb{R} \times \mathbb{S}^3$ for subsets $X \subset \mathbb{M}$.} $\mathbb{M}$ to mean $\iota_\Omega(\mathbb{M})$. The time-zero Minkowskian and cylindrical slices coincide in $\mathbb{R} \times \mathbb{S}^3$, i.e. $\{ \tau = 0 \} = \iota_\Omega(\{ t= 0 \}) \cup i^0$. Moreover, the Minkowskian Killing vector field $\tilde{T}^a = (\partial_t)^a$ and the cylindrical Killing vector field $T^a = (\partial_\tau)^a$ differ by
\begin{align} \label{relation_between_Killing_fields} \tilde{T}^a &= \left( \cos^2\left( \frac{\tau-\zeta}{2} \right) + \cos^2\left( \frac{\tau+\zeta}{2} \right) \right)T^a - \sin \tau \sin \zeta Z^a, \\
T^a &= \frac{1}{2}(1+t^2 +r^2)\tilde{T}^a + rt R^a,
\end{align}
where
\[ Z^a \partial_a = \partial_\zeta \quad \text{and} \quad R^a \partial_a = \partial_r. \]
In particular, we have $\frac{1}{2} \langle r \rangle^{2} \tilde{T}^a |_{t=0} = T^a|_{\tau =0}$, i.e. they are initially parallel.

\section{The Maxwell-Scalar Field System}

\subsection{Basics and conformal symmetry}

The conformally invariant Maxwell-scalar field system describes the motion of a massless charged spin-0 particle interacting with an electromagnetic field. The system is given, on a general spacetime $(\mathcalboondox{M}, g)$, by the equations
\begin{align}  \label{general_Maxwell_equation} & \nabla^b F_{ab} = \Im ( \bar{\phi} \D_a \phi ), \\
\label{general_scalar_equation} & \D^a \D_a \phi + \frac{1}{6} \mathrm{R} \phi = 0, \\
& \label{general_Bianchi_identity}
\nabla_{[a} F_{bc]} = 0,
\end{align}
where $F_{ab}:\mathcalboondox{M} \to \mathbb{R}$ is a 2-form called the \emph{Maxwell field strength}, $\phi:\mathcalboondox{M} \to \mathbb{C}$ is a complex scalar field, $\mathrm{D}_a = \nabla_a + i A_a$ is the \emph{gauge covariant derivative}, $A_a : \mathcalboondox{M} \to \mathbb{R}$ is a 1-form called the \emph{Maxwell potential}, and $\mathrm{R}$ is the scalar curvature of $(\mathcalboondox{M},g)$. The 2-form $F_{ab}$ is given in terms of $A_a$ by $F_{ab} = 2 \nabla_{[a} A_{b]}$, which makes equation \eqref{general_Bianchi_identity} trivial (or, conversely, \eqref{general_Bianchi_identity} and the Poincar\'e lemma imply the existence of the potential $A_a$). The system is conformally invariant, i.e. massless, due to the presence of the conformal mass term $\mathrm{R}/6$ in \eqref{general_scalar_equation}. That is, for a smooth $\Omega>0$ the conformal rescaling
\begin{equation} \label{conformal_symmetry} g_{ab} = \Omega^2 \tilde{g}_{ab}, \qquad A_a = \tilde{A}_a, \qquad F_{ab} = \tilde{F}_{ab}, \qquad \phi = \Omega^{-1} \tilde{\phi}
\end{equation}
is a symmetry of \eqref{general_Maxwell_equation}--\eqref{general_Bianchi_identity}, with the tilded quantities satisfying the same equations if and only if \eqref{general_Maxwell_equation}--\eqref{general_Bianchi_identity} hold. The Lagrangian for the system \eqref{general_Maxwell_equation}--\eqref{general_Bianchi_identity} is 
\begin{equation}
    \label{general_Lagrangian}
    \mathcal{L} = - \frac{1}{4} F_{ab} F^{ab} + \frac{1}{2} \mathrm{D}_a \phi \overline{\mathrm{D}^a \phi} - \frac{1}{12} \mathrm{R} |\phi|^2,
\end{equation}
which also gives rise to the stress-energy tensor
\begin{equation}
    \label{general_stress_tensor}
    \mathbf{T}_{ab} = - F_{ac}F_b{}^c + \frac{1}{4} g_{ab} F_{cd} F^{cd} + \overline{\mathrm{D}_{(a} \phi } \mathrm{D}_{b)} \phi - \frac{1}{2} g_{ab} \overline{\mathrm{D}_c \phi }\mathrm{D}^c \phi + \frac{1}{12} g_{ab} \mathrm{R} |\phi|^2.
\end{equation}
When the equations \eqref{general_Maxwell_equation}--\eqref{general_Bianchi_identity} are satisfied, $\mathbf{T}_{ab}$ satisfies 
\[ \nabla^a \mathbf{T}_{ab} = \frac{1}{12} |\phi|^2 \nabla_b \mathrm{R}, \]
and so is conserved when $\mathrm{R}$ is constant and $\mathbf{T}_{ab}$ is sufficiently smooth.

\subsection{Gauge symmetry}

In addition to the conformal symmetry \eqref{conformal_symmetry}, \eqref{general_Maxwell_equation}--\eqref{general_Bianchi_identity} exhibit the \emph{gauge symmetry}
\begin{equation}
    \label{general_gauge_transformations}
    A_a \rightsquigarrow A_a + \nabla_a \chi, \qquad \phi \rightsquigarrow \e^{-i\chi} \phi.
\end{equation}
Under \eqref{general_gauge_transformations} the fields $F_{ab}$ and $\D_a \phi$ transform as
\[ F_{ab} \rightsquigarrow F_{ab}, \qquad \D_a \phi \rightsquigarrow \e^{-i\chi} \D_a \phi, \]
which in particular makes the Lagrangian \eqref{general_Lagrangian} and the stress tensor \eqref{general_stress_tensor} manifestly gauge invariant. Geometrically, this is the statement that the system \eqref{general_Maxwell_equation}--\eqref{general_Bianchi_identity} is a theory on a principal bundle $P \to \mathcalboondox{M}$ with fibre $\mathrm{U}(1)$. The gauge covariant derivative $\mathrm{D}_a$ is then a connection on $P$, represented by the real\footnote{Note that we choose $A_a$ to be real, i.e. the factor of $i$ in $\mathrm{D}_a = \nabla_a + iA_a$ comes from the Lie algebra of $\mathrm{U}(1)$ being $\mathfrak{u}(1) = i \mathbb{R}$.} 1-form $A_a$ on $\mathcalboondox{M}$ in any trivialisation of $P$, and $iF_{ab} = [\mathrm{D}_a, \mathrm{D}_b]$ its curvature. The scalar field $\phi$ is interpreted as a section of the complex line bundle over $\mathcalboondox{M}$ associated to $P$ by the representation $\e^{i\chi}$ of $\mathrm{U}(1)$, and the gauge transformations \eqref{general_gauge_transformations} correspond to a change of local trivialization. When written in terms of the potential $A_a$, the system \eqref{general_Maxwell_equation}--\eqref{general_Bianchi_identity} takes the form
\begin{align}
    \label{general_Maxwell_equation_potential}
    & \Box A_a - \nabla_a (\nabla_b A^b) + \mathrm{R}_{ab} A^b = -\operatorname{Im}(\bar{\phi} \mathrm{D}_a \phi), \\
    \label{general_scalar_equation_potential}
    & \Box \phi + 2 i A_a \nabla^a \phi + \left( \frac{1}{6} \mathrm{R} - A_a A^a + i \nabla_a A^a \right) \phi = 0.
\end{align}
Note that in Lorenz gauge $\nabla_a A^a = 0$ on $\mathcalboondox{M}$ the equations \eqref{general_Maxwell_equation_potential} and \eqref{general_scalar_equation_potential} become wave equations for $(A_a, \phi)$.

\subsection{Time slicing}

Let $\tau$ be a time function on $\mathcalboondox{M}$. Then the metric $g_{ab}$ may be decomposed as
\[ g = N^2 \d \tau^2 - h, \]
where $N = (\nabla_a \tau \nabla^a \tau)^{-1/2}$ is the lapse function and $h_{ab}$ is a Riemannian metric on the level surfaces $\Sigma_\tau$ of $\tau$ with future-oriented unit normal $T^a = N \nabla^a \tau$. The flow along $T^a$ allows to identify all hypersurfaces $\Sigma_\tau$ with a single $3$-manifold $\Sigma = \Sigma_0$ and thus defines the product structure $\mathcalboondox{M} = \mathbb{R}_\tau \times \Sigma$. This also defines the vector field $\partial_\tau$ as
\[ \partial_\tau = N T^a \partial_a .\]
Since we do not need anything more general, we assume that $N=1$ and that $h_{ab}$ is independent of $\tau$. In particular, this implies that the extrinsic curvature of $\Sigma_\tau$ vanishes for all $\tau$, and that $\partial_\tau$ is a Killing vector. Given this decomposition, we then define the \emph{electric} and \emph{magnetic} fields by $\mathbf{E}_a = T^b F_{ba}$ and $\mathbf{B}_a = \frac{1}{2} \varepsilon_{abc} F^{bc}$, where $\varepsilon_{abc}$ is the volume form of $\Sigma$. We also denote by $\boldsymbol{\nabla}_a = h_a{}^b \nabla_b$ the projection of $\nabla_b$ to $\Sigma$, which, since $N=1$, is also the Levi-Civita connection of $(\Sigma, h)$. Similarly, we denote by $\boldsymbol{\mathrm{D}}_a = h_a{}^b \mathrm{D}_b$ the projection of $\mathrm{D}_a$ to $\Sigma$. The system \eqref{general_Maxwell_equation}--\eqref{general_Bianchi_identity} then takes the form
\begin{align}
\label{general_MKG_system_with_projections}
\tag{MSF}
\begin{split}
    & \partial_\tau \mathbf{E} + \boldsymbol{\nabla} \times \mathbf{B} = - \operatorname{Im}(\bar{\phi} \boldsymbol{\mathrm{D}} \phi), \\
    & \mathrm{D}^a \mathrm{D}_a \phi + \frac{1}{6} \mathrm{R} \phi = 0, \\
    & \boldsymbol{\nabla} \cdot \mathbf{E} = - \operatorname{Im}(\bar{\phi} \mathrm{D}_0 \phi), \\
    & \boldsymbol{\nabla} \cdot \mathbf{B} = 0,
\end{split}
\end{align}
where $(\boldsymbol{\nabla} \times \mathbf{B} )_a = \varepsilon_{abc} \boldsymbol{\nabla}^b \mathbf{B}^c$, $\mathrm{D}_0 = T^a \mathrm{D}_a$, and $\boldsymbol{\nabla} \cdot \mathbf{E} = h^{ab} \boldsymbol{\nabla}_a \mathbf{E}_b$. The first two equations in \eqref{general_MKG_system_with_projections} are dynamical, whereas the third and fourth equations are constraints, and, for sufficiently smooth solutions, are propagated. The system \eqref{general_MKG_system_with_projections} is manifestly time-reversible, since $(\tau, A_0, \mathbf{E}) \mapsto (-\tau,-A_0,-\mathbf{E})$ is a symmetry. We note also that contracting \eqref{general_stress_tensor} with $T^a T^b$ gives rise to the conservation of energy
\begin{equation} \label{conservation_of_energy_in_general} \mathscr{E}(\tau) = \mathscr{E}(0)
\end{equation}
for all $\tau \in \mathbb{R}$, where
\[ \mathscr{E}(\tau) = \frac{1}{2} \int_{\Sigma_\tau} \left( | \mathbf{E} |^2 + |\mathbf{B}|^2 + |\mathrm{D}_0 \phi |^2 + |\boldsymbol{\mathrm{D}} \phi |^2 + \frac{\mathrm{R}}{6} |\phi|^2 \right) \dvol_{\Sigma}. \]
Here
\[ \mathrm{D}_0 \phi = \partial_\tau \phi + i A_0 \phi, \]
where $A_0 = (\partial_\tau)^a A_a$, and
\[ \boldsymbol{\mathrm{D}} \phi = \boldsymbol{\nabla} \phi + i \mathbf{A} \phi, \]
where $\mathbf{A}_a = h_a^{\phantom{a}b} A_b$ is the projection of $A_b$ to $\Sigma$.

\subsubsection{Einstein cylinder}
In the case of the Einstein cylinder $(\R_\tau \times \mathbb{S}^3, g)$, the metric is given by $g_{ab} = \d \tau^2 - h_{ab}$, where $h_{ab}$ is the standard round metric on $\mathbb{S}^3$. The system \eqref{general_MKG_system_with_projections} assumes the same form with $\mathrm{R} = 6$, or, in terms of the potential, \eqref{general_Maxwell_equation_potential}--\eqref{general_scalar_equation_potential} become
\begin{align}
    \label{cylinder_Maxwell_equation_potential}
    & \Box A_a - \nabla_a (\nabla_b A^b) -2 \mathbf{A}_a = -\operatorname{Im}(\bar{\phi} \mathrm{D}_a \phi), \\
    \label{cylinder_scalar_equation_potential}
    & \Box \phi + 2 i A_a \nabla^a \phi + \left( 1 - A_a A^a + i \nabla_a A^a \right) \phi = 0.
\end{align}
In any gauge which expresses $\nabla_a A^a$ in terms of zeroth-order derivatives of $A_a$ (and perhaps $\phi$)\footnote{A trivial example of such a gauge is the Lorenz gauge on the Einstein cylinder, $\nabla_a A^a = 0$. The Lorenz gauge on Minkowski space, $\tilde{\nabla}_a \tilde{A}^a = 0$, is another example, when the Minkowski spacetime is conformally embedded in the Einstein cylinder. See \Cref{sec:conformal_embedding}.}, \eqref{cylinder_Maxwell_equation_potential}--\eqref{cylinder_scalar_equation_potential} are a system of coupled nonlinear wave equations. In particular, in the Lorenz gauge $\nabla_a A^a = 0$ on $\mathbb{R} \times \mathbb{S}^3$, the system reads
\begin{align}
\label{cylinder_Lorenz_gauge_Maxwell_equation_potential}
    & \Box A_a -2 \mathbf{A}_a = -\operatorname{Im}(\bar{\phi} \mathrm{D}_a \phi), \\
    \label{cylinder_Lorenz_gauge_scalar_equation_potential}
    & \Box \phi + 2 i A_a \nabla^a \phi + \left( 1 - A_a A^a \right) \phi = 0,
\end{align}
and the gauge condition $\nabla_a A^a = 0$ replaces the constraints. Indeed, for sufficiently smooth solutions the quantity $\lambda = \nabla_a A^a $ satisfies, as a consequence of 
\eqref{cylinder_Lorenz_gauge_Maxwell_equation_potential} and \eqref{cylinder_Lorenz_gauge_scalar_equation_potential}, the linear wave equation
\[ \Box \lambda + |\phi|^2 \lambda = 0, \]
so the condition $\lambda = 0$ is propagated provided $(\lambda, \partial_\tau \lambda)|_{\tau = 0} = (0,0)$. But in fact $\partial_\tau \lambda = - \boldsymbol{\nabla} \cdot \mathbf{E} - \operatorname{Im}(\bar{\phi} \D_0 \phi) = 0$ is satisfied trivially provided the initial data satisfies the constraints, so that $\lambda|_{\tau = 0} = 0$ is a sufficient condition for the Lorenz gauge condition to be propagated.

We have in the case of the Einstein cylinder that $T^a = \partial_\tau$, so the conserved energy becomes
\begin{equation}
    \label{cylinder_energy}
    \mathscr{E}(\tau) = \frac{1}{2} \int_{\{ \tau \} \times \mathbb{S}^3} \left( | \mathbf{E} |^2 + |\mathbf{B}|^2 + |\mathrm{D}_0 \phi |^2 + |\boldsymbol{\mathrm{D}} \phi |^2 + |\phi|^2 \right) \dvol_{\mathbb{S}^3}.
\end{equation}

\begin{definition}
    \label{defn_finite_energy_data_on_cylinder}
    We say that $(\mathbf{E}_0, \mathbf{B}_0, U, \phi_0)$, where $U = (U_0, \mathbf{U})$, constitutes a set of finite energy initial data for the system \eqref{general_MKG_system_with_projections} on $\mathbb{R} \times \mathbb{S}^3$ if 
    \begin{equation}
        \label{finite_energy_initial_data_cylinder}
    (\mathbf{E}_0, \mathbf{B}_0, U, \phi_0) \in L^2(\mathbb{S}^3) \times L^2(\mathbb{S}^3) \times L^2(\mathbb{S}^3) \times L^2(\mathbb{S}^3),
    \end{equation} 
    and the constraints
    \begin{align}
    \label{first_initial_constraint_cylinder}
    & \boldsymbol{\nabla} \cdot \mathbf{E}_0 = -\operatorname{Im}(\bar{\phi}_0 U_0), \\
    \label{second_initial_constraint_cylinder}
    & \boldsymbol{\nabla} \cdot \mathbf{B}_0 = 0
    \end{align}
are satisfied in the sense of distributions.
\end{definition}

\subsubsection{Minkowski space}

In the case of Minkowski space $(\mathbb{M}, \eta)$ the metric is the standard metric $\eta_{ab} = \d t^2 - \delta_{ab}$, where $\delta_{ab}$ is the 3-dimensional Euclidean metric. Here the timelike vector field is $\tilde{T}^a = \partial_t$, and the system \eqref{general_MKG_system_with_projections} becomes
\begin{align}
\label{Minkowski_MKG_system_with_projections}
\tag{MSF$\mathbb{M}$}
\begin{split}
    & \partial_t \tilde{\mathbf{E}} + \tilde{\boldsymbol{\nabla}} \times \tilde{\mathbf{B}} = - \operatorname{Im}\big(\bar{\tilde{\phi}} \tilde{\boldsymbol{\mathrm{D}}} \tilde{\phi} \big), \\
    & \tilde{\mathrm{D}}^a \tilde{\mathrm{D}}_a \tilde{\phi} = 0, \\
    & \tilde{\boldsymbol{\nabla}} \cdot \tilde{\mathbf{E}} = - \operatorname{Im}\big(\bar{\tilde{\phi}} \tilde{\mathrm{D}}_0 \phi \big), \\
    & \tilde{\boldsymbol{\nabla}} \cdot \tilde{\mathbf{B}} = 0.
\end{split}
\end{align}
In Lorenz gauge $\tilde{\nabla}_a \tilde{A}^a = 0$ these equations become
\begin{align}
    \label{Minkowski_Maxwell_equation_potential}
    & \widetilde{\Box} \tilde{A}_a = - \operatorname{Im}\big( \bar{\tilde{\phi}} \tilde{\mathrm{D}}_a \tilde{\phi} \big) \eqdef \mathcalboondox{N}(\tilde{A}_a, \tilde{\phi}), \\
    \label{Minkowski_scalar_equation_potential}
    & \widetilde{\Box} \tilde{\phi} = -2i \tilde{A}_a \tilde{\nabla}^a \tilde{\phi} + \tilde{A}_a \tilde{A}^a \tilde{\phi} \eqdef \mathcalboondox{M}(\tilde{A}_a, \tilde{\phi}),
\end{align}
and the gauge condition $\tilde{\nabla}_a \tilde{A}^a = 0$ replaces the constraints.

Furthermore, by differentiating \eqref{general_Bianchi_identity} and contracting with $\partial_t$ or projecting to $\tilde{\Sigma}_t = \{ t = \text{const.} \}$, one derives gauge-invariant wave equations for $\tilde{\mathbf{E}}$ and $\tilde{\mathbf{B}}$:
\begin{align}
    \label{Minkowski_wave_equation_electric_field}
    & \widetilde{\Box} \tilde{\mathbf{E}} = -\operatorname{Im} \Big( \partial_t \tilde{\phi} \tilde{\boldsymbol{\nabla}}\bar{\tilde{\phi}} - \tilde{\boldsymbol{\nabla}} \tilde{\phi} \partial_t\bar{\tilde{\phi}} \Big) - \partial_t(\tilde{\mathbf{A}} |\tilde{\phi}|^2) + \tilde{\boldsymbol{\nabla}} ( \tilde{A}_0 |\tilde{\phi}|^2) \eqdef \boldsymbol{\mathcalboondox{P}}(\tilde{A}_a, \tilde{\phi}), \\
    \label{Minkowski_wave_equation_magnetic_field}
    & \widetilde{\Box} \tilde{\mathbf{B}}  = \operatorname{Im}\Big( \tilde{\boldsymbol{\nabla}} \tilde{\phi} \times \tilde{\boldsymbol{\nabla}} \bar{\tilde{\phi}} \Big) + \tilde{\boldsymbol{\nabla}} \times (\tilde{\mathbf{A}} |\tilde{\phi}|^2 ) \eqdef \boldsymbol{\mathcalboondox{Q}}(\tilde{A}_a \tilde{\phi}).
\end{align}

The conserved energy on Minkowski space is given by
\begin{equation}
    \label{conserved_energy_Minkowski}
    \tilde{\mathscr{E}}(t) = \frac{1}{2} \int_{\{ t\} \times \mathbb{R}^3} \left( |\tilde{\mathbf{E}}|^2 + |\tilde{\mathbf{B}}|^2 + |\tilde{\mathrm{D}}_0 \tilde{\phi}|^2 + |\tilde{\boldsymbol{\mathrm{D}}} \tilde{\phi}|^2 \right) \dvol_{\mathbb{R}^3}.
\end{equation}
In addition to $\tilde{\mathscr{E}}(0) < \infty$, on $\mathbb{M}$ we will need the assumption\footnote{In fact, this assumption is needed already in the original proof by Klainerman and Machedon in the Coulomb gauge \cite{KlainermanMachedon1994}.} that initially the scalar field $\tilde{\phi}$ is in $L^2(\mathbb{R}^3)$. This makes no further restriction on our assumptions on the data on $\mathbb{R} \times \mathbb{S}^3$, but is a technical assumption needed to apply the theorems of \cite{SelbergTesfahun2010}. We therefore define finite energy initial data for the system \eqref{Minkowski_MKG_system_with_projections} on Minkowski space as follows.

\begin{definition} \label{defn:finite_energy_initial_data_Minkowski}
    We say that $(\tilde{\mathbf{E}}_0, \tilde{\mathbf{B}}_0, \tilde{U}, \tilde{\phi}_0)$, where $\tilde{U} = (\tilde{U}_0, \tilde{\mathbf{U}})$, constitutes a set of finite energy initial data for the system \eqref{Minkowski_MKG_system_with_projections} if
    \begin{equation}
        \label{finite_energy_initial_data_Minkowski}
        (\tilde{\mathbf{E}}_0, \tilde{\mathbf{B}}_0, \tilde{U}, \tilde{\phi}_0) \in L^2(\mathbb{R}^3) \times L^2(\mathbb{R}^3) \times L^2(\mathbb{R}^3) \times L^2(\mathbb{R}^3),
    \end{equation} 
    and the constraints
    \begin{align}
        \label{Minkowski_constraint_1}
        & \tilde{\boldsymbol{\nabla}} \cdot \tilde{\mathbf{E}}_0 = - \operatorname{Im}\big(\bar{\tilde{\phi}}_0 \tilde{U}_0\big), \\
        \label{Minkowski_constraint_2}
        & \tilde{\boldsymbol{\nabla}} \cdot \tilde{\mathbf{B}}_0 = 0
    \end{align}
    are satisfied in the sense of distributions.
\end{definition}

In fact, we will use the observation of Selberg \& Tesfahun (\cite{SelbergTesfahun2010}, Remarks 2.1 and 2.2, and Lemma 2.1) that the residual gauge freedom in Lorenz gauge may be used to reduce the conditions $(\tilde{U}, \tilde{\phi}_0) \in L^2(\mathbb{R}^3) \times L^2(\mathbb{R}^3)$ to the simpler requirement $(\tilde{\phi}_1, \tilde{\phi}_0) \in L^2(\mathbb{R}^3) \times H^1(\mathbb{R}^3)$.

\begin{lemma} 
\label{lem:equivalence_of_Minkowskian_initial_data_Lorenz_gauge_and_temporal_initially}
Let $(\tilde{\mathbf{E}}_0, \tilde{\mathbf{B}}_0, \tilde{U}, \tilde{\phi}_0)$ be a set of finite energy initial data for the system \eqref{Minkowski_MKG_system_with_projections} in the sense of \Cref{defn:finite_energy_initial_data_Minkowski}. Then there exists a gauge which leaves the condition $\tilde{\nabla}_a \tilde{A}^a = 0$ unchanged, sets
\begin{equation} 
\label{Minkowski_residual_initial_gauge}
(\tilde{a}_0,\dot{\tilde{a}}_0) = (\tilde{A}_0, \partial_t \tilde{A}_0)|_{t=0} = (0,0), 
\end{equation}
and reduces the initial data set to
\[ (\tilde{\mathbf{E}}_0, \tilde{\mathbf{B}}_0, \tilde{\phi}_1, \tilde{\phi}_0 ) \in L^2(\mathbb{R}^3) \times L^2(\mathbb{R}^3) \times L^2(\mathbb{R}^3) \times H^1(\mathbb{R}^3), \]
where $\tilde{\phi}_1 = \partial_t \tilde{\phi}|_{t=0}$, with the estimates 
\begin{align*}
    &\| \dot{\tilde{\mathbf{a}}} \|_{L^2(\mathbb{R}^3)} = \| \tilde{\mathbf{E}}_0 \|_{L^2(\mathbb{R}^3)}, \\
    & \|\tilde{\mathbf{a}} \|_{\dot{H}^1(\mathbb{R}^3)} = \| \tilde{\mathbf{B}}_0 \|_{L^2(\mathbb{R}^3)}, \\
    & \| \tilde{\phi}_1 \|_{L^2(\mathbb{R}^3)} = \|\tilde{U}_0\|_{L^2(\mathbb{R}^3)}, \\
    & \|\tilde{\phi}_0 \|_{H^1(\mathbb{R}^3)} \leq 2 \|\tilde{\mathbf{U}} \|_{L^2(\mathbb{R}^3)} + C (1+\|\tilde{\mathbf{B}}_0 \|^2_{L^2(\mathbb{R}^3)}) \|\tilde{\phi}_0\|_{L^2(\mathbb{R}^3)},
\end{align*}
and the constraint \eqref{Minkowski_constraint_1} replaced with
\begin{equation}
\label{Minkowski_electric_field_constraint_under_initial_gauge}
    \tilde{\boldsymbol{\nabla}} \cdot \tilde{\mathbf{E}}_0 = - \operatorname{Im}(\bar{\tilde{\phi}}_0 \tilde{\phi}_1).
\end{equation} 
\end{lemma}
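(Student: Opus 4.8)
The plan is to realize the asserted gauge as a residual transformation $\tilde A_a \rightsquigarrow \tilde A_a + \tilde\nabla_a\chi$, $\tilde\phi \rightsquigarrow \e^{-i\chi}\tilde\phi$ with $\widetilde{\Box}\chi = 0$, so that the Lorenz condition $\tilde\nabla_a\tilde A^a = 0$ is left untouched, and to choose the Cauchy data $(\chi_0,\chi_1) = (\chi,\partial_t\chi)|_{t=0}$ so as to annihilate the $A_0$-components at $t = 0$. Restricting the transformation to $t = 0$ and using $\partial_t^2\chi = \Delta\chi$ there, one reads off
\[ \tilde a_0 \rightsquigarrow \tilde a_0 + \chi_1, \qquad \dot{\tilde a}_0 \rightsquigarrow \dot{\tilde a}_0 + \Delta\chi_0, \qquad \tilde{\mathbf a} \rightsquigarrow \tilde{\mathbf a} + \boldsymbol\nabla\chi_0, \qquad \tilde\phi_0 \rightsquigarrow \e^{-i\chi_0}\tilde\phi_0, \]
while $\tilde{\mathbf E}_0$, $\tilde{\mathbf B}_0$ are invariant and the gauge-covariant derivatives of $\tilde\phi$ invariant up to a phase. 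I therefore set $\chi_1 = -\tilde a_0$ and let $\chi_0$ solve $\Delta\chi_0 = -\dot{\tilde a}_0$ on $\mathbb R^3$, which is admissible since $\dot{\tilde a}_0 = \boldsymbol\nabla\cdot\tilde{\mathbf a}$ by the Lorenz condition; the result is a gauge in which $(\tilde a_0,\dot{\tilde a}_0) = (0,0)$ and, simultaneously, $\boldsymbol\nabla\cdot\tilde{\mathbf a} = 0$. (Equivalently, one may bypass the transformation and directly construct admissible $A_a$-Cauchy data: take $\tilde a_0 = \dot{\tilde a}_0 = 0$, let $\tilde{\mathbf a}$ be the divergence-free Biot--Savart potential of $\tilde{\mathbf B}_0$, and set $\dot{\tilde{\mathbf a}} = \tilde{\mathbf E}_0$.)

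In this gauge $\tilde{\mathrm D}_0\tilde\phi = \partial_t\tilde\phi + i\tilde A_0\tilde\phi$ with $\tilde A_0|_{t=0} = 0$ forces $\tilde\phi_1 \defeq \partial_t\tilde\phi|_{t=0} = \tilde U_0$, whence $\|\tilde\phi_1\|_{L^2} = \|\tilde U_0\|_{L^2}$ and the constraint \eqref{Minkowski_constraint_1} takes the form \eqref{Minkowski_electric_field_constraint_under_initial_gauge}; likewise $\dot{\tilde{\mathbf a}} = \tilde{\mathbf E}_0$ gives $\|\dot{\tilde{\mathbf a}}\|_{L^2} = \|\tilde{\mathbf E}_0\|_{L^2}$, and from $\boldsymbol\nabla\times\tilde{\mathbf a} = \tilde{\mathbf B}_0$ together with $\boldsymbol\nabla\cdot\tilde{\mathbf a} = 0$ and the Plancherel identity $\|\boldsymbol\nabla\mathbf v\|_{L^2}^2 = \|\boldsymbol\nabla\times\mathbf v\|_{L^2}^2 + \|\boldsymbol\nabla\cdot\mathbf v\|_{L^2}^2$ on $\mathbb R^3$ we get $\|\tilde{\mathbf a}\|_{\dot H^1} = \|\tilde{\mathbf B}_0\|_{L^2}$. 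Finally $\tilde\nabla_a\tilde A^a|_{t=0} = \dot{\tilde a}_0 - \boldsymbol\nabla\cdot\tilde{\mathbf a} = 0$, and since $\partial_t(\tilde\nabla_a\tilde A^a)|_{t=0} = -\boldsymbol\nabla\cdot\tilde{\mathbf E}_0 - \operatorname{Im}(\bar{\tilde\phi}_0\tilde U_0) = 0$ by the constraint, the Lorenz gauge is propagated, exactly as in the discussion of the cylindrical Lorenz gauge in the excerpt.

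The only substantive estimate is the $H^1$ bound for $\tilde\phi_0$. Writing, in the new gauge, $\boldsymbol\nabla\tilde\phi_0 = \tilde{\boldsymbol{\mathrm D}}\tilde\phi_0 - i\tilde{\mathbf a}\tilde\phi_0$ gives $\|\boldsymbol\nabla\tilde\phi_0\|_{L^2} \leq \|\tilde{\mathbf U}\|_{L^2} + \|\tilde{\mathbf a}\tilde\phi_0\|_{L^2}$; estimating the product by Hölder with exponents $(6,3)$, Sobolev \eqref{Sobolev_embedding_homogeneous_H1}, the interpolation $\|\tilde\phi_0\|_{L^3} \leq \|\tilde\phi_0\|_{L^2}^{1/2}\|\tilde\phi_0\|_{L^6}^{1/2}$, and $\|\tilde\phi_0\|_{L^6} \la \|\boldsymbol\nabla\tilde\phi_0\|_{L^2}$ yields
\[ \|\boldsymbol\nabla\tilde\phi_0\|_{L^2} \leq \|\tilde{\mathbf U}\|_{L^2} + C\,\|\tilde{\mathbf B}_0\|_{L^2}\,\|\tilde\phi_0\|_{L^2}^{1/2}\|\boldsymbol\nabla\tilde\phi_0\|_{L^2}^{1/2}, \]
after which Young's inequality absorbs the last factor of $\|\boldsymbol\nabla\tilde\phi_0\|_{L^2}^{1/2}$ into the left-hand side, producing $\|\boldsymbol\nabla\tilde\phi_0\|_{L^2} \leq 2\|\tilde{\mathbf U}\|_{L^2} + C^2\|\tilde{\mathbf B}_0\|_{L^2}^2\|\tilde\phi_0\|_{L^2}$ and hence, adding $\|\tilde\phi_0\|_{L^2}$, the claimed bound. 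I expect this last step to be the main (indeed essentially the only) obstacle: one must extract genuine $\dot H^1$ control of $\tilde\phi_0$ from a priori merely $L^2$ gauge-invariant data, and this closes only because the offending term $\tilde{\mathbf a}\tilde\phi_0$ is subcritical for the Sobolev embedding on $\mathbb R^3$; everything else is bookkeeping with the residual gauge and the Lorenz constraint.
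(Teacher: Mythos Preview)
Your proof is correct and follows essentially the same approach as the paper: the same residual gauge transformation with $\widetilde{\Box}\chi=0$, the same choice $\chi_1=-\tilde a_0$ and $\chi_0=-\tilde{\boldsymbol\Delta}^{-1}(\tilde{\boldsymbol\nabla}\cdot\tilde{\mathbf a})$, and the same deductions for the norm identities. The only cosmetic difference is that the paper cites the $H^1$ bound on $\tilde\phi_0$ from Selberg--Tesfahun (Lemma~2.1 in \cite{SelbergTesfahun2010}), whereas you spell out the H\"older--interpolation--Sobolev--Young argument explicitly; your derivation is exactly the content of that lemma.
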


\begin{proof}
    The residual gauge freedom in Lorenz gauge is given by $\tilde{A}_a \rightsquigarrow \tilde{A}_a + \tilde{\nabla}_a \chi$, $\tilde{\phi} \rightsquigarrow \e^{-i \chi} \tilde{\phi}$, where $\chi$ satisfies
    \[ \widetilde{\Box}\chi = 0, \qquad (\chi, \partial_t \chi)|_{t=0} = (\chi_0, \chi_1). \]
    We set $\chi_1 = - \tilde{a}_0$ and, using the Lorenz gauge condition $\partial_t \tilde{A}_0 = \tilde{\boldsymbol{\nabla}} \cdot \tilde{\mathbf{A}}$, $\chi_0 = - \tilde{\boldsymbol{\Delta}}^{-1} \tilde{\boldsymbol{\nabla}} \cdot \tilde{\mathbf{a}}$, where $\tilde{\mathbf{a}} = \tilde{\mathbf{A}}|_{t=0}$ (here $u = \tilde{\boldsymbol{\Delta}}^{-1}f$ denotes any solution $u$ to $\tilde{\boldsymbol{\Delta}}u = f$). Then clearly $\tilde{a}_0 \rightsquigarrow 0$ and $\dot{\tilde{a}}_0 \rightsquigarrow \dot{\tilde{a}}_0 + \partial_t^2 \chi|_{t=0} = \dot{\tilde{a}}_0 + \tilde{\boldsymbol{\Delta}} \chi_0 = 0$. In this gauge
    \[ \tilde{\boldsymbol{\nabla}} \cdot \tilde{\mathbf{a}} = 0, \qquad \tilde{\mathbf{E}}_0  = \dot{\tilde{\mathbf{a}}}, \quad \text{and} \quad \tilde{\mathbf{B}}_0 = \tilde{\boldsymbol{\nabla}} \times \tilde{\mathbf{a}}, \]
    so $\| \dot{\tilde{\mathbf{a}}} \|_{L^2(\mathbb{R}^3)} = \| \tilde{\mathbf{E}}_0 \|_{L^2(\mathbb{R}^3)}$ and, by integrating by parts,
    \[ \| \tilde{\mathbf{a}} \|_{\dot{H}^1(\mathbb{R}^3)} = \| \tilde{\boldsymbol{\nabla}} \times \tilde{\mathbf{a}} \|_{L^2(\mathbb{R}^3)} + \| \tilde{\boldsymbol{\nabla}} \cdot \tilde{\mathbf{a}} \|_{L^2(\mathbb{R}^3)} = \| \tilde{\mathbf{B}}_0 \|_{L^2(\mathbb{R}^3)}.  \]
    Finally, writing $\tilde{U} = (\tilde{U}_0, \tilde{\mathbf{U}})$, we have $\tilde{U}_0 = \tilde{\phi}_1 + i \tilde{a}_0 \phi_0 = \tilde{\phi}_1 \in L^2(\mathbb{R}^3)$, and 
    \begin{equation}
    \label{spatial_components_initial_data_gauge_covariant_derivative_scalar_field}
    \tilde{\mathbf{U}} = \tilde{\boldsymbol{\nabla}} \tilde{\phi}_0 +i \tilde{\mathbf{a}} \tilde{\phi}_0,
    \end{equation}
    for which we have the estimate 
    \[ \| \tilde{\boldsymbol{\nabla}} \tilde{\phi}_0 \|_{L^2(\mathbb{R}^3)} \leq 2 \| \tilde{\mathbf{U}} \|_{L^2(\mathbb{R}^3)} + C \|\tilde{\mathbf{a}} \|^2_{\dot{H}^1(\mathbb{R}^3)} \|\tilde{\phi}_0 \|_{L^2(\mathbb{R}^3)} \]
    due to Selberg and Tesfahun \cite{SelbergTesfahun2010} (Lemma 2.1), i.e. $\tilde{\phi}_0 \in H^1(\mathbb{R}^3)$. Putting this together with the above gives the stated estimates and shows that the assumption $\tilde{\mathscr{E}}(0) + \|\tilde{\phi}_0\|_{L^2(\mathbb{R}^3)} < \infty$ in this gauge is equivalent to prescribing $(\tilde{\mathbf{E}}_0, \tilde{\mathbf{B}}_0, \tilde{\phi}_1, \tilde{\phi}_0 ) \in L^2(\mathbb{R}^3) \times L^2(\mathbb{R}^3) \times L^2(\mathbb{R}^3) \times H^1(\mathbb{R}^3)$, with the constraints \eqref{Minkowski_electric_field_constraint_under_initial_gauge} \& \eqref{Minkowski_constraint_2}.
\end{proof}

\subsection{Gauge fixing on $\mathbb{R} \times \mathbb{S}^3$}

We next observe that the Lorenz gauge on the Einstein cylinder $\nabla_a A^a = 0$ permits an analogue of \Cref{lem:equivalence_of_Minkowskian_initial_data_Lorenz_gauge_and_temporal_initially} on $\mathbb{R} \times \mathbb{S}^3$. 

\begin{lemma} \label{lem:initial_residual_gauge_on_cylinder}
    Let $(\mathbf{E}_0, \mathbf{B}_0, U, \phi_0) \in L^2(\mathbb{S}^3)^4$ be a set of finite energy initial data for \eqref{general_MKG_system_with_projections} on $\mathbb{R} \times \mathbb{S}^3$ in the sense of \Cref{defn_finite_energy_data_on_cylinder}. Then there exists a gauge which leaves the condition $\nabla_a A^a = 0$ unchanged, sets 
    \[ (a_0, \dot{a}_0) = (A_0, \partial_\tau A_0)|_{\tau = 0} = (0,0), \]
    and reduces the initial data to the set
    \[ (\mathbf{E}_0, \mathbf{B}_0, \phi_1, \phi_0) \in L^2(\mathbb{S}^3) \times L^2(\mathbb{S}^3) \times L^2(\mathbb{S}^3) \times H^1(\mathbb{S}^3), \]
    where $\phi_1 = \partial_\tau \phi |_{\tau = 0}$, with the estimates on the initial data
    \begin{align*}
        & \| \mathbf{a} \|_{H^1(\mathbb{S}^3)} \la \| \mathbf{B}_0 \|_{L^2(\mathbb{S}^3)}, \\
        & \| \dot{\mathbf{a}} \|_{L^2(\mathbb{S}^3)} \la \| \mathbf{E}_0 \|_{L^2(\mathbb{S}^3)}, \\
        & \| \phi_1 \|_{L^2(\mathbb{S}^3)} \la \| U_0 \|_{L^2(\mathbb{S}^3)}, \\
        & \|\phi_0 \|_{H^1(\mathbb{S}^3)} \la \| \mathbf{U} \|_{L^2(\mathbb{S}^3)} + \| \phi_0 \|_{L^2(\mathbb{S}^3)} \left(1 + \| \mathbf{B}_0 \|_{L^2(\mathbb{S}^3)} \right)^2,
    \end{align*}
    and the constraints 
    \[ \boldsymbol{\nabla} \cdot \mathbf{E}_0 = - \operatorname{Im}(\bar{\phi}_0 \phi_1) \quad \text{and} \quad \boldsymbol{\nabla} \cdot \mathbf{B}_0 = 0. \]
\end{lemma}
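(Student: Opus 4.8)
The plan is to follow the proof of \Cref{lem:equivalence_of_Minkowskian_initial_data_Lorenz_gauge_and_temporal_initially} almost verbatim, replacing the flat div--curl identities on $\mathbb{R}^3$ by Hodge-theoretic elliptic estimates on $\mathbb{S}^3$. Fix any Lorenz-gauge potential $(a_0,\dot a_0,\mathbf{a},\dot{\mathbf{a}})$ on $\Sigma = \{\tau=0\}$ compatible with the data $(\mathbf{E}_0,\mathbf{B}_0,U,\phi_0)$; such a representative exists since $\mathbb{S}^3$ has trivial de Rham cohomology in degrees $1$ and $2$. The residual gauge freedom preserving $\nabla_a A^a = 0$ is given by \eqref{general_gauge_transformations} with $\chi$ solving $\Box\chi = 0$ on $\mathbb{R}\times\mathbb{S}^3$, $(\chi,\partial_\tau\chi)|_{\tau=0}=(\chi_0,\chi_1)$. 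I would take $\chi_1 = -a_0 \in L^2(\mathbb{S}^3)$ and $\chi_0 = -\boldsymbol{\Delta}^{-1}(\boldsymbol{\nabla}\cdot\mathbf{a})$, where $\boldsymbol{\Delta}$ is the Laplace--Beltrami operator on $\mathbb{S}^3$; this is legitimate because the Lorenz condition forces $\dot a_0 = \boldsymbol{\nabla}\cdot\mathbf{a}$ at $\tau=0$, and $\boldsymbol{\nabla}\cdot\mathbf{a}$ has zero mean on the closed manifold $\mathbb{S}^3$, hence lies in the range of $\boldsymbol{\Delta}$, with $\chi_0 \in H^2(\mathbb{S}^3) \hookrightarrow \mathcal{C}^0(\mathbb{S}^3)$ by elliptic regularity and Sobolev embedding. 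Using $\partial_\tau^2\chi|_{\tau=0} = \boldsymbol{\Delta}\chi_0$ one checks that in the transformed gauge $a_0 \rightsquigarrow 0$, $\dot a_0 \rightsquigarrow \dot a_0 + \boldsymbol{\Delta}\chi_0 = 0$, and $\mathbf{a} \rightsquigarrow \mathbf{a} + \boldsymbol{\nabla}\chi_0$ becomes divergence-free.

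Working in this new gauge, $\mathbf{E}_0 = \partial_\tau\mathbf{A}|_{\tau=0} - \boldsymbol{\nabla}a_0 = \dot{\mathbf{a}}$, so $\|\dot{\mathbf{a}}\|_{L^2(\mathbb{S}^3)} = \|\mathbf{E}_0\|_{L^2(\mathbb{S}^3)}$, and $\mathbf{B}_0 = \boldsymbol{\nabla}\times\mathbf{a}$ with $\boldsymbol{\nabla}\cdot\mathbf{a}=0$. This is the one genuine departure from the Minkowski argument: on $\mathbb{R}^3$ one has the clean identity $\|\tilde{\mathbf{a}}\|_{\dot H^1}^2 = \|\tilde{\boldsymbol{\nabla}}\times\tilde{\mathbf{a}}\|_{L^2}^2 + \|\tilde{\boldsymbol{\nabla}}\cdot\tilde{\mathbf{a}}\|_{L^2}^2$, whereas on $\mathbb{S}^3$ integration by parts produces a curvature (Bochner) term and homogeneous spaces are unavailable. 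Instead, since $\mathbb{S}^3$ admits no nontrivial harmonic $1$-forms, the first-order elliptic operator $\d+\delta$ --- equivalently $\mathrm{curl}$ restricted to co-closed $1$-forms --- is injective with closed range, so $\|\mathbf{a}\|_{H^1(\mathbb{S}^3)} \la \|\boldsymbol{\nabla}\times\mathbf{a}\|_{L^2(\mathbb{S}^3)} = \|\mathbf{B}_0\|_{L^2(\mathbb{S}^3)}$. This is precisely why the estimates in the statement are $\la$ rather than equalities, and I expect this Hodge estimate (together with the mean-zero check that makes $\boldsymbol{\Delta}^{-1}$ well-defined) to be the only part requiring any care; everything else is bookkeeping of the gauge transformation.

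For the scalar field, the gauge-covariant data $(\mathrm{D}_0\phi,\boldsymbol{\mathrm{D}}\phi)|_{\tau=0}$ transforms under the residual gauge by the unimodular factor $\e^{-i\chi_0}$, and in the gauge where $a_0 = 0$ it coincides with $(\phi_1,\boldsymbol{\nabla}\phi_0+i\mathbf{a}\phi_0)$; hence $\phi_1 = \e^{-i\chi_0}U_0$, so $\|\phi_1\|_{L^2(\mathbb{S}^3)} = \|U_0\|_{L^2(\mathbb{S}^3)}$. From $\mathbf{U} = \boldsymbol{\nabla}\phi_0 + i\mathbf{a}\phi_0$ we get $\|\boldsymbol{\nabla}\phi_0\|_{L^2} \leq \|\mathbf{U}\|_{L^2} + \|\mathbf{a}\phi_0\|_{L^2}$, and I would bound $\|\mathbf{a}\phi_0\|_{L^2}$ by Hölder together with $H^1(\mathbb{S}^3)\hookrightarrow L^6(\mathbb{S}^3)$ and the interpolation $\|\phi_0\|_{L^3}\la\|\phi_0\|_{L^2}^{1/2}\|\phi_0\|_{H^1}^{1/2}$ (equivalently, \Cref{lem:product_Sobolev_estimates} with $(s_0,s_1,s_2)=(0,1,\tfrac12)$ on $\mathbb{S}^3$ followed by interpolation), yielding $\|\mathbf{a}\phi_0\|_{L^2}\la\|\mathbf{B}_0\|_{L^2}\|\phi_0\|_{L^2}^{1/2}\|\phi_0\|_{H^1}^{1/2}$; absorbing the factor $\|\phi_0\|_{H^1}^{1/2}$ by Young's inequality --- exactly as in Lemma 2.1 of \cite{SelbergTesfahun2010} --- gives $\|\phi_0\|_{H^1(\mathbb{S}^3)}\la\|\mathbf{U}\|_{L^2(\mathbb{S}^3)} + \|\phi_0\|_{L^2(\mathbb{S}^3)}\bigl(1+\|\mathbf{B}_0\|_{L^2(\mathbb{S}^3)}\bigr)^2$. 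Finally the constraints persist: $\mathbf{E}_0$ and $\mathbf{B}_0$ are gauge-invariant, and $\Im(\bar\phi_0 U_0) = \Im(\bar\phi_0\phi_1)$ after cancelling the $\e^{\pm i\chi_0}$ factors, so $\boldsymbol{\nabla}\cdot\mathbf{E}_0 = -\Im(\bar\phi_0\phi_1)$ and $\boldsymbol{\nabla}\cdot\mathbf{B}_0 = 0$, completing the proof.
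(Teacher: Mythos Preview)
Your proof is correct and follows essentially the same route as the paper: the same choice of residual gauge $(\chi_0,\chi_1)=(-\boldsymbol{\Delta}^{-1}\dot a_0,-a_0)$, the same identification $\mathbf{E}_0=\dot{\mathbf{a}}$, $\mathbf{B}_0=\boldsymbol{\nabla}\times\mathbf{a}$ with $\boldsymbol{\nabla}\cdot\mathbf{a}=0$, and the same H\"older--interpolation--Young argument for $\|\phi_0\|_{H^1}$. The only minor variation is in the $\|\mathbf{a}\|_{H^1}\la\|\mathbf{B}_0\|_{L^2}$ step: you invoke the abstract Hodge/elliptic estimate for $\d+\delta$ on co-closed forms, whereas the paper integrates by parts directly and uses the explicit Ricci curvature of $\mathbb{S}^3$ to obtain the sharper identity $\|\boldsymbol{\nabla}\mathbf{a}\|_{L^2}^2+2\|\mathbf{a}\|_{L^2}^2=\|\mathbf{B}_0\|_{L^2}^2$.
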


\begin{proof}
    The gauge transformations which fix $\nabla_a A^a = 0$ are given by solutions $\chi$ to the wave equation $\Box \chi = 0$. Choose the initial data for $\chi$ to be 
    \[ (\chi, \partial_\tau \chi)|_{\tau = 0} = (-\boldsymbol{\Delta}^{-1} \dot{a}_0, -a_0), \]
    where $\boldsymbol{\Delta}^{-1} f$ denotes any solution $u$ to $\boldsymbol{\Delta} u = f$ on $\mathbb{S}^3$ (the kernel of $\boldsymbol{\Delta}$ is the space of constant functions on $\mathbb{S}^3$; for concreteness, one may choose for instance $u$ in such a way that $\dashint u = 0$). Then
    \[ a_0 \rightsquigarrow 0, \qquad \dot{a}_0 \rightsquigarrow \dot{a}_0 + \partial_\tau^2 \chi|_{\tau = 0} = \dot{a}_0 + \boldsymbol{\Delta}\chi|_{\tau=0} = 0, \]
    and we have from $\mathbf{E}_0 = \dot{\mathbf{a}}$ and $\mathbf{B}_0 = \boldsymbol{\nabla} \times \mathbf{a}$
    \[ \| \dot{\mathbf{a}} \|_{L^2(\mathbb{S}^3)} = \| \mathbf{E}_0 \|_{L^2(\mathbb{S}^3)} \]
    and, by integrating by parts, using that now $\boldsymbol{\nabla} \cdot \mathbf{a} = 0$, and plugging in the Ricci curvature of $\mathbb{S}^3$,
    \[ \| \boldsymbol{\nabla} \mathbf{a} \|^2_{L^2(\mathbb{S}^3)} + 2 \| \mathbf{a} \|^2_{L^2(\mathbb{S}^3)} = \| \mathbf{B}_0 \|^2_{L^2(\mathbb{S}^3)}. \]
    Moreover, $U_0 = \phi_1$ in this gauge, and in particular $\| \phi_1 \|_{L^2(\mathbb{S}^3)} = \| U_0 \|_{L^2(\mathbb{S}^3)}$. Finally, to control the $L^2$ norm of $\boldsymbol{\nabla} \phi_0$ we have the estimate
    \begin{align*}
        \| \boldsymbol{\nabla} \phi_0 \|_{L^2(\mathbb{S}^3)} &\leq \| \mathbf{U}\|_{L^2(\mathbb{S}^3)} + \| \phi_0 \|_{L^3(\mathbb{S}^3)} \| \mathbf{a} \|_{L^6(\mathbb{S}^3)} \\
        & \leq \| \mathbf{U} \|_{L^2(\mathbb{S}^3)} + \| \phi_0 \|^{1/2}_{L^6(\mathbb{S}^3)} \| \phi_0 \|^{1/2}_{L^2(\mathbb{S}^3)} \| \mathbf{a} \|_{L^6(\mathbb{S}^3)} \\
        & \leq \| \mathbf{U} \|_{L^2(\mathbb{S}^3)} + C \left( \| \boldsymbol{\nabla} \phi_0 \|^{1/2}_{L^2(\mathbb{S}^3)} + \| \phi_0 \|^{1/2}_{L^2(\mathbb{S}^3)} \right)  \| \phi_0 \|^{1/2}_{L^2(\mathbb{S}^3)} \| \mathbf{a} \|_{H^1(\mathbb{S}^3)} \\
        & \leq \| \mathbf{U} \|_{L^2(\mathbb{S}^3)} + \frac{1}{2} \| \boldsymbol{\nabla} \phi_0 \|_{L^2(\mathbb{S}^3)} + C \| \phi_0 \|_{L^2(\mathbb{S}^3)}\left(1+\|\mathbf{a}\|_{H^1(\mathbb{S}^3)}\right)^2,
    \end{align*}
    where in the second line we used the interpolation inequality $\|f\|_{L^r} \leq \|f\|^t_{L^p} \|f\|^{1-t}_{L^q}$ for exponents $r^{-1} = tp^{-1} + (1-t)q^{-1}$, in the third line the Sobolev inequality $H^1(\mathbb{S}^3) \hookrightarrow L^6(\mathbb{S}^3)$, and Young's inequality in the fourth line. Moving the $\boldsymbol{\nabla} \phi_0$ term to the left-hand side then gives the required estimate.
\end{proof}

Finally, the following lemma expresses the Minkowskian residual gauge fixing conditions \eqref{Minkowski_residual_initial_gauge} in terms of quantities on $\mathbb{R} \times \mathbb{S}^3$. 

\begin{lemma} \label{lem:obtaining_Minkowski_residual_gauge_on_cylinder}
With respect to the conformal embedding $\iota_\Omega : \mathbb{M} \hookrightarrow \mathbb{R} \times \mathbb{S}^3$ described in \Cref{sec:conformal_embedding}, on the Einstein cylinder the residual gauge fixing conditions \eqref{Minkowski_residual_initial_gauge} are equivalent to 
    \begin{equation} \label{residual_gauge_fixing_cylinder_one_copy} 
    (a_0, \dot{a}_0 ) = \left(  0, \Gamma a_Z \right),
    \end{equation}
    where $a_Z = (Z^a A_a)|_{\tau = 0}$ and $\Gamma = \tan \frac{\zeta}{2}$.
\end{lemma}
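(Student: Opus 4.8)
\emph{Proof proposal for \Cref{lem:obtaining_Minkowski_residual_gauge_on_cylinder}.} The plan is to exploit the conformal invariance of the potential, $A_a = \tilde{A}_a$ under \eqref{conformal_symmetry}, together with the explicit relation \eqref{relation_between_Killing_fields} between the Minkowskian and cylindrical timelike Killing fields, restricted to the common initial hypersurface $\iota_\Omega(\{t=0\})$, which is $\{\tau = 0\}$ minus the point $i^0$. Everything follows from a single application of the chain rule to the change of coordinates \eqref{tau_definition_in_physical_coordinates}--\eqref{zeta_definition_in_physical_coordinates}, together with the observation that on $\{t=0\}$ one has $\zeta = 2\arctan r$, hence $r = \tan\tfrac{\zeta}{2}$ and $\tfrac{2}{1+r^2} = 2\cos^2\tfrac{\zeta}{2}$.

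First I would write $\tilde{A}_0 = (\partial_t)^a \tilde{A}_a = \tilde{T}^a A_a$ and substitute \eqref{relation_between_Killing_fields} to obtain, as a function of the Minkowskian coordinates,
\[ \tilde{A}_0 = \Big( \cos^2\big(\tfrac{\tau-\zeta}{2}\big) + \cos^2\big(\tfrac{\tau+\zeta}{2}\big) \Big) A_0 - \sin\tau \, \sin\zeta \, \big(Z^a A_a\big), \]
where $A_0 = (\partial_\tau)^a A_a$ and $\tau, \zeta$ are given by \eqref{tau_definition_in_physical_coordinates}--\eqref{zeta_definition_in_physical_coordinates}. Evaluating at $t=0$ (equivalently $\tau = 0$) gives $\tilde{a}_0 = 2\cos^2\tfrac{\zeta}{2}\, a_0$; since $\cos\tfrac{\zeta}{2} > 0$ on the relevant range $0 \le \zeta < \pi$, this shows $\tilde{a}_0 = 0 \iff a_0 = 0$.

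For the second condition I would differentiate the displayed identity in $t$. From \eqref{tau_definition_in_physical_coordinates}--\eqref{zeta_definition_in_physical_coordinates} one computes $\partial_t \tau|_{t=0} = \tfrac{2}{1+r^2} = 2\cos^2\tfrac{\zeta}{2}$ and $\partial_t \zeta|_{t=0} = 0$, so along $\{t=0\}$ only the $\partial_\tau$-term of $\partial_t = (\partial_t\tau)\partial_\tau + (\partial_t\zeta)\partial_\zeta$ contributes (the angular coordinates being unchanged). Using $\partial_\tau\big(\cos^2(\tfrac{\tau-\zeta}{2}) + \cos^2(\tfrac{\tau+\zeta}{2})\big)\big|_{\tau=0} = -\sin\tau\cos\zeta|_{\tau=0} = 0$, $\partial_\tau(\sin\tau\sin\zeta)|_{\tau=0} = \sin\zeta$, and $\sin\tau|_{\tau=0} = 0$ (which removes the $\partial_\tau(Z^a A_a)$ term, so $\dot{a}_0$'s analogue for $a_Z$ never appears), I get
\[ \dot{\tilde{a}}_0 = 2\cos^2\tfrac{\zeta}{2}\Big( 2\cos^2\tfrac{\zeta}{2}\, \dot{a}_0 - \sin\zeta\, a_Z \Big) = 4\cos^4\tfrac{\zeta}{2}\,\big( \dot{a}_0 - \tan\tfrac{\zeta}{2}\, a_Z \big), \]
where the last step uses $2\cos^2\tfrac{\zeta}{2}\sin\zeta = 4\cos^3\tfrac{\zeta}{2}\sin\tfrac{\zeta}{2} = 4\cos^4\tfrac{\zeta}{2}\tan\tfrac{\zeta}{2}$. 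Again because $\cos\tfrac{\zeta}{2} > 0$ for $\zeta < \pi$, this vanishes precisely when $\dot{a}_0 = \tan\tfrac{\zeta}{2}\, a_Z = \Gamma a_Z$, which is \eqref{residual_gauge_fixing_cylinder_one_copy}.

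I do not expect any serious obstacle here: the lemma is a direct consequence of the explicit conformal embedding. The points requiring care are (i) correctly tracking the chain rule for the single $t$-derivative and noting that the $\partial_\zeta$-contribution drops because $\partial_t\zeta$ vanishes on $\{t=0\}$; (ii) observing that $\partial_\tau(Z^a A_a)|_{\tau=0}$ and the angular components of $A_a$ never enter, since $\sin\tau = 0$ and $\partial_t\zeta = 0$ at $t=0$; and (iii) the elementary trigonometric simplification that produces $\Gamma = \tan\tfrac{\zeta}{2}$. Alternatively, one could bypass the Killing-field identity and transform the covector component directly via $\tilde{A}_t = \tfrac{\partial\tau}{\partial t}A_\tau + \tfrac{\partial\zeta}{\partial t}A_\zeta$, differentiating once more and reading off the needed first and second $t$-derivatives of $\tau$ and $\zeta$ at $t=0$ from \eqref{tau_definition_in_physical_coordinates}--\eqref{zeta_definition_in_physical_coordinates}; the outcome is identical.
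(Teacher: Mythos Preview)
Your proof is correct and follows essentially the same route as the paper's: both contract $A_a = \tilde{A}_a$ with $\tilde{T}^a$ using \eqref{relation_between_Killing_fields}, differentiate once in $t$ via the chain rule (noting $\partial_t\zeta|_{t=0}=0$ and $\partial_t\tau|_{t=0}=2\cos^2\tfrac{\zeta}{2}$), and read off $\tilde{a}_0 = 2\cos^2\tfrac{\zeta}{2}\,a_0$ and $\dot{\tilde{a}}_0 = 4\cos^4\tfrac{\zeta}{2}\,\dot{a}_0 - 4\sin\tfrac{\zeta}{2}\cos^3\tfrac{\zeta}{2}\,a_Z$. Your write-up is simply more explicit about the intermediate trigonometric steps.
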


\begin{proof}
    Contracting $A_a = \tilde{A}_a$ with $T^a = \partial_\tau$, differentiating with respect to $\tau$, and using the expression \eqref{relation_between_Killing_fields}, one derives after restricting to $\{t = 0 \} = \{ \tau = 0\}$ the equations
    \begin{align*}
        & \tilde{a}_0 = 2 a_0 \cos^2 \frac{\zeta}{2} , \\
        & \dot{\tilde{a}}_0 = 4 \dot{a}_0 \cos^4 \frac{\zeta}{2} - 4 a_Z \sin \frac{\zeta}{2} \cos^3 \frac{\zeta}{2}.
    \end{align*}
    The condition \eqref{residual_gauge_fixing_cylinder_one_copy} follows by setting $(\tilde{a}_0, \dot{\tilde{a}}_0) = (0,0)$.
\end{proof}

\section{Main Result}

Our main result is the following.

\begin{theorem} \label{thm:main_theorem}
Let $(\mathbf{E}_0, \mathbf{B}_0, U, \phi_0) \in L^2(\mathbb{S}^3) \times L^2(\mathbb{S}^3) \times L^2(\mathbb{S}^3) \times L^2(\mathbb{S}^3)$ be finite energy initial data for the Maxwell-scalar field system \eqref{general_MKG_system_with_projections} on $\mathbb{R} \times \mathbb{S}^3$ in the sense of \Cref{defn_finite_energy_data_on_cylinder}. Then for any $\delta > 0$ there exists a unique solution
\[ \mathbf{E}, \, \mathbf{B}, \, \D_a \phi \in \mathcal{C}^0 (\R_\tau; L^2 (\mathbb{S}^3)) \]
and
\[ \phi \in \mathcal{C}^0(\mathbb{R}_\tau; H^{1-\delta}(\mathbb{S}^3)) \cap \mathcal{C}^1(\mathbb{R}_\tau; H^{-\delta}(\mathbb{S}^3)) \]
to \eqref{general_MKG_system_with_projections} on $\mathbb{R} \times \mathbb{S}^3$ with respect to a potential
\[ A_a \in \mathcal{C}^0(\mathbb{R}_\tau;H^{\frac{1}{2}-\delta}(\mathbb{S}^3)) \cap \mathcal{C}^1(\mathbb{R}_\tau; H^{-\frac{1}{2}-\delta}(\mathbb{S}^3)) \]
satisfying the Lorenz gauge condition $\nabla_a A^a = 0$ in the sense of distributions. The solution satisfies the initial condition $(\mathbf{E}, \mathbf{B}, \D_a \phi, \phi)|_{\tau=0} =  (\mathbf{E}_0, \mathbf{B}_0, U, \phi_0)$ and the conservation of energy
\[ \mathscr{E}(\tau) = \mathscr{E}(0) \quad \forall \tau \in \mathbb{R}. \]
\end{theorem}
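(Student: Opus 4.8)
The plan is to follow exactly the five-step programme outlined in Section 2, assembling the global solution from local Minkowskian pieces via conformal patching. First I would reduce the problem to the setting of \Cref{lem:initial_residual_gauge_on_cylinder}, using the residual gauge freedom to arrange $(a_0,\dot a_0)|_{\tau=0}=(0,0)$ and $\phi_0 \in H^1(\mathbb{S}^3)$, so that the cylindrical data is genuinely finite energy in the strong sense needed downstream. I would then fix an interval $I\ni 0$ of small enough length and, using the conformal embeddings $\iota_\Omega:\mathbb{M}\hookrightarrow \mathbb{R}\times\mathbb{S}^3$ and its antipodal copy $\mathbb{M}'$ (Section~\ref{sec:conformal_embedding}), transport the data to $\tilde\Sigma\subset\mathbb{M}$ and $\tilde\Sigma'\subset\mathbb{M}'$. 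Because the conformal scaling $\phi=\Omega^{-1}\tilde\phi$ forces a long-range tail in the Dirichlet part of $\tilde\phi_0$, I would invoke the modification of \Cref{lem:modification_of_induced_Minkowskian_data}, which excises that tail near spatial infinity while preserving the constraints \eqref{Minkowski_constraint_1}–\eqref{Minkowski_constraint_2} pointwise, producing honest finite energy data on each of $\mathbb{M}$ and $\mathbb{M}'$ in the sense of \Cref{defn:finite_energy_initial_data_Minkowski}; by finite speed of propagation the domain of influence of the modified region lies outside the strip we ultimately keep.

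Next I would apply the Selberg--Tesfahun theorem \cite{SelbergTesfahun2010} on each of $\mathbb{M}$ and $\mathbb{M}'$ to obtain finite energy solutions with the regularity $\phi\in\mathcal C^0H^1\cap\mathcal C^1 L^2$ and $A_a\in\mathcal C^0 H^{1-\delta}$, together with the $H^{s,b}$ control that is built into that construction. The crucial refoliation step (Step~2) is then carried out by \Cref{lem:regularity_with_respect_to_hyperboloidal_foliation}: decompose the nonlinearities $\mathcalboondox{N}(\tilde A_a,\tilde\phi)$ and $\mathcalboondox{M}(\tilde A_a,\tilde\phi)$ (and $\boldsymbol{\mathcalboondox{P}},\boldsymbol{\mathcalboondox{Q}}$ for the electric and magnetic fields) into null forms, estimate them in $L^2$-based spacetime Sobolev spaces using the Foschi--Klainerman and Klainerman--Machedon product estimates (\Cref{lem:product_estimates_wave_Sobolev}), and feed the resulting inhomogeneity regularity into the abstract foliation-change lemmas \Cref{lem:foliation_change_positive_s,lem:foliation_change_negative_s}. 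Here is where the regularity losses enter: the equation for $\tilde A_a$ has \emph{no} null structure, so one only gets $\mathcal C^0 H^{1/2-\delta}$ for the potential with respect to the hyperboloidal foliation, while $\tilde\phi$ inherits the $\delta$-loss from the Selberg--Tesfahun potential, giving $\mathcal C^0 H^{1-\delta}$; the energy-carrying quantities $\mathbf E,\mathbf B,\D_a\phi$, satisfying null-form wave equations independent of gauge, suffer no loss. Step~3 then conformally transports these solutions onto a domain of dependence in $\mathbb{R}\times\mathbb{S}^3$, changes to the cylindrical Lorenz gauge (controlling the gauge function $\chi$ via the composition estimates \Cref{lem:composition_Sobolev_functions_1,lem:composition_Sobolev_functions_2,lem:composition_Sobolev_functions_3} and product estimates \Cref{lem:product_Sobolev_estimates}), and recovers $\mathcal C^1$-in-$\tau$ regularity for $\phi$ via energy conservation for smooth approximants and the Aubin--Lions lemma, and for $A_a$ by reading off $\partial_\tau A_a$ from $\mathbf E$, $\mathbf B$ and the gauge condition.

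Step~4 glues the two pieces: on the overlap $\mathbb{M}\cap\mathbb{M}'$ the two solutions solve \eqref{general_MKG_system_with_projections} with the same Cauchy data up to a gauge, so by uniqueness (which for rough data is proved by approximation) they agree modulo a gauge transformation $\chi$ that is well-behaved enough to define a single solution on the whole strip $I\times\mathbb{S}^3$; the cylindrical energy conservation \eqref{cylinder_energy} on the strip is obtained by approximating by smooth solutions, using the domain-of-dependence structure, and passing to the limit in the Minkowskian — hence cylindrical — energy norm. Finally, Step~5 iterates: at the top of each strip the endpoint data $(\mathbf E,\mathbf B,\phi_1,\phi_0)|_{\tau=\tau_1}$ is again finite energy, but the Lorenz gauge is inherited only up to a residual transformation whose generator is rough (because $A_a$ is only $H^{1/2-\delta}$); one re-applies \Cref{lem:initial_residual_gauge_on_cylinder} to restart, and crucially observes that $\|\mathbf E\|_{L^2},\|\mathbf B\|_{L^2},\|\D_a\phi\|_{L^2}$ are gauge-invariant, so energy conservation controls the iteration and the number of strips needed to reach any finite time is finite, yielding the global solution with the stated regularity and $\mathscr E(\tau)=\mathscr E(0)$. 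The main obstacle I expect is the refoliation step: obtaining spacetime $H^{s,b}$ control of the nonlinearities sharp enough to survive the foliation change, and in particular managing the complete absence of null structure in the potential equation without bleeding more than half a derivative, together with the bookkeeping of the rough residual gauge transformation across infinitely many iterations while keeping the energy-carrying components loss-free.
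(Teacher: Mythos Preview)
Your proposal is correct and follows essentially the same five-step approach as the paper, invoking the same key lemmas (\Cref{lem:modification_of_induced_Minkowskian_data}, \Cref{lem:regularity_with_respect_to_hyperboloidal_foliation}, \Cref{lem:foliation_change_positive_s,lem:foliation_change_negative_s}, \Cref{lem:local_solution_cylinder}, \Cref{thm:local_strip_solution_cylinder}) in the same order and for the same purposes. The one piece of bookkeeping you flag but do not fully resolve---that the residual gauge transformation at each iteration step degrades the $H^{1-\delta}$ regularity of $\phi$ by a fixed multiplicative factor in $\delta$---is handled in the paper exactly as you anticipate: since the loss $\delta$ at each step is arbitrary and independent of $\tau_*$, one chooses $\delta_n = 2^{-3n}\delta$ on the $n$-th strip so that the accumulated loss stays below the target $\delta$, and uniqueness then identifies all these solutions in the fixed regularity class.
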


\begin{remark}
We show, moreover, that the solution obtained in \Cref{thm:main_theorem} is a limit in the cylinder energy norm \eqref{cylinder_energy} of smoother solutions on $\mathbb{R} \times \mathbb{S}^3$. See \Cref{rmk:limit_of_smooth_solutions_cylinder}.
\end{remark}

\section{Theorem of Selberg--Tesfahun}

Our proof of \Cref{thm:main_theorem} relies in an essential way on the following.

\begin{theorem}[Selberg--Tesfahun, \cite{SelbergTesfahun2010}]
    \label{Selberg_Tesfahun_theorem}
    Let $(\tilde{\mathbf{E}}_0, \tilde{\mathbf{B}}_0, \tilde{U}, \tilde{\phi}_0)$ be finite energy initial data for \eqref{Minkowski_MKG_system_with_projections} in the sense of \Cref{defn:finite_energy_initial_data_Minkowski} such that $(\tilde{a}_0, \dot{\tilde{a}}_0) = (\tilde{A}_0, \partial_t \tilde{A}_0)|_{t=0} = (0,0)$ (cf. \Cref{lem:equivalence_of_Minkowskian_initial_data_Lorenz_gauge_and_temporal_initially}). Then:
    \begin{enumerate}[(i)]
        \item there exist real numbers $T>0$ and $\epsilon>0$ sufficiently small such that for all $\delta > 0$ sufficiently small there exists a unique local solution to the system \eqref{Minkowski_MKG_system_with_projections} given by
        \begin{align*}
            & \tilde{\phi} \in H^{1,\frac{1}{2}+\epsilon}(S_T), ~~ \partial_t \tilde{\phi} \in H^{0,\frac{1}{2}+\epsilon}(S_T), \\
            & \tilde{\mathbf{E}}, ~ \tilde{\mathbf{B}} \in H^{0,\frac{1}{2}+\epsilon}(S_T), \\
            & \tilde{A}_a \in \dot{H}^{1,\frac{1}{2}+\epsilon}(S_T) + H^{1-\delta,\frac{1}{2}+\epsilon}(S_T)
        \end{align*}
        satisfying the Lorenz gauge $\tilde{\nabla}_a \tilde{A}^a = 0$, the conservation of energy $\tilde{\mathscr{E}}(t) = \tilde{\mathscr{E}}(0)$, where $\tilde{\mathscr{E}}$ is given by \eqref{conserved_energy_Minkowski}, and the initial condition $(\tilde{\mathbf{E}}, \tilde{\mathbf{B}}, \tilde{\D}_a\tilde{\phi}, \tilde{\phi})|_{t=0} = (\tilde{\mathbf{E}}_0, \tilde{\mathbf{B}}_0, \tilde{U}, \tilde{\phi}_0)$, where $S_T = (-T,T) \times \mathbb{R}^3$, and $T$ depends (continuously) only on the size of the initial energy\footnote{And the size of the initial $L^2$ norm of the scalar field, $\|\tilde{\phi}_0\|_{L^2(\mathbb{R}^3)}$, cf. \Cref{rmk:Klein_Gordon_mass}.} $\tilde{\mathscr{E}}(0)$, and
        \smallskip
        \item the above local solution extends to a unique global solution of the system \eqref{Minkowski_MKG_system_with_projections},
        \begin{align*}
            & \tilde{\phi} \in \mathcal{C}^0(\mathbb{R}_t;H^1(\mathbb{R}^3))\cap \mathcal{C}^1(\mathbb{R}_t;L^2(\mathbb{R}^3)), \\
            & \tilde{\mathbf{E}}, \, \tilde{\mathbf{B}} \in \mathcal{C}^0(\mathbb{R}_t;L^2(\mathbb{R}^3)),
        \end{align*}
        relative to a real-valued 4-potential $\tilde{A}_a$ such that
        \begin{align*}
            \tilde{A}_a \in \mathcal{C}^0(\mathbb{R}_t;\dot{H}^1(\mathbb{R}^3) + H^{1-\delta}(\mathbb{R}^3)) \cap \mathcal{C}^1(\mathbb{R}_t;H^{-\delta}(\mathbb{R}^3))
        \end{align*}
        satisfying the Lorenz gauge $\tilde{\nabla}_a \tilde{A}^a = 0$. Moreover, the solution satisfies the conservation of energy $\tilde{\mathscr{E}}(t) = \tilde{\mathscr{E}}(0)$ $\forall t \in \mathbb{R}$.
    \end{enumerate}
\end{theorem}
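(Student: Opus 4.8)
The plan is to recapitulate the argument of Selberg and Tesfahun \cite{SelbergTesfahun2010}. After the gauge reduction of \Cref{lem:equivalence_of_Minkowskian_initial_data_Lorenz_gauge_and_temporal_initially}, which puts us in a Lorenz gauge with $(\tilde{a}_0,\dot{\tilde{a}}_0) = (0,0)$, $\tilde{\boldsymbol{\nabla}}\cdot\tilde{\mathbf{a}} = 0$, $\tilde{\mathbf{a}}\in\dot{H}^1$, and reduces the data to $(\tilde{\mathbf{E}}_0,\tilde{\mathbf{B}}_0,\tilde{\phi}_1,\tilde{\phi}_0)\in L^2\times L^2\times L^2\times H^1$, the system \eqref{Minkowski_MKG_system_with_projections} becomes the coupled nonlinear wave equations \eqref{Minkowski_Maxwell_equation_potential}--\eqref{Minkowski_scalar_equation_potential} for $(\tilde{A}_a,\tilde{\phi})$. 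First I would set up a Picard iteration for these in the Bourgain--Klainerman--Machedon spaces of \Cref{sec:dispersive_Sobolev_spaces}, seeking $\tilde{\phi}\in H^{1,\frac{1}{2}+\epsilon}(S_T)$, $\partial_t\tilde{\phi}\in H^{0,\frac{1}{2}+\epsilon}(S_T)$ and $\tilde{A}_a\in\dot{H}^{1,\frac{1}{2}+\epsilon}(S_T) + H^{1-\delta,\frac{1}{2}+\epsilon}(S_T)$, placing the free-wave data via the characterisation \eqref{propagator_characterisation_Bourgain_spaces} and using the linear estimate \eqref{basic_linear_Xsb_estimate}. Convergence of the iteration then reduces to proving the nonlinear estimates controlling $\mathcalboondox{M}(\tilde{A}_a,\tilde{\phi})$ and $\mathcalboondox{N}(\tilde{A}_a,\tilde{\phi})$ in the corresponding dual wave-Sobolev norms (of the type $H^{\cdot,-\frac{1}{2}+\epsilon}$) by the iterates in their spaces, together with the matching difference estimates.

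The heart of the matter is the quadratic term $\tilde{A}^a\tilde{\nabla}_a\tilde{\phi}$ in $\mathcalboondox{M} = -2i\tilde{A}^a\tilde{\nabla}_a\tilde{\phi} + \tilde{A}_a\tilde{A}^a\tilde{\phi}$, which in Lorenz gauge does not manifestly carry a null structure. I would Hodge-decompose the spatial potential $\tilde{\mathbf{A}} = \tilde{\mathbf{A}}^{\mathrm{df}} + \tilde{\boldsymbol{\nabla}}\tilde{\psi}$ into divergence-free and curl-free parts: then $\tilde{\mathbf{A}}^{\mathrm{df}}\cdot\tilde{\boldsymbol{\nabla}}\tilde{\phi}$ is a genuine $Q_{ij}$-type null form, estimated with no loss by bilinear bounds of Klainerman--Machedon and Foschi--Klainerman type \cite{FoschiKlainerman2000,KlainermanMachedon1996}, while the remaining pieces $\tilde{A}_0\partial_t\tilde{\phi}$ and $\tilde{\boldsymbol{\nabla}}\tilde{\psi}\cdot\tilde{\boldsymbol{\nabla}}\tilde{\phi}$, although non-null, are controlled by the wave-Sobolev product estimates of \Cref{lem:product_estimates_wave_Sobolev} at the cost of an arbitrarily small loss $\delta$ --- this being possible because $\tilde{A}_0$ and $\tilde{\psi}$, solving wave equations, are slightly smoother than generic $H^1$ functions. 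The cubic term $\tilde{A}_a\tilde{A}^a\tilde{\phi}$ is removed by iterated product estimates. In the potential equation the source $\mathcalboondox{N} = -\operatorname{Im}(\bar{\tilde{\phi}}\,\partial_a\tilde{\phi}) - \tilde{A}_a|\tilde{\phi}|^2$ carries no null structure, but after spatial Hodge projection the divergence-free part of $\tilde{\mathbf{A}}$ still closes in $\dot{H}^1$ (its source being a null form after projection), while $\tilde{A}_0$ and the curl-free part of $\tilde{\mathbf{A}}$ close only in $H^{1-\delta}$, which is the origin of the $H^{1-\delta,\frac{1}{2}+\epsilon}$ summand. The gauge-invariant equations \eqref{Minkowski_wave_equation_electric_field}--\eqref{Minkowski_wave_equation_magnetic_field} for $\tilde{\mathbf{E}},\tilde{\mathbf{B}}$ have sources $\boldsymbol{\mathcalboondox{P}},\boldsymbol{\mathcalboondox{Q}}$ enjoying null structure irrespective of gauge, so $\tilde{\mathbf{E}},\tilde{\mathbf{B}}\in H^{0,\frac{1}{2}+\epsilon}(S_T)$ with no loss. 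The fixed point yields a local solution for $T$ small depending on the size of the data; rescaling the estimates and absorbing $\tilde{\mathbf{A}}\tilde{\phi}$ through $H^1\hookrightarrow L^6$ refines the dependence to $T = T(\tilde{\mathscr{E}}(0), \|\tilde{\phi}_0\|_{L^2})$, which is part (i).

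To complete (i) I would verify propagation of the Lorenz gauge: $\tilde{\lambda} = \tilde{\nabla}_a\tilde{A}^a$ solves the homogeneous wave equation $\widetilde{\Box}\tilde{\lambda} + |\tilde{\phi}|^2\tilde{\lambda} = 0$ (exactly as in \eqref{cylinder_Lorenz_gauge_Maxwell_equation_potential}--\eqref{cylinder_Lorenz_gauge_scalar_equation_potential} on the cylinder) with $\tilde{\lambda}|_{t=0} = \partial_t\tilde{\lambda}|_{t=0} = 0$ by the gauge choice and the constraint \eqref{Minkowski_electric_field_constraint_under_initial_gauge}, so $\tilde{\lambda}\equiv0$ and the constraints propagate. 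For conservation of energy I would first establish $\tilde{\mathscr{E}}(t) = \tilde{\mathscr{E}}(0)$ for smooth, fast-decaying data from $\tilde{\nabla}^a\mathbf{T}_{ab} = 0$ by integrating over a slab, then pass to general finite energy data by density, using continuity of the solution map in the $X^{s,b}$-norms, which dominate the energy. Finally, for (ii), since $T$ depends only on $\tilde{\mathscr{E}}(0)$ and $\|\tilde{\phi}_0\|_{L^2}$, the energy is conserved, and $\|\tilde{\phi}(t)\|_{L^2}$ grows at most linearly on each step (bound $\frac{\d}{\d t}\|\tilde{\phi}(t)\|_{L^2}^2$ by $\tilde{\mathscr{E}}(0)^{1/2}\|\tilde{\phi}(t)\|_{L^2}$ after controlling $\|\partial_t\tilde{\phi}\|_{L^2}$ and $\|\tilde{A}_0\tilde{\phi}\|_{L^2}$), all the quantities governing $T$ stay bounded over a step of length $\sim T$; re-applying (i) from $t = T$ and inducting covers all of $\mathbb{R}$, and the regularity classes of (ii) follow by pushing the embeddings $H^{s,\frac{1}{2}+\epsilon}(S_T)\hookrightarrow\mathcal{C}^0([-T,T];H^s)$ and the identities $\tilde{\mathbf{E}} = \dot{\tilde{\mathbf{a}}}$, $\tilde{\mathbf{B}} = \tilde{\boldsymbol{\nabla}}\times\tilde{\mathbf{a}}$ through the iteration. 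I expect the main obstacle to be exactly what makes the statement non-standard: the absence of a full null structure in Lorenz gauge forces one to surgically extract the $Q_{ij}$ null form from the divergence-free part of $\tilde{\mathbf{A}}$ via the Hodge split and then to carry the resulting small loss $\delta$ consistently through the iteration, the gauge propagation and the globalization, verifying at each stage that it neither obstructs the contraction nor accumulates.
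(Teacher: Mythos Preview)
The paper does not prove this theorem; it is quoted from \cite{SelbergTesfahun2010}, with only \Cref{rmk:Klein_Gordon_mass} sketching the cosmetic extension to the massless case (the $L^2$ growth bound for $\tilde{\phi}$ you describe). So the comparison is really with Selberg--Tesfahun's argument, and there your sketch has a genuine gap.

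You say that after Hodge-splitting $\tilde{\mathbf{A}}$, the divergence-free contribution $\tilde{\mathbf{A}}^{\mathrm{df}}\!\cdot\!\tilde{\boldsymbol{\nabla}}\tilde{\phi}$ is a $Q_{ij}$ null form (correct), while the remaining pieces $-\tilde{A}_0\partial_t\tilde{\phi}+\tilde{\mathbf{A}}^{\mathrm{cf}}\!\cdot\!\tilde{\boldsymbol{\nabla}}\tilde{\phi}$ are ``non-null'' and handled by product estimates at the cost of a $\delta$-loss. This is the key point Selberg--Tesfahun observed and you are missing: in Lorenz gauge $\partial_t\tilde{A}_0=\tilde{\boldsymbol{\nabla}}\!\cdot\!\tilde{\mathbf{A}}$, so $\tilde{\mathbf{A}}^{\mathrm{cf}}=\tilde{\boldsymbol{\nabla}}\tilde{\boldsymbol{\Delta}}^{-1}\partial_t\tilde{A}_0$, and setting $\tilde{v}=\tilde{\boldsymbol{\Delta}}^{-1}\partial_t\tilde{A}_0$ one finds (after using the equation for $\tilde{A}_0$) that this combination equals $-Q_0(\tilde{v},\tilde{\phi})$ modulo terms that are genuinely more regular. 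This is exactly the computation the present paper reproduces in \Cref{lem:null_structure_in_M}. Without this $Q_0$ structure the iteration for $\tilde{\phi}$ does not close at $H^{1,\frac12+\epsilon}$: a generic product $H^{1-\delta,b}\cdot H^{0,b}$ falls short of $H^{0,b-1}$, so you would be forced to place $\tilde{\phi}$ in $H^{1-\delta,b}$, contradicting the statement and making the loss accumulate under iteration. The $\delta$-loss in the theorem lives entirely in the potential equation, where $\mathcalboondox{N}$ really has no null structure.

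A second, more minor point: the decomposition $\tilde{A}_a\in\dot{H}^{1,\frac12+\epsilon}+H^{1-\delta,\frac12+\epsilon}$ is not a Hodge split of $\tilde{\mathbf{A}}$ into divergence-free and curl-free parts, but a homogeneous/inhomogeneous (equivalently low/high frequency) split, cf. the paper's Remark after the theorem. The homogeneous part carries the data $\tilde{\mathbf{a}}\in\dot{H}^1$ (which is not in $L^2$); the inhomogeneous part absorbs the non-null source $\mathcalboondox{N}$ and is where the $\delta$-loss appears. Your claim that the Leray projection of $\mathcalboondox{N}$ is a null form is not correct.
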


We make several remarks.

\begin{remark}
    The loss $\delta$ in \Cref{Selberg_Tesfahun_theorem} is arbitrarily small. This is used extensively in our argument.
\end{remark}

\begin{remark} 
\label{rmk:Klein_Gordon_mass}
\Cref{Selberg_Tesfahun_theorem} is proven explicitly in \cite{SelbergTesfahun2010} in the case of positive Klein--Gordon mass $m>0$, which results in the additional term $m^2 \tilde{\phi}$ in the wave equation for $\tilde{\phi}$ in \eqref{Minkowski_MKG_system_with_projections}. The positivity of the mass $m$ serves to control the $L^2$ norm of the scalar field in the energy $\tilde{\mathscr{E}}(t)$. Since our constructions rely on the conformal invariance of the equations, we in fact need the theorem for $m=0$. We remark that \Cref{Selberg_Tesfahun_theorem} extends with only cosmetic modifications to the proof to the case $m=0$, provided the initial assumption that $\tilde{\phi}_0 \in L^2(\mathbb{R}^3)$ is satisfied as a replacement; this is the reason we define the set of finite energy initial data on Minkowski space as in \Cref{defn:finite_energy_initial_data_Minkowski}. Indeed, for the local well-posedness (part (i) of \Cref{Selberg_Tesfahun_theorem}), the only change in the proof of \cite{SelbergTesfahun2010} is in estimate (4.6) (p. 1042), where the operator $\langle \tilde{\boldsymbol{\nabla}} \rangle_m$ is replaced with $| \tilde{\boldsymbol{\nabla}} |$. So there is only a change at low frequencies $|\xi| < 1$. But in this regime the estimate can be reduced to $L^1$ (instead of $L^2$) in space, which can be done just using H\"older and Sobolev inequalities. For the global solution (part (ii) of \Cref{Selberg_Tesfahun_theorem}), suppose that $T_{max.}$ is the maximal time of existence of the local solution; by the conservation of energy $\tilde{\mathscr{E}}(t) = \tilde{\mathscr{E}}(0)$ and the following estimate for the $L^2$ norm of $\tilde{\phi}$ at a later time $t$, 
\begin{align*} \frac{\partial}{\partial t} \| \tilde{\phi}(t,\cdot) \|^2_{L^2(\mathbb{R}^3)}  &= 2 \operatorname{Re} \int_{\mathbb{R}^3} \tilde{\phi} \overline{\partial_t \tilde{\phi}} \dvol_{\mathbb{R}^3} \\
& =2 \operatorname{Re}\int_{\mathbb{R}^3} \tilde{\phi} \overline{\tilde{\mathrm{D}}_0 \tilde{\phi}} \dvol_{\mathbb{R}^3} \leq 2 \|\tilde{\phi}(t,\cdot) \|_{L^2(\mathbb{R}^3)} \tilde{\mathscr{E}}(0),
\end{align*}
which consequently implies
\[ \|\tilde{\phi}(t,\cdot)\|_{L^2(\mathbb{R}^3)} \leq \|\tilde{\phi}_0\|_{L^2(\mathbb{R}^3)} + \tilde{\mathscr{E}}(0)t < \infty, \]
it then follows that the local solution may be extended beyond $t= T_{max.}$, yielding a contradiction and a global solution.
\end{remark}

\begin{remark}
    Note that the uniqueness statements in parts (i) and (ii) of \Cref{Selberg_Tesfahun_theorem} refer to the respective function spaces in parts (i) and (ii), i.e. in wave-Sobolev spaces $H^{s,b}$ with exponents $s$ and $b$ as stated in part (i), and in the natural energy spaces $\mathcal{C}^0 H^s \cap \mathcal{C}^1 H^{s-1}$ in part (ii). The former is sometimes called \emph{conditional} uniqueness, and the latter \emph{unconditional} uniqueness.
\end{remark}

\begin{remark}
    \label{rmk:continuous_dependence_on_data} The solution obtained in \Cref{Selberg_Tesfahun_theorem} enjoys continuous dependence on the data and persistence of higher regularity. That is, the solution $(\tilde{\phi}, \tilde{\mathbf{E}}, \tilde{\mathbf{B}})$ obtained in part (ii) is a limit, in $(\mathcal{C}^0_t H^1_x \cap \mathcal{C}^1_t L^2_x)\times \mathcal{C}^0_t L^2_x \times \mathcal{C}^0_t L^2_x$, of smooth solutions with smooth data. This in particular implies convergence in the (Minkowskian) energy norm, see \cite{SelbergTesfahun2010}, p. 1048.
\end{remark}

\begin{remark}
    For later use, we note the splitting of the (local) potential $\tilde{A}_a$ obtained in \Cref{Selberg_Tesfahun_theorem} into its homogeneous and inhomogeneous (i.e. low- and high-frequency) parts. We write
    \[ \tilde{A}_a = \tilde{A}_a^{(0)} + \tilde{A}_a^{\text{inh.}}, \]
    where $\tilde{A}_a^{(0)} \in \dot{H}^{1,\frac{1}{2}+\epsilon}(S_T)$ is supported on low spatial frequencies $|\xi| \leq 1$, and $\tilde{A}_a^{\text{inh.}} \in H^{1-\delta,\frac{1}{2}+\epsilon}(S_T)$ on high frequencies $|\xi| \geq 1$. 
\end{remark}

\section{Spacetime Estimates for Nonlinearities}
\label{sec:spacetime_estimates_for_nonlinearities}

The strategy of our proof requires extracting $L^2$-based spacetime Sobolev regularity for the right-hand sides $\mathcalboondox{M}(\tilde{A}_a, \tilde{\phi})$, $\mathcalboondox{N}(\tilde{A}_a, \tilde{\phi})$, $\boldsymbol{\mathcalboondox{P}}(\tilde{A}_a, \tilde{\phi})$ and $\boldsymbol{\mathcalboondox{Q}}(\tilde{A}_a, \tilde{\phi})$ of the equations \eqref{Minkowski_Maxwell_equation_potential}, \eqref{Minkowski_scalar_equation_potential}, \eqref{Minkowski_wave_equation_electric_field} and \eqref{Minkowski_wave_equation_magnetic_field}. Sufficient regularity in these spaces is a consequence of the null structure in $\mathcalboondox{M}$, $\boldsymbol{\mathcalboondox{P}}$ and $\boldsymbol{\mathcalboondox{Q}}$, and null form estimates in $X^{s,b}_\pm$ spaces which follow from the well-known estimates of Foschi--Klainerman \cite{FoschiKlainerman2000} for free waves. In this section we derive explicit expressions for $\mathcalboondox{M}$, $\boldsymbol{\mathcalboondox{P}}$ and $\boldsymbol{\mathcalboondox{Q}}$ in terms of classical null forms, modulo more regular terms. There is no null structure in $\mathcalboondox{N}$, and we obtain only rough estimates for this equation, but these are sufficient for our purposes. Throughout this section we assume that we have the local solutions $\tilde{\phi}$, $\tilde{A}_a$ and $\tilde{\mathbf{E}}$, $\tilde{\mathbf{B}}$ on $S_T \subset \mathbb{M}$, as given by \Cref{Selberg_Tesfahun_theorem}, where $S_T = (-T,T) \times\mathbb{R}^3$.

\subsection{Regularity of $\mathcalboondox{N}$}

\begin{lemma}
    \label{lem:regularity_of_N}
    For the local solution $\tilde{A}_a$, $\tilde{\phi}$ of \eqref{Minkowski_Maxwell_equation_potential}--\eqref{Minkowski_scalar_equation_potential} given by \Cref{Selberg_Tesfahun_theorem}, we have for some $T>0$
    \[ \mathcalboondox{N}(\tilde{A}_a, \tilde{\phi}) \in L^{\frac{3}{2}}(S_T) \hookrightarrow H^{-\frac{2}{3}}(S_T). \]
\end{lemma}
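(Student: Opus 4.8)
The plan is to exploit the explicit algebraic form of $\mathcalboondox{N}$, the regularity of the local solution provided by \Cref{Selberg_Tesfahun_theorem}, and the control on $\|\tilde{\phi}\|_{L^2}$, reducing the claim to Hölder's inequality in the spatial variables and one Sobolev embedding in spacetime. Writing out the gauge covariant derivative,
\[ \mathcalboondox{N}(\tilde{A}_a, \tilde{\phi}) = -\operatorname{Im}\big(\bar{\tilde{\phi}}\, \tilde{\nabla}_a \tilde{\phi}\big) - \tilde{A}_a |\tilde{\phi}|^2, \]
so there is one bilinear and one cubic contribution. From part (i) of \Cref{Selberg_Tesfahun_theorem} and the embedding $H^{s,b}(S_T) \hookrightarrow \mathcal{C}^0([-T,T];H^s(\mathbb{R}^3))$ for $b > \tfrac12$, the local solution satisfies $\tilde{\phi} \in L^\infty_t H^1_x(S_T)$, $\tilde{\nabla}_a \tilde{\phi} \in L^\infty_t L^2_x(S_T)$ (the time derivative being controlled via $\partial_t \tilde{\phi} \in H^{0,\frac12+\epsilon}(S_T)$), and $\tilde{A}_a = \tilde{A}_a^{(0)} + \tilde{A}_a^{\mathrm{inh.}}$ with $\tilde{A}_a^{(0)} \in L^\infty_t \dot{H}^1_x$ and $\tilde{A}_a^{\mathrm{inh.}} \in L^\infty_t H^{1-\delta}_x$. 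Since $\tilde{\phi}_0 \in L^2(\mathbb{R}^3)$ is assumed and $\|\tilde{\phi}(t,\cdot)\|_{L^2(\mathbb{R}^3)}$ stays finite (cf. \Cref{rmk:Klein_Gordon_mass}), interpolation with the Sobolev embedding $H^1(\mathbb{R}^3) \hookrightarrow L^6(\mathbb{R}^3)$ gives $\tilde{\phi} \in L^\infty_t L^p_x(S_T)$ for every $2 \leq p \leq 6$.

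For the bilinear term, Hölder in space with $\tfrac16 + \tfrac12 = \tfrac23$ yields $\|\bar{\tilde{\phi}}\, \tilde{\nabla}_a\tilde{\phi}\|_{L^{3/2}_x} \leq \|\tilde{\phi}\|_{L^6_x}\|\tilde{\nabla}_a\tilde{\phi}\|_{L^2_x}$ uniformly on $t \in [-T,T]$; since the time interval is finite this gives $\bar{\tilde{\phi}}\,\tilde{\nabla}_a\tilde{\phi} \in L^\infty_t L^{3/2}_x(S_T) \hookrightarrow L^{3/2}(S_T)$. For the cubic term we treat the two frequency pieces separately. On low frequencies, $\tilde{A}_a^{(0)} \in L^6_x$ by \eqref{Sobolev_embedding_homogeneous_H1} and $|\tilde{\phi}|^2 \in L^2_x$ (as $\tilde{\phi} \in L^4_x$), so $\tilde{A}_a^{(0)}|\tilde{\phi}|^2 \in L^{3/2}_x$ uniformly in $t$. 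On high frequencies, $\tilde{A}_a^{\mathrm{inh.}} \in H^{1-\delta}(\mathbb{R}^3) \hookrightarrow L^{6/(1+2\delta)}_x$ by \eqref{Sobolev_embedding_Lp} and $|\tilde{\phi}|^2 \in L^{6/(3-2\delta)}_x$ (valid since $\tilde{\phi} \in L^{12/(3-2\delta)}_x$ with $2 \le 12/(3-2\delta) < 6$ for $\delta$ small), and these exponents satisfy $\tfrac{1+2\delta}{6} + \tfrac{3-2\delta}{6} = \tfrac23$, so $\tilde{A}_a^{\mathrm{inh.}}|\tilde{\phi}|^2 \in L^{3/2}_x$ uniformly in $t$. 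Summing the three contributions and integrating $\|\cdot\|_{L^{3/2}_x}^{3/2}$ over the finite interval $[-T,T]$ gives $\mathcalboondox{N}(\tilde{A}_a,\tilde{\phi}) \in L^{3/2}(S_T)$.

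Finally, the inclusion $L^{3/2}(S_T) \hookrightarrow H^{-2/3}(S_T)$ is the dual of the borderline Sobolev embedding $H^{2/3}(\mathbb{R}^4) \hookrightarrow L^3(\mathbb{R}^4)$ (the case $\tfrac23 = 4(\tfrac12 - \tfrac13)$ of \eqref{Sobolev_embedding_Lp} in spacetime dimension $4$), used together with extension by zero and restriction: for $f \in L^{3/2}(S_T)$ the zero extension $\bar{f} \in L^{3/2}(\mathbb{R}^4)$ has $\|\bar{f}\|_{H^{-2/3}(\mathbb{R}^4)} \la \|\bar{f}\|_{L^{3/2}(\mathbb{R}^4)} = \|f\|_{L^{3/2}(S_T)}$, and $\bar{f}|_{S_T} = f$, whence $\|f\|_{H^{-2/3}(S_T)} \la \|f\|_{L^{3/2}(S_T)}$. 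There is no serious obstacle here; the only delicate point is conceptual, namely that the low-frequency (charge) part of $\tilde{A}_a$ is only as regular as $\dot{H}^1$, so $\mathcalboondox{N}$ cannot be placed in an $L^2$-based space without a null structure it does not possess — but the weak $L^{3/2}$ bound we are after is insensitive to this, and the $L^2$ control of $\tilde{\phi}$ together with the smallness of $\delta$ enter only to make the elementary Hölder bookkeeping close.
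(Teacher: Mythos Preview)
Your proof is correct. It differs from the paper's argument in organization: the paper first shows the intermediate fact $\tilde{\D}_a\tilde{\phi}\in L^2(S_T)$ (using the wave--Sobolev product estimate of \Cref{lem:product_estimates_wave_Sobolev} for $\tilde{A}^{\mathrm{inh.}}\tilde{\phi}$ and a Hölder/Sobolev estimate for $\tilde{A}^{(0)}\tilde{\phi}$), and then obtains $\mathcalboondox{N}\in L^{3/2}$ via $H^{1,\frac12+\epsilon}\cdot L^2\hookrightarrow L^2_t(H^1_x\cdot L^2_x)\subset L^2_t L^{3/2}_x$. You instead expand $\mathcalboondox{N}$ into the bilinear term $\bar{\tilde{\phi}}\,\tilde{\nabla}_a\tilde{\phi}$ and the cubic term $\tilde{A}_a|\tilde{\phi}|^2$ and bound each directly in $L^\infty_t L^{3/2}_x$ using only physical-space Hölder and Sobolev embeddings. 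Your route is slightly more elementary in that it avoids the $H^{s,b}$ product machinery entirely; the paper's route has the minor byproduct $\tilde{\D}_a\tilde{\phi}\in L^2(S_T)$, though this is not reused elsewhere. As a small comment, the appeal to the $L^2$ control of $\tilde{\phi}$ from \Cref{rmk:Klein_Gordon_mass} is not actually needed: since $\tilde{\phi}\in H^{1,\frac12+\epsilon}(S_T)\hookrightarrow L^\infty_t H^1_x$ with the inhomogeneous norm, you already have $\tilde{\phi}(t,\cdot)\in L^p(\mathbb{R}^3)$ for all $2\leq p\leq 6$ without interpolation.
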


\begin{proof}
    We have $\mathcalboondox{N}(\tilde{A}_a, \tilde{\phi}) = - \operatorname{Im}(\bar{\tilde{\phi}} \tilde{\D}_a \tilde{\phi})$, where $\tilde{\phi} \in H^{1,\frac{1}{2}+\epsilon}(S_T)$ and $\tilde{\nabla}_a \tilde{\phi} \in H^{0,\frac{1}{2}+\epsilon}(S_T)$. The gauge covariant derivative of $\tilde{\phi}$ is given by $\tilde{\D}_a \tilde{\phi} = \tilde{\nabla}_a \tilde{\phi} + i \tilde{A}_a \tilde{\phi}$, where
    \[ \tilde{A}_a = \tilde{A}_a^{(0)} + \tilde{A}_a^{\text{inh.}}, \]
    with
    \[ \tilde{A}_a^{(0)} \in \dot{H}^{1,\frac{1}{2}+\epsilon}(S_T), \qquad \tilde{A}_a^{\text{inh.}} \in H^{1-\delta,\frac{1}{2} + \epsilon}(S_T). \]
    For the high-frequency part of the potential, we therefore have the estimate (using \Cref{lem:product_estimates_wave_Sobolev})
    \[ \| \tilde{A}^{\text{inh.}} \tilde{\phi} \|_{H^{1-\delta,0}(S_T)} \la \| \tilde{A}^{\text{inh.}} \|_{H^{1-\delta,\frac{1}{2}+\epsilon}(S_T)} \| \tilde{\phi} \|_{H^{1,\frac{1}{2}+\epsilon}(S_T)}. \]
    For the low-frequency part $\tilde{A}_a^{(0)}$, using $\dot{H}^{1,\frac{1}{2}+\epsilon}(S_T) \hookrightarrow L^\infty_t \dot{H}^1_x (S_T) \hookrightarrow L^\infty_t L^6_x (S_T)$ (the second embedding using \eqref{Sobolev_embedding_homogeneous_H1}), the embedding $H^1(\mathbb{R}^3) \hookrightarrow L^6(\mathbb{R}^3)$, and the interpolation inequality $\| u \|_{L^3(\mathbb{R}^3)} \la \| u \|_{L^2(\mathbb{R}^3)}^{1/2} \| u \|_{L^6(\mathbb{R}^3)}^{1/2} \la \|u\|_{H^1(\mathbb{R}^3)}$, we have
    \begin{align*} \| \tilde{A}^{(0)} \tilde{\phi} \|_{L^2(S_T)} &\leq \| \tilde{A}^{(0)} \tilde{\phi} \|_{L^\infty_tL^2_x(S_T)} \\
    &\la \| \tilde{A}^{(0)}\|_{L^\infty_t L^6_x(S_T)}\|\tilde{\phi}\|_{L^\infty_t L^3_x(S_T)} \\
    & \la \| \tilde{A}^{(0)}\|_{\dot{H}^{1,\frac{1}{2}+\epsilon}(S_T)} \| \tilde{\phi} \|_{L^\infty_t H^1_x(S_T)} \\
    &\la \| \tilde{A}^{(0)} \|_{\dot{H}^{1,\frac{1}{2}+\epsilon}(S_T)} \| \tilde{\phi}\|_{H^{1,\frac{1}{2}+\epsilon}(S_T)}.
    \end{align*}
    We therefore have
    \[ \tilde{\D}_a \tilde{\phi} = \tilde{\nabla}\tilde{\phi} + i \tilde{A}_a \tilde{\phi} \in H^{0,\frac{1}{2}+\epsilon}(S_T) + H^{1-\delta,0}(S_T) + L^2(S_T) \subset L^2(S_T). \]
    Hence 
    \[ \mathcalboondox{N}(\tilde{A}_a, \tilde{\phi}) \in H^{1,\frac{1}{2}+\epsilon}(S_T) \cdot L^2 (S_T) \hookrightarrow L^2_t (H^1_x \cdot L^2_x) \subset L^2_t L^{\frac{3}{2}}_x \subset L^{\frac{3}{2}}(S_T). \]
    Finally, we note that $H^{\frac{2}{3}}(S_T) \hookrightarrow L^3(S_T)$, so $L^{\frac{3}{2}}(S_T) \hookrightarrow H^{-\frac{2}{3}}(S_T)$.
    \end{proof}

\subsection{Null structure in $\mathcalboondox{M}$}

While it is absent in general, a key observation of \cite{SelbergTesfahun2010} is that in Lorenz gauge there is in fact null structure in $\mathcalboondox{M}(\tilde{A}_a, \tilde{\phi})$; recall that
\[ \mathcalboondox{M}(\tilde{A}_a, \tilde{\phi}) = - 2i \tilde{A}_a \tilde{\nabla}^a \tilde{\phi} + \tilde{A}_a \tilde{A}^a \tilde{\phi}, \]
and the classical null forms $Q_0$, $Q_{0i}$ and $Q_{ij}$ on $\mathbb{M}$ are given by
\begin{align*}
    Q_0(\tilde{\phi}, \tilde{\psi}) &= \partial_t \tilde{\phi} \, \partial_t \tilde{\psi} - \tilde{\boldsymbol{\nabla}}\tilde{\phi} \cdot \!\tilde{\boldsymbol{\nabla}} \tilde{\phi}, \\
    Q_{0i}(\tilde{\phi},\tilde{\psi}) & = \partial_t \tilde{\phi} \, \tilde{\nabla}_i \tilde{\psi} - \tilde{\nabla}_i \tilde{\phi} \,\partial_t \tilde{\psi}, \\
    Q_{ij}(\tilde{\phi}, \tilde{\psi})&= \tilde{\nabla}_i\tilde{\phi} \tilde{\nabla}_j \tilde{\psi} - \tilde{\nabla}_j \tilde{\phi} \tilde{\nabla}_i \tilde{\psi}.
\end{align*}

\begin{lemma}
    \label{lem:null_structure_in_M}
    There exist $\tilde{v} \in H^{2-\delta,\frac{1}{2}+\epsilon}(S_T)$ and $\tilde{\mathbf{w}} \in H^{2,\frac{1}{2}+\epsilon}(S_T)$ such that
    \begin{equation}
        \label{null_structure_M}
        -\frac{1}{2i} \mathcalboondox{M}(\tilde{A}_a, \tilde{\phi}) - Q_0(\tilde{v}, \tilde{\phi}) - \varepsilon^{kij} Q_{ij}(\tilde{\phi}, \tilde{w}_k) \in L^2(S_T).
    \end{equation}
\end{lemma}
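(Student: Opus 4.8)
The plan is to implement, with the regularity furnished by \Cref{Selberg_Tesfahun_theorem}, the null structure observed by Selberg and Tesfahun: in Lorenz gauge the longitudinal part of the spatial potential is determined, through the gauge condition and the wave equation for $\tilde{A}_0$, by the time component, while the transverse part is governed by the magnetic field, so that the principal bilinear term $\tilde{A}_a\tilde{\nabla}^a\tilde{\phi}$ reduces to a combination of the null forms $Q_0$ and $Q_{ij}$ modulo lower order terms.

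First I would record the elementary identity $-\tfrac{1}{2i}\mathcalboondox{M}(\tilde{A}_a,\tilde{\phi}) = \tilde{A}_a\tilde{\nabla}^a\tilde{\phi} + \tfrac{i}{2}\tilde{A}_a\tilde{A}^a\tilde{\phi}$ and write $\tilde{A}_a\tilde{\nabla}^a\tilde{\phi} = \tilde{A}_0\,\partial_t\tilde{\phi} - \tilde{\mathbf{A}}\cdot\tilde{\boldsymbol{\nabla}}\tilde{\phi}$. Since $\tilde{\boldsymbol{\Delta}}^{-1}$ is badly behaved at low frequencies, the decomposition is performed only on the high-frequency part $\tilde{A}^{\mathrm{inh.}}_a$, absorbing $\tilde{A}^{(0)}_a\tilde{\nabla}^a\tilde{\phi}$ into the remainder exactly as in \Cref{lem:regularity_of_N} (via $\dot{H}^{1,\frac{1}{2}+\epsilon}(S_T)$-based Strichartz and H\"older estimates). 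Applying the Hodge decomposition $\tilde{\mathbf{A}}^{\mathrm{inh.}} = \tilde{\boldsymbol{\nabla}}\varphi + \tilde{\boldsymbol{\nabla}}\times\tilde{\mathbf{w}}$, $\tilde{\boldsymbol{\nabla}}\cdot\tilde{\mathbf{w}} = 0$, the Lorenz gauge condition $\partial_t\tilde{A}_0 = \tilde{\boldsymbol{\nabla}}\cdot\tilde{\mathbf{A}}$ gives $\varphi = \tilde{\boldsymbol{\Delta}}^{-1}\partial_t\tilde{A}^{\mathrm{inh.}}_0$, and $-\tilde{\boldsymbol{\Delta}}\tilde{\mathbf{w}}$ equals the high-frequency part of the magnetic field $\tilde{\mathbf{B}} = \tilde{\boldsymbol{\nabla}}\times\tilde{\mathbf{A}}$. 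Setting $\tilde{v}=\varphi$ and normalising $\tilde{\mathbf{w}}$ by a numerical constant, the usual antisymmetrisation of $(\tilde{\boldsymbol{\nabla}}\times\tilde{\mathbf{w}})\cdot\tilde{\boldsymbol{\nabla}}\tilde{\phi}$ together with the definition of $Q_0$ yield
\begin{align*}
\tilde{A}_a\tilde{\nabla}^a\tilde{\phi} &= Q_0(\tilde{v},\tilde{\phi}) + \varepsilon^{kij}Q_{ij}(\tilde{\phi},\tilde{w}_k) \\
&\quad + \big(\tilde{A}^{\mathrm{inh.}}_0 - \partial_t\tilde{v}\big)\,\partial_t\tilde{\phi} + \mathcalboondox{R}_{\mathrm{low}},
\end{align*}
with $\mathcalboondox{R}_{\mathrm{low}}$ the low-frequency remainder. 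The crucial point is that $\widetilde{\Box}\tilde{A}_0 = \mathcalboondox{N}(\tilde{A}_0,\tilde{\phi})$ gives $\partial_t\tilde{v} = \tilde{\boldsymbol{\Delta}}^{-1}\partial_t^2\tilde{A}^{\mathrm{inh.}}_0 = \tilde{A}^{\mathrm{inh.}}_0 + \tilde{\boldsymbol{\Delta}}^{-1}\mathcalboondox{N}_0^{\mathrm{inh.}}$, so the defect term equals $-(\tilde{\boldsymbol{\Delta}}^{-1}\mathcalboondox{N}_0^{\mathrm{inh.}})\,\partial_t\tilde{\phi}$, i.e.\ $\tilde{\boldsymbol{\Delta}}^{-1}$ applied to an inhomogeneity already controlled by \Cref{lem:regularity_of_N}.

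For the regularity assertions, $\tilde{\mathbf{w}}$ is a constant multiple of $\tilde{\boldsymbol{\Delta}}^{-1}$ of a high-frequency truncation of $\tilde{\mathbf{B}} \in H^{0,\frac{1}{2}+\epsilon}(S_T)$; since $\tilde{\boldsymbol{\Delta}}^{-1}$ restricted to $|\xi|\geq 1$ is a bounded Fourier multiplier gaining two derivatives, $\tilde{\mathbf{w}} \in H^{2,\frac{1}{2}+\epsilon}(S_T)$ \emph{without loss} --- this is exactly why $\tilde{\mathbf{w}}$, unlike $\tilde{v}$, does not see $\delta$, because the magnetic field carries no loss in \Cref{Selberg_Tesfahun_theorem}. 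For $\tilde{v} = \varphi = \tilde{\boldsymbol{\Delta}}^{-1}\tilde{\boldsymbol{\nabla}}\cdot\tilde{\mathbf{A}}^{\mathrm{inh.}}$, the operator $\tilde{\boldsymbol{\Delta}}^{-1}\tilde{\boldsymbol{\nabla}}$ gains one derivative on $|\xi|\geq 1$, so $\tilde{\mathbf{A}}^{\mathrm{inh.}} \in H^{1-\delta,\frac{1}{2}+\epsilon}(S_T)$ yields $\tilde{v} \in H^{2-\delta,\frac{1}{2}+\epsilon}(S_T)$, inheriting the Selberg--Tesfahun loss. It then remains to check that the remainder --- the sum of $\tfrac{i}{2}\tilde{A}_a\tilde{A}^a\tilde{\phi}$, the low-frequency remainder $\mathcalboondox{R}_{\mathrm{low}}$, and $-(\tilde{\boldsymbol{\Delta}}^{-1}\mathcalboondox{N}_0^{\mathrm{inh.}})\,\partial_t\tilde{\phi}$ --- lies in $L^2(S_T)$: the cubic term is far subcritical and handled by the wave-Sobolev product estimate \Cref{lem:product_estimates_wave_Sobolev} together with the embeddings of \Cref{lem:regularity_of_N}; $\mathcalboondox{R}_{\mathrm{low}}$ is handled by Strichartz and H\"older on $S_T$ as in the proof of \Cref{lem:regularity_of_N}; and for the defect term one uses that, by \Cref{lem:product_estimates_wave_Sobolev} applied to $\mathcalboondox{N}_0 = -\Im(\bar{\tilde{\phi}}\,\partial_t\tilde{\phi}) - \tilde{A}_0|\tilde{\phi}|^2$, the main part of $\mathcalboondox{N}_0$ lies in $L^2_t H^{\epsilon'}_x(S_T)$ for some $\epsilon'>0$, so that $\tilde{\boldsymbol{\Delta}}^{-1}\mathcalboondox{N}_0^{\mathrm{inh.}} \in L^2_t H^{2+\epsilon'}_x(S_T) \hookrightarrow L^2_t L^\infty_x(S_T)$, which multiplies $\partial_t\tilde{\phi} \in L^\infty_t L^2_x(S_T)$ into $L^2(S_T)$.

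The algebra above --- the identification of the null structure --- is essentially the observation of \cite{SelbergTesfahun2010}; the real work is the regularity bookkeeping, and the main obstacle I anticipate is keeping track of the time-regularity (the $\langle|\tau|-|\xi|\rangle^{\frac{1}{2}+\epsilon}$ weight) of the auxiliary potential $\tilde{v}=\varphi$, which involves a time derivative. One must split $\tilde{A}^{\mathrm{inh.}}_0 = \tilde{A}^{\mathrm{inh.}}_{0,+} + \tilde{A}^{\mathrm{inh.}}_{0,-}$ into $X^{s,b}_\pm$ components, replace $\partial_t$ by $\mp i|\nabla|$ modulo an error in $X^{1-\delta,-\frac{1}{2}+\epsilon}_\pm(S_T)$ (a $\widetilde{\Box}$-type term, absorbed into the $L^2$ remainder via the equation for $\tilde{A}_0$ just as with the defect term), and only then apply $\tilde{\boldsymbol{\Delta}}^{-1}$; verifying that this, together with the spatial-versus-temporal derivative trade-offs in each product estimate, closes in the spaces claimed is where the care is needed.
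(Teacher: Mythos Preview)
Your proposal follows the same route as the paper --- Hodge-decompose $\tilde{\mathbf{A}}$, recognise the divergence-free part as $\varepsilon^{kij}Q_{ij}(\tilde{\phi},\tilde{w}_k)$ with $\tilde{\mathbf{w}}=\tilde{\boldsymbol{\Delta}}^{-1}\tilde{\mathbf{B}}$, recognise the curl-free part together with $\tilde{A}_0\partial_t\tilde{\phi}$ as $Q_0(\tilde{v},\tilde{\phi})$ plus a defect controlled via the wave equation for $\tilde{A}_0$, and place the cubic term directly in $L^2$ --- and it is essentially correct, but two of the justifications are off. First, you absorb $\tilde{A}^{(0)}_a\tilde{\nabla}^a\tilde{\phi}$ into the $L^2$ remainder ``exactly as in \Cref{lem:regularity_of_N}'': that lemma only reaches $L^{3/2}(S_T)$, and the estimate you actually need is Bernstein's inequality (the Fourier support of $\tilde{A}^{(0)}$ in $\{|\xi|\leq 1\}$ upgrades $\dot{H}^{1}_x\hookrightarrow L^6_x$ to $L^\infty_x$, whence $\tilde{A}^{(0)}\in L^\infty_{t,x}$ and the product lands in $L^2(S_T)$). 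The paper sidesteps this by applying the Hodge decomposition to the \emph{full} $\tilde{\mathbf{A}}$ and absorbing the low-frequency non-invertibility of $\tilde{\boldsymbol{\Delta}}$ into the non-uniqueness of $\tilde{\mathbf{w}}$ and $\tilde{v}$ (this is the content of the footnote there). Second, your claim that the main part of $\mathcalboondox{N}_0$ lies in $L^2_tH^{\epsilon'}_x$ for some $\epsilon'>0$ is false --- \Cref{lem:product_estimates_wave_Sobolev} with $s_1=1$, $s_2=0$ gives only $H^{0,0}=L^2$, which is what the paper uses --- but it is also unnecessary, since $H^2(\mathbb{R}^3)\hookrightarrow L^\infty(\mathbb{R}^3)$ already yields $\tilde{\boldsymbol{\Delta}}^{-1}\mathcalboondox{N}_0^{\mathrm{inh.}}\in L^2_tL^\infty_x$. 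Finally, your closing worry about the $b$-regularity of $\tilde{v}$ is misplaced: you yourself wrote $\tilde{v}=\tilde{\boldsymbol{\Delta}}^{-1}\tilde{\boldsymbol{\nabla}}\cdot\tilde{\mathbf{A}}^{\mathrm{inh.}}$, and since $\tilde{\boldsymbol{\Delta}}^{-1}\tilde{\boldsymbol{\nabla}}$ is a purely spatial Fourier multiplier it leaves the weight $\langle|\tau|-|\xi|\rangle^{\frac{1}{2}+\epsilon}$ untouched, giving $\tilde{v}\in H^{2-\delta,\frac{1}{2}+\epsilon}(S_T)$ with no $X^{s,b}_\pm$-splitting needed.
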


\begin{proof}
We split $\tilde{A}_a^{(0)} + \tilde{A}_a^{\text{inh.}} \in \dot{H}^{1,\frac{1}{2}+\epsilon}(S_T) + H^{1-\delta,\frac{1}{2}+\epsilon}(S_T)$ and consider first the cubic term in $\mathcalboondox{M}(\tilde{A}_a, \tilde{\phi})$. We claim this is in $L^2(S_T)$. The inhomogeneous $\times$ inhomogeneous part can be controlled by two applications of \Cref{lem:product_estimates_wave_Sobolev}:
\begin{align*} \| (\tilde{A}^{\text{inh.}})^2 \tilde{\phi} \|_{L^2(S_T)} &\la \| (\tilde{A}^{\text{inh.}})^2 \|_{H^{\frac{1}{2}+\epsilon, 0}(S_T)} \| \tilde{\phi} \|_{H^{1, \frac{1}{2}+\epsilon}(S_T)} \\
&\la \| \tilde{A}^{\text{inh.}} \|^2_{H^{1-\delta, \frac{1}{2}+\epsilon}(S_T)} \| \tilde{\phi} \|_{H^{1, \frac{1}{2} +\epsilon}(S_T)}.
\end{align*}
For the inhomogeneous $\times$ low-frequency part, one similarly obtains
\begin{align*}
    \| \tilde{A}^{\text{inh.}}\tilde{A}^{(0)}\tilde{\phi}\|_{L^2(S_T)} & \la \| \tilde{A}^{(0)}\|_{\dot{H}^{1,\frac{1}{2}+\epsilon}(S_T)} \| \tilde{A}^{\text{inh.}}\tilde{\phi}\|_{H^{\frac{1}{2}+\epsilon,0}(S_T)} \\
    & \la \| \tilde{A}^{(0)}\|_{\dot{H}^{1,\frac{1}{2}+\epsilon}(S_T)} \| \tilde{A}^{\text{inh.}} \|_{H^{{1-\delta},\frac{1}{2}+\epsilon}(S_T)} \|\tilde{\phi} \|_{H^{1,\frac{1}{2}+\epsilon}(S_T)}.
\end{align*}
For the low-frequency $\times$ low-frequency part, the estimate is even simpler: using $H^{1,\frac{1}{2}+\epsilon}(S_T) \hookrightarrow L^\infty_t L^6_x(S_T)$ and $\dot{H}^1(\mathbb{R}^3) \hookrightarrow L^6(\mathbb{R}^3)$,
\begin{align*} \| (\tilde{A}^{(0)})^2 \tilde{\phi} \|_{L^2(S_T)} & \la \| (\tilde{A}^{(0)})^2\|_{L^\infty_t L^3_x (S_T)} \| \tilde{\phi} \|_{L^\infty_t L^6_x(S_T)} \\
& \la \| \tilde{A}^{(0)} \|^2_{L^\infty_t L^6_x(S_T)} \| \tilde{\phi} \|_{H^{1,\frac{1}{2}+\epsilon}(S_T)} \\
& \la \| \tilde{A}^{(0)} \|^2_{\dot{H}^{1,\frac{1}{2}+\epsilon}(S_T)} \| \tilde{\phi} \|_{H^{1,\frac{1}{2}+\epsilon}(S_T)}.
\end{align*}
Now for the differentiated term in $\mathcalboondox{M}(\tilde{A}_a, \tilde{\phi})$, we split the spatial part $\tilde{\mathbf{A}}$ of the potential into its divergence-free and curl-free parts,
\[ \tilde{\mathbf{A}} = - \tilde{\boldsymbol{\Delta}}^{-1} \tilde{\boldsymbol{\nabla}} \times \tilde{\boldsymbol{\nabla}} \times \tilde{\mathbf{A}} + \tilde{\boldsymbol{\Delta}}^{-1} \tilde{\boldsymbol{\nabla}} (\tilde{\boldsymbol{\nabla}} \cdot \tilde{\mathbf{A}}) \eqdef \tilde{\mathbf{A}}^{\mathrm{df}} + \tilde{\mathbf{A}}^{\mathrm{cf}} \]
and hence rewrite
\[ -\tilde{A}_a \tilde{\nabla}^a \tilde{\phi} = ( - \tilde{A}_0 \partial_t \tilde{\phi} + \tilde{\mathbf{A}}^{\mathrm{cf}} \cdot \tilde{\boldsymbol{\nabla}}\tilde{\phi} ) + \tilde{\mathbf{A}}^{\mathrm{df}} \cdot \tilde{\boldsymbol{\nabla}} \tilde{\phi} \eqdef P_1 + P_2, \]
where we claim that both $P_1$ and $P_2$ are null forms. Indeed,
\[ P_2 = \tilde{\mathbf{A}}^{\mathrm{df}} \cdot \tilde{\boldsymbol{\nabla}} \tilde{\phi} = - ( \tilde{\boldsymbol{\nabla}} \times \tilde{\mathbf{w}} ) \cdot \tilde{\boldsymbol{\nabla}} \tilde{\phi} = - \varepsilon^{kij} Q_{ij}(\tilde{\phi}, \tilde{w}_k), \]
where $\tilde{\boldsymbol{\Delta}} \tilde{\mathbf{w}} = \tilde{\mathbf{B}} \in H^{0,\frac{1}{2}+\epsilon}(S_T)$, to which there exists\footnote{This solution $\tilde{\mathbf{w}}$ is, of course, not unique; it is sufficient to require that $\tilde{\mathbf{w}} \in H^{0,\frac{1}{2}+\epsilon}(S_T)$ to obtain the stated regularity, since $\| u \|_{H^2} \la \|u\|_{L^2} + \| \tilde{\boldsymbol{\Delta}} u \|_{L^2}$.} a solution such that $\tilde{\mathbf{w}} \in H^{2,\frac{1}{2}+\epsilon}(S_T)$ that we denote by $\tilde{\mathbf{w}} = \tilde{\boldsymbol{\Delta}}^{-1} \tilde{\mathbf{B}} = \tilde{\boldsymbol{\Delta}}^{-1} \tilde{\boldsymbol{\nabla}} \times \tilde{\mathbf{A}} $. Further, since in Lorenz gauge $\partial_t \tilde{A}_0 = \tilde{\boldsymbol{\nabla}} \cdot \tilde{\mathbf{A}}$ and hence $\tilde{\mathbf{A}}^{\mathrm{cf}} = \tilde{\boldsymbol{\Delta}}^{-1} \tilde{\boldsymbol{\nabla}} \partial_t \tilde{A}_0$,
\[ P_1 = - \tilde{A}_0 \partial_t \tilde{\phi} + \tilde{\mathbf{A}}^{\mathrm{cf}} \cdot \tilde{\boldsymbol{\nabla}} \tilde{\phi} = - \tilde{A}_0 \partial_t \tilde{\phi} + \tilde{\boldsymbol{\nabla}} \tilde{\boldsymbol{\Delta}}^{-1} \partial_t \tilde{A}_0 \cdot \tilde{\boldsymbol{\nabla}} \tilde{\phi}. \]
Put $\tilde{v} \defeq \tilde{\boldsymbol{\Delta}}^{-1} \partial_t \tilde{A}_0$ (i.e., as above, choose a solution $\tilde{v}$ to $\tilde{\boldsymbol{\Delta}} \tilde{v} = \partial_t \tilde{A}_0 \in H^{-\delta,\frac{1}{2}+\epsilon}(S_T)$ such that $\tilde{v} \in H^{2-\delta,\frac{1}{2}+\epsilon}(S_T)$). Then, using the equation for $\tilde{A}_0$,
\begin{align*} \partial_t \tilde{v} = \tilde{\boldsymbol{\Delta}}^{-1} \partial_t^2 \tilde{A}_0 = \tilde{A}_0 + \tilde{\boldsymbol{\Delta}}^{-1} \big(\! -\operatorname{Im} \big( \bar{\tilde{\phi}} \tilde{\mathrm{D}}_0 \tilde{\phi} \big) \big),
\end{align*}
so that
\[ P_1 = - Q_0(\tilde{v}, \tilde{\phi}) + \partial_t \tilde{\phi} \tilde{\boldsymbol{\Delta}}^{-1} \big( \! \operatorname{Im} \big( \bar{\tilde{\phi}} \partial_t \tilde{\phi} \big) \big) + \partial_t \tilde{\phi} \tilde{\boldsymbol{\Delta}}^{-1} \big( \tilde{A}_0 |\tilde{\phi}|^2 \big).  
\]
Since $\partial_t \tilde{\phi} \in H^{0,\frac{1}{2}+\epsilon}(S_T)$, we have $\bar{\tilde{\phi}} \partial_t \tilde{\phi} \in H^{-\eta,0}(S_T)$ for any $\eta > 0$ using $H^{1,\frac{1}{2}+\epsilon} \cdot H^{0,\frac{1}{2}+\epsilon} \hookrightarrow H^{-\eta, 0}$. Moreover, the term $\tilde{A}_0 |\tilde{\phi}|^2$ is in $L^2(S_T)$ by a similar argument as for $\tilde{A}^2 \tilde{\phi}$,
\begin{align}
\label{L2_estimate_A_phi_squared}
\begin{split}
\| \tilde{A}^{\text{inh.}} |\tilde{\phi}|^2 \|_{L^2(S_T)} &\la \| |\tilde{\phi}|^2 \|_{H^{\frac{1}{2}+\epsilon+\delta,0}(S_T)} \| \tilde{A}^{\text{inh.}} \|_{H^{1-\delta,\frac{1}{2}+\epsilon}(S_T)} \\
& \la \| \tilde{\phi}\|^2_{H^{1,\frac{1}{2}+\epsilon}(S_T)} \| \tilde{A}^{\text{inh.}} \|_{H^{1-\delta,\frac{1}{2}+\epsilon}(S_T)},
\end{split}
\end{align}
and
\begin{align}
\label{L2_estimate_A_phi_squared_2}
\begin{split}
    \| \tilde{A}^{(0)} |\tilde{\phi}|^2 \|_{L^2(S_T)} & \la \| \tilde{A}^{(0)} \|_{L^\infty_t L^6_x (S_T)} \| |\tilde{\phi}|^2 \|_{L^\infty_t L^3_x(S_T)} \\ 
    & \la \| \tilde{A}^{(0)} \|_{\dot{H}^{1,\frac{1}{2}+\epsilon}(S_T)} \| \tilde{\phi} \|_{H^{1,\frac{1}{2}+\epsilon}(S_T)}.
\end{split}
\end{align}
Hence we have $\operatorname{Im}\big( \bar{\tilde{\phi}} \tilde{\mathrm{D}}_0 \tilde{\phi} \big) \in H^{-\eta, 0}(S_T)$ (for any $\eta >0$), and so
\[ \partial_t \tilde{\phi} \tilde{\boldsymbol{\Delta}}^{-1}\big(\!\operatorname{Im}\big( \bar{\tilde{\phi}} \tilde{\mathrm{D}}_0 \tilde{\phi} \big)\big) \in H^{0,\frac{1}{2}+\epsilon} \cdot L^2_t H^{2-\eta}_x \hookrightarrow L^\infty_tL^2_x \cdot L^2_t L^\infty_x \subset L^2(S_T). \]
Hence up to a term which is in $L^2(S_T)$, $P_1$ is equal to the classical null form $-Q_0(\tilde{v}, \tilde{\phi})$, and \eqref{null_structure_M} follows.
\end{proof}

\subsection{Null structure in $\boldsymbol{\mathcalboondox{P}}$ and $\boldsymbol{\mathcalboondox{Q}}$}

\begin{lemma}
    \label{lem:null_structure_P_and_Q}
    The nonlinearities $\boldsymbol{\mathcalboondox{P}}(\tilde{A}_a, \tilde{\phi})$ and $\boldsymbol{\mathcalboondox{Q}}(\tilde{A}_a, \tilde{\phi})$ are given by 
    \begin{align*} \boldsymbol{\mathcalboondox{P}}(\tilde{A}_a, \tilde{\phi}) &+ \operatorname{Im}\big(Q_{0i}(\tilde{\phi},\bar{\tilde{\phi}}) \big) \in \dot{H}^{-1}(S_T), \\
    \boldsymbol{\mathcalboondox{Q}}(\tilde{A}_a, \tilde{\phi}) &- \epsilon^{ijk}\operatorname{Im} \big( Q_{jk}(\tilde{\phi},\bar{\tilde{\phi}}) \big) \in \dot{H}^{-1}(S_T), \end{align*}
    where $\tilde{\phi} \in H^{1,\frac{1}{2}+\epsilon}(S_T)$ is the scalar field obtained in \Cref{Selberg_Tesfahun_theorem}.
\end{lemma}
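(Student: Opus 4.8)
The plan is to read both identities straight off the explicit expressions \eqref{Minkowski_wave_equation_electric_field}--\eqref{Minkowski_wave_equation_magnetic_field} for $\boldsymbol{\mathcalboondox{P}}(\tilde{A}_a,\tilde{\phi})$ and $\boldsymbol{\mathcalboondox{Q}}(\tilde{A}_a,\tilde{\phi})$, splitting each into its quadratic-in-$\tilde{\phi}$ part, which is a classical null form on the nose, and its cubic part involving the potential, which is a total derivative of an $L^2(S_T)$ cubic expression and hence a $\dot{H}^{-1}(S_T)$ error. For the quadratic part: comparing with the definitions of $Q_{0i}$ and $Q_{jk}$, the $i$-th component of $-\operatorname{Im}\big(\partial_t\tilde{\phi}\,\tilde{\boldsymbol{\nabla}}\bar{\tilde{\phi}}-\tilde{\boldsymbol{\nabla}}\tilde{\phi}\,\partial_t\bar{\tilde{\phi}}\big)$ is exactly $-\operatorname{Im}\big(Q_{0i}(\tilde{\phi},\bar{\tilde{\phi}})\big)$, and the $i$-th component of $\operatorname{Im}\big(\tilde{\boldsymbol{\nabla}}\tilde{\phi}\times\tilde{\boldsymbol{\nabla}}\bar{\tilde{\phi}}\big)$ is $\epsilon^{ijk}\operatorname{Im}\big(\tilde{\nabla}_j\tilde{\phi}\,\tilde{\nabla}_k\bar{\tilde{\phi}}\big)$, which, up to a fixed numerical constant coming from the antisymmetrization in $(j,k)$ --- legitimate because $\operatorname{Im}(z)$ is odd under $z\mapsto\bar z$ --- coincides with $\epsilon^{ijk}\operatorname{Im}\big(Q_{jk}(\tilde{\phi},\bar{\tilde{\phi}})\big)$. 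Subtracting these null forms off leaves precisely the cubic remainders $-\partial_t\big(\tilde{\mathbf{A}}|\tilde{\phi}|^2\big)+\tilde{\boldsymbol{\nabla}}\big(\tilde{A}_0|\tilde{\phi}|^2\big)$ and $\tilde{\boldsymbol{\nabla}}\times\big(\tilde{\mathbf{A}}|\tilde{\phi}|^2\big)$ respectively.

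It remains to show each remainder lies in $\dot{H}^{-1}(S_T)$. Each is a single spacetime derivative --- temporal or spatial --- of a component of the cubic quantity $\tilde{A}_a|\tilde{\phi}|^2$, and $\tilde{A}_a|\tilde{\phi}|^2\in L^2(S_T)$ was already established within the proof of \Cref{lem:null_structure_in_M}: writing $\tilde{A}_a=\tilde{A}_a^{(0)}+\tilde{A}_a^{\text{inh.}}$, the high-frequency piece is handled by \Cref{lem:product_estimates_wave_Sobolev} (cf.\ \eqref{L2_estimate_A_phi_squared}) and the low-frequency piece by the embeddings $\dot{H}^{1,\frac{1}{2}+\epsilon}(S_T)\hookrightarrow L^\infty_tL^6_x$ and $H^{1,\frac{1}{2}+\epsilon}(S_T)\hookrightarrow L^\infty_tL^6_x$ together with $\big\||\tilde{\phi}|^2\big\|_{L^\infty_tL^3_x(S_T)}\la\|\tilde{\phi}\|_{H^{1,\frac{1}{2}+\epsilon}(S_T)}^2$ (cf.\ \eqref{L2_estimate_A_phi_squared_2}). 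Since for $f\in L^2(\mathbb{R}^{1+3})$ one has $\|\partial_\mu f\|_{\dot{H}^{-1}(\mathbb{R}^{1+3})}=\big\||(\tau,\xi)|^{-1}(\tau,\xi)_\mu\,\widehat{f}\,\big\|_{L^2}\le\|f\|_{L^2}$, and this passes to the restriction space on $S_T$, both remainders belong to $\dot{H}^{-1}(S_T)$, which is the assertion.

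There is no substantial obstacle in this lemma: the null-form identifications are just a matter of unwinding the definitions and of the oddness of $\operatorname{Im}$ under conjugation, and the control of the cubic terms is inherited verbatim from the proof of \Cref{lem:null_structure_in_M}. The only point meriting a moment's care is that the admissible error here is the \emph{homogeneous} space $\dot{H}^{-1}(S_T)$, which is exactly what differentiating an $L^2$ function once produces --- no frequency localization is needed beyond the $L^2$ bound on $\tilde{A}_a|\tilde{\phi}|^2$ already supplied by the analysis of \Cref{lem:null_structure_in_M}.
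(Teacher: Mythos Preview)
Your proof is correct and follows essentially the same approach as the paper: read off the null forms $Q_{0i}$ and $Q_{jk}$ from the explicit expressions \eqref{Minkowski_wave_equation_electric_field}--\eqref{Minkowski_wave_equation_magnetic_field}, then invoke the $L^2(S_T)$ bound on $\tilde{A}_a|\tilde{\phi}|^2$ established in \eqref{L2_estimate_A_phi_squared}--\eqref{L2_estimate_A_phi_squared_2} to place the cubic remainders, which are single spacetime derivatives of this quantity, in $\dot{H}^{-1}(S_T)$. The paper's proof is slightly terser but identical in substance.
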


\begin{proof}
This follows directly from the expressions for $\boldsymbol{\mathcalboondox{P}}(\tilde{A}_a, \tilde{\phi})$ and $\boldsymbol{\mathcalboondox{Q}}(\tilde{A}_a, \tilde{\phi})$. We have
\begin{equation}
\label{null_structure_P}
\boldsymbol{\mathcalboondox{P}}(\tilde{A}_a, \tilde{\phi}) = - \operatorname{Im} \big( Q_{0i}(\tilde{\phi},\bar{\tilde{\phi}}) \big) - \partial_t(\tilde{\mathbf{A}} |\tilde{\phi}|^2) + \tilde{\boldsymbol{\nabla}} ( \tilde{A}_0 |\tilde{\phi}|^2),
\end{equation}
where by \eqref{L2_estimate_A_phi_squared} and \eqref{L2_estimate_A_phi_squared_2}, the last two terms in \eqref{null_structure_P} are in $\dot{H}^{-1}(S_T)$. Similarly,
\begin{equation}
\label{null_structure_Q}
\boldsymbol{\mathcalboondox{Q}}(\tilde{A}_a, \tilde{\phi}) = \epsilon^{ijk}\operatorname{Im} \big( Q_{jk}(\tilde{\phi},\bar{\tilde{\phi}}) \big) + \boldsymbol{\tilde{\nabla}} \times ( \tilde{\mathbf{A}} |\tilde{\phi}|^2 ),
\end{equation}
where the second term is in $\dot{H}^{-1}(S_T)$.
\end{proof}

\subsection{Regularity of the null nonlinearities}

We control the null nonlinearities using the following estimates for null forms in $H^{s,b}$ spaces. The first of these appears in Masmoudi--Nakanishi \cite{MasmoudiNakanishi2003}, Lemma 2.3, with the most important underlying estimates due to Foschi and Klainerman \cite{FoschiKlainerman2000}. By definition of restrictions spaces, these estimates hold also on $S_T$.

\begin{lemma}[Masmoudi--Nakanishi] Let $b> \frac{1}{2}$ and $s_1, \, s_2\leq 2 \leq s_1 + s_2$. Then
    \begin{equation} \label{Masmoudi_Nakanishi_estimate}
        \| Q_{ij}(\tilde{u}, \tilde{v}) \|_{L^2_t \dot{H}^{s_1 + s_2 - 3}_x} \la \| \tilde{u} \|_{H^{s_1,b}} \| \tilde{v} \|_{H^{s_2,b}}.
    \end{equation}
\end{lemma}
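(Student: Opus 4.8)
The content of \eqref{Masmoudi_Nakanishi_estimate} is that the rotational null form $Q_{ij}$ gains half a derivative over a generic bilinear product: by the Sobolev product law a product of two factors at the regularities $s_1-1$, $s_2-1$ (heuristically those of $\tilde\nabla\tilde u$, $\tilde\nabla\tilde v$) would land only in $\dot{H}^{s_1+s_2-7/2}$, whereas the claimed target is $\dot{H}^{s_1+s_2-3}$. This extra half derivative is a null-structure gain, available precisely because $H^{s,b}$ with $b>\tfrac12$ localises each factor to a thin neighbourhood of the light cone. I would prove the estimate by the standard reduction to free waves followed by the sharp bilinear estimates of Foschi--Klainerman.

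\emph{Step 1: $\pm$-decomposition and transfer.} Split each factor along the two characteristic cones, $\tilde u = \tilde u_+ + \tilde u_-$ with $\tilde u_\pm \in X^{s_1,b}_\pm$ and $\|\tilde u_+\|_{X^{s_1,b}_+}+\|\tilde u_-\|_{X^{s_1,b}_-}\sim\|\tilde u\|_{H^{s_1,b}}$, and likewise for $\tilde v$; it then suffices to bound $Q_{ij}(\tilde u_{\pm_1},\tilde v_{\pm_2})$ for each of the four sign choices. Using \eqref{propagator_characterisation_Bourgain_spaces} to write $\tilde u_{\pm_1}(t,\cdot) = \int_\R \e^{it\sigma}\,U(\pm_1 t)\,g_\sigma\,\d\sigma$ with $\int_\R \langle\sigma\rangle^{2b}\|g_\sigma\|_{H^{s_1}}^2\,\d\sigma\sim\|\tilde u_{\pm_1}\|^2_{X^{s_1,b}_{\pm_1}}$, and using that the target norm $L^2_t\dot{H}^{s_1+s_2-3}_x$ is invariant under modulation in $t$, a Cauchy--Schwarz in the modulation parameters (convergent precisely because $b>\tfrac12$) reduces the estimate to its free-wave version,
\[ \big\|\,Q_{ij}\big(U(\pm_1 t)u_0,\,U(\pm_2 t)v_0\big)\,\big\|_{L^2_t\dot{H}^{s_1+s_2-3}_x(\R^{1+3})}\la\|u_0\|_{H^{s_1}(\R^3)}\,\|v_0\|_{H^{s_2}(\R^3)}. \]

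\emph{Step 2: null symbol and Foschi--Klainerman.} The form $Q_{ij}$ carries the Fourier multiplier $\xi_i\eta_j-\xi_j\eta_i$ in the input frequencies $\xi,\eta\in\R^3$, and $|\xi_i\eta_j-\xi_j\eta_i| = |\xi|\,|\eta|\,|\sin\angle(\xi,\eta)|$, which is unchanged under $\xi\mapsto-\xi$ or $\eta\mapsto-\eta$. Combining this with the elementary angular bound — which controls $\sin\angle(\pm_1\xi,\pm_2\eta)^2$ by (the modulation of the product relative to the appropriate output cone)$\,/\min(|\xi|,|\eta|)$ — one sees, after a Littlewood--Paley decomposition of $u_0$, $v_0$ and a dyadic decomposition of the product in its output modulation $d$, that the multiplier is dominated by $|\xi|\,|\eta|\,\big(d/\min(|\xi|,|\eta|)\big)^{1/2}$ times the modulation-localised piece. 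One is then reduced to the sharp modulation-localised bilinear $L^2_{t,x}$ estimates for free waves of Foschi--Klainerman \cite{FoschiKlainerman2000}, in both the same-sign case (where the null gain is essential) and the transversal opposite-sign case (which is easier and needs no null structure). The hypotheses $s_1,s_2\le 2\le s_1+s_2$ place the resulting exponents precisely in the admissible range of those estimates; summing the dyadic pieces with the half-derivative null gain then yields \eqref{Masmoudi_Nakanishi_estimate}.

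\emph{Main obstacle.} The delicate point is the endpoint $s_1+s_2 = 2$ (with $s_i\le 2$): there one of the dyadic summations — over the frequency ratio $\min/\max$ or over the output modulation $d$ — becomes borderline, and the homogeneous target norm $\dot{H}^{s_1+s_2-3}$, whose exponent drops to $-1$, demands careful treatment of the low output frequencies, where the gain $\min(|\xi|,|\eta|)^{1/2}$ must be spent without loss and the Foschi--Klainerman admissibility inequalities must be checked to hold with equality only where their theorems permit. Everything else is routine transfer-principle bookkeeping; alternatively, one can shortcut Step 2 by invoking the $X^{s,b}$ null-form estimates of Klainerman--Machedon \cite{KlainermanMachedon1996} directly, foregoing sharp constants.
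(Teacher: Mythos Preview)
The paper does not actually prove this lemma: it cites it directly from Masmoudi--Nakanishi \cite{MasmoudiNakanishi2003}, Lemma~2.3, noting only that the underlying estimates come from Foschi--Klainerman \cite{FoschiKlainerman2000}. Your two-step argument --- $\pm$-decomposition plus transfer principle to reduce to free waves, then invoking the Foschi--Klainerman bilinear estimates --- is exactly the template the paper \emph{does} spell out for the companion Lemmas on $Q_0$ and $Q_{0i}$ (see the proofs of \Cref{lem:Hsb_estimate_Q0,lem:Hsb_estimate_Q0i}), so your proposal is both correct and entirely in the spirit of the paper. The only stylistic difference is that the paper's transfer argument is phrased slightly differently (writing $\tilde u = U(\pm t)\tilde f$ with $\tilde f \in H^b_t H^{s_1}_x$ and using that $H^b(\R)$ is a Banach algebra, rather than your modulation-integral version), but these are equivalent; your Step~2 gives more detail than the paper, which simply points to the appropriate Foschi--Klainerman corollary.
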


\begin{remark}
Note that the above estimate is valid for the endpoint case $s_1 + s_2 = 2$. The situation is slightly more delicate in the case of $Q_0$ and $Q_{0i}$: for these null forms the endpoint estimate in $L^2_t H^{-1}_x$ is prohibited. Although in all cases we will in fact be interested in the weaker estimates in $H^{-1}$ in spacetime, for completeness we record the slight differences between the null forms involving time derivatives and the ones without. For $Q_0$, we can actually get away with an asymmetric estimate in $L^2_t \dot{H}^{s-2}_x$, since in \eqref{null_structure_M} the null form $Q_0(\tilde{v},\tilde{\phi})$ contains the more regular $\tilde{v}$, which ensures that the required estimate stays away from endpoints. For $Q_{0i}$, however, we will need the full estimate in $H^{-1}$ in spacetime.
\end{remark}

\begin{lemma} \label{lem:Hsb_estimate_Q0} Let $b> \frac{1}{2}$. Then for $1<s \leq 2$
    \[ \| Q_0(\tilde{u}, \tilde{v}) \|_{L^2_t \dot{H}^{s-2}_x} \la \| \tilde{u} \|_{H^{1,b}} \| \tilde{v} \|_{H^{s,b}}. \]    
\end{lemma}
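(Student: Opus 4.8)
The plan is to reduce the estimate, via the transfer principle, to a bilinear $L^2$ estimate for products of free waves of Foschi--Klainerman type, exploiting the algebraic identity that expresses $Q_0$ through the d'Alembertian. (This is the asymmetric estimate anticipated in the preceding remark, the asymmetry $1$ versus $s$ in the two factors being tied to the output exponent $s-2$.)

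First I would split $\tilde u = \tilde u_+ + \tilde u_-$ and $\tilde v = \tilde v_+ + \tilde v_-$ with $\tilde u_\pm \in X^{1,b}_\pm$, $\tilde v_\pm \in X^{s,b}_\pm$ and $\|\tilde u_\pm\|_{X^{1,b}_\pm}\la\|\tilde u\|_{H^{1,b}}$, $\|\tilde v_\pm\|_{X^{s,b}_\pm}\la\|\tilde v\|_{H^{s,b}}$ (recall $H^{s,b}=X^{s,b}_++X^{s,b}_-$ for $b\ge 0$). By bilinearity it suffices to estimate each of the four products $Q_0(\tilde u_{\sigma_1},\tilde v_{\sigma_2})$; fix one such product and write $\sigma_1,\sigma_2\in\{+1,-1\}$ for the corresponding signs. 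By the characterisation \eqref{propagator_characterisation_Bourgain_spaces} and the $X^{s,b}_\pm$ transfer principle, valid since $b>\tfrac12$ (see \cite{KlainermanSelberg2002}), this reduces to the free-wave estimate
\[ \big\| Q_0\big(U(\sigma_1 t)f,\,U(\sigma_2 t)g\big)\big\|_{L^2_t\dot H^{s-2}_x(\mathbb{R}^{1+3})} \la \|f\|_{H^1(\mathbb{R}^3)}\|g\|_{H^s(\mathbb{R}^3)}. \]

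To prove this I would use the identity $2Q_0(u,v)=\widetilde\Box(uv)-v\,\widetilde\Box u-u\,\widetilde\Box v$, which follows from the Leibniz rule for $\widetilde\Box$. For free waves the last two terms vanish, so $Q_0(U(\sigma_1 t)f,U(\sigma_2 t)g)=\tfrac12\widetilde\Box\big(U(\sigma_1 t)f\cdot U(\sigma_2 t)g\big)$; on the spacetime Fourier side this equals $-\tfrac12(\tau^2-|\xi|^2)$ times the transform of the product, which (after the $\eta$-integration) is supported on $\{\tau=\sigma_1|\xi-\eta|+\sigma_2|\eta|\}$. On that set one computes $\tau^2-|\xi|^2 = 2\big(\sigma_1\sigma_2|\xi-\eta||\eta|-(\xi-\eta)\cdot\eta\big)$, whose modulus is at most $2|\xi-\eta||\eta|\,\theta(\sigma_1(\xi-\eta),\sigma_2\eta)^2$, where $\theta(\cdot,\cdot)$ is the angle between the two vectors --- this is the null structure of $Q_0$. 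Bounding $|\xi-\eta|\le\langle\xi-\eta\rangle$ and (since $s\ge1$) $|\eta|\le\langle\eta\rangle^s$, and absorbing these weights into $f$, $g$, reduces the claim to the bilinear estimate
\[ \Big\| |\xi|^{s-2}\!\int_{\mathbb{R}^3}\theta(\sigma_1(\xi-\eta),\sigma_2\eta)^2\,|F(\xi-\eta)|\,|G(\eta)|\,\delta\big(\tau-\sigma_1|\xi-\eta|-\sigma_2|\eta|\big)\,d\eta\Big\|_{L^2_{\tau,\xi}} \la \|F\|_{L^2(\mathbb{R}^3)}\|G\|_{L^2(\mathbb{R}^3)}, \]
which is an instance of the Foschi--Klainerman bilinear estimates for free waves \cite{FoschiKlainerman2000}; the argument is structurally the same as the proof of the Masmoudi--Nakanishi estimate \eqref{Masmoudi_Nakanishi_estimate} for $Q_{ij}$.

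The hard part is verifying that the exponents in this last display fall within the admissible range of \cite{FoschiKlainerman2000}, i.e. that the relevant frequency interactions close. Two regimes are delicate. At high output frequency one must integrate $\theta^2$ over the cone near the tangency locus of the two characteristic cones (where $\sigma_1(\xi-\eta)$ and $\sigma_2\eta$ are parallel); the quadratic vanishing of the $Q_0$ symbol there is precisely what makes this converge, and is the geometric core of \cite{FoschiKlainerman2000}. At low output frequency, $|\xi|\ll|\eta|\approx|\xi-\eta|$ with $\sigma_1=\sigma_2$ and nearly antipodal inputs --- where, in contrast to $Q_{ij}$, the $Q_0$ symbol offers no angular gain --- convergence of the spatial integral $\int_{|\xi|\le 1}|\xi|^{2(s-2)}(\cdots)\,d\xi$ requires exactly the hypothesis $s>1$ (while $s\le 2$ keeps the output exponent $s-2$ admissible); this is why, unlike \eqref{Masmoudi_Nakanishi_estimate}, the endpoint $s=1$ --- equivalently the forbidden $L^2_tH^{-1}_x$ product estimate --- must be excluded. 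The remaining ingredients --- the $\pm$-splitting, the transfer principle, the Leibniz identity and the symbol computation --- are routine, and the estimate on $S_T$ follows from that on $\mathbb{R}^{1+3}$ by the definition of restriction spaces.
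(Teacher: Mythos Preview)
Your proof is correct and follows essentially the same route as the paper's: reduce to free waves via the $X^{s,b}_\pm$ transfer principle, identify $Q_0$ on a product of free waves with $\tfrac12\widetilde\Box(\tilde u\tilde v)$ (equivalently $D_+D_-(\tilde u\tilde v)$ in the Foschi--Klainerman notation), and then invoke the bilinear free-wave estimates of \cite{FoschiKlainerman2000}. The paper is slightly more economical in that it cites Corollary~13.3 of \cite{FoschiKlainerman2000} directly with the parameter choices $\beta_+=\beta_-=0$, $\beta_0=s-2$, $\alpha_1=1$, $\alpha_2=s$, rather than unpacking the angular symbol and discussing the low/high-frequency regimes; conversely, the paper spells out the extension step from free waves to $X^{s,b}_\pm$ explicitly (via $H^b(\mathbb{R})\hookrightarrow L^\infty(\mathbb{R})$ and the Banach algebra property of $H^b$ for $b>\tfrac12$), whereas you appeal to the transfer principle as a black box.
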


\begin{proof} Without loss of generality, assume $\tilde{u} \in X^{1,b}_+$ and $\tilde{v} \in X^{s,b}_-$, the proof being the same for the other signs. First assume $\tilde{u}$ and $\tilde{v}$ are free waves, i.e. $\tilde{u}(t,x) = U(t) \tilde{u}(0,x)$ and $\tilde{v}(t,x) = U(-t) \tilde{v}(0,x)$, where $U(t)$ is the operator defined in \eqref{free_wave_propagator}. Then $\widetilde{\Box} \tilde{u} = 0 = \widetilde{\Box} \tilde{v}$, and in the notation of \cite{FoschiKlainerman2000}, the null form $Q_0(\tilde{u}, \tilde{v})$ corresponds to $D_+ D_- (\tilde{u} \tilde{v})$, where $D_+$ is the operator with symbol $|\tau| + |\xi|$ and $D_-$ is the operator with symbol $||\tau|-|\xi||$. Applying Corollary 13.3 of \cite{FoschiKlainerman2000} with $\beta_+ = 0 = \beta_-$, $\beta_0 = s - 2$, $\alpha_1 = 1$, and $\alpha_2 = s$ gives the estimate for free waves
\begin{equation} \label{Q0_estimate_free_waves} \| Q_0(\tilde{u}, \tilde{v}) \|_{L^2_t\dot{H}^{s-2}_x} \la \| \tilde{u}(0,\cdot) \|_{H^1} \| \tilde{v}(0,\cdot) \|_{H^s} . \end{equation}
To extend this to $\tilde{u} \in X^{1,b}_+$ and $\tilde{v} \in X^{s,b}_-$, note that by the characterisation \eqref{propagator_characterisation_Bourgain_spaces}, there exist $\tilde{f} \in H^b_t H^1_x$, $\tilde{g} \in H^b_t H^s_x$ such that $\tilde{u} = U(t) \tilde{f}$ and $\tilde{v} = U(-t) \tilde{g}$. Using the embedding $H^b(\mathbb{R}) \subset L^\infty(\mathbb{R})$, we have
\begin{align*}
    \| Q_0 (\tilde{u}, \tilde{v}) \|_{L^2_t \dot{H}^{s-2}_x} & \la \| Q_0 (U(t)\tilde{f}(t,x), U(-t)\tilde{g}(t,x) ) \|_{L^2_t \dot{H}^{s-2}_x} \\
    & \la \| Q_0 ( U(t) \tilde{f}(t',x), U(-t) \tilde{g}(t',x) ) \|_{L^\infty_{t'} L^2_t \dot{H}^{s-2}_x} \\
    & \la \| Q_0 ( U(t) \tilde{f}(t',x), U(-t) \tilde{g}(t',x) ) \|_{H^b_{t'} L^2_t \dot{H}^{s-2}_x} \\
 \eqref{Q0_estimate_free_waves} \to  & \la \left\| \| \tilde{f}(t',\cdot) \|_{H^{1}} \| \tilde{g}(t',\cdot) \|_{H^s} \right\|_{H^b_{t'}} \\
    & \la \| U(-t') \tilde{u}(t',x) \|_{H^b_{t'} H^1_x} \| U(t') \tilde{v}(t',x) \|_{H^b_{t'} H^s_x} \\
    & \la \| \tilde{u} \|_{X^{1,b}_+} \| \tilde{v} \|_{X^{s,b}_-},
\end{align*}
where in the penultimate line we used the fact that $H^b(\mathbb{R})$ is a Banach algebra for $b> \frac{1}{2}$, i.e. $\| fg \|_{H^b} \la \|f \|_{H^b} \|g \|_{H^b}$.
\end{proof}

\begin{lemma} \label{lem:Hsb_estimate_Q0i} Let $b > \frac{1}{2}$. Then

    \[ \| Q_{0i}(\tilde{u},\tilde{v}) \|_{\dot{H}^{-1}_{t,x}} \la \| \tilde{u} \|_{H^{1,b}} \| \tilde{v} \|_{H^{1,b}}. \]

\end{lemma}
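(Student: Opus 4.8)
The plan is to follow the same route as the proof of \Cref{lem:Hsb_estimate_Q0}: decompose into $\pm$-waves, transfer to free waves, and apply the bilinear estimates of Foschi--Klainerman \cite{FoschiKlainerman2000}. The one new feature, as already noted, is that for $Q_{0i}$ the endpoint estimate in $L^2_t \dot H^{-1}_x$ is false, so we are forced to work with the full spacetime space $\dot H^{-1}_{t,x}$, whose weight $D_0^{-1}$ (symbol $(|\tau|^2 + |\xi|^2)^{-1/2}$) supplies the extra decay at large output modulation that $H^{-1}_x$ alone does not; there is consequently no room to spare in the estimate.

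First I would reduce to free waves. Using $H^{1,b} = X^{1,b}_- + X^{1,b}_+$ with $\|u\|_{H^{1,b}} = \|u_-\|_{X^{1,b}_-} + \|u_+\|_{X^{1,b}_+}$ for $b > \tfrac{1}{2}$ (see \Cref{sec:dispersive_Sobolev_spaces}), split $\tilde u = \tilde u_- + \tilde u_+$ and $\tilde v = \tilde v_- + \tilde v_+$; by bilinearity it suffices to bound $\|Q_{0i}(\tilde u_{\pm_1}, \tilde v_{\pm_2})\|_{\dot H^{-1}_{t,x}}$ by $\|\tilde u_{\pm_1}\|_{X^{1,b}_{\pm_1}} \|\tilde v_{\pm_2}\|_{X^{1,b}_{\pm_2}}$ for each of the four sign combinations, of which only the equal-sign and opposite-sign cases are genuinely distinct (the others follow by complex conjugation and the antisymmetry of $Q_{0i}$). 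Since $\dot H^{-1}_{t,x}$ is the $L^2_{t,x}$-norm composed with the spacetime Fourier multiplier $D_0^{-1}$, and $Q_{0i}$ is a spacetime-translation-invariant bilinear differential operator, the transference principle applies exactly as in the proof of \Cref{lem:Hsb_estimate_Q0} (write $\tilde u_{\pm_1} = U(\pm_1 t)\tilde f$, $\tilde v_{\pm_2} = U(\pm_2 t)\tilde g$ with $\tilde f,\tilde g \in H^b_t H^1_x$, freeze the $H^b_{t'}$-variable, and use that $H^b(\mathbb{R})$ is a Banach algebra for $b > \tfrac12$). This reduces the lemma to the free-wave bilinear estimate
\[ \big\| D_0^{-1} Q_{0i}\big( U(\pm_1 t) f_0,\, U(\pm_2 t) g_0 \big) \big\|_{L^2_{t,x}(\mathbb{R}^{1+3})} \la \| f_0 \|_{H^1(\mathbb{R}^3)} \| g_0 \|_{H^1(\mathbb{R}^3)}. \]

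Second I would prove this free-wave estimate. Evaluating $Q_{0i}$ on a product of plane waves $\e^{i(\pm_1 t|\zeta| + x\cdot\zeta)}$, $\e^{i(\pm_2 t|\eta| + x\cdot\eta)}$ one finds its symbol is $\pm_1 |\zeta|\eta_i \mp_2 |\eta|\zeta_i$, of modulus comparable (after summing in $i$) to $|\zeta|\,|\eta|$ times $\sin\tfrac{\theta}{2}$ in the equal-sign case and $\cos\tfrac{\theta}{2}$ in the opposite-sign case, where $\theta$ is the angle between $\zeta$ and $\eta$; in either case the symbol vanishes precisely at the resonant configuration of that interaction (collinear for $++$, anti-collinear for $+-$), which is exactly the cancellation underlying the Foschi--Klainerman null-form estimates. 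The required estimate therefore follows by applying the results of \cite{FoschiKlainerman2000} to $D_0^{-1} Q_{0i}$ of free waves -- as in \Cref{lem:Hsb_estimate_Q0}, where Corollary 13.3 of \cite{FoschiKlainerman2000} was invoked for $Q_0$; here one uses the analogous bilinear bound with the output weight carried by the full spacetime derivative $D_0$ and data exponents $\alpha_1 = \alpha_2 = 1$ (the homogeneous $\dot H^1$-version, which upgrades to the stated $H^1$-version via $\|\cdot\|_{\dot H^1} \le \|\cdot\|_{H^1}$), splitting dyadically in the angle and in the output modulation if necessary and checking that the exponents stay within the admissible range. Summing the four sign combinations completes the proof.

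I expect the main obstacle to be precisely this second step: because the estimate sits exactly at the $L^2_t \dot H^{-1}_x$-endpoint, one cannot afford any loss and must use the full strength of the Foschi--Klainerman machinery -- the combination of the angular null-form gain with the sharp $L^2_{t,x}$ bilinear bounds for products of waves on light cones, together with the $D_0^{-1}$-decay at large modulation. In particular the opposite-sign interaction, where collinear frequencies are harmless but anti-collinear ones are resonant and the null symbol degenerates, must be matched to the correct Foschi--Klainerman estimate and dyadic decomposition.
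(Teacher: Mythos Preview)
Your proposal is correct and follows essentially the same route as the paper: reduce to free waves via the $X^{s,b}_\pm$ decomposition and the transference argument of \Cref{lem:Hsb_estimate_Q0}, then invoke the Foschi--Klainerman bilinear estimates for the free-wave case. The paper is simply more specific at the second step, citing Corollary 13.5 of \cite{FoschiKlainerman2000} with parameters $\beta_+ = -1$, $\beta_- = 0$, $\alpha_1 = \alpha_2 = 1$, which dispenses with the dyadic decomposition you anticipate having to carry out by hand.
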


\begin{proof} The estimate
\[ \| Q_{0i} (\tilde{u}, \tilde{v}) \|_{\dot{H}^{-1}_{t,x}} \la \| \tilde{u}(0,\cdot)\|_{H^{1}} \|\tilde{v}(0,\cdot)\|_{H^{1}} \]
for free waves $\tilde{u}(t,x) = U(t) \tilde{u}(0,x)$, $\tilde{v}(t,x) = U(-t)\tilde{v}(0,x)$ follows from Corollary 13.5 of \cite{FoschiKlainerman2000} with $\beta_+ = -1$, $\beta_- = 0$, $\alpha_1 = 1$, and $\alpha_2 = 1$. We then extend this estimate to $X^{s,b}_\pm$ in the same way as in the proof of \Cref{lem:Hsb_estimate_Q0}.
\end{proof}

\begin{lemma} \label{lem:regularity_of_nonlinearities} For the local solution $\tilde{A}_a$, $\tilde{\phi}$ of \eqref{Minkowski_Maxwell_equation_potential}--\eqref{Minkowski_scalar_equation_potential} given by \Cref{Selberg_Tesfahun_theorem}, we have for some $T>0$
\[ \mathcalboondox{M}(\tilde{A}_a, \tilde{\phi}) \in H^{-\delta}(S_T), \]
\[ \boldsymbol{\mathcalboondox{Q}}(\tilde{A}_a, \tilde{\phi)}, ~ \boldsymbol{\mathcalboondox{P}}(\tilde{A}_a, \tilde{\phi}) \in H^{-1}(S_T). \]
\end{lemma}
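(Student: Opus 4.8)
The plan is to read off the result from the decompositions already established in \Cref{lem:null_structure_in_M} and \Cref{lem:null_structure_P_and_Q}, combined with the null form estimates \eqref{Masmoudi_Nakanishi_estimate}, \Cref{lem:Hsb_estimate_Q0} and \Cref{lem:Hsb_estimate_Q0i}, once one records the elementary embeddings
\[ \dot{H}^s(S_T) \hookrightarrow H^s(S_T), \qquad L^2_t \dot{H}^s_x(S_T) \hookrightarrow H^s(S_T) \qquad \text{for all } s \leq 0 \]
(in the admissible range $|s| < \frac{n}{2}$), which follow from $\langle (\tau,\xi) \rangle^s \leq \langle \xi \rangle^s \la |\xi|^s$ for $s \leq 0$ together with the definition of restriction spaces. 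So the only work is to account for the regularity of each null form appearing in those decompositions, using that $\tilde{\phi}, \bar{\tilde{\phi}} \in H^{1,\frac{1}{2}+\epsilon}(S_T)$, $\tilde{v} \in H^{2-\delta,\frac{1}{2}+\epsilon}(S_T)$, $\tilde{\mathbf{w}} \in H^{2,\frac{1}{2}+\epsilon}(S_T)$ from \Cref{Selberg_Tesfahun_theorem} and \Cref{lem:null_structure_in_M}.

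For $\mathcalboondox{M}$: by \Cref{lem:null_structure_in_M} it suffices to show $Q_0(\tilde{v},\tilde{\phi})$ and $\varepsilon^{kij}Q_{ij}(\tilde{\phi},\tilde{w}_k)$ lie in $H^{-\delta}(S_T)$, the remainder already lying in $L^2(S_T) = H^0(S_T) \hookrightarrow H^{-\delta}(S_T)$. For the first, I would apply \Cref{lem:Hsb_estimate_Q0} (using that $Q_0$ is symmetric) with exponent $1$ on $\tilde{\phi}$ and exponent $s = 2-\delta \in (1,2]$ (for $\delta$ small) on the more regular factor $\tilde{v}$, giving $Q_0(\tilde{v},\tilde{\phi}) \in L^2_t \dot{H}^{-\delta}_x(S_T) \hookrightarrow H^{-\delta}(S_T)$. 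For the second, \eqref{Masmoudi_Nakanishi_estimate} with $s_1 = 1$, $s_2 = 2$ (admissible since $s_1, s_2 \leq 2 \leq s_1 + s_2$) gives $Q_{ij}(\tilde{\phi},\tilde{w}_k) \in L^2_t \dot{H}^0_x(S_T) = L^2(S_T)$. Hence $\mathcalboondox{M}(\tilde{A}_a, \tilde{\phi}) \in L^2_t\dot{H}^{-\delta}_x(S_T) + L^2(S_T) \hookrightarrow H^{-\delta}(S_T)$.

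For $\boldsymbol{\mathcalboondox{P}}$ and $\boldsymbol{\mathcalboondox{Q}}$: \Cref{lem:null_structure_P_and_Q} already places the non-null parts in $\dot{H}^{-1}(S_T) \hookrightarrow H^{-1}(S_T)$, so it remains to bound $Q_{0i}(\tilde{\phi},\bar{\tilde{\phi}})$ and $\epsilon^{ijk}Q_{jk}(\tilde{\phi},\bar{\tilde{\phi}})$. \Cref{lem:Hsb_estimate_Q0i} gives $Q_{0i}(\tilde{\phi},\bar{\tilde{\phi}}) \in \dot{H}^{-1}(S_T) \hookrightarrow H^{-1}(S_T)$, and \eqref{Masmoudi_Nakanishi_estimate} at the endpoint $s_1 = s_2 = 1$ (permitted since $s_1 + s_2 = 2$) gives $Q_{jk}(\tilde{\phi},\bar{\tilde{\phi}}) \in L^2_t\dot{H}^{-1}_x(S_T) \hookrightarrow H^{-1}(S_T)$. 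Combining, $\boldsymbol{\mathcalboondox{P}}(\tilde{A}_a, \tilde{\phi}), \boldsymbol{\mathcalboondox{Q}}(\tilde{A}_a, \tilde{\phi}) \in H^{-1}(S_T)$.

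There is no genuine obstacle here: all the analytic content is in the preceding lemmas, and what remains is bookkeeping of Sobolev exponents. The two points worth flagging are (i) that $s = 2-\delta$ must stay inside the admissible window $(1,2]$ for \Cref{lem:Hsb_estimate_Q0}, so the small loss $\delta$ in $\mathcalboondox{M}$ is inherited directly from the loss in $\tilde{v}$ (hence ultimately from \Cref{Selberg_Tesfahun_theorem}); and (ii) that the endpoint $s_1 + s_2 = 2$ is allowed in \eqref{Masmoudi_Nakanishi_estimate} for $Q_{jk}$, which is precisely what lets $\boldsymbol{\mathcalboondox{Q}}$ reach $H^{-1}(S_T)$ without further loss.
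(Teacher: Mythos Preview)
Your proof is correct and follows essentially the same approach as the paper's own proof: the same null form estimates are applied with the same choices of exponents ($s_1=s_2=1$ for $Q_{jk}$, $s_1=1$, $s_2=2$ for $Q_{ij}(\tilde\phi,\tilde w_k)$, $s=2-\delta$ for $Q_0$, and \Cref{lem:Hsb_estimate_Q0i} for $Q_{0i}$), and the same embeddings $\dot H^s \hookrightarrow H^s$, $L^2_t\dot H^s_x \hookrightarrow H^s$ for $s\le 0$ are used. Your explicit recording of these embeddings at the outset is if anything slightly clearer than the paper's version.
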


\begin{proof} We first treat $\boldsymbol{\mathcalboondox{Q}}(\tilde{A}_a, \tilde{\phi})$. Noting that $\tilde{\phi} \in H^{1,\frac{1}{2}+\epsilon}(S_T)$, estimate \eqref{Masmoudi_Nakanishi_estimate} with $s_1=s_2=1$ and $b=\frac{1}{2}+\epsilon$ implies
\[ \| |\tilde{\boldsymbol{\nabla}}|^{-1} Q_{ij}(\tilde{\phi},\bar{\tilde{\phi}}) \|_{L^2(S_T)} \la \| \tilde{\phi} \|^2_{H^{1,\frac{1}{2}+\epsilon}(S_T)}, \]
i.e. $Q_{ij}(\tilde{\phi},\bar{\tilde{\phi}}) \in L^2_t\dot{H}^{-1}_x(S_T) \subset L^2_t H^{-1}_x(S_T)$. Using \eqref{null_structure_Q}, this immediately gives $\boldsymbol{\mathcalboondox{Q}}(\tilde{A}_a, \tilde{\phi}) \in \dot{H}^{-1}(S_T) \subset H^{-1}(S_T)$. To treat $\mathcalboondox{M}(\tilde{A}_a, \tilde{\phi})$, we note that the $Q_{ij}(\tilde{\phi}, \tilde{w}_k)$ term in \eqref{null_structure_M} is in $L^2(S_T)$ using estimate \eqref{Masmoudi_Nakanishi_estimate} with $b=\frac{1}{2} + \epsilon$, $s_1 =1$, and $s_2 = 2$. The final term $Q_0(\tilde{v}, \tilde{\phi})$, on the other hand, is not in $L^2(S_T)$, but by \Cref{lem:Hsb_estimate_Q0} with $s=2-\delta$ and $b=\frac{1}{2}+\epsilon$ is in $L^2_t \dot{H}^{-\delta}_x(S_T) \subset H^{-\delta}(S_T)$. Finally, for $\boldsymbol{\mathcalboondox{P}}(\tilde{A}_a, \tilde{\phi})$ \Cref{lem:Hsb_estimate_Q0i} and \eqref{null_structure_P} give $\boldsymbol{\mathcalboondox{P}}(\tilde{A}_a, \tilde{\phi}) \in H^{-1}(S_T)$.
\end{proof}

\begin{remark}
    Note that, as a consequence of the arbitrarily small loss of regularity $\delta$ in the potential in \Cref{Selberg_Tesfahun_theorem}, we can only ensure that the nonlinearity $\mathcalboondox{M}(\tilde{A}_a, \tilde{\phi})$ is in $H^{-\delta}(S_T)$ rather than $L^2(S_T)$. This is the source of the small loss of regularity in $\phi$ in \Cref{thm:main_theorem}.
\end{remark}

\section{Localization to Minkowski Space}
\label{sec:localization_to_Minkowski}

\subsection{Induced data and their modification} \label{sec:induced_data_and_modification}

We now turn to our construction for obtaining localized finite energy initial data on Minkowski space (in the sense of \Cref{defn:finite_energy_initial_data_Minkowski}) from global finite energy initial data (in the sense of \Cref{defn_finite_energy_data_on_cylinder}) on the Einstein cylinder.

Let us first describe how a set of finite energy initial data on the Einstein cylinder induces a conformally related set of initial data (which may not be finite energy) on Minkowski space. Consider initial data $(\mathbf{E}_0, \mathbf{B}_0, U, \phi_0)$ for the Maxwell-scalar field system on $\mathbb{R} \times \mathbb{S}^3$ in the sense of \Cref{defn_finite_energy_data_on_cylinder}, and write 
\[ r_+^2 = \frac{1}{2} (1+r^2) . \] 
The conformal embedding $\iota_\Omega : \mathbb{M} \hookrightarrow \mathbb{R} \times \mathbb{S}^3$ described in \Cref{sec:conformal_embedding} reduces to a compactification of $\mathbb{R}^3$ via stereographic projection on $\{t=0\} \iff \{\tau =0\}$. The data $(\mathbf{E}_0, \mathbf{B}_0, U, \phi_0)$ on $\mathbb{S}^3$ therefore induces conformally related data on the standard initial hypersurface $\mathbb{R}^3$ in Minkowski space as follows. 

\subsubsection{Electromagnetic data}

Under the conformal embedding $\iota_\Omega$, on $\{t=0\} \iff \{\tau = 0\}$ the measure on $\mathbb{S}^3$ becomes related to the measure on $\mathbb{R}^3$ by $\dvol_{\mathbb{S}^3} = r_+^{-6} \dvol_{\mathbb{R}^3}$, and the initial electric and magnetic fields on $\mathbb{M}$ and $\mathbb{R} \times \mathbb{S}^3$ satisfy
\[ \mathbf{E}_0 = r^2_+ \tilde{\mathbf{E}}_0 \qquad \text{and} \qquad \mathbf{B}_0 = r^{2}_+ \tilde{\mathbf{B}}_0. \]
Indeed, the vector fields $\partial_\tau$ on $\mathbb{R}\times \mathbb{S}^3$ and $\partial_t$ on $\mathbb{M}$ are parallel on $\{ t=0 \} \iff \{ \tau = 0 \}$,
\[ r^2_+ \left. \frac{\partial}{\partial t} \right|_{t=0} = \left. \frac{\partial}{\partial \tau} \right|_{\tau=0},
\]
and $\Omega^{-1}|_{t=0} = r^2_+$. This puts $\tilde{\mathbf{E}}_0$ and $\tilde{\mathbf{B}}_0$ in weighted $L^2$ spaces on $\mathbb{R}^3$,
\[ \int_{\mathbb{S}^3} |\mathbf{E}_0|^2 \dvol_{\mathbb{S}^3} = \int_{\mathbb{R}^3} r^2_+ |\tilde{\mathbf{E}}_0|^2 \dvol_{\mathbb{R}^3}, \qquad \int_{\mathbb{S}^3} |\mathbf{B}_0|^2 \dvol_{\mathbb{S}^3} = \int_{\mathbb{R}^3} r^2_+ |\tilde{\mathbf{B}}_0|^2 \dvol_{\mathbb{R}^3}, \]
\emph{where the inner product on the left-hand side is taken with respect to the standard round metric $h$ on $\mathbb{S}^3$, and on the right-hand side with respect to the Euclidean metric $\delta$ on $\mathbb{R}^3$, $h^{ij} = r^4_+ \delta^{ij}$}. In particular, the resulting Minkowskian Maxwell data is $\tilde{\mathbf{E}}_0$, $\tilde{\mathbf{B}}_0 \in L^2(\mathbb{R}^3)$.

\begin{remark}
\label{rmk:electric_charge_zero}

Of course, in fact one has more decay, $\tilde{\mathbf{E}}_0, \, \tilde{\mathbf{B}}_0 \in r_+^{-1} L^2(\mathbb{R}^3)$, where the additional factor of $r$ imposes the usual effect of conformally induced Maxwell data being necessarily chargeless at spatial infinity. Indeed, if $\tilde{\mathbf{E}}_0 \in r^{-1}_+ L^2(\mathbb{R}^3)$, then
\[ \int_{\mathbb{R}^3} r^2_+ | \tilde{\mathbf{E}}_0 |^2 \dvol_{\mathbb{R}^3} = \int_0^\infty \int_{\mathbb{S}^2} r^2_+ r^2 |\tilde{\mathbf{E}}_0|^2 \dvol_{\mathbb{S}^2} \d r < \infty, \]
so in particular (assuming\footnote{This limit may fail to exist up to, for instance, bumps of shrinking support going out to infinity. What is, instead, true, is that if $f \in L^1([0,\infty)_r)$, then for every $\epsilon > 0$ there exists a measurable set $A_\epsilon$ with $|A_\epsilon|< \epsilon$ such that $\lim_{r \to \infty, \, r\notin A_\epsilon} f(r) = 0$.} $\lim_{r \to \infty} \int_{\mathbb{S}^2} r^4 |\tilde{\mathbf{E}}_0|^2 \dvol_{\mathbb{S}^2}$ exists)
\[ \int_{\mathbb{S}^2} r^2 | \langle\tilde{\mathbf{E}}_0, \mathbf{e}_r\rangle| \dvol_{\mathbb{S}^2} \leq |\mathbb{S}^2|^{1/2} \left( \int_{\mathbb{S}^2} r^4 |\tilde{\mathbf{E}}_0|^2 \dvol_{\mathbb{S}^2} \right)^{1/2} \to 0 \quad \text{as} \quad r \to \infty. \]
By approaching $\tilde{\mathbf{E}}_0$ in $r_+^{-1} L^2(\mathbb{R}^3)$ by approximants belonging to $\mathscr{S}(\mathbb{R}^3)$, we obtain that the electric charge at spatial infinity is therefore zero:
\begin{equation} \label{electric_charge_zero} \tilde{\mathfrak{q}} \defeq \int_{\mathbb{R}^3} \tilde{\boldsymbol{\nabla}} \cdot \tilde{\mathbf{E}}_0 \dvol_{\mathbb{R}^3} = \lim_{r \to \infty} \int_{\mathbb{S}^2} r^2 \langle \tilde{\mathbf{E}}_0, \mathbf{e}_r \rangle \dvol_{\mathbb{S}^2} = 0. 
\end{equation}
\end{remark}

\subsubsection{Scalar field data}

To see the induced data for the scalar field, we recall that by the first part of \Cref{lem:equivalence_of_Minkowskian_initial_data_Lorenz_gauge_and_temporal_initially} we may additionally impose
\begin{equation} \label{residual_gauge_fixing}
\tilde{a}_0 = 0 = \dot{\tilde{a}}_0,
\end{equation}
where $\tilde{a}_0 = \tilde{A}_0|_{t=0}$, $\dot{\tilde{a}}_0 = \partial_t \tilde{A}_0|_{t=0}$, and $\tilde{A}_0 = (\partial_t)^a \tilde{A}_a$. Then, because $\partial_t \Omega|_{t=0} = 0$, the component $U_0$ of the initial data becomes
\[ U_0 =r^4_+ \partial_t \tilde{\phi}|_{t=0} = r^4_+ \tilde{\phi}_1. \]
Therefore the condition $U_0 \in L^2(\mathbb{S}^3)$ is
\begin{equation}
    \label{initial_norm_phi_1}
    \int_{\mathbb{S}^3} |U_0|^2 \dvol_{\mathbb{S}^3} = \int_{\mathbb{R}^3} r^2_+ |\tilde{\phi}_1| \dvol_{\mathbb{R}^3} < \infty,
\end{equation} 
which similarly implies $\tilde{\phi}_1 \in L^2(\mathbb{R}^3)$ (in fact $\tilde{\phi}_1 \in r^{-1}_+ L^2(\mathbb{R}^3)$). For $\phi_0 = \phi|_{\tau = 0}$, however, the rescaling is different, $\phi_0 = r^2_+ \tilde{\phi}_0$, and we have
\begin{equation} 
\label{initial_norm_phi_0}
\int_{\mathbb{S}^3} | \phi_0 |^2 \dvol_{\mathbb{S}^3} = \int_{\mathbb{R}^3} r^{-2}_+ |\tilde{\phi}_0|^2 \dvol_{\mathbb{R}^3} < \infty.
\end{equation}
 The remaining components of $U$ transform as
\begin{align*} \mathbf{U} &= \boldsymbol{\nabla} \phi_0 + i \mathbf{a} \phi_0 = \tilde{\boldsymbol{\nabla}} (r^2_+ \tilde{\phi}_0 ) + i \tilde{\mathbf{a}} r^2_+ \tilde{\phi}_0 = r^2_+ \tilde{\mathbf{U}} + \tilde{\phi}_0 r \mathbf{e}_r,
\end{align*}
so that
\begin{align*}
    \|\tilde{\mathbf{U}} \|^2_{L^2(\mathbb{R}^3)} &= \int_{\mathbb{R}^3} \langle \tilde{\mathbf{U}}, \tilde{\mathbf{U}} \rangle_\delta \dvol_{\mathbb{R}^3} \\
    &= \int_{\mathbb{R}^3} r^{-4}_+ |\mathbf{U} - r \tilde{\phi}_0 \mathbf{e}_r |^2_{\delta} \dvol_{\mathbb{R}^3} \\
    & \la \int_{\mathbb{R}^3} r^{-4}_+ |\mathbf{U} |^2_{\delta} \dvol_{\mathbb{R}^3} + \int_{\mathbb{R}^3} r^{-2}_+ |\tilde{\phi}_0|^2 \dvol_{\mathbb{R}^3} \\
    & \la \int_{\mathbb{S}^3} r^{-2}_+ |\mathbf{U}|^2_{h} \dvol_{\mathbb{S}^3} + \int_{\mathbb{S}^3} |\phi_0|^2 \dvol_{\mathbb{S}^3} \\
    & \leq \| \mathbf{U} \|^2_{L^2(\mathbb{S}^3)} + \| \phi_0 \|^2_{L^2(\mathbb{S}^3)} < \infty.
\end{align*}

\subsubsection{Modification of the scalar field data near infinity}

We therefore obtain the induced set of data
\begin{equation} 
\label{conformally_induced_data_on_Minkowski}
(\tilde{\mathbf{E}}_0, \tilde{\mathbf{B}}_0, \tilde{\phi}_1, \tilde{\mathbf{U}}, \tilde{\phi}_0) = (r^{-2}_+ \mathbf{E}_0, r^{-2}_+ \mathbf{B}_0, r^{-4}_+ U_0, r^{-2}_+ \mathbf{U} - r^{-4}_+ \phi_0 r \mathbf{e}_r, r^{-2}_+ \phi_0)
\end{equation}
on $\mathbb{M}$, which satisfies the constraints \eqref{Minkowski_constraint_1} and \eqref{Minkowski_constraint_2} (by the conformal invariance of the equations, or a direct calculation). Of these data, we saw above that $\tilde{\mathbf{E}}_0 \in L^2(\mathbb{R}^3)$, $\tilde{\mathbf{B}}_0 \in L^2(\mathbb{R}^3)$, $\tilde{\phi}_1 \in L^2(\mathbb{R}^3)$, and $\tilde{\mathbf{U}} \in L^2(\mathbb{R}^3)$. However, \eqref{initial_norm_phi_0} \emph{does not} imply that $\tilde{\phi}_0 \in L^2(\mathbb{R}^3)$, only that $\tilde{\phi}_0 \in r_+L^2(\mathbb{R}^3)$. Because of this, the induced data \eqref{conformally_induced_data_on_Minkowski} on $\mathbb{R}^3$ does not have finite energy on Minkowski space in the sense of \Cref{defn:finite_energy_initial_data_Minkowski}. Our strategy is to modify this data to return to the finite energy regime while preserving the constraint equations; we do this by a delicate\footnote{As a first idea, one might attempt to simply cut off the data outside a large ball on $\mathbb{R}^3$. This, however, does not preserve the non-local constraints \eqref{Minkowski_constraint_1}, \eqref{Minkowski_constraint_2}.} change of the data outside a large ball, in effect undoing the conformal change of the scalar field at infinity on the initial surface.

\begin{lemma}
    \label{lem:modification_of_induced_Minkowskian_data}
    Given a set of finite energy initial data $(\mathbf{E}_0, \mathbf{B}_0, U, \phi_0)$ on the Einstein cylinder, consider the conformally induced initial data \eqref{conformally_induced_data_on_Minkowski} on $\{0\}_t \times \mathbb{R}^3 \subset \mathbb{M}$ under the embedding $\mathbb{M} \hookrightarrow \R \times \mathbb{S}^3$. There exists a set of modified finite energy data $(\tilde{\mathbf{E}}_0, \tilde{\mathbf{B}}_0, \tilde{\psi}_1, \tilde{\mathbf{V}}, \tilde{\psi}_0)$ on $\{0\}_t \times \mathbb{R}^3$ such that
    \[ (\tilde{\mathbf{E}}_0, \tilde{\mathbf{B}}_0, \tilde{\psi}_1, \tilde{\mathbf{V}}, \tilde{\psi}_0) \equiv (\tilde{\mathbf{E}}_0, \tilde{\mathbf{B}}_0, \tilde{\phi}_1, \tilde{\mathbf{U}}, \tilde{\phi}_0) \quad \text{ on } \quad B_R = \{ x \in \mathbb{R}^3 \, : \, r= \| x \| \leq R  \} \]
    for a given fixed $R \gg 1$. In particular, $(\tilde{\mathbf{E}}_0, \tilde{\mathbf{B}}_0, \tilde{\psi}_1, \tilde{\mathbf{V}}, \tilde{\psi}_0)$ satisfy the constraint equations \eqref{Minkowski_constraint_1}, \eqref{Minkowski_constraint_2} and \eqref{spatial_components_initial_data_gauge_covariant_derivative_scalar_field} on $\mathbb{R}^3$, and have the regularity
    \begin{align*}
        & \tilde{\mathbf{E}}_0, \, \tilde{\mathbf{B}}_0, \, \tilde{\mathbf{V}}, \, \tilde{\psi}_1 \in L^2(\mathbb{R}^3), \\
        & \tilde{\psi}_0 \in H^1(\R^3).
    \end{align*}
\end{lemma}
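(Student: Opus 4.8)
The plan is to leave $\tilde{\mathbf{E}}_0$, $\tilde{\mathbf{B}}_0$ and the ball $B_R$ completely untouched and to repair only the scalar-field triple $(\tilde\phi_0,\tilde\phi_1,\tilde{\mathbf{U}})$ by inserting a single smooth positive radial weight. Fix $w=w(r)$ with $w\equiv 1$ on $[0,R]$, $w(r)\asymp r_+^{-1}$ for $r\geq 2R$, and $|w'(r)|\la r_+^{-2}$, and set
\[ \tilde\psi_0\defeq w\,\tilde\phi_0,\qquad \tilde\psi_1\defeq w^{-1}\tilde\phi_1,\qquad \tilde{\mathbf{V}}\defeq w\,\tilde{\mathbf{U}}+w'(r)\,\tilde\phi_0\,\mathbf{e}_r. \]
Since $w\equiv 1$ and $w'\equiv 0$ on $B_R$, the modified data agrees with \eqref{conformally_induced_data_on_Minkowski} there. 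The distributional Leibniz rule, combined with \eqref{spatial_components_initial_data_gauge_covariant_derivative_scalar_field} for the induced data, gives $\tilde{\mathbf{V}}=\tilde{\boldsymbol{\nabla}}\tilde\psi_0+i\tilde{\mathbf{a}}\tilde\psi_0$, so \eqref{spatial_components_initial_data_gauge_covariant_derivative_scalar_field} holds on all of $\mathbb{R}^3$; \eqref{Minkowski_constraint_2} is inherited unchanged; and, crucially, the electric constraint \eqref{Minkowski_constraint_1} is preserved because the imaginary pairing is invariant under the rescaling, $\operatorname{Im}(\bar{\tilde\psi}_0\tilde\psi_1)=\operatorname{Im}\big(w\bar{\tilde\phi}_0\cdot w^{-1}\tilde\phi_1\big)=\operatorname{Im}(\bar{\tilde\phi}_0\tilde\phi_1)=-\tilde{\boldsymbol{\nabla}}\cdot\tilde{\mathbf{E}}_0$, the last equality being \eqref{Minkowski_constraint_1} for the unmodified induced data.

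First I would check that $\tilde\psi_0,\tilde\psi_1\in L^2(\mathbb{R}^3)$. This is exactly what pins down the decay of $w$: by \eqref{initial_norm_phi_0} one has $\tilde\phi_0\in r_+L^2(\mathbb{R}^3)$, which forces $w\la r_+^{-1}$ near infinity in order that $\tilde\psi_0\in L^2$, whereas by \eqref{initial_norm_phi_1} one has $\tilde\phi_1\in r_+^{-1}L^2(\mathbb{R}^3)$, which forces $w^{-1}\la r_+$ near infinity in order that $\tilde\psi_1\in L^2$. Hence $w\asymp r_+^{-1}$ at infinity is the unique admissible rate, and with this choice $\tilde\psi_0,\tilde\psi_1\in L^2(\mathbb{R}^3)$ (on the bounded region $B_{2R}$ both $w$ and $w^{-1}$ are bounded and $\tilde\phi_0,\tilde\phi_1\in L^2_{loc}$), while $\tilde{\mathbf{E}}_0,\tilde{\mathbf{B}}_0,\tilde{\mathbf{V}}$ are handled below.

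The heart of the matter, and the step I expect to be the main obstacle, is the $H^1$-regularity of $\tilde\psi_0$ together with $\tilde{\mathbf{V}}\in L^2(\mathbb{R}^3)$: because the admissible weight $w\asymp r_+^{-1}$ is borderline, these leave essentially no room to spare. Using $\tilde{\boldsymbol{\nabla}}\tilde\phi_0=\tilde{\mathbf{U}}-i\tilde{\mathbf{a}}\tilde\phi_0$ one has
\[ \tilde{\boldsymbol{\nabla}}\tilde\psi_0 = w\,\tilde{\boldsymbol{\nabla}}\tilde\phi_0 + w'\,\tilde\phi_0\,\mathbf{e}_r = \tilde{\mathbf{V}} - i\,\tilde{\mathbf{a}}\,\tilde\psi_0, \]
and the two easy terms are controlled: $w\,\tilde{\mathbf{U}}\in L^2$ since $w$ is bounded and $\tilde{\mathbf{U}}\in L^2$, while $w'\tilde\phi_0\in L^2$ since $|w'\tilde\phi_0|\la r_+^{-1}(r_+^{-1}|\tilde\phi_0|)\in r_+^{-1}L^2\subset L^2$. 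Thus both $\tilde{\mathbf{V}}\in L^2$ and $\tilde\psi_0\in H^1(\mathbb{R}^3)$ reduce to the single claim $\tilde{\mathbf{a}}\,\tilde\psi_0\in L^2(\mathbb{R}^3)$.

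For this last claim I would use $\tilde{\mathbf{a}}\in\dot{H}^1(\mathbb{R}^3)\hookrightarrow L^6(\mathbb{R}^3)$ (cf.\ \Cref{lem:equivalence_of_Minkowskian_initial_data_Lorenz_gauge_and_temporal_initially}, which gives $\|\tilde{\mathbf{a}}\|_{\dot{H}^1}=\|\tilde{\mathbf{B}}_0\|_{L^2}$) together with H\"older, reducing further to $\tilde\psi_0\in L^3(\mathbb{R}^3)$. This integrability is precisely what the raw $L^2(\mathbb{S}^3)$ data does not supply, and is the reason for working in the gauge of \Cref{lem:initial_residual_gauge_on_cylinder}, in which $\phi_0\in H^1(\mathbb{S}^3)\hookrightarrow L^6(\mathbb{S}^3)$; in Minkowskian coordinates this reads $r_+\tilde\phi_0\in L^6(\mathbb{R}^3)$, so for $r\geq R$ one has $|\tilde\psi_0|\la r_+^{-2}\,|r_+\tilde\phi_0|$ and H\"older against $r_+^{-2}\in L^6(\mathbb{R}^3)$ yields $\tilde\psi_0\in L^3(\{r\geq R\})$, while on $B_{2R}$ one has $\tilde\phi_0\in H^1_{loc}\hookrightarrow L^6_{loc}$; hence $\tilde\psi_0\in L^3(\mathbb{R}^3)$. (The local statement $\tilde\phi_0\in H^1_{loc}(\mathbb{R}^3)$ can alternatively be extracted from $\tilde{\mathbf{U}}\in L^2$ alone, by bootstrapping $\tilde{\boldsymbol{\nabla}}\tilde\phi_0=\tilde{\mathbf{U}}-i\tilde{\mathbf{a}}\tilde\phi_0$ through $W^{1,3/2}_{loc}\hookrightarrow L^3_{loc}$.) Granting $\tilde{\mathbf{a}}\,\tilde\psi_0\in L^2$, we conclude $\tilde{\mathbf{V}}\in L^2(\mathbb{R}^3)$ and $\tilde\psi_0\in H^1(\mathbb{R}^3)$, which together with the constraints verified above gives the lemma.
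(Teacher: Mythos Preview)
Your construction is correct and identical to the paper's (your $w$ is the paper's $\xi^{-1}$, and your $\tilde\psi_0,\tilde\psi_1,\tilde{\mathbf{V}}$ coincide with the paper's definitions). The only difference is in the final $H^1$ step: rather than your detour through $\tilde\psi_0\in L^3$ via $\phi_0\in L^6(\mathbb{S}^3)$, the paper simply invokes \Cref{lem:equivalence_of_Minkowskian_initial_data_Lorenz_gauge_and_temporal_initially}, whose estimate $\|\tilde{\boldsymbol{\nabla}}\tilde\psi_0\|_{L^2}\leq 2\|\tilde{\mathbf{V}}\|_{L^2}+C\|\tilde{\mathbf{a}}\|_{\dot{H}^1}^2\|\tilde\psi_0\|_{L^2}$ already gives $\tilde\psi_0\in H^1$ directly from $\tilde{\mathbf{V}},\tilde\psi_0\in L^2$ and $\tilde{\mathbf{a}}\in\dot{H}^1$.
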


\begin{proof}
    We only wish to modify the scalar field data while preserving the constraint equation
    \[
        \tilde{\boldsymbol{\nabla}} \cdot \tilde{\mathbf{E}}_0 = -  \operatorname{Im}(\bar{\tilde{\phi}}_0 \tilde{\phi}_1).
    \]
    Given $R \gg 1$, consider a function $\xi$ such that
    \[ \xi \in \mathcal{C}^\infty (\R^3) \, ,~ \xi >0 \, ,~ \xi \equiv 1 \mbox{ for } r \leq R \, ,~ \xi \equiv r_+ \mbox{ for } r \geq 2R \, . \]
    Then put
    \begin{equation} \label{modification_of_initial_scalar_field_data} \tilde{\psi}_0 \defeq \frac{1}{\xi} \tilde{\phi}_0, \, \, \qquad \tilde{\psi}_1 \defeq \xi \tilde{\phi}_1 \, .\end{equation}
    Then, by \eqref{initial_norm_phi_1} and \eqref{initial_norm_phi_0}, $\tilde{\psi}_0$, $\tilde{\psi}_1 \in L^2(\mathbb{R}^3)$, they agree with $\tilde{\phi}_0$ and $\tilde{\phi}_1$ respectively on $B_R$, and, since $\bar{\tilde{\psi}}_0 \tilde{\psi}_1 = \xi^{-1} \xi \bar{\tilde{\phi}}_0 \tilde{\phi}_1 =  \bar{\tilde{\phi}}_0 \tilde{\phi}_1$, satisfy
    \[ \tilde{\boldsymbol{\nabla}} \cdot \tilde{\mathbf{E}}_0 = - \operatorname{Im}(\bar{\tilde{\psi}}_0 \tilde{\psi}_1 ). \]
    The constraint \eqref{Minkowski_constraint_2}, of course, remains unchanged. To satisfy the constraint \eqref{spatial_components_initial_data_gauge_covariant_derivative_scalar_field}, we simply set
    \begin{align*} \tilde{\mathbf{V}} &= \tilde{\boldsymbol{\nabla}} \tilde{\psi}_0 + i \tilde{\mathbf{a}} \tilde{\psi}_0 \\
    & = \tilde{\boldsymbol{\nabla}} ( \xi^{-1} \tilde{\phi}_0 ) + i \tilde{\mathbf{a}} \xi^{-1} \tilde{\phi}_0 \\
    & = \xi^{-1} \tilde{\mathbf{U}} - \xi^{-2} (\tilde{\boldsymbol{\nabla}} \xi)\tilde{\phi}_0,
    \end{align*}
    which of course equals $\tilde{\mathbf{U}}$ in $B_R$, and satisfies the estimate
    \begin{align*}
        \| \tilde{\mathbf{V}} \|^2_{L^2(\mathbb{R}^3)} & \la \| \tilde{\mathbf{U}} \|^2_{L^2(\mathbb{R}^3)} + \int_{\mathbb{R}^3} \xi^{-4} |\tilde{\boldsymbol{\nabla}} \xi |^2 |\tilde{\phi}_0|^2 \dvol_{\mathbb{R}^3} < \infty,
    \end{align*}
    in which the second integral is finite as a consequence of the fact that for $r \geq 2R$, $\xi^{-4} |\tilde{\boldsymbol{\nabla}} \xi|^2 = r^{-4}_+ r^2 \la r^{-2}_+$, and the condition \eqref{initial_norm_phi_0}. Using \Cref{lem:equivalence_of_Minkowskian_initial_data_Lorenz_gauge_and_temporal_initially}, we therefore have that $\tilde{\psi}_0 \in H^1(\mathbb{R}^3)$, and the set
    \begin{equation} \label{IDTilda}
    (\tilde{\mathbf{E}}_0, \tilde{\mathbf{B}}_0, \tilde{\psi}_1, \tilde{\psi}_0) \in L^2(\mathbb{R}^3)\times L^2(\mathbb{R}^3) \times L^2(\mathbb{R}^3) \times L^2(\mathbb{R}^3) \times H^1(\mathbb{R}^3)
    \end{equation}
    defines genuine finite energy data for the Maxwell-scalar field system \eqref{Minkowski_MKG_system_with_projections} (equivalently \eqref{Minkowski_Maxwell_equation_potential}--\eqref{Minkowski_scalar_equation_potential}) on Minkowski space in the sense of \Cref{defn:finite_energy_initial_data_Minkowski}, which agrees with $(\tilde{\mathbf{E}}_0, \tilde{\mathbf{B}}_0, \tilde{\phi}_1, \tilde{\phi}_0)$ in $B_R = \{ r \leq R \}$.
\end{proof}

\subsection{Local solution}
\label{sec:local_solution}

We can therefore apply \Cref{Selberg_Tesfahun_theorem} to the data constructed in \Cref{lem:modification_of_induced_Minkowskian_data} to obtain a unique solution $(\tilde{\psi}', \tilde{\mathbf{E}}' , \tilde{\mathbf{B}}')$ to \eqref{Minkowski_Maxwell_equation_potential}--\eqref{Minkowski_scalar_equation_potential} and a corresponding potential $\tilde{A}'_a$ in Lorenz gauge such that
\begin{align*}
    & \tilde{\psi}' \in \mathcal{C}^0 (\R_t ; H^1(\R^3)) \cap \mathcal{C}^1 (\R_t ; L^2 (\R^3)),\\
    & \tilde{\mathbf{E}}' , \, \tilde{\mathbf{B}}' \in \mathcal{C}^0 (\R_t ; L^2 (\R^3)), \\
    & \tilde{A}'_a \in \mathcal{C}^0(\R_t ; \dot{H}^1(\R^3) + H^{1-\delta}(\R^3) ) \cap \mathcal{C}^1(\R_t ; H^{-\delta}(\R^3)) , \\
    & (\tilde{\psi}', \partial_t \tilde{\psi}')|_{t=0} = (\tilde{\psi}_0, \tilde{\psi}_1), \\
    & (\tilde{\mathbf{E}}', \tilde{\mathbf{B}}')|_{t=0} = (\tilde{\mathbf{E}}_0, \tilde{\mathbf{B}}_0),
\end{align*}
and moreover
    \begin{align}
        \label{wave_Sobolev_regularity_1}
        & \tilde{\psi}' \in H^{1,\frac{1}{2}+\epsilon}(S_T), ~ \partial_t \tilde{\psi}' \in H^{0,\frac{1}{2}+\epsilon}(S_T), \\
        \label{wave_Sobolev_regularity_2}
        & \tilde{\mathbf{E}}', \, \tilde{\mathbf{B}}' \in H^{0,\frac{1}{2}+\epsilon}(S_T), \\
        \label{wave_Sobolev_regularity_3}
        & \tilde{A}'_a \in \dot{H}^{1,\frac{1}{2}+\epsilon}(S_T) + H^{1-\delta,\frac{1}{2}+\epsilon}(S_T).
    \end{align}
Here the primes---and the use of the letter $\tilde{\psi}$ to denote the scalar field---represent the fact that the solution $(\tilde{\psi}', \tilde{A}'_a, \tilde{\mathbf{E}}', \tilde{\mathbf{B}}')$ arises from the modified set of initial data and is not meaningful outside of $D^+(B_R)$, where $D^+(B_R)$ denotes the future domain of dependence of $B_R$. On the other hand, by the manifest finite speed of propagation for the system \eqref{Minkowski_Maxwell_equation_potential}--\eqref{Minkowski_scalar_equation_potential}, we have that in $D^+(B_R)$ the fields $(\tilde{\psi}', \tilde{A}'_a, \tilde{\mathbf{E}}', \tilde{\mathbf{B}}')$ depend only on $(\tilde{\mathbf{E}}_0, \tilde{\mathbf{B}}_0, \tilde{\phi}_1, \tilde{\phi}_0)|_{B_R}$.

\section{Local Change of Foliation}
\label{sec:change_of_foliation}

\subsection{Change of foliation for linear wave equations on manifolds}

Consider a smooth globally hyperbolic spacetime $(\mathcalboondox{M}, g)$, $\mathcalboondox{M} = \mathbb{R} \times S$, where $S$ is a smooth $n$-dimensional compact manifold without boundary and $g$ a smooth Lorentzian metric on $\mathcalboondox{M}$, and denote by $\nabla$ the Levi-Civita connection associated to $g$. Suppose that $\tau$ is a smooth time function on $\mathcalboondox{M}$ whose values span $\mathbb{R}$, and $\{ S_\tau \}_\tau$ the associated uniformly spacelike foliation of $\mathcalboondox{M}$ by the level hypersurfaces of $\tau$, where for each $\tau$ the hypersurface $S_\tau$ is diffeomorphic to $S$. Hence for each $\tau$, $S_\tau \simeq S$ is a smooth compact spacelike Cauchy hypersurface in $\mathcalboondox{M}$. In particular, on $S_\tau$ we can define all Sobolev spaces (cf. \Cref{sec:Sobolev_spaces}), all canonically isomorphic to the corresponding Sobolev spaces on $S$. Define the future-oriented timelike vector field $\partial_\tau$ by
\[ \partial_\tau \defeq g ( \nabla \tau, \nabla \tau)^{-1} \nabla \tau. \]
We then have the following lemmas, whose proofs are postponed until the appendix.

\begin{lemma} \label{lem:foliation_change_positive_s}
    Let $s \in [0, \infty)$ and $\{ S_\tau \}_\tau$ be any smooth uniformly spacelike foliation of $\mathcalboondox{M}$, and suppose $f \in H^s_{\text{loc}}(\mathcalboondox{M})$. Then, given $(u_0, u_1) \in H^{s+1}(S_0) \times H^s(S_0)$, if the Cauchy problem on $\mathcalboondox{M}$
    \begin{align*}
        \begin{cases}
            \Box_g u = f, \\
            (u, \partial_\tau u)|_{S_0} = (u_0, u_1)
        \end{cases}
    \end{align*}
    has a solution $u$, then this solution has the regularity
        \[ u \in \mathcal{C}^0 (\R_\tau ;H^{s+1} (S_\tau)) \cap \mathcal{C}^1 (\R_\tau ;H^s(S_\tau)) \, . \]
\end{lemma}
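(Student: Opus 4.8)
The plan is to deduce the lemma from the linear energy inequality \eqref{energy_inequality}, the only genuine issue being that $H^s$ regularity of $f$ on the ambient spacetime $\mathcalboondox{M}$ is a priori weaker, by half a derivative, than $H^s$ regularity of $f$ on a single hypersurface. So the first thing I would establish is the elementary but crucial fact that for $s\geq 0$ one does recover the leafwise bound
\[ f \in H^s_{\mathrm{loc}}(\mathcalboondox{M}) \ \Longrightarrow\ \int_{-T}^T \| f|_{S_\tau}\|_{H^s(S_\tau)}^2\,\d\tau < \infty \qquad \text{for every } T>0, \]
so in particular $f \in L^1_{\mathrm{loc}}(\R_\tau; H^s(S_\tau))$ and $\int_0^\tau \|f|_{S_{\tau'}}\|_{H^s(S_{\tau'})}\,\d\tau' < \infty$ for every $\tau$. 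This is a local statement: after straightening $\tau$ to a coordinate and cutting off, it reduces to the Plancherel inequality $\int_{\R}\|g(\tau,\cdot)\|_{H^s(\R^n)}^2\,\d\tau \leq \|g\|_{H^s(\R^{1+n})}^2$ for $g\in H^s(\R^{1+n})$, which holds because $\langle\xi\rangle^{2s}\leq \langle(\eta,\xi)\rangle^{2s}$ when $s\geq 0$ ($\eta$ being dual to $\tau$); one then patches finitely many charts with a partition of unity and uses the coordinate invariance of $H^s(S_\tau)$. I would stress that this is exactly the step that fails for $s<0$, since the inequality on the Fourier weights then reverses, which is why the negative-regularity case requires the separate duality argument of \Cref{lem:foliation_change_negative_s}; this is the point I expect to be the conceptual crux, everything else being routine.

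With this in hand the rest is the standard energy method. I would first establish well-posedness in the energy class: approximate $(u_0,u_1)$ in $H^{s+1}(S_0)\times H^s(S_0)$ by smooth data and $f$ in $H^s_{\mathrm{loc}}(\mathcalboondox{M})$ by smooth $f_k$, solve $\Box_g v_k = f_k$ with the approximate data (the $v_k$ being smooth by classical theory on the globally hyperbolic $(\mathcalboondox{M},g)$), and apply \eqref{energy_inequality} with $\sigma = s$ — the geometric constants in \eqref{energy_inequality} being uniform on each compact interval $[-T,T]$ since $S$ is compact and $\{S_\tau\}_\tau$ is uniformly spacelike. Applying the same estimate to the differences $v_k-v_\ell$ and invoking the finiteness from the first step shows that $(v_k)$ is Cauchy in $\mathcal{C}^0([-T,T];H^{s+1}(S_\tau))\cap\mathcal{C}^1([-T,T];H^s(S_\tau))$ for every $T$; its limit $v$ solves $\Box_g v = f$ with Cauchy data $(u_0,u_1)$ and has precisely the regularity asserted in the statement.

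Finally I would invoke uniqueness to identify the given solution $u$ with $v$: two solutions of $\Box_g(\cdot)=f$ with the same Cauchy data on the Cauchy hypersurface $S_0$ coincide — by \eqref{energy_inequality} applied to the difference for finite-energy solutions, or by the classical uniqueness theorem for the Cauchy problem of second-order strictly hyperbolic operators in the distributional category — so $u=v$ and hence $u \in \mathcal{C}^0(\R_\tau;H^{s+1}(S_\tau))\cap\mathcal{C}^1(\R_\tau;H^s(S_\tau))$. A minor bookkeeping point to dispose of along the way is the discrepancy between the vector field $\partial_\tau = g(\nabla\tau,\nabla\tau)^{-1}\nabla\tau$ entering the Cauchy data and the coordinate vector field of the product structure used in \eqref{energy_inequality}: these differ by a leafwise-tangent vector field with smooth coefficients, which only alters the Cauchy datum $u_1$ by a term in $H^s(S_0)$ and is therefore harmless.
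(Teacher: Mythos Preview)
Your proposal is correct and follows essentially the same route as the paper: the key point is the embedding $H^s_{\mathrm{loc}}(\mathcalboondox{M}) \hookrightarrow L^1_{\mathrm{loc}}(\R_\tau; H^s(S_\tau))$ for $s\geq 0$, after which the result follows from the energy inequality \eqref{energy_inequality}. The paper states this in two lines without writing out the approximation and uniqueness details you supply, but the argument is the same.
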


\begin{lemma} \label{lem:foliation_change_negative_s}
    Let $s \in [-1, 0)$ and $\{S_\tau \}$ be any smooth uniformly spacelike foliation of $\mathcalboondox{M}$, and suppose $f \in H^s_{\text{loc}}(\mathcalboondox{M})$. Then, given $(u_0, u_1) \in H^{s+1}(S_0) \times H^s(S_0)$, if the Cauchy problem on $\mathcalboondox{M}$
    \begin{align*}
        \begin{cases}
            \Box_g u = f, \\
            (u, \partial_\tau u)|_{S_0} = (u_0, u_1)
        \end{cases}
    \end{align*}
    has a solution $u$, then this solution has the regularity
    \[ u \in \mathcal{C}^0(\mathbb{R}_\tau; H^{s+1}(S_\tau)). \]
\end{lemma}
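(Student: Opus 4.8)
The plan is a duality argument. It is needed because the energy inequality \eqref{energy_inequality} is \emph{not} directly usable here: its right–hand side contains the slice–wise quantity $\int_0^\tau \|f(\tau',\cdot)\|_{H^s(S_{\tau'})}\,\d\tau'$, and for $s<0$ this is not controlled by $\|f\|_{H^s_{\text{loc}}(\mathcalboondox{M})}$ — the embedding $H^s(\mathbb{R}\times S)\hookrightarrow L^2_\tau H^s_x$ holds only for $s\ge 0$. We will instead establish the quantitative estimate
\[ \|u(\tau_*)\|_{H^{s+1}(S_{\tau_*})} \le C(\tau_*)\Big( \|u_0\|_{H^{s+1}(S_0)} + \|u_1\|_{H^s(S_0)} + \|f\|_{H^s(\Omega_{\tau_*})} \Big), \qquad \Omega_{\tau_*}\defeq\bigcup_{0\le\tau\le\tau_*}S_\tau, \]
in which the spacetime norm of $f$ replaces its (unavailable) slice–wise norm, and then conclude by a standard approximation and uniqueness argument.

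\textbf{Reductions.} By time reversal, and since $u\in\mathcal{C}^0(\mathbb{R}_\tau;H^{s+1}(S_\tau))$ is equivalent to continuity on $[0,T]$ (and on $[-T,0]$) for every $T$, it suffices to control $u$ on the slab $\Omega_{\tau_*}$, which is a compact region of $\mathcalboondox{M}$ with $\partial\Omega_{\tau_*}=S_0\cup S_{\tau_*}$ (there is no lateral boundary, as $S$ is closed). By linearity, by uniqueness of the Cauchy problem for $\Box_g$ on the globally hyperbolic $(\mathcalboondox{M},g)$, and by density of smooth functions, it is enough to prove the displayed estimate when $u$ (hence $u_0,u_1,f$) is smooth: approximating $(u_0,u_1,f)$ in $H^{s+1}(S_0)\times H^s(S_0)\times H^s(\Omega_{\tau_*})$ by smooth data, the corresponding smooth solutions form a Cauchy sequence in $\mathcal{C}^0([0,\tau_*];H^{s+1}(S_\tau))$ by the estimate, whose limit is continuous and, by uniqueness, equals $u$.

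\textbf{The estimate.} Fix $\psi\in\mathcal{C}^\infty(S_{\tau_*})$, a dense subset of $H^{-s-1}(S_{\tau_*})$, and let $v$ solve $\Box_g v=0$ backward from $S_{\tau_*}$ with Cauchy data $(v,\partial_\tau v)|_{S_{\tau_*}}=(0,\psi)$. Applying the homogeneous case of \eqref{energy_inequality} with $\sigma=-s-1$ (permitted since $-s-1\in\mathbb{R}$ and the forcing term is absent) gives $v\in\mathcal{C}^0\big([0,\tau_*];H^{-s}(S_\tau)\big)\cap\mathcal{C}^1\big([0,\tau_*];H^{-s-1}(S_\tau)\big)$ with these norms $\la\|\psi\|_{H^{-s-1}(S_{\tau_*})}$; moreover, since $s\ge-1$, the elementary Fourier bound $\langle\xi\rangle^{-2s}+|\tau|^2\langle\xi\rangle^{-2s-2}\ga\langle(\tau,\xi)\rangle^{-2s}$ shows $\mathcal{C}^0 H^{-s}\cap\mathcal{C}^1 H^{-s-1}\hookrightarrow H^{-s}_{\text{loc}}(\mathcalboondox{M})$, so $v\in H^{-s}(\Omega_{\tau_*})$ with $\|v\|_{H^{-s}(\Omega_{\tau_*})}\le C(\tau_*)\|\psi\|_{H^{-s-1}(S_{\tau_*})}$. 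Green's identity for the (formally self–adjoint) operator $\Box_g$ on $\Omega_{\tau_*}$ — justified for the low–regularity $v$ by approximating it with smooth backward solutions, the bulk term $\int_{\Omega_{\tau_*}} u\,\Box_g v\,\dvol_g$ vanishing throughout — gives, after absorbing the smooth positive lapse factors relating $\partial_\tau$ to the metric unit normal,
\[ \langle u(\tau_*),\psi\rangle_{S_{\tau_*}} = \langle f,v\rangle_{\Omega_{\tau_*}} + \big(\text{terms on }S_0\text{ linear in }u_0\text{ and }u_1\big), \]
the contribution $\langle\partial_\tau u(\tau_*),v|_{S_{\tau_*}}\rangle$ dropping out because $v|_{S_{\tau_*}}=0$. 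Estimating by Cauchy--Schwarz and the duality pairings $H^s\times H^{-s}$ on $\Omega_{\tau_*}$ and on $S_0$ (where $v|_{S_0}\in H^{-s}(S_0)$ and $\partial_\tau v|_{S_0}\in H^{-s-1}(S_0)$ are bounded by $\|\psi\|_{H^{-s-1}(S_{\tau_*})}$ via the same energy estimate) yields $|\langle u(\tau_*),\psi\rangle|\le C(\tau_*)\big(\|f\|_{H^s(\Omega_{\tau_*})}+\|u_0\|_{H^{s+1}(S_0)}+\|u_1\|_{H^s(S_0)}\big)\|\psi\|_{H^{-s-1}(S_{\tau_*})}$. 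Taking the supremum over $\psi$ and invoking $H^{s+1}(S_{\tau_*})=(H^{-s-1}(S_{\tau_*}))'$ produces the displayed estimate.

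\textbf{Main obstacle.} The crux is making the duality rigorous at the endpoint regularities: justifying Green's identity when $v$ lies only in $\mathcal{C}^0 H^{-s}\cap\mathcal{C}^1 H^{-s-1}$, and controlling the boundary behaviour of $v$ near $S_{\tau_*}$ and $S_0$ so that the spacetime pairing $\langle f,v\rangle_{\Omega_{\tau_*}}$ is well defined and estimable for $f\in H^s$ with $s\in[-1,0)$ — this is precisely where both $v|_{S_{\tau_*}}=0$ and the hypothesis $s\ge-1$ enter (the latter through the embedding $\mathcal{C}^0 H^{-s}\cap\mathcal{C}^1 H^{-s-1}\hookrightarrow H^{-s}_{\text{loc}}(\mathcalboondox{M})$, which fails for $s<-1$) — together with the bookkeeping of lapse factors. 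The remaining ingredients — well-posedness and propagation of smoothness for $\Box_g$ on globally hyperbolic manifolds, uniqueness of distributional solutions of the Cauchy problem, and density of smooth functions in Sobolev spaces — are classical.
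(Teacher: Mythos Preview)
Your proof is correct and follows essentially the same duality strategy as the paper. The only cosmetic difference is in the choice of dual problem: the paper tests $u$ against an arbitrary source $\eta\in L^1([0,T]_\tau;H^{-s-1}(S_\tau))$ via the inhomogeneous backward problem $\Box_g\chi=\eta$ with $(\chi,\partial_\tau\chi)|_{S_T}=(0,0)$, obtaining the $L^\infty_\tau H^{s+1}$ bound directly from $(L^1 H^{-s-1})'$ duality, whereas you fix $\tau_*$ and test against slice data $(0,\psi)$ for the homogeneous backward problem --- the two are equivalent and rest on the same key embedding $\mathcal{C}^0 H^{-s}\cap\mathcal{C}^1 H^{-s-1}\hookrightarrow H^{-s}_{\text{loc}}$.
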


\subsection{Change of foliation of the local solution}

Consider the solution  $(\tilde{\psi}', \tilde{A}'_a, \tilde{\mathbf{E}}', \tilde{\mathbf{B}}')$ on $\mathbb{M}$ obtained in \Cref{sec:local_solution} and let
\[ \tilde{\Sigma}_\tau = \{ \tau = \text{const.} \} \cap D^+(B_R),  \]
where $\tau$ is given\footnote{In Minkowskian coordinates, $\partial_\tau = \frac{1}{2}(1+t^2+r^2)\partial_t + rt \partial_r$.} by \eqref{tau_definition_in_physical_coordinates}. Extend $\tilde{\Sigma}_\tau$ to a foliation of $\mathbb{M}$, still denoting it $\tilde{\Sigma}_\tau$, so that $\tilde{\Sigma}_\tau$ coincides with $\tilde{\Sigma}_t$ outside $D^+(B_{R'})$ for some $R' > R$, and interpolates smoothly as a uniformly spacelike foliation in the region $D^+(B_{R'}) \setminus D^+(B_R)$. We now denote the solution in $D^+(B_R)$ and to the past of $\tilde{\Sigma}_{\tau_*}$, for some $\tau_* > 0$ to be chosen, by removing the primes and returning to the notation $\tilde{\phi}$ for the scalar field, i.e.
\[ (\tilde{\psi}', \tilde{A}'_a, \tilde{\mathbf{E}}', \tilde{\mathbf{B}}')|_{D^+(B_R) \cap J^-(\tilde{\Sigma}_{\tau_*})} = (\tilde{\phi}, \tilde{A}_a, \tilde{\mathbf{E}}, \tilde{\mathbf{B}}), \]
and discard the part of the primed solution outside $D^+(B_R)$. In what follows we now write
\[ \tilde{\mathcal{R}}_{R,\tau_*} \defeq D^+(B_R) \cap J^-(\tilde{\Sigma}_{\tau_*}) \]
and prove the following regularity of the local solution with respect to the foliation $\tilde{\Sigma}_\tau$.

\begin{figure}[h]
    \centering
    \begin{tikzpicture}
        \node[inner sep=0pt] (curved) at (0,0)
        {\includegraphics[width=.421\textwidth]{{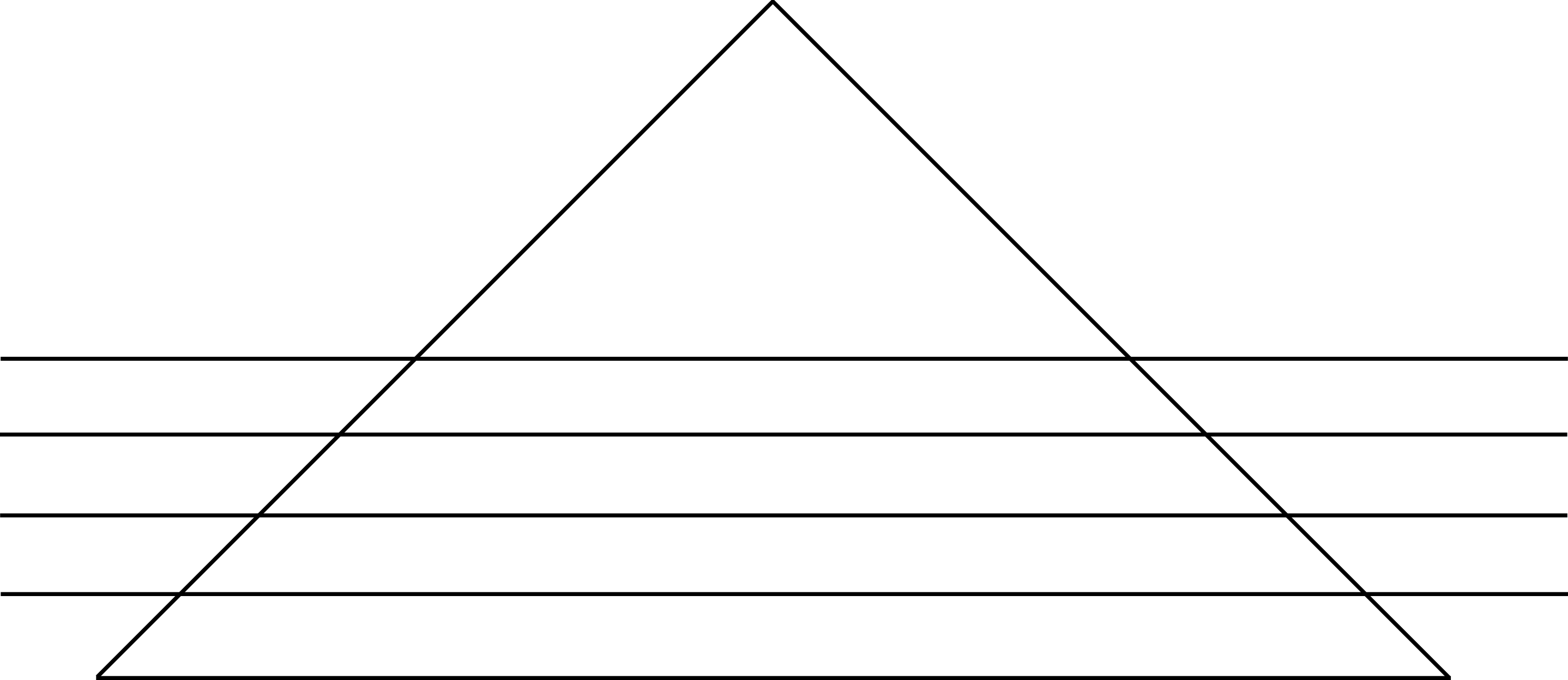}}};
        \node[inner sep=0pt] (straight) at (7.5,0)
        {\includegraphics[width=.485\textwidth]{{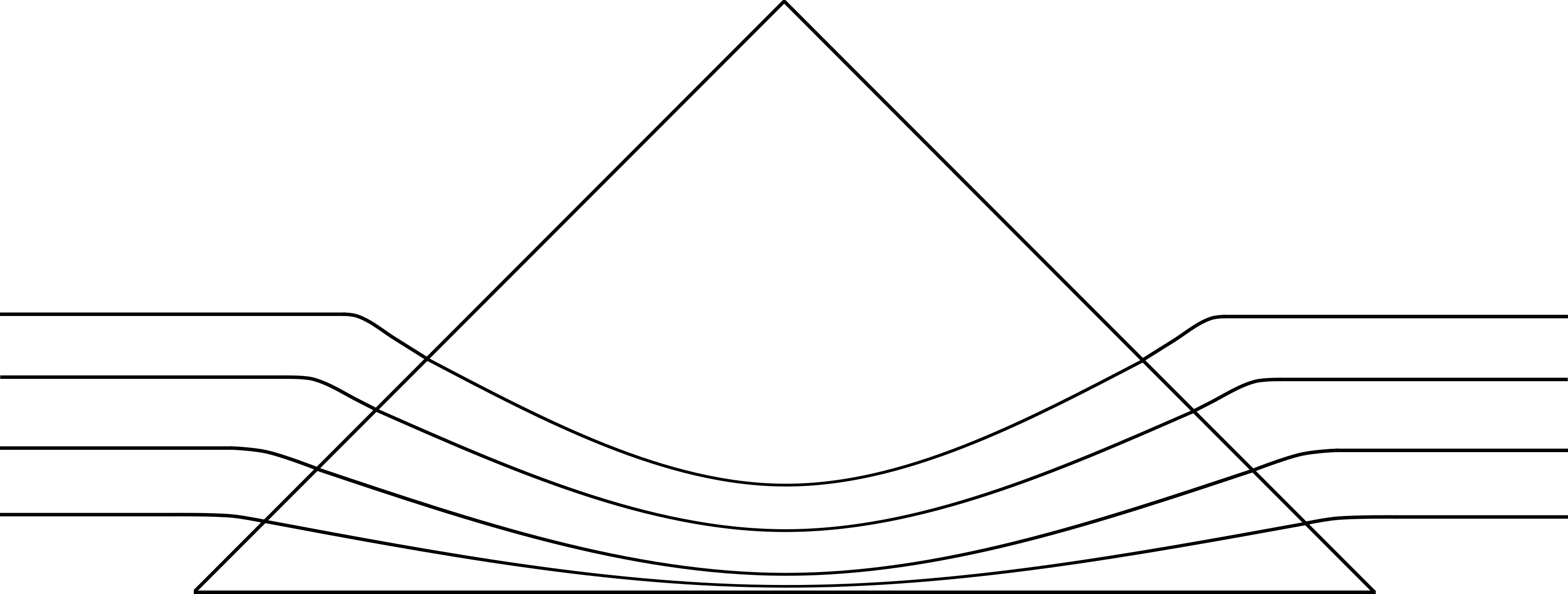}}};
        \draw[->] (2.3,0.5) .. controls (3,0.9) and (4,0.9) .. (4.7,0.5);
        \node[label={[shift={(0,-2.1)}]$B_R$}] {};
        \node[label={[shift={(7.5,-2.1)}]$B_R$}] {};
        \node[label={[shift={(0,-0.1)}]$D^+(B_R)$}] {};
        \node[label={[shift={(7.5,-0.1)}]$D^+(B_R)$}] {};
        \node[label={[shift={(-2,-0.2)}]$\tilde{\Sigma}_t$}] {};
        \node[label={[shift={(7.5,-1)}]$\tilde{\Sigma}_\tau$}] {};
        \node[label={[shift={(3.5,0.7)}]foliation change}] {};
    \end{tikzpicture}
    \caption{Change of local foliation of the solution on Minkowski space.}
    \label{fig:change_of_foliation_Minkowski}
\end{figure}

\begin{lemma}
\label{lem:regularity_with_respect_to_hyperboloidal_foliation}
    There exists a $\tau_*>0$ such that in $\tilde{\mathcal{R}}_{R,\tau_*}$ the solution \[(\tilde{\psi}', \tilde{A}'_a, \tilde{\mathbf{E}}', \tilde{\mathbf{B}}')|_{\tilde{\mathcal{R}}_{R,\tau_*}} \eqdef (\tilde{\phi}, \tilde{A}_a, \tilde{\mathbf{E}}, \tilde{\mathbf{B}})\] has the regularity
    \begin{align*}
    & \tilde{\phi} \in \mathcal{C}^0 ([0,\tau_*] ; H^{1-\delta}(\tilde{\Sigma}_\tau)), \\
    & \tilde{\mathbf{E}} , \, \tilde{\mathbf{B}} \in \mathcal{C}^0 ([0,\tau_*] ; L^2 (\tilde{\Sigma}_\tau)), \\
    & \tilde{A}_a \in \mathcal{C}^0([0,\tau_*];H^{\frac{1}{2}-\delta}(\tilde{\Sigma}_\tau)),
    \end{align*}
    and
    \begin{align*} \tilde{A}_a, \, \tilde{\phi} &\in H^{1-\delta}(\tilde{\mathcal{R}}_{R,\tau_*}).
    \end{align*}
\end{lemma}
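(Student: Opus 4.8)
The plan is to view each of $\tilde\phi,\tilde A_a,\tilde{\mathbf E},\tilde{\mathbf B}$ as a solution of an inhomogeneous \emph{linear} wave equation on $\mathbb M$ — respectively \eqref{Minkowski_scalar_equation_potential}, \eqref{Minkowski_Maxwell_equation_potential}, \eqref{Minkowski_wave_equation_electric_field}, \eqref{Minkowski_wave_equation_magnetic_field} — whose inhomogeneities carry the $L^2$-based spacetime Sobolev regularity extracted in \Cref{sec:spacetime_estimates_for_nonlinearities}, and to transport this regularity from the Minkowskian foliation $\{\tilde\Sigma_t\}$ (for which \Cref{Selberg_Tesfahun_theorem} provides control) to the foliation $\{\tilde\Sigma_\tau\}$ by means of \Cref{lem:foliation_change_positive_s,lem:foliation_change_negative_s}. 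Since those lemmas require a background with \emph{compact} Cauchy slices, the first step is a localization: by finite speed of propagation for \eqref{Minkowski_Maxwell_equation_potential}--\eqref{Minkowski_scalar_equation_potential}, the fields on $\tilde{\mathcal R}_{R,\tau_*}\subseteq D^+(B_R)$ depend only on the data on $B_R$, so one may cut off the (globally defined, primed) solution away from a fixed compact neighbourhood of $D^+(B_R)$ and glue in a compactification of $\mathbb R^3$ far out — equivalently, pass through the conformal embedding $\iota_\Omega$, using that $\Omega$ is bounded above and below on the compact set $D^+(B_R)$ so that the conformally rescaled inhomogeneities (together with the extra zeroth-order curvature term and the cut-off commutators, all harmless in the energy arguments underlying \Cref{lem:foliation_change_positive_s,lem:foliation_change_negative_s}) retain their Sobolev class. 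One then fixes $\tau_*>0$ small enough that $\{\tilde\Sigma_\tau\}_{\tau\in[0,\tau_*]}$ is a bona fide uniformly spacelike foliation of the resulting compact-slice background and that the slices meet $D^+(B_R)$ inside a fixed compact region on which the physical solution is defined.

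Next I would record the input. From \Cref{lem:regularity_of_nonlinearities} and the proof of \Cref{lem:regularity_of_N} one has $\mathcalboondox M(\tilde A_a,\tilde\phi)\in H^{-\delta}(S_T)$, $\mathcalboondox N(\tilde A_a,\tilde\phi)\in L^2_tL^{3/2}_x(S_T)\hookrightarrow L^2_tH^{-1/2}_x(S_T)\hookrightarrow H^{-1/2}(S_T)$, and $\boldsymbol{\mathcalboondox P},\boldsymbol{\mathcalboondox Q}\in H^{-1}(S_T)$, each of which, after the cut-off, lies in the matching $H^s_{\mathrm{loc}}$ of the compact background. For the Cauchy data on $\tilde\Sigma_0=\{t=0\}$, \Cref{Selberg_Tesfahun_theorem}(ii) supplies $\tilde\phi\in\mathcal C^0_tH^1_x\cap\mathcal C^1_tL^2_x$, $\tilde{\mathbf E},\tilde{\mathbf B}\in\mathcal C^0_tL^2_x$ and $\tilde A_a\in\mathcal C^0_t(\dot H^1+H^{1-\delta})_x\cap\mathcal C^1_tH^{-\delta}_x$; restricting to $B_R$ (where $\dot H^1\hookrightarrow H^1$), using $\partial_\tau|_{\tau=0}=r_+^2\partial_t$ with $r_+^2$ smooth and bounded on $B_R$, and computing $\partial_t\tilde{\mathbf E}|_{t=0}$, $\partial_t\tilde{\mathbf B}|_{t=0}$ from the first Maxwell equation and the Bianchi identity in \eqref{Minkowski_MKG_system_with_projections} (which place them in $H^{-1}(B_R)$), one checks that the Cauchy data of $\tilde\phi,\tilde A_a,\tilde{\mathbf E},\tilde{\mathbf B}$ on $\tilde\Sigma_0$ lie in $H^{1-\delta}\times H^{-\delta}$, $H^{1/2-\delta}\times H^{-1/2-\delta}$, $H^0\times H^{-1}$, $H^0\times H^{-1}$ respectively.

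Then I would apply \Cref{lem:foliation_change_negative_s} componentwise with matching indices: $s=-\delta$ for $\tilde\phi$, $s=-1$ for $\tilde{\mathbf E}$ and $\tilde{\mathbf B}$, and $s=-\tfrac12-\delta$ for $\tilde A_a$, which yields exactly $\tilde\phi\in\mathcal C^0([0,\tau_*];H^{1-\delta}(\tilde\Sigma_\tau))$, $\tilde{\mathbf E},\tilde{\mathbf B}\in\mathcal C^0([0,\tau_*];L^2(\tilde\Sigma_\tau))$ and $\tilde A_a\in\mathcal C^0([0,\tau_*];H^{1/2-\delta}(\tilde\Sigma_\tau))$. (The crude $H^{-1/2}$ bound for $\mathcalboondox N$, rather than the sharper regularity available from \Cref{lem:product_estimates_wave_Sobolev}, is the source of the half-derivative loss for the potential on the new foliation; cf. \Cref{rmk:regularity_losses}.) The spacetime statement $\tilde A_a,\tilde\phi\in H^{1-\delta}(\tilde{\mathcal R}_{R,\tau_*})$ refers to no foliation and needs no change of slices: $\tilde\phi\in\mathcal C^0_tH^1_x\cap\mathcal C^1_tL^2_x\hookrightarrow H^1_{\mathrm{loc}}(\mathbb M)$ and, over $B_{R'}$, $\tilde A_a\in\mathcal C^0_tH^{1-\delta}_x\cap\mathcal C^1_tH^{-\delta}_x\hookrightarrow H^{1-\delta}_{\mathrm{loc}}$, so it suffices to restrict the globally defined primed solution to $\tilde{\mathcal R}_{R,\tau_*}\subseteq[0,R]\times B_R$.

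The main obstacle is the first step: making the reduction to a compact-slice globally hyperbolic background airtight — verifying that the cut-off and compactification genuinely leave the solution unchanged on $\tilde{\mathcal R}_{R,\tau_*}$ and leave the inhomogeneities in the stated Sobolev classes, and simultaneously pinning down $\tau_*$ so that $\{\tilde\Sigma_\tau\}_{\tau\in[0,\tau_*]}$ is uniformly spacelike and sweeps out $\tilde{\mathcal R}_{R,\tau_*}$ in a manner to which \Cref{lem:foliation_change_negative_s} applies. Everything afterwards is bookkeeping; the one point that still needs care is checking that $\partial_\tau\tilde{\mathbf E}|_{\tilde\Sigma_0}$ and $\partial_\tau\tilde{\mathbf B}|_{\tilde\Sigma_0}$ really lie in $H^{-1}(B_R)$, for which one must use the evolution equations — the $H^{0,1/2+\epsilon}$ wave-Sobolev control alone does not define a trace of the time derivative.
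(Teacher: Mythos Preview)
Your proposal is correct and follows the paper's strategy: treat each field as a solution of an inhomogeneous linear wave equation, feed in the spacetime Sobolev regularity of the nonlinearities from \Cref{sec:spacetime_estimates_for_nonlinearities}, and invoke \Cref{lem:foliation_change_negative_s} after localizing to a compact-slice background. Your discussion of the localization step and the verification that $\partial_\tau\tilde{\mathbf E}|_{\tau=0},\partial_\tau\tilde{\mathbf B}|_{\tau=0}\in H^{-1}$ (via the first-order Maxwell evolution equations) are details the paper treats more tersely but which you are right to flag.

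There is one genuine difference in the treatment of the potential. The paper only records $\mathcalboondox{N}\in L^{3/2}(S_T)\hookrightarrow H^{-2/3}(S_T)$, applies \Cref{lem:foliation_change_negative_s} with $s=-\tfrac23$ to get $\tilde A_a\in\mathcal C^0_\tau H^{1/3}$, and then \emph{upgrades} to $\mathcal C^0_\tau H^{1/2-\delta}$ in a second step by combining the spacetime regularity $\tilde A_a\in H^{1-\delta}(\tilde{\mathcal R}_{R,\tau_*})$ with the trace theorem \eqref{trace_operator} and a smooth approximation. You instead extract the sharper intermediate statement $\mathcalboondox{N}\in L^2_tL^{3/2}_x\hookrightarrow L^2_tH^{-1/2}_x\hookrightarrow H^{-1/2}(S_T)$ (this is implicit in the proof of \Cref{lem:regularity_of_N}) and apply \Cref{lem:foliation_change_negative_s} directly at $s=-\tfrac12-\delta$, bypassing the trace-theorem step entirely. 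Your route is slightly more economical; the paper's route has the side benefit of illustrating that the spacetime regularity $H^{1-\delta}$ (which both arguments establish in the same way) already encodes the $H^{1/2-\delta}$ slice regularity independently of the precise Sobolev class of $\mathcalboondox{N}$.
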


\begin{proof}
We first treat the spacetime regularity of the potential and the scalar field. We split
    \[ \tilde{A}'_a = \tilde{A}^{(0)}_a + \tilde{A}^{\text{inh.}}_a, \]
    where in $\tilde{\mathcal{R}}_{R,\tau_*}$
    \[ \tilde{A}^{(0)}_a \in \mathcal{C}^0_t \dot{H}^1_x \cap \mathcal{C}^1_t H^{-\delta}_x, \quad \tilde{A}^{\text{inh.}}_a \in \mathcal{C}^0_t H^{1-\delta}_x \cap \mathcal{C}^1_t H^{-\delta}_x \]
    with respect to the standard foliation $\tilde{\Sigma}_t$. In particular, by the boundedness of $\tilde{\mathcal{R}}_{R,\tau_*}$
    and the embedding \eqref{Sobolev_embedding_homogeneous_H1}, we have $\| \tilde{A}_a^{(0)} \|_{L^2_x} \la \| \tilde{A}_a^{(0)} \|_{L^6_x} \la \|\tilde{A}_a^{(0)}\|_{\dot{H}^1_x}$, so
    \[ \tilde{A}^{(0)}_a \in L^2_t H^1_x, \quad \partial_t \tilde{A}^{(0)}_a \in L^2_t H^{-\delta}_x, \quad \tilde{A}^{\text{inh.}}_a \in L^2_t H^{1-\delta}_x, \quad \partial_t \tilde{A}^{\text{inh.}}_a \in L^2_t H^{-\delta}_x, \]
    and hence $\tilde{A}_a \in L^2_t H^{1-\delta}_x$ and $\partial_t \tilde{A}_a \in L^2_t H^{-\delta}_x$. It follows immediately from the Fourier space definitions of (Riemannian, spacetime) $H^s$ spaces, and of the definition of the corresponding restriction spaces to $\tilde{\mathcal{R}}_{R,\tau_*}$ \eqref{definition_fractional_Sobolev_spaces_on_domain}, that
    \begin{equation} \label{local_spacetime_regularity_potential} \tilde{A}_a \in H^{1-\delta}(\tilde{\mathcal{R}}_{R,\tau_*}),
    \end{equation}
    since
    \begin{equation} \label{simple_Fourier_argument} (1+|\xi_0|^2 + |\xi|^2)^{1-\delta} = \langle \xi \rangle^{2(1-\delta)} \left( 1+ \frac{|\xi_0|^2}{\langle \xi \rangle^2} \right)^{1-\delta} \leq \langle \xi \rangle^{2(1-\delta)} + |\xi_0|^2 \langle \xi \rangle^{-2\delta}, \end{equation}
    as $\delta \in (0,1)$. The spacetime regularity of $\tilde{\phi}$ follows in the same way.

    Now for the regularity with respect to the foliation $\tilde{\Sigma}_\tau$, recall that by \Cref{Selberg_Tesfahun_theorem}, the local solution in $S_T$ has the wave-Sobolev regularity stated in \eqref{wave_Sobolev_regularity_1}--\eqref{wave_Sobolev_regularity_3}. Let $\tau_*$ be sufficiently small such that $\sup_{D^+(B_R)\cap \{ \tau = \tau_*\} } t < T$. By \Cref{lem:regularity_of_nonlinearities}, 
    \begin{align*} \widetilde{\Box} \tilde{\mathbf{E}}' =\boldsymbol{\mathcalboondox{P}}(\tilde{A}'_a, \tilde{\psi}'), ~ \widetilde{\Box} \tilde{\mathbf{B}}' = \boldsymbol{\mathcalboondox{Q}}(\tilde{A}'_a, \tilde{\psi}') & \in H^{-1}(S_T) \subset H^{-1}(\tilde{\mathcal{R}}_{R,\tau_*}), \\
    \widetilde{\Box} \tilde{\psi}' = \mathcalboondox{M}(\tilde{A}'_a, \tilde{\psi}') &\in H^{-\delta}(S_T) \subset H^{-\delta}(\tilde{\mathcal{R}}_{R,\tau_*}),
    \end{align*}
    so applying\footnote{Note that in order to apply \Cref{lem:foliation_change_positive_s} and \Cref{lem:foliation_change_negative_s}, we need to extend all relevant local Sobolev functions to Sobolev functions on a spatially compact spacetime, e.g. $\mathbb{R}\times \mathbb{S}^3$. This is easy using the definition \eqref{Sobolev_spaces_on_manifolds}.} \Cref{lem:foliation_change_negative_s} we deduce that $\tilde{\mathbf{E}}, \, \tilde{\mathbf{B}} \in \mathcal{C}^0([0,\tau_*];L^2(\tilde{\Sigma}_\tau))$ and $\tilde{\phi} \in \mathcal{C}^0([0,\tau_*];H^{1-\delta}(\tilde{\Sigma}_\tau))$. For the potential, since $\tilde{A}'_a \in \mathcal{C}^0_t(\dot{H}^1_x + H^{1-\delta}_x) \cap \mathcal{C}^1_t(H^{-\delta}_x)$, again by the boundedness of $\tilde{\mathcal{R}}_{R,\tau_*}$ and the embedding \eqref{Sobolev_embedding_homogeneous_H1}, we have $\tilde{A}_a \in \mathcal{C}^0_t H^{1-\delta}_x \cap \mathcal{C}^1_t H^{-\delta}_x$ in $\tilde{\mathcal{R}}_{R,\tau_*}$, so \Cref{lem:regularity_of_N} and \Cref{lem:foliation_change_negative_s} similarly give $\tilde{A}_a \in \mathcal{C}^0([0,\tau_*];H^{\frac{1}{3}}(\tilde{\Sigma}_\tau))$. But now the spatial regularity of $\tilde{A}_a$ can be improved using \eqref{local_spacetime_regularity_potential} and the trace theorem \eqref{trace_operator}: using the continuity of the operator $\operatorname{tr}: H^s(\tilde{\mathcal{R}}_{R,\tau_*}) \to H^{s-\frac{1}{2}}(\tilde{\Sigma}_\tau)$, $\tilde{A}_a \mapsto \tilde{A}_a|_{\tilde{\Sigma}_\tau}$ (for $s > \frac{1}{2}$), we have 
\[ \sup_\tau \| \tilde{A}_a|_{\tilde{\Sigma}_\tau} \|_{H^{\frac{1}{2}-\delta}(\tilde{\Sigma}_\tau)} \la \| \tilde{A}_a \|_{H^{1-\delta}(\mathcal{R}_{R,\tau_*})}, \]
and by approximating $\tilde{A}_a$ by smooth functions, we obtain $\tilde{A}_a \in \mathcal{C}^0([0,\tau_*];H^{\frac{1}{2}-\delta}(\tilde{\Sigma}_\tau))$.
\end{proof}

\section{Solution on the Einstein Cylinder}

\subsection{Conformal transport of local solution and improved regularity}
\label{sec:conformal_transport_of_solution}
By conformality, under $\iota_\Omega$ (cf. \Cref{sec:conformal_embedding}) the domain $D^+(B_R) \subset \mathbb{M}$ maps to the future domain of dependence of the image of $B_R$, which we still denote by $D^+(B_R)$. By construction of the foliation $\tilde{\Sigma}_\tau$, in $D^+(B_R)$ the leaves $\tilde{\Sigma}_\tau$ are mapped to the leaves $\Sigma_\tau$, as shown in \Cref{fig:change_of_foliation_cylinder}, where the leaves $\Sigma_\tau$ are now pieces of the standard foliation of $\mathbb{R} \times \mathbb{S}^3$ by 3-spheres contained in $D^+(B_R)$. Further, $J^-(\tilde{\Sigma}_{\tau_*}) = J^-(\Sigma_{\tau_*})$, and hence the region $\tilde{\mathcal{R}}_{R,\tau_*} = D^+(B_R) \cap J^-(\tilde{\Sigma}_{\tau_*}) \subset \mathbb{M}$ is mapped to the region  
\[ \mathcal{R}_{R,\tau_*} \defeq D^+(B_R) \cap J^-(\Sigma_{\tau_*}) \subset \mathbb{R} \times \mathbb{S}^3,\] 
as shown in \Cref{fig:image_of_partial_slab}.

    \begin{figure}[h]
    \hspace{-1.8cm}
    \begin{tikzpicture}
        \node[inner sep=0pt] (curved) at (0,0)
        {\includegraphics[width=.25\textwidth]{{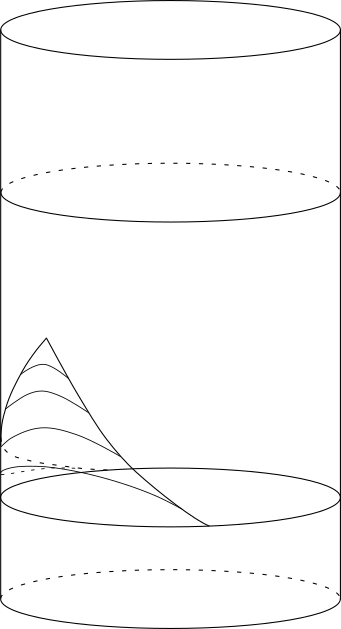}}};
        \node[inner sep=0pt] (straight) at (8,0)
        {\includegraphics[width=.25\textwidth]{{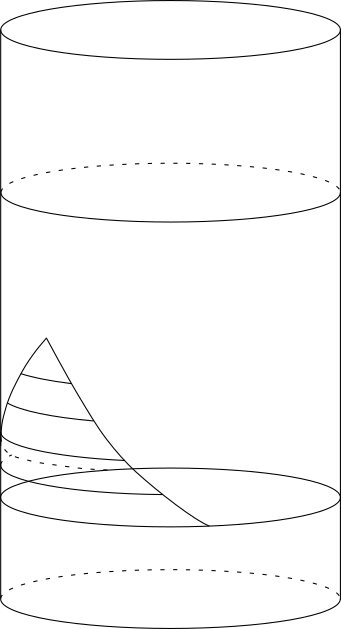}}};
        \draw[->] (2.8,0) .. controls (3.5,0.4) and (4.5,0.4) .. (5.2,0);
        \node[label={[shift={(4,0.2)}]foliation change}] {};
        \node[label={[shift={(-2.6,-1.65)}]$\Sigma_t$}] {};
        \node[label={[shift={(-2,-2.5)}]$\left\{\begin{array}{c} \\ \\ \\  \\\end{array}\right.$}] {};
        \node[label={[shift={(5.4,-1.65)}]$\Sigma_\tau$}] {};
        \node[label={[shift={(6,-2.5)}]$\left\{\begin{array}{c} \\ \\ \\ \\ \end{array}\right.$}] {};
    \end{tikzpicture}
    \caption{By construction, the new foliation agrees with the standard foliation of $\mathbb{R} \times \mathbb{S}^3$ in $D^+(B_R)$ after the conformal embedding of $\mathbb{M}$ into $\mathbb{R} \times \mathbb{S}^3$.}
    \label{fig:change_of_foliation_cylinder}
    \end{figure}

    \begin{figure}[h]
    \centering
    \begin{tikzpicture}
        \node[inner sep=0pt] (curved) at (0,-0.6)
        {\includegraphics[width=.38\textwidth]{{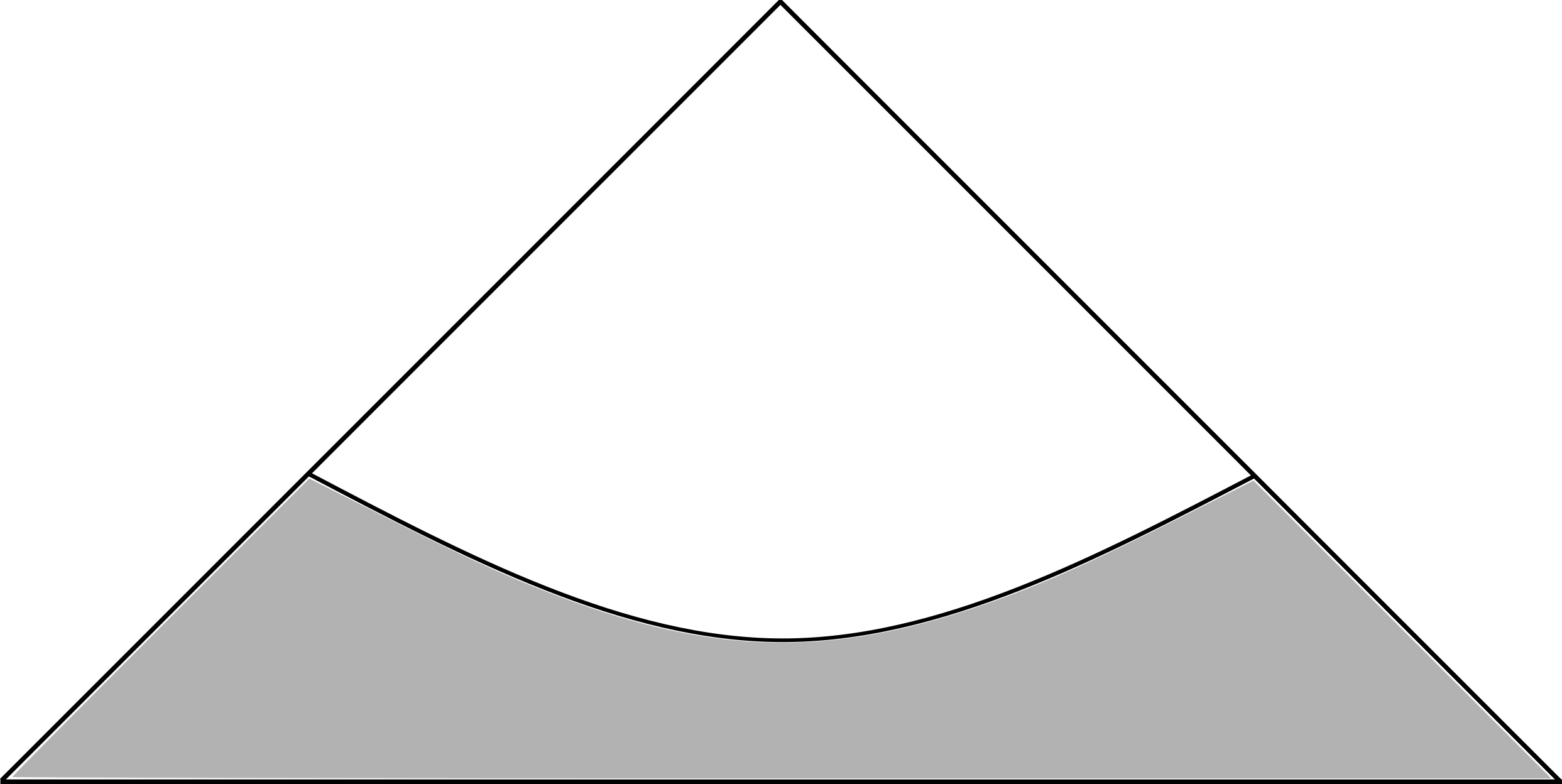}}};
        \node[inner sep=0pt] (straight) at (7,0)
        {\includegraphics[width=.25\textwidth]{{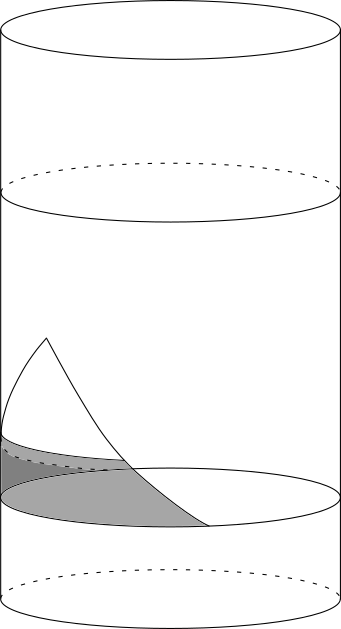}}};
        \draw[->] (1.5,0) .. controls (2.5,0.4) and (3.5,0.4) .. (4.5,0);
        \node[label={[shift={(3,0.2)}]$\iota_\Omega$}] {};
        \node[label={[shift={(0.1,-1.5)}]$\tilde{\Sigma}_{\tau_*}$}] {};
        \node[label={[shift={(9.5,0)}]$\Sigma_{\tau_*}$}] {};
        \draw[-latex] (9.15,0.45) .. controls (8,-0.5) and (6.5,0.5) .. (5.8,-1.5);
        \node[label={[shift={(-0.7,1)}]$\tilde{\mathcal{R}}_{R,\tau_*} = D^+(B_R) \cap J^-(\tilde{\Sigma}_{\tau_*})$}] {};
        \draw[-latex] (-2.9,1.2) .. controls (-3.5,0) and (-2.5,-1) .. (-1.1,-1.8);
        \node[label={[shift={(2.2,-3.6)}]$\mathcal{R}_{R,\tau_*} = D^+(B_R) \cap J^-(\Sigma_{\tau_*})$}] {};
        \draw[-latex] (4.6,-3.1) .. controls (5.5,-3.2) and (6,-3) .. (6,-2);
        \node[label={[shift={(9.9,-1.5)}]$\underline{
        C}_{R,\tau_*}$}] {};
        \draw[-latex] (9.3,-1) .. controls (8,-1) and (7.5,-1.5) .. (7,-2);
    \end{tikzpicture}
    \caption{The image of $\tilde{\mathcal{R}}_{R,\tau_*}$ in $\mathbb{R} \times \mathbb{S}^3$ under the conformal embedding $\mathbb{M} \hookrightarrow \mathbb{R} \times \mathbb{S}^3$.}
    \label{fig:image_of_partial_slab}
    \end{figure}
    
In $\mathcal{R}_{R,\tau_*}$ we now denote by
\begin{equation} \label{conformally_transported_local_solution_definition} A_a \defeq (\iota_\Omega)_* \tilde{A}_a, \quad F_{ab} \defeq (\iota_\Omega)_* \tilde{F}_{ab} \quad \text{and} \quad
\phi \defeq \Omega^{-1} (\iota_\Omega)_* \tilde{\phi} \end{equation}
the conformally transported fields obtained in \Cref{sec:local_solution}. The potential and electromagnetic field $A_a$ and $F_{ab}$ are related in the usual way, and the fields $(\phi, A_a, F_{ab})$ solve \eqref{general_Maxwell_equation}--\eqref{general_Bianchi_identity} in $\mathcal{R}_{R,\tau_*}$ (by the conformal invariance of \eqref{general_Maxwell_equation}--\eqref{general_Bianchi_identity} according to the rescaling \eqref{conformal_symmetry}). Moreover, $A_a$, $F_{ab}$ and $\phi$, being defined in $\mathcal{R}_{R,\tau_*}$, depend only on $(\tilde{\mathbf{E}}_0, \tilde{\mathbf{B}}_0, \tilde{\phi}_1, \tilde{\phi}_0)|_{B_R}$, i.e. only on $(\mathbf{E}_0, \mathbf{B}_0,U_0,\phi_0)|_{B_R}$, by the finite speed of propagation of the system \eqref{Minkowski_Maxwell_equation_potential}--\eqref{Minkowski_scalar_equation_potential}.

\begin{lemma}
    \label{lem:local_solution_cylinder}
    Let $(\mathbf{E}_0, \mathbf{B}_0, U, \phi_0) \in L^2(\mathbb{S}^3)^4$ be a set of finite energy initial data for \eqref{general_MKG_system_with_projections} on $\mathbb{R} \times \mathbb{S}^3$ in the sense of \Cref{defn_finite_energy_data_on_cylinder}. Then in $\mathcal{R}_{R,\tau_*}$ there exists a local solution $(\phi, A_a, \mathbf{E}, \mathbf{B})$ to \eqref{general_MKG_system_with_projections} in Lorenz gauge on $\mathbb{R} \times \mathbb{S}^3$ such that for any $\delta > 0$ sufficiently small
    \begin{align*}
        &\mathbf{E},\, \mathbf{B} \in \mathcal{C}^0([0,\tau_*];L^2(\Sigma_\tau)), \\
        & \phi \in \mathcal{C}^0([0,\tau_*];H^{1-\delta}(\Sigma_\tau)) \cap \mathcal{C}^1([0,\tau_*];H^{-\delta}(\Sigma_\tau)), \\
        & A_a \in \mathcal{C}^0([0,\tau_*];H^{\frac{1}{2}-\delta}(\Sigma_\tau)) \cap \mathcal{C}^1([0,\tau_*];H^{-\frac{1}{2}-\delta}(\Sigma_\tau))
    \end{align*}
    and
    \[ \D_a \phi \in L^\infty([0,\tau_*];L^2(\Sigma_\tau)) \cap \mathcal{C}^0([0,\tau_*];H^{-\delta}(\Sigma_\tau)), \]
    and the solution verifies the initial condition
    \[ (\mathbf{E}, \mathbf{B}, \D_a \phi, \phi)|_{\tau=0} = (\mathbf{E}_0, \mathbf{B}_0, U, \phi_0). \]
    Moreover, we have the local spacetime regularity for $A_a$ and $\phi$ given by
    \[  A_a, \, \phi \in H^{1-\delta}(\mathcal{R}_{R,\tau_*}). \]
\end{lemma}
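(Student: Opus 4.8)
Define $(\phi, A_a, F_{ab})$ on $\mathcal{R}_{R,\tau_*}$ by \eqref{conformally_transported_local_solution_definition}. Since $\mathcal{R}_{R,\tau_*}$ is relatively compact in $\mathbb{R}\times\mathbb{S}^3$ and stays away from $\scri$, the conformal factor $\Omega$ is smooth, positive and bounded below there, and $\iota_\Omega$ restricts to a smooth diffeomorphism onto its image; hence multiplication by $\Omega^{\pm1}$ and push-forward by $\iota_\Omega$ preserve all the Sobolev norms in play. Applying this to the conclusions of \Cref{lem:regularity_with_respect_to_hyperboloidal_foliation} yields the asserted $\mathcal{C}^0$-in-$\tau$ regularities of $\mathbf{E},\mathbf{B},\phi,A_a$ relative to the slices $\Sigma_\tau$, as well as $A_a,\phi\in H^{1-\delta}(\mathcal{R}_{R,\tau_*})$ from \eqref{local_spacetime_regularity_potential}; by \eqref{conformal_symmetry} the fields solve \eqref{general_MKG_system_with_projections}, but in the gauge $\nabla_aA^a=\lambda_0$, where a short computation gives $\lambda_0=\tfrac{2}{\Omega^3}(\nabla_a\Omega)A^a$, which does not vanish in general (cf.\ the footnote after \eqref{cylinder_scalar_equation_potential}). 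Crucially, $\lambda_0$ is \emph{undifferentiated} in $A$, so it inherits the full regularity of $A_a$: $\lambda_0\in H^{1-\delta}(\mathcal{R}_{R,\tau_*})\hookrightarrow L^2([0,\tau_*];H^{1-\delta}(\Sigma_\tau))$.

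\textbf{Step 2: gauge change to $\nabla_aA^a=0$.} Extend $\lambda_0$ to a function in $H^{1-\delta}_{\mathrm{loc}}(\mathbb{R}\times\mathbb{S}^3)$, solve $\Box_g\chi=-\lambda_0$ with $(\chi,\partial_\tau\chi)|_{\tau=0}=(0,0)$, and restrict back to $\mathcal{R}_{R,\tau_*}$ by finite speed of propagation. Feeding $\lambda_0\in L^1([0,\tau_*];H^{1-\delta}(\Sigma_\tau))$ into the energy inequality \eqref{energy_inequality} with $\sigma=1-\delta$ gives $\chi\in\mathcal{C}^0_\tau H^{2-\delta}_x\cap\mathcal{C}^1_\tau H^{1-\delta}_x$ on $\mathcal{R}_{R,\tau_*}$; reinserting into the equation gives $\partial_\tau^2\chi\in L^2_\tau H^{-\delta}_x$, and a Fourier argument as in \eqref{simple_Fourier_argument} then upgrades this to $\chi\in H^{2-\delta}(\mathcal{R}_{R,\tau_*})$, whence $\nabla_a\chi\in\mathcal{C}^0_\tau H^{1-\delta}_x\cap H^{1-\delta}(\mathcal{R}_{R,\tau_*})$. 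Replacing $(A_a,\phi)\rightsquigarrow(A_a+\nabla_a\chi,\,e^{-i\chi}\phi)$ puts the solution in cylindrical Lorenz gauge without touching $\mathbf{E},\mathbf{B}$ or $\D_a\phi$; the spacetime regularity $H^{1-\delta}$ of $A_a$ is preserved, and the slice regularity $A_a\in\mathcal{C}^0_\tau H^{1/2-\delta}_x$ is unchanged because it is already dictated by the trace loss in \Cref{lem:regularity_with_respect_to_hyperboloidal_foliation}, not by the smoother $\nabla_a\chi$. For the scalar field write $e^{-i\chi}\phi=\phi+(e^{-i\chi}-1)\phi$; since $\chi\in\mathcal{C}^0_\tau H^{2-\delta}_x\hookrightarrow\mathcal{C}^0_\tau L^\infty_x$, \Cref{lem:composition_Sobolev_functions_1} gives $e^{-i\chi}-1\in\mathcal{C}^0_\tau(H^{2-\delta}_x\cap L^\infty_x)$ and \Cref{lem:product_Sobolev_estimates} gives $(e^{-i\chi}-1)\phi\in\mathcal{C}^0_\tau H^{1-\delta}_x\cap H^{1-\delta}(\mathcal{R}_{R,\tau_*})$. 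Because $\chi|_{\tau=0}=0$, the initial values of the gauge-invariant quantities and of $\phi$ are unaffected, so $(\mathbf{E},\mathbf{B},\D_a\phi,\phi)|_{\tau=0}=(\mathbf{E}_0,\mathbf{B}_0,U,\phi_0)$ on $B_R$.

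\textbf{Step 3: time derivatives.} For the potential, $\partial_\tau\mathbf{A}$ is recovered from $\mathbf{E}_a=T^bF_{ba}$ and $\boldsymbol{\nabla}A_0$ via $F_{ab}=\nabla_aA_b-\nabla_bA_a$, while the Lorenz condition gives $\partial_\tau A_0=-\boldsymbol{\nabla}\cdot\mathbf{A}$ modulo zeroth-order (curvature) terms; since $\mathbf{E}\in\mathcal{C}^0_\tau L^2_x$ and $A_a\in\mathcal{C}^0_\tau H^{1/2-\delta}_x$, this yields $\partial_\tau A_a\in\mathcal{C}^0_\tau H^{-1/2-\delta}_x$. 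For $\phi$ and $\D_a\phi$ the wave equation only bounds $\partial_\tau^2\phi$ in a space too negative to conclude directly, so we argue by approximation: mollify $(\mathbf{E}_0,\mathbf{B}_0,U,\phi_0)$ and correct $\mathbf{E}_0$ (resp.\ $\mathbf{B}_0$) by a gradient (resp.\ its Hodge curl-free part), solving a Poisson equation on $\mathbb{S}^3$, to obtain smooth \emph{constrained} data converging in $L^2(\mathbb{S}^3)$; running the whole construction (modification, \Cref{Selberg_Tesfahun_theorem} with persistence of regularity and continuous dependence \Cref{rmk:continuous_dependence_on_data}, conformal transport, and the gauge change $\chi_n$) produces smooth local cylinder solutions $(\phi_n,A^{(n)}_a,\mathbf{E}_n,\mathbf{B}_n)$ converging to $(\phi,A_a,\mathbf{E},\mathbf{B})$ in the norms of Steps 1--2. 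For these smooth solutions the cylinder energy \eqref{cylinder_energy} is classically conserved, so $\|\D_a\phi_n(\tau,\cdot)\|_{L^2(\Sigma_\tau)}^2\le2\mathscr{E}_n(\tau)=2\mathscr{E}_n(0)$ is bounded uniformly in $n,\tau$; weak-$*$ lower semicontinuity gives $\D_a\phi\in L^\infty([0,\tau_*];L^2(\Sigma_\tau))$. Then $\partial_\tau\phi=\D_0\phi-iA_0\phi$, with $\D_0\phi\in L^\infty_\tau L^2_x$ and (by \Cref{lem:product_Sobolev_estimates}) $A_0\phi\in L^\infty_\tau H^{-\delta}_x$ after enlarging $\delta$ by a harmless multiple, lies in $L^\infty_\tau H^{-\delta}_x$; combined with $\partial_\tau^2\phi\in L^\infty_\tau H^{-1-\delta}_x$ from the scalar wave equation and the compact embedding $H^{-\delta}(\Sigma)\subset\subset H^{-2\delta}(\Sigma)$, the Aubin--Lions lemma yields $\phi\in\mathcal{C}^1([0,\tau_*];H^{-2\delta}(\Sigma_\tau))$, and hence $\D_a\phi\in\mathcal{C}^0([0,\tau_*];H^{-2\delta}(\Sigma_\tau))$; relabelling $2\delta\rightsquigarrow\delta$ completes the proof.

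\textbf{Main obstacle.} The delicate point is the regularity bookkeeping in Step 2: one must exploit that $\lambda_0$ retains the \emph{spacetime} regularity $H^{1-\delta}$ of the transported potential (since it is undifferentiated in $A$) and feed the corresponding sharp spatial regularity $L^2_\tau H^{1-\delta}_x$ into \eqref{energy_inequality} at level $\sigma=1-\delta$; using only the weaker $\mathcal{C}^0_\tau H^{1/2-\delta}_x$ slice regularity of $A_a$ would leave $\nabla_a\chi$ merely in $H^{1/2-\delta}(\mathcal{R}_{R,\tau_*})$ and destroy the claimed spacetime regularity of $A_a$. The second, softer difficulty is threading the smooth-approximation scheme of Step 3 through the entire chain of constructions while keeping the constraints exactly satisfied, so that the conserved-energy bound can be transferred to the low-regularity limit.
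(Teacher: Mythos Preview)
Your overall strategy matches the paper's: conformally transport, change gauge to cylindrical Lorenz via a wave equation for $\chi$, read off the $\tau$-derivatives of $A_a$ from the gauge condition and the field strength, and obtain the $L^\infty_\tau L^2_x$ bound on $\D_a\phi$ by passing to smooth approximants and using an energy argument together with Aubin--Lions. The ordering is slightly different (the paper computes $\partial_\tau A_a$ and $\partial_\tau\phi$ \emph{before} the gauge change, still in Minkowskian Lorenz gauge, and changes gauge only at the end), but your order works just as well because the gauge function $\chi$ you build is one derivative more regular than $A_a$ in every norm used.

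There is one genuine gap in Step~3. You write ``the cylinder energy \eqref{cylinder_energy} is classically conserved, so $\|\D_a\phi_n(\tau,\cdot)\|_{L^2(\Sigma_\tau)}^2\le 2\mathscr{E}_n(\tau)=2\mathscr{E}_n(0)$''. But your smooth approximants, obtained by ``running the whole construction'', exist only on $\mathcal{R}_{R,\tau_*}$, whose lateral boundary is the \emph{incoming null cone} $\underline{C}_{R,\tau}$; the slices $\Sigma_\tau$ here are partial, and the energy over $\Sigma_\tau\cap\mathcal{R}_{R,\tau_*}$ is \emph{not} conserved---there is flux through $\underline{C}_{R,\tau}$. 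What the paper does is integrate $\nabla^a(T^b\mathbf{T}^{(n)}_{ab})=0$ over $\mathcal{R}_{R,\tau}$, compute the null boundary contribution in a Newman--Penrose tetrad adapted to $\underline{C}_{R,\tau}$, and verify that $T^an^b\mathbf{T}^{(n)}_{ab}\ge 0$ pointwise (this is the usual positivity of the stress-energy flux through an achronal hypersurface). This yields an \emph{inequality} $\int_{\Sigma_\tau}T^aT^b\mathbf{T}^{(n)}_{ab}\le\int_{\Sigma_0}T^aT^b\mathbf{T}^{(n)}_{ab}$, which is all you need. Your sentence as written either asserts a false equality or implicitly appeals to a global smooth solution on $[0,\tau_*]\times\mathbb{S}^3$ that has not been constructed at this stage. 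Insert the null-flux positivity computation (or cite the dominant energy condition for $\mathbf{T}_{ab}$) and replace the equality by the inequality.

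Two minor remarks. First, the formula $\lambda_0=\tfrac{2}{\Omega^3}(\nabla_a\Omega)A^a$ has the wrong power of $\Omega$; the correct relation is $\nabla_aA^a=2(\nabla^a\log\Omega)A_a=2\Omega^{-1}(\nabla^a\Omega)A_a$, though this is harmless since $\Omega$ is smooth and bounded on $\mathcal{R}_{R,\tau_*}$. Second, ``weak-$*$ lower semicontinuity'' alone does not identify the limit; the paper pins down that the weak-$*$ limit equals $\D_a\phi$ by observing that the approximants converge in $L^2(\mathcal{R}_{R,\tau_*})$ (via the $t$-foliation convergence from \Cref{rmk:continuous_dependence_on_data}), and then invoking uniqueness of limits. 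You should make that identification explicit.
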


\begin{proof} Consider the solution \eqref{conformally_transported_local_solution_definition} obtained in the constructions of \Cref{sec:localization_to_Minkowski,sec:conformal_transport_of_solution,sec:change_of_foliation}. 

\subsubsection*{Regularity implied by \Cref{lem:regularity_with_respect_to_hyperboloidal_foliation}}

First, the local spacetime regularity of $A_a \in H^{1-\delta}(\mathcal{R}_{R,\tau_*})$ and $\phi \in H^{1-\delta}(\mathcal{R}_{R,\tau_*})$ is immediate from \Cref{lem:regularity_with_respect_to_hyperboloidal_foliation}. 

Next, on $\mathbb{M}$ the electromagnetic field $\tilde{F}_{ab}$ is given by
\[ \tilde{F}_{ab} = - 2 \tilde{\mathbf{E}}_{[a} \tilde{T}_{b]} + \frac{1}{3} \tilde{\epsilon}_{abc} \tilde{\mathbf{B}}^c, \]
so it is obvious from \Cref{lem:regularity_with_respect_to_hyperboloidal_foliation} that in $\tilde{\mathcal{R}}_{R,\tau_*}$ $\tilde{F}_{ab} \in \mathcal{C}^0([0,\tau_*];L^2(\tilde{\Sigma}_\tau))$. It is then clear that 
\[ F_{ab} \in \mathcal{C}^0([0,\tau_*];L^2(\Sigma_\tau))
\]
since continuity in $\tau$ is unchanged and the Riemannian metrics on $\tilde{\Sigma}_\tau$ and $\Sigma_\tau$ are related by a conformal factor which is uniformly equivalent to $1$ in $\mathcal{R}_{R,\tau_*}$. Similarly, it is clear from \Cref{lem:regularity_with_respect_to_hyperboloidal_foliation} that
\[ \phi \in \mathcal{C}^0([0,\tau_*];H^{1-\delta}(\Sigma_\tau)) \quad \text{and} \quad A_a \in \mathcal{C}^0([0,\tau_*];H^{\frac{1}{2}-\delta}(\Sigma_\tau)).  \]

\subsubsection*{Regularity of $\tau$-derivatives}

Now recall that the potential $\tilde{A}_a$ obtained in \Cref{sec:local_solution} satisfies the Minkowskian Lorenz gauge condition $\tilde{\nabla}_a \tilde{A}^a = 0$. In terms of the metric on $\mathbb{R} \times \mathbb{S}^3$ this is expressed as
\[ \tilde{\nabla}_a \tilde{A}^a = \Omega^2 ( \nabla_a A^a - 2 (\nabla^a \log \Omega )A_a ), \]
i.e. in $\mathcal{R}_{R,\tau_*}$
\[ \partial_\tau A_0 = \boldsymbol{\nabla} \cdot \mathbf{A} + 2 (\nabla^a \log \Omega ) A_a. \]
The first of the above terms is in $\mathcal{C}^0([0,\tau_*];H^{-\frac{1}{2}-\delta}(\Sigma_\tau))$ and the second, since $\Omega$ is smooth and uniformly equivalent to $1$ in $\mathcal{R}_{R,\tau_*}$, is in $\mathcal{C}^0([0,\tau_*];H^{\frac{1}{2}-\delta}(\Sigma_\tau))$. Hence $A_0 = (\partial_\tau)^a A_a \in \mathcal{C}^1([0,\tau_*];H^{-\frac{1}{2}-\delta}(\Sigma_\tau))$. For the $\tau$-derivatives of the $\Sigma_\tau$-spatial components, from the regularity of $F_{ab}$ above we obtain by contracting with $\partial_\tau$
\[ \partial_\tau \mathbf{A} = \mathbf{E} + \boldsymbol{\nabla}A_0 \in \mathcal{C}^0([0,\tau_*];L^2(\Sigma_\tau)) + \mathcal{C}^0([0,\tau_*];H^{-\frac{1}{2}-\delta}(\Sigma_\tau)), \]
i.e. $A_a \in \mathcal{C}^1([0,\tau_*];H^{-\frac{1}{2}-\delta}(\Sigma_\tau))$. Therefore
\[ A_a \in \mathcal{C}^0([0,\tau_*];H^{\frac{1}{2}-\delta}(\Sigma_\tau)) \cap \mathcal{C}^1([0,\tau_*];H^{-\frac{1}{2}-\delta}(\Sigma_\tau)). \]
To obtain the regularity for the $\tau$-derivative of $\phi$, recall that (from \eqref{general_scalar_equation_potential} and the gauge condition $\nabla_a A^a = 2(\nabla^a \log \Omega)A_a$), the scalar field $\phi$ satisfies the wave equation 
\begin{align*} (\partial_\tau^2 + 1 - \boldsymbol{\Delta}) \phi + 2i A_0 \partial_\tau \phi &= 2i \mathbf{A} \cdot \boldsymbol{\nabla} \phi + A_a A^a \phi - 2i (\nabla^a \log \Omega) A_a \phi 
\end{align*}
in $\mathcal{R}_{R,\tau_*}$. This may be rewritten as
\begin{align}
\label{ODE_for_phi_time_derivative}
\begin{split}
\partial_\tau^2 \phi + 2i \partial_\tau (A_0 \phi) & = 2i \mathbf{A} \cdot \boldsymbol{\nabla} \phi + A_a A^a \phi - 2i (\nabla^a \log \Omega) A_a \phi - (1-\boldsymbol{\Delta})\phi + 2i \phi \partial_\tau A_0 \\
& \eqdef \mathcalboondox{m}(A_a,\phi),
\end{split}
\end{align}
where the right-hand side $\mathcalboondox{m}$ has the regularity
\begin{align*} \mathcalboondox{m}(A_a,\phi) &\in L^\infty([0,\tau_*]; (H^{\frac{1}{2}-\delta} \cdot H^{-\delta})(\Sigma_\tau)) + L^\infty([0,\tau_*]; (H^{\frac{1}{2}-\delta} \cdot H^{\frac{1}{2}-\delta} \cdot H^{1-\delta})(\Sigma_\tau)) \\
& + L^\infty([0,\tau_*]; (H^{\frac{1}{2}-\delta} \cdot H^{1-\delta})(\Sigma_\tau)) + L^\infty([0,\tau_*];H^{-1-\delta}(\Sigma_\tau)) \\
& + L^\infty([0,\tau_*]; (H^{1-\delta} \cdot H^{-\frac{1}{2}-\delta})(\Sigma_\tau)) \\
& \subset L^\infty([0,\tau_*];H^{-1-3\delta}(\Sigma_\tau)),
\end{align*}
where we used the embeddings (cf. \Cref{lem:product_Sobolev_estimates}) $H^{\frac{1}{2}-\delta} \cdot H^{1-\delta} \hookrightarrow H^{- 2\delta}$, $H^{\frac{1}{2}-\delta_1}\cdot H^{-\delta_2} \hookrightarrow H^{-1-\delta_1 -\delta_2}$ and $H^{1-\delta} \cdot H^{-\frac{1}{2}-\delta} \hookrightarrow H^{-1-2\delta}$ in 3 dimensions. By directly integrating in $\tau$,
\[ \partial_\tau \phi  = - 2i A_0 \phi + \phi_1 + 2i a_0\phi_0 + \int_0^\tau \mathcalboondox{m}(A_a, \phi) \in \mathcal{C}^0([0,\tau_*];H^{-1 -3\delta}(\Sigma_\tau)) \]
as $A_0 \phi \in \mathcal{C}^0([0,\tau_*]; H^{-2\delta}(\Sigma_\tau))$. Hence $\phi \in \mathcal{C}^1([0,\tau_*];H^{-1 - 3\delta}(\Sigma_\tau))$.

\subsubsection*{Improved regularity of $\partial_\tau \phi$}

By \Cref{rmk:continuous_dependence_on_data}, the solution $(\tilde{\phi}, \tilde{\mathbf{E}}, \tilde{\mathbf{B}})$ constructed in \Cref{sec:localization_to_Minkowski,sec:conformal_transport_of_solution,sec:change_of_foliation} is a limit in $\mathcal{C}^0H^1 \times \mathcal{C}^0 L^2 \times \mathcal{C}^0 L^2$, as well as the energy space, with respect to the $t$-foliation, of smooth solutions with smooth data. By regularizing the initial data $(\mathbf{E}_0,\mathbf{B}_0,U,\phi_0)$ in $B_R$, we obtain in the same way a sequence of smooth solutions $(\phi^{(n)}, A^{(n)}_a, F^{(n)}_{ab})$ in $\mathcal{R}_{R,\tau_*}$ for which the stress-energy tensor \eqref{general_stress_tensor}, which we denote by $\mathbf{T}_{ab}^{(n)}$, is conserved. We integrate $\nabla^a(T^b \mathbf{T}_{ab}^{(n)}) = 0$ in $\mathcal{R}_{R,\tau}$ for any fixed $\tau < \tau_*$ and use the positivity of the integral on the null (incoming) part $\underline{C}_{R,\tau}$ of $\partial \mathcal{R}_{R,\tau}$. We have
\begin{equation}
\label{integration_by_parts}
    \int_{\Sigma_\tau} T^a T^b \mathbf{T}_{ab}^{(n)} \dvol_{\mathbb{S}^3} + \int_{\underline{C}_{R,\tau}} T^b \mathbf{T}_{b}^{(n)}{}^a \partial_a \intprod \dvol = \int_{\Sigma_0} T^a T^b \mathbf{T}_{ab}^{(n)} \dvol_{\mathbb{S}^3}.
\end{equation}
Choose a null Newman--Penrose tetrad\footnote{The metric then takes the form $g_{ab} = l_a n_b + n_a l_b - m_a \bar{m}_b - \bar{m}_a m_b$ and $T^a = n^a + \frac{1}{2}l^a$. Explicitly, $n^a = \frac{1}{2}(\partial_\tau - \partial_\zeta)$ and $l^a = \partial_\tau + \partial_\zeta$.} $(l^a, n^a, m^a, \bar{m}^a)$ in a neighbourhood of $\mathcal{R}_{R,\tau_*}$ such that $l^a$, $n^a$ are real, future-pointing, and tangent to radial null geodesics with $l^a n_a = 1$, $m^a$ is a complex null vector field tangent to spacelike $2$-spheres with $m^a \bar{m}_a = -1$, and all other contractions zero, and moreover such that $n^a$ is tangent to $\underline{C}_{R,\tau}.$ Then
\begin{align*} \int_{\underline{C}_{R,\tau}} T^b \mathbf{T}_b^{(n)}{}^a \partial_a \intprod \dvol &= \int_{\underline{C}_{R,\tau}} T^b n^a \mathbf{T}_{ab}^{(n)} l^\flat \wedge m^\flat \wedge \bar{m}^\flat \\
& =\int_{\underline{C}_{R,\tau}} \mathbf{T}^{(n)}_{nn} + \frac{1}{2} \mathbf{T}^{(n)}_{nl} \dvol_{\underline{C}},
\end{align*}
where $\mathbf{T}_{nn}^{(n)} = n^a n^b \mathbf{T}^{(n)}_{ab}$ and similarly for $\mathbf{T}^{(n)}_{nl}$, and $\dvol_{\underline{C}} = l^\flat \wedge m^\flat \wedge \bar{m}^\flat$. A computation then shows that
\begin{align*} \mathbf{T}^{(n)}_{nn} &= 2 |F_{nm}^{(n)} |^2 + |(\D_n \phi)^{(n)}|^2 \geq 0, \\
\mathbf{T}_{nl}^{(n)} & = \frac{1}{2} \left( |F_{nl}^{(n)}|^2 + |F_{m\bar{m}}^{(n)}|^2 + |(\D_m \phi)^{(n)}|^2 + |(\D_{\bar{m}}\phi)^{(n)}|^2 + |\phi^{(n)}|^2 \right) \geq 0.
\end{align*}
Therefore the second term in \eqref{integration_by_parts} is non-negative, and we deduce for $n$ sufficiently large that 
\begin{align*}
   \int_{\Sigma_\tau} |\mathbf{E}^{(n)}|^2 &+ |\mathbf{B}^{(n)}|^2 + |(\mathrm{D}_0 \phi)^{(n)}|^2 + |(\mathrm{\mathbf{D}}\phi)^{(n)}|^2 + |\phi^{(n)}|^2 \dvol_{\mathbb{S}^3} \\
   & \leq \int_{\Sigma_0} |\mathbf{E}^{(n)}|^2 + |\mathbf{B}^{(n)}|^2 + |(\mathrm{D}_0 \phi)^{(n)}|^2 + |(\mathrm{\mathbf{D}}\phi)^{(n)}|^2 + |\phi^{(n)}|^2 \dvol_{\mathbb{S}^3} \\
   & \leq \mathscr{E}(0) + 1.
\end{align*}
Hence in particular $(\mathrm{D}_a \phi)^{(n)}$ is uniformly bounded in $L^\infty([0,\tau_*];L^2(\Sigma_\tau))$ for any $\tau< \tau_*$, so contains a subsequence which converges weakly-$*$ in $L^\infty([0,\tau_*];L^2(\Sigma_\tau))$. But the regularized solution $(\mathrm{D}_a \phi)^{(n)}$ converges to $\mathrm{D}_a \phi$ in $\mathcal{C}^0 L^2$ with respect to the $t$-foliation (since $(\tilde{\mathrm{D}}_a \tilde{\phi})^{(n)}$ does), so in, say, $L^2(\mathcal{R}_{R,\tau_*})$. By the uniqueness of limits in $L^2(\mathcal{R}_{R,\tau_*})$, we therefore must have 
\[
\mathrm{D}_a \phi \in L^\infty([0,\tau_*];L^2(\Sigma_\tau)).
\]
But now 
\begin{align*} \partial_\tau \phi = D_0 \phi - i A_0 \phi &\in L^\infty([0,\tau_*];L^2(\Sigma_\tau)) + \mathcal{C}^0([0,\tau_*];H^{ -2\delta}(\Sigma_\tau)) \\
&\subset L^\infty([0,\tau_*];H^{-2\delta}(\Sigma_\tau)).
\end{align*}
Also, returning to \eqref{ODE_for_phi_time_derivative}, we have
\[ \partial_\tau^2 \phi + 2i A_0 \partial_\tau \phi \in L^\infty([0,\tau_*];H^{-1 -3\delta}(\Sigma_\tau)), \]
where $A_0 \partial_\tau \phi \in L^\infty([0,\tau_*];(H^{\frac{1}{2}-\delta}\cdot H^{-2\delta})(\Sigma_\tau)) \subset L^\infty([0,\tau_*];H^{-1 -3\delta}(\Sigma_\tau))$. Hence 
\[ 
\partial_\tau^2 \phi \in L^\infty([0,\tau_*];H^{-1 -3\delta}(\Sigma_\tau)).
\]
By the Aubin--Lions\footnote{Recall that the Aubin--Lions lemma \cite{Aubin1963} implies that if $X_0 \subset \subset X \hookrightarrow X_1$ are Banach spaces such that $X_0$ is compactly embedded in $X$ and $X$ is continuously embedded in $X_1$, then $L^\infty([0,\tau];X_0) \cap W^{1,\infty}([0,\tau];X_1)$ is compactly embedded in $\mathcal{C}^0([0,\tau];X)$. Our conclusion follows by choosing $X_1 = H^{-1-3\delta}(\Sigma_\tau)$, $X_0 = H^{ - 2\delta}(\Sigma_\tau)$, and $X=H^{-3\delta}(\Sigma_\tau)$, and noting that by the Kondrachov embedding theorem and Schauder's theorem (a bounded linear operator between Banach spaces is compact if and only if its adjoint is compact) $H^{-2\delta} \subset \subset H^{-3\delta} \subset \subset H^{-1-3\delta}$.} lemma, we then have
\[ \partial_\tau \phi \in \mathcal{C}^0([0,\tau_*];H^{-3\delta}(\Sigma_\tau)). \]

\subsubsection*{Regularity of $\mathrm{D}_a \phi$}

It then follows that
\[ \D_0 \phi = \partial_\tau \phi + iA_0 \phi \in \mathcal{C}^0([0,\tau_*];H^{-3\delta}(\Sigma_\tau)) \]
and moreover that 
\[ \boldsymbol{\mathrm{D}} \phi = \boldsymbol{\nabla} \phi  + i\mathbf{A} \phi \in \mathcal{C}^0([0,\tau_*];H^{-2\delta}(\Sigma_\tau)). \]
Hence $\mathrm{D}_a \phi \in L^\infty([0,\tau_*];L^2(\Sigma_\tau)) \cap \mathcal{C}^0([0,\tau_*];H^{-3\delta}(\Sigma_\tau))$.

\subsubsection*{Change of gauge}

It remains to deal with the gauge. By construction, the potential $A_a$ satisfies the Minkowskian Lorenz gauge $\tilde{\nabla}_a \tilde{A}^a = 0$, i.e., writing $\Upsilon_a = \nabla_a \log \Omega$,
\[ \nabla_a A^a = \Omega^{-2} \tilde{\nabla}_a \tilde{A}^a + 2 \Upsilon_a A^a = 2 (\Omega^{-1} \nabla^a \Omega) A_a. \]
We must therefore check that the gauge transformation to the Lorenz gauge on $\mathcal{R}_{R,\tau_*} \subset \mathbb{R} \times \mathbb{S}^3$ preserves regularity. For this we transform $A_a \rightsquigarrow A_a + \nabla_a \chi$, where $\chi$ solves 
\begin{align}
\label{gauge_change_wave_equation_from_Minkowski_to_cylinder_Lorenz}
\begin{split}
    & \Box \chi = - 2\Upsilon_a A^a \in H^{1-\delta}(\mathcal{R}_{R,\tau_*}), \\
    &(\chi, \partial_\tau \chi)|_{\tau =0} = (0,0).
\end{split}
\end{align}
By the energy inequality for linear waves \eqref{energy_inequality}, for any $\tau \in [0,\tau_*]$
\begin{align*} \| \partial_\tau \chi \|_{H^{1-\delta}(\Sigma_\tau)} + \| \chi \|_{H^{2-\delta}(\Sigma_\tau)} &\la \int_0^\tau \| A_a \|_{H^{1-\delta}(\Sigma_\sigma)} \, \d \sigma \\
& \la \| A_a \|_{L^2([0,\tau_*];H^{1-\delta}(\Sigma_\tau))} \\
& \la \| A_a \|_{H^{1-\delta}(\mathcal{R}_{R,\tau_*})},
\end{align*}
so that, via a regularization, $\chi \in \mathcal{C}^0([0,\tau_*];H^{2-\delta}(\Sigma_\tau)) \cap \mathcal{C}^1([0,\tau_*];H^{1-\delta}(\Sigma_\tau))$. Hence $\nabla_a \chi \in \mathcal{C}^0([0,\tau_*];H^{1-\delta}(\Sigma_\tau))$, and
\[ A_a \in \mathcal{C}^0([0,\tau_*];H^{\frac{1}{2}-\delta}(\Sigma_\tau)) \cap \mathcal{C}^1([0,\tau_*];H^{-\frac{1}{2}-\delta}(\Sigma_\tau)) \]
in the new gauge. Similarly, since $\mathbf{E}$ and $\mathbf{B}$ are gauge invariant,
\[\mathbf{E}, \, \mathbf{B} \in \mathcal{C}^0([0,\tau_*];L^2(\Sigma_\tau))\] 
in the new gauge. Additionally, by \Cref{lem:foliation_change_positive_s} we also have $\chi \in \mathcal{C}^0([0,t_*];H^{2-\delta}(\Sigma_t)) \cap \mathcal{C}^1([0,t_*];H^{1-\delta}(\Sigma_t))$ for some $t_* > 0$, where $\Sigma_t = \iota_\Omega (\tilde{\Sigma}_t)$ is the flat Minkowskian foliation of $\mathcal{R}_{R,\tau_*}$, with respect to which it is clear that $\chi \in H^{2-\delta}(\mathcal{R}_{R,\tau_*})$ by a Fourier argument as in \eqref{simple_Fourier_argument}. Hence also
\[ A_a \in H^{1-\delta}(\mathcal{R}_{R,\tau_*}) \]
in the new gauge.

For $\D_a \phi$, the gauge transformation sends $\D_a \phi \rightsquigarrow \e^{-i\chi} \D_a \phi$, so regularity in time is clear and, using product estimates in Sobolev spaces (cf. \Cref{lem:product_Sobolev_estimates}) and the regularity obtained above for $\chi$, we have
\[ \| \e^{-i\chi} \D_a \phi \|_{Y} \la \| \e^{-i \chi} \|_{H^{2-\delta}(\Sigma_\tau)} \| \D_a \phi \|_{Y}, \]
where $Y \in \{L^2(\Sigma_\tau),  H^{-3\delta}(\Sigma_\tau)\}$. Here we used \Cref{lem:composition_Sobolev_functions_1} and the fact that $1$, $x \mapsto \cos x - 1$ and $x \mapsto\sin x$ are smooth bounded functions on $\Sigma_\tau$, and $2 - \delta > \frac{3}{2}$, to deduce that $\e^{-i\chi}|_{\Sigma_\tau} \in H^{2-\delta}(\Sigma_\tau)$.
Hence 
\[ \D_a \phi \in L^\infty([0,\tau_*];L^2(\Sigma_\tau)) \cap \mathcal{C}^0([0,\tau_*];H^{-3\delta}(\Sigma_\tau)) \]
in the new gauge.

For $\phi$, the gauge transformation similarly sends $\phi \rightsquigarrow \e^{-i\chi} \phi$. Here we analogously have, using product Sobolev estimates, the regularity of $\e^{-i\chi}$, and the regularity of $\phi$ in the Minkowskian Lorenz gauge,
\[ \| \e^{-i \chi} \phi \|_{H^{1-\delta}(\Sigma_\tau)} \la \| \e^{-i\chi} \|_{H^{2-\delta}(\Sigma_\tau)} \| \phi \|_{H^{1-\delta}(\Sigma_\tau)}, \]
and for the time derivative $\partial_\tau ( \e^{-i\chi} \phi) = \e^{-i\chi}(\partial_\tau \phi - i \phi \partial_\tau \chi )$ 
\[ \| \e^{-i\chi} \partial_\tau \phi \|_{H^{-3\delta}(\Sigma_\tau)} \la \| \e^{-i\chi} \|_{H^{2-\delta}(\Sigma_\tau)} \| \partial_\tau \phi \|_{H^{-3\delta}(\Sigma_\tau)} \]
and
\begin{align*} \| \e^{-i \chi} \partial_\tau \chi \phi \|_{H^{-3\delta}(\Sigma_\tau)} &\la \| \e^{-i\chi} \partial_\tau \chi \|_{H^{1-\delta}(\Sigma_\tau)} \| \phi \|_{H^{1-\delta}(\Sigma_\tau)} \\
&\la \| \e^{-i\chi} \|_{H^{2-\delta}(\Sigma_\tau)} \| \partial_\tau \chi \|_{H^{1-\delta}(\Sigma_\tau)} \| \phi \|_{H^{1-\delta}(\Sigma_\tau)}. 
\end{align*}
Therefore the regularity
\[ \phi \in \mathcal{C}^0([0,\tau_*];H^{1-\delta}(\Sigma_\tau)) \cap \mathcal{C}^1([0,\tau_*];H^{-3\delta}(\Sigma_\tau)) \]
is preserved by the gauge transformation by $\chi$. Finally, for the spacetime regularity of $\phi$ in the new gauge, we apply a product Sobolev estimate in spacetime:
\[ \| \e^{-i\chi} \phi\|_{H^{1-3\delta}(\mathcal{R}_{R,\tau_*})} \la \| \e^{-i\chi} \|_{H^{2-\delta}(\mathcal{R}_{R,\tau_*})} \| \phi \|_{H^{1-\delta}(\mathcal{R}_{R,\tau_*})}. \]
Here, although $ 4/2 = 2 > 2-\delta$, \Cref{lem:composition_Sobolev_functions_1} nevertheless applies since 
\[ \chi \in L^\infty([0,\tau_*];H^{2-\delta}(\Sigma_\tau)) \hookrightarrow L^\infty(\mathcal{R}_{R,\tau_*}) \] 
to give $\e^{-i\chi} \in  H^{2-\delta}(\mathcal{R}_{R,\tau_*})$. Since $\delta > 0$ was arbitrarily small, the regularity claimed in the statement is then obtained by reducing $\delta$ (by a factor of $1/3$).
\end{proof}

\begin{remark}
\label{rmk:Lemma91_initial_gauge}
We remark that the Minkowskian solution in the proof of \Cref{lem:local_solution_cylinder}---before the gauge change to $\nabla_a A^a = 0$---satisfied the initial residual gauge conditions $(\tilde{a}_0, \dot{\tilde{a}}_0) = (0,0)$, i.e. $(a_0, \dot{a}_0) = (0, \Gamma a_Z)$ (cf. \Cref{lem:obtaining_Minkowski_residual_gauge_on_cylinder}). After the gauge change in the proof of \Cref{lem:local_solution_cylinder}, this condition is modified to 
\begin{equation}
    \label{Lemma91_initial_gauge}
    (a_0, \dot{a}_0) = (0, - \Gamma a_Z).
\end{equation}
Indeed, from \eqref{gauge_change_wave_equation_from_Minkowski_to_cylinder_Lorenz} one has
\[ a_0 \rightsquigarrow a_0 + \partial_\tau \chi|_{\tau = 0} = 0, \]
and
\begin{align*} \dot{a}_0 &\rightsquigarrow \dot{a}_0 + \partial_\tau^2 \chi|_{\tau=0} \\
    & = \dot{a}_0 + \boldsymbol{\Delta} \chi \vert_{\tau = 0} - 2 \Upsilon_a A^a|_{\tau = 0} \\
    & = \Gamma a_Z - 2(\partial_\tau \log \Omega)|_{\tau =0} a_0 + 2 (\partial_\zeta \log \Omega)|_{\tau=0} a_Z \\
    & = \Gamma a_Z - 2 \Gamma a_Z \\
    & = - \Gamma a_Z,
\end{align*}
where we used the expression \eqref{standard_conformal_factor_cylinder_coordinates} for $\Omega$, and the fact that $a_Z \rightsquigarrow a_Z + \partial_\zeta \chi|_{\tau=0} = a_Z$.
\end{remark}

\subsection{Covering a strip of $\mathbb{R} \times \mathbb{S}^3$ with two copies of Minkowski space}
\label{sec:two_copies_of_Minkowski}

We now embed two copies of Minkowski space into the Einstein cylinder $\mathbb{R} \times \mathbb{S}^3$ as follows. Let $\mathbb{M}$ be embedded in $\mathbb{R} \times \mathbb{S}^3$ as described in \Cref{sec:conformal_embedding}, with associated conformal factor $\Omega$ and the coordinates $(\tau, \zeta, \theta, \varphi)$. We then declare $\mathbb{M}'$ to be the image of $\mathbb{M}$, $\mathbb{M}' = J(\mathbb{M})$, under the map
\begin{equation} 
\label{reflection_mapping_definition}
J: (\tau, \zeta, \theta, \varphi) \longmapsto (\tau', \zeta', \theta', \varphi') \defeq (\tau, \pi - \zeta, \pi - \theta, \pi + \varphi).
\end{equation}
The $\mathbb{R} \times \mathbb{S}^3$ coordinates $(\tau, \zeta, \theta, \varphi)$, as well as their image under $J$, have ranges $\mathbb{R} \times [0, \pi] \times [0, \pi] \times \mathbb{S}^1$.
The map $J$ is the composition of a reflection in the $\zeta$ coordinate and the antipodal map on $\mathbb{S}^2_{\theta,\varphi}$, and so in particular is orientation-preserving on $\mathbb{S}^3$, and in fact an involution and a global isometry of $\mathbb{R} \times \mathbb{S}^3$. We observe that the intersection $\mathbb{M} \cap \mathbb{M}'$ is non-empty, i.e. $J$ induces a relation between the spherical coordinates $(t,r,\theta,\varphi)$ on $\mathbb{M}$ and $(t',r',\theta',\varphi')$ on $\mathbb{M}'$ in the region where $\mathbb{M}$ and $\mathbb{M}'$ overlap on the Einstein cylinder:
\[ (t',r',\theta' , \varphi' ) = \left( \frac{t}{r^2-t^2} , \frac{r}{r^2-t^2} , \pi - \theta , \pi + \varphi \right) \, .\]
This region is characterised by $r > \vert t \vert$ in $\mathbb{M}$ and $r' > \vert t' \vert$ in $\mathbb{M}'$. Moreover, this change of coordinates is conformal: 
\[ \eta'_{ab} = \Theta^2 \eta_{ab} , \]
where $\Theta = (r^2 -t^2)^{-1} = (\Theta')^{-1}$, $\eta'_{ab}$ and $\eta_{ab}$ are the metrics on $\mathbb{M}'$ and $\mathbb{M}$ respectively, and $\Theta'$ has the same expression as $\Theta$ but with $(t,r)$ replaced with $(t',r')$.

\begin{remark}
On the antipodal copy $\mathbb{M}'$, since $J$ maps $\zeta \mapsto \pi - \zeta$ (cf. \eqref{reflection_mapping_definition}), it is not difficult to see that the residual gauge fixing conditions on $\{ \tau = 0 \} \cap \mathbb{M}'$ which impose \eqref{residual_gauge_fixing} on $\mathbb{M}'$ are 
\[ a_0 = 0 \quad \text{and} \quad \dot{a}_0 = \Gamma' a_Z, \]
where $\Gamma' = \cot \frac{\zeta}{2}$.
\end{remark}

\subsection{Solution on a strip}
\label{sec:solution_on_strip}

\begin{theorem}
    \label{thm:local_strip_solution_cylinder}
    Let $(\mathbf{E}_0, \mathbf{B}_0, U, \phi_0) \in L^2(\mathbb{S}^3)^4$ be a set of finite energy initial data for \eqref{general_MKG_system_with_projections} on $\mathbb{R} 
    \times \mathbb{S}^3$ in the sense of \Cref{defn_finite_energy_data_on_cylinder} and let $\tau_*$ be sufficiently small. Then on $[0,\tau_*] \times \mathbb{S}^3$ there exists a unique local-in-time solution $(\phi, A_a, \mathbf{E}, \mathbf{B})$ to \eqref{general_MKG_system_with_projections} satisfying the Lorenz gauge $\nabla_a A^a = 0$ with the regularity, for any $\delta > 0$ sufficiently small,
    \begin{align*}
        & \mathbf{E}, \, \mathbf{B}, \, \D_a \phi \in \mathcal{C}^0([0,\tau_*];L^2(\mathbb{S}^3)), \\
        & \phi \in \mathcal{C}^0([0,\tau_*];H^{1-\delta}(\mathbb{S}^3)) \cap \mathcal{C}^1([0,\tau_*];H^{-\delta}(\mathbb{S}^3)), \\
        & A_a \in \mathcal{C}^0([0,\tau_*];H^{\frac{1}{2}-\delta}(\mathbb{S}^3)) \cap \mathcal{C}^1([0,\tau_*];H^{-\frac{1}{2}-\delta}(\mathbb{S}^3))
    \end{align*}
    and
    \[ A_a, \, \phi \in H^{1-\delta}([0,\tau_*]\times \mathbb{S}^3). \]
    Moreover, the solution verifies the residual initial gauge condition 
    \[ (a_0, \dot{a}_0) \defeq (A_0, \partial_\tau A_0) |_{\tau = 0} = (0,0), \]
    the initial condition
    \[ (\mathbf{E}, \mathbf{B}, \D_a \phi, \phi)|_{\tau = 0} = (\mathbf{E}_0, \mathbf{B}_0, U, \phi_0), \]
    and the conservation of energy $\mathscr{E}(\tau) = \mathscr{E}(0)$ for all $\tau \in [0,\tau_*]$.
\end{theorem}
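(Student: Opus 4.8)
The plan is to cover the strip $[0,\tau_*]\times\mathbb{S}^3$ by the two conformally embedded copies $\mathbb{M},\mathbb{M}'$ of Minkowski space of \Cref{sec:two_copies_of_Minkowski} and to glue the two local solutions produced by \Cref{lem:local_solution_cylinder}. Fix $R\gg 1$ as in \Cref{lem:modification_of_induced_Minkowskian_data}. Applying \Cref{lem:local_solution_cylinder} gives $(\phi,A_a,\mathbf{E},\mathbf{B})$ on $\mathcal{R}_{R,\tau_*}=D^+(B_R)\cap J^-(\Sigma_{\tau_*})$, and applying the ``primed'' version---the whole construction of \Cref{sec:localization_to_Minkowski,sec:change_of_foliation,sec:conformal_transport_of_solution} run through the antipodal embedding \eqref{reflection_mapping_definition} and the primed residual gauge---gives $(\phi',A'_a,\mathbf{E}',\mathbf{B}')$ on the antipodal region $\mathcal{R}'_{R,\tau_*}$. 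Tracking the conformal factor \eqref{standard_conformal_factor_cylinder_coordinates}, on $\Sigma_\tau$ one has $\mathcal{R}_{R,\tau_*}\cap\Sigma_\tau=\{\zeta\leq 2\arctan R-\tau\}$ and $\mathcal{R}'_{R,\tau_*}\cap\Sigma_\tau=\{\zeta\geq\pi-2\arctan R+\tau\}$; since $R>1$ these cover $\Sigma_\tau$ once $\tau\leq 2\arctan R-\pi/2$, so taking $\tau_*$ small enough both for \Cref{lem:local_solution_cylinder} (and its primed analogue) and for this inequality, the two regions cover $[0,\tau_*]\times\mathbb{S}^3$ and overlap in the nonempty domain of dependence $\mathcal{O}:=\mathcal{R}_{R,\tau_*}\cap\mathcal{R}'_{R,\tau_*}$. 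On $\mathcal{O}$ both solutions are the genuine, unmodified ones, because $\mathcal{O}\subseteq D^+(B_R)\cap D^+(B'_R)$ and the cylinder data is unchanged on $B_R\cap B'_R$.

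The core step is to show that on $\mathcal{O}$ the two solutions agree up to a sufficiently well-behaved gauge transformation. On $\Sigma_0\cap\mathcal{O}$ both restrict to the same gauge-invariant data $(\mathbf{E}_0,\mathbf{B}_0,U,|\phi_0|)$ induced by the cylinder data (the electromagnetic field and $\D_a\phi$ transform identically under the two embeddings), so there they differ by a gauge transformation $\chi_0$ with $\boldsymbol{\nabla}\chi_0=\mathbf{A}'|_0-\mathbf{A}|_0$ and $\e^{-i\chi_0}\phi|_0=\phi'|_0$. The essential regularity point is that, in the Minkowskian Lorenz-plus-residual gauge, the spatial potential at $\tau=0$ is the conformal transport of a divergence-free $\dot{H}^1$ one-form (cf.\ \Cref{lem:equivalence_of_Minkowskian_initial_data_Lorenz_gauge_and_temporal_initially,lem:initial_residual_gauge_on_cylinder}), hence $\mathbf{A}|_0,\mathbf{A}'|_0\in H^1_{\text{loc}}$ and $\chi_0\in H^2_{\text{loc}}$---one order better than the $H^{1/2-\delta}$ available for $\tau>0$, hence in particular bounded. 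Using the conformal invariance of the system, the compatibility of the two embeddings on the overlap, finite speed of propagation, and the uniqueness statement of \Cref{Selberg_Tesfahun_theorem} in the two Minkowski pictures, I would verify that the two solutions differ on all of $\mathcal{O}$ by a gauge transformation $\chi$ solving $\Box\chi=0$ with Cauchy data $(\chi_0,0)$. Extending $(\chi_0,0)$ off $\Sigma_0\cap\mathcal{O}$ within $H^2\times H^1$ via a Sobolev extension operator and solving $\Box\chi=0$ on $\mathcal{R}_{R,\tau_*}$, the energy inequality \eqref{energy_inequality} yields $\chi\in\mathcal{C}^0 H^2\cap\mathcal{C}^1 H^1$; then \Cref{lem:composition_Sobolev_functions_1} and \Cref{lem:product_Sobolev_estimates} show that $A_a\rightsquigarrow A_a+\nabla_a\chi$, $\phi\rightsquigarrow\e^{-i\chi}\phi$ preserves all the regularities of \Cref{lem:local_solution_cylinder}, exactly as for the gauge change in its proof. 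After this, the two solutions agree on $\mathcal{O}$ and glue to a single solution of \eqref{general_MKG_system_with_projections} on $[0,\tau_*]\times\mathbb{S}^3$ in the Lorenz gauge $\nabla_a A^a=0$, with the stated $\mathcal{C}^0 H^s\cap\mathcal{C}^1 H^{s-1}$ regularities and $A_a,\phi\in H^{1-\delta}([0,\tau_*]\times\mathbb{S}^3)$ (the last by the Fourier argument \eqref{simple_Fourier_argument}).

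It remains to normalize the residual gauge and prove energy conservation. The glued solution has $a_0=0$ and $\dot{a}_0\in L^2(\mathbb{S}^3)$ (one sees this from the residual-gauge formulae of \Cref{lem:obtaining_Minkowski_residual_gauge_on_cylinder} and \Cref{rmk:Lemma91_initial_gauge}, the coefficients $\tan(\zeta/2)$, $\cot(\zeta/2)$ being bounded on the respective patches, together with $\chi_0\in H^2$), so a final gauge transformation $\Box\hat\chi=0$ with $(\hat\chi,\partial_\tau\hat\chi)|_{\tau=0}=(-\boldsymbol{\Delta}^{-1}\dot{a}_0,0)\in H^2\times H^1$, as in \Cref{lem:initial_residual_gauge_on_cylinder}, sets $(a_0,\dot{a}_0)=(0,0)$ while preserving all regularities and the initial condition $(\mathbf{E},\mathbf{B},\D_a\phi,\phi)|_{\tau=0}=(\mathbf{E}_0,\mathbf{B}_0,U,\phi_0)$. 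For energy conservation, regularize the cylinder data on $B_R$ and $B'_R$ and rerun the whole construction (including the foliation change of \Cref{lem:regularity_with_respect_to_hyperboloidal_foliation} and the gauge gluing above) on the regularized data: by \Cref{rmk:continuous_dependence_on_data} and the uniform bounds on the conformal factors on the relevant compact regions, the resulting smooth solutions converge to $(\phi,A_a,\mathbf{E},\mathbf{B})$ in the Minkowskian energy norm on each patch, hence---through a partition of unity across the overlap---in the cylinder energy norm \eqref{cylinder_energy}. Each smooth solution has conserved stress tensor, so integrating $\nabla^a(T^b\mathbf{T}_{ab})=0$ over $[0,\tau]\times\mathbb{S}^3$, which has no lateral boundary, gives $\mathscr{E}^{(n)}(\tau)=\mathscr{E}^{(n)}(0)$; passing to the limit gives $\mathscr{E}(\tau)=\mathscr{E}(0)$ for a.e.\ $\tau$, and since then $\|\D_a\phi(\tau)\|_{L^2(\mathbb{S}^3)}$ is continuous while $\D_a\phi$ is already weak-$*$ continuous in $L^2$ (being in $\mathcal{C}^0 H^{-\delta}\cap L^\infty L^2$), one upgrades $\D_a\phi\in\mathcal{C}^0([0,\tau_*];L^2(\mathbb{S}^3))$ and concludes $\mathscr{E}(\tau)=\mathscr{E}(0)$ for every $\tau$; this also shows the solution is a limit of smoother solutions in the cylinder energy norm.

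For uniqueness, two solutions in the stated class with the same data and $(a_0,\dot{a}_0)=(0,0)$ have identical Cauchy data for \eqref{cylinder_Lorenz_gauge_Maxwell_equation_potential}--\eqref{cylinder_Lorenz_gauge_scalar_equation_potential} (the spatial potential at $\tau=0$ being the unique divergence-free one-form on $\mathbb{S}^3$ with curl $\mathbf{B}_0$), and an energy estimate for the difference---using \eqref{energy_inequality}, \Cref{lem:product_Sobolev_estimates} to treat the bilinear and trilinear terms as linear terms with the fixed solutions as coefficients, and Gr\"onwall---forces it to vanish. The step I expect to be the main obstacle is the gauge matching on $\mathcal{O}$: proving that the two local solutions genuinely differ there by a gauge transformation, and that this transformation is realizable without losing the $(H^{1-\delta},H^{1/2-\delta})$ regularity---the naive bound $\chi\in H^{3/2-\delta}$ is just short of $L^\infty$, so one must exploit the extra derivative enjoyed by the initial potential---while keeping careful track of the residual gauge through the conformal transport, the gluing, and the final normalization.
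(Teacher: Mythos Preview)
Your overall architecture---cover the strip by two antipodal Minkowski patches, glue on the overlap, and deduce energy conservation by approximation with smooth solutions---matches the paper's. The divergence, and the place where your argument is genuinely at risk, is the gluing step and the uniqueness argument.

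You propose to construct the gauge transformation $\chi$ between the two cylindrical-Lorenz-gauge solutions on $\mathcalboondox{O}$ and then verify it is regular enough to preserve all estimates; you correctly flag this as the main obstacle, and you defer the verification (``I would verify that the two solutions differ on all of $\mathcalboondox{O}$ by a gauge transformation $\chi$\ldots''). The paper sidesteps this entirely. Rather than relating the two solutions by a gauge transformation, it changes \emph{both} of them to the Minkowskian Lorenz gauge of the \emph{same} copy $\mathbb{M}$ (i.e.\ imposes $\nabla^a A_a=2\Upsilon^a A_a$ and $\nabla^a A'_a=2\Upsilon^a A'_a$ with the \emph{same} $\Upsilon$), and then fixes the Minkowskian residual gauge $(\tilde a_0,\dot{\tilde a}_0)=(0,0)$ for both. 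In this common gauge the two solutions are Selberg--Tesfahun solutions on $\iota_\Omega^{-1}(\mathcalboondox{O})\subset\mathbb{M}$ with identical Minkowskian data, so the unconditional uniqueness in \Cref{Selberg_Tesfahun_theorem}(ii) gives literal \emph{equality}, not just gauge equivalence. No gauge difference needs to be analysed at all. The same reduction (restrict to a domain of dependence inside one Minkowski patch, change to Minkowskian Lorenz gauge, invoke \Cref{Selberg_Tesfahun_theorem}) gives uniqueness on the cylinder for free. Your alternative---a direct Gr\"onwall estimate for the difference in the cylinder Lorenz gauge---is doubtful at this regularity: with $A_a\in H^{1/2-\delta}$ the product $A_a\nabla^a\phi$ only lands in $H^{-1-2\delta}$ by \Cref{lem:product_Sobolev_estimates}, just below the $H^{-1-\delta}$ needed to close \eqref{energy_inequality} at level $\sigma=-\delta$, so the loop does not obviously close.

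For energy conservation your plan is correct in spirit. The paper makes the passage from Minkowskian to cylindrical $L^2$ norms explicit via the pointwise bound $\|f\|_{L^2(\Sigma_\tau^k)}\la\|f\|_{L^\infty_t L^2_x}$ (from $C^{-1}\leq \d r/\d\zeta|_\tau\leq C$ on compact regions) on a disjoint decomposition $\Sigma_\tau=\overline{\Sigma_\tau^1}\cup\Sigma_\tau^2$, rather than a partition of unity; this estimate also upgrades $\D_a\phi$ to $\mathcal{C}^0 L^2$ directly as a uniform limit of smooth approximants, without the weak-$*$ continuity argument you sketch.
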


\begin{proof}

\smallskip
\emph{Existence.}
By \Cref{lem:local_solution_cylinder}, a solution satisfying $\nabla_a A^a = 0$ with the stated regularity exists in the truncated future domain of dependence $\mathcal{R}_{R,\tau_*}$ of $B_R$ (see \Cref{fig:flat_Minkowski_intersections}). This solution satisfies $(a_0, \dot{a}_0) = (0, -\Gamma a_Z)$ (cf. \Cref{rmk:Lemma91_initial_gauge}), so in $\mathcal{R}_{R,\tau_*}$ it remains to check that the residual gauge transformation to set $\dot{a}_0 = 0$ preserves regularity. This follows from the same argument as in the proof of \Cref{lem:local_solution_cylinder}, provided we can show that the gauge transformation $\chi$ has at least the regularity $\chi \in \mathcal{C}^0([0,\tau_*];H^{2-\delta}(\Sigma_\tau)) \cap \mathcal{C}^1([0,\tau_*];H^{1-\delta}(\Sigma_\tau))$. Indeed, this is true: the required gauge transformation satisfies
\[ \Box \chi = 0, \qquad (\chi, \partial_\tau \chi)|_{\tau=0} = \left(\boldsymbol{\Delta}^{-1}\left( \Gamma a_Z \right), ~ 0\right), \]
so in $\mathcal{R}_{R,\tau_*}$, by the energy inequality \eqref{energy_inequality},
\begin{align*}
    \| \chi (\tau, \cdot) \|_{H^\sigma(\Sigma_\tau)} + \| \partial_\tau \chi \|_{H^{\sigma-1}(\Sigma_\tau)} & \la \| \boldsymbol{\Delta}^{-1}(\Gamma a_Z)\|_{H^{\sigma}(\Sigma_\tau)} \\
    & \la \|\Gamma a_Z \|_{H^{\sigma-2}(\Sigma_\tau)} \\
    & \la \| \mathbf{A} \|_{L^\infty([0,\tau_*];H^{\sigma-2}(\Sigma_\tau))},
\end{align*}
where we used the fact that $\Gamma = \tan(\zeta/2)$ remains bounded on $B_R$. Thus, choosing, say, $\sigma = 2$ gives the required regularity for $\chi$. Note that this fact is a consequence of the form of the residual gauge condition \eqref{residual_gauge_fixing_cylinder_one_copy} despite the fact that the potential $A_a$ does not have $\mathcal{C}^0 H^1$ regularity, since the first order derivative $\dot{a}_0$ is expressed in terms of an undifferentiated component of $\mathbf{A}$.

\begin{figure}[h]
    \centering
    \begin{tikzpicture}
    \node[inner sep=0pt] (intersections) at (0,-0.6)
        {\includegraphics[scale=0.8]{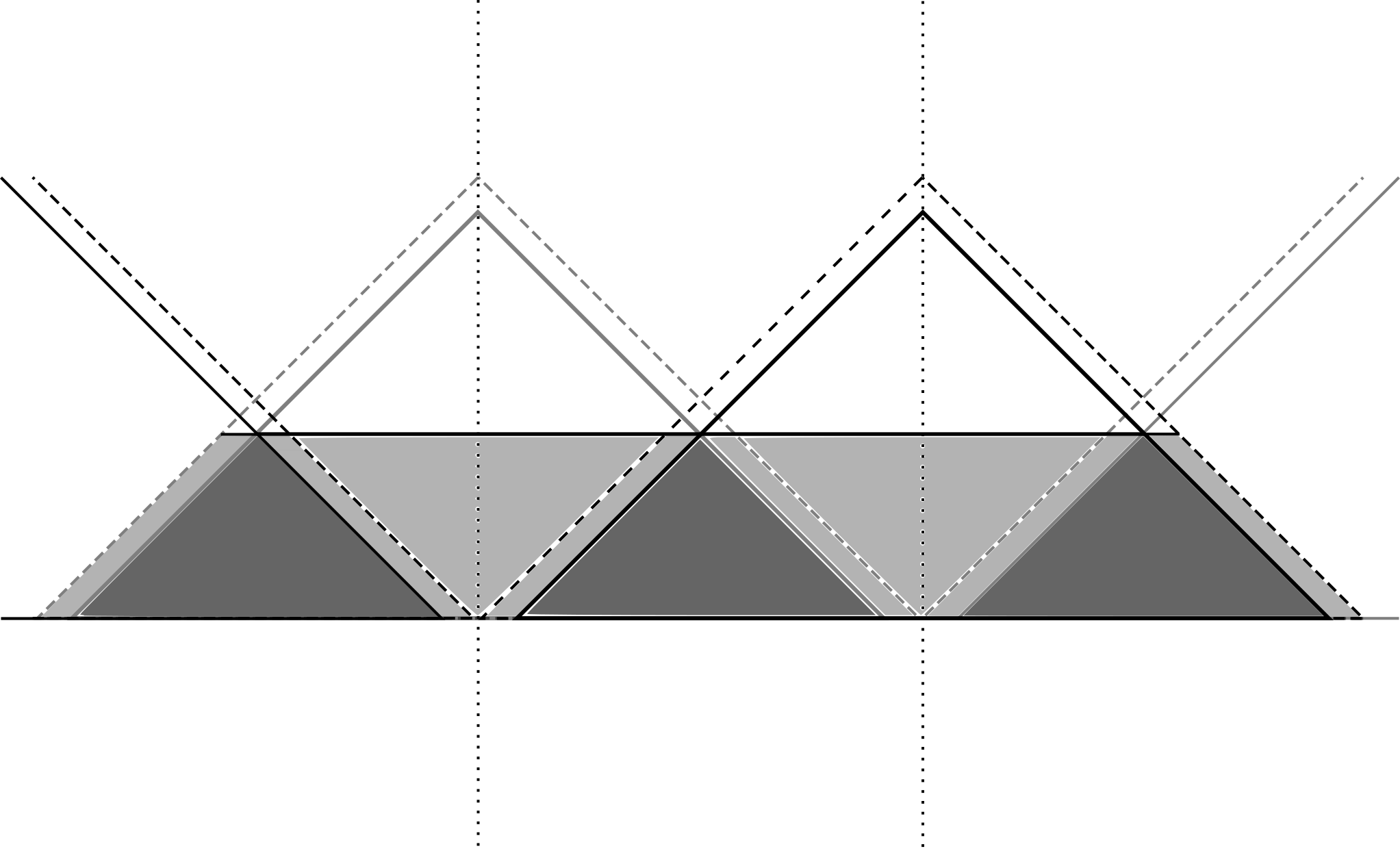}};
    \node[label={[shift={(0,-5)}]$\mathcalboondox{O} \defeq  \mathcal{R}_{R,\tau_*} \cap \mathcal{R}_{R,\tau_*}'$}] {};
    \draw[-latex] (1.6,-4.4) .. controls (3, -4) and (4,-3.5) .. (4.5,-2);
    \draw[-latex] (-1.8,-4.4) .. controls (-3, -4) and (-4,-3.5) .. (-4.5,-2);
    \draw[-latex] (0,-4) .. controls (0, -4) and (0,-3) .. (0,-2);
    \node[label={[shift={(1.8,-0.8)}]$\Sigma_{\tau_*}$}] {};
    \node[label={[shift={(-2.6,-0.8)}]$\Sigma_{\tau_*}'$}] {};
    \node[label={[shift={(2.2,3.5)}]$N$}] {};
    \node[label={[shift={(-2.2,3.5)}]$S$}] {};
    \node[label={[shift={(5.4,-4.5)}]$(i^0)' = O$}] {};
    \draw[-latex] (4.6,-4) .. controls (0, -3) and (5,-3.5) .. (2.25,-2.57);
    \node[label={[shift={(-5.3,-4.4)}]$O' = i^0$}] {};
    \draw[-latex] (-4.6,-4) .. controls (0, -3) and (-5,-3.5) .. (-2.25,-2.57);
    \node[label={[shift={(-4,2)}]$(\scri^+)'$}] {};
    \draw[-latex] (-4.6,2.45) .. controls (-5.5, 1.5) and (-4.5,1) .. (-3.6,0.6);
    \draw[-latex] (-3.4,2.5) .. controls (-0.2, 2) and (-0.1,1) .. (-0.8,0.6);
    \node[label={[shift={(4,2.1)}]$\scri^+$}] {};
    \draw[-latex] (4.4,2.45) .. controls (5.5, 1.5) and (4.5,1) .. (3.6,0.6);
    \draw[-latex] (3.6,2.5) .. controls (0.2, 2) and (0.1,1) .. (0.8,0.6);
    \end{tikzpicture}
    \caption{The overlap of the two (future halves of) antipodal copies of Minkowski space $\mathbb{M}_+$ and $\mathbb{M}'_+$ conformally embedded in $\mathbb{R} \times \mathbb{S}^3$, unwrapped. The picture should be read as being horizontally periodic. The North and South poles of the $3$-sphere are denoted by $N$ and $S$ respectively.}
    \label{fig:flat_Minkowski_intersections}
\end{figure}

Next, we perform the same construction in $(D^+(B_R))' \defeq J(D^+(B_R)) \subset \mathbb{M}'$, where $J$ is the map \eqref{reflection_mapping_definition}, to obtain a solution $(\phi', A_a', \mathbf{E}', \mathbf{B}')$ with the same regularity in $\mathcal{R}_{R,\tau_*}' \defeq J(\mathcal{R}_{R,\tau_*})$. Since $\mathcal{R}_{R,\tau_*} \cup \mathcal{R}_{R,\tau_*}'$ covers $[0,\tau_*] \times \mathbb{S}^3$, to complete the existence part it remains only to show that the primed and unprimed solutions agree on the overlap region $\mathcal{R}_{R,\tau_*} \cap \mathcal{R}_{R,\tau_*}'$. Observe that the overlap region is the domain of dependence of $B_R \cap J(B_R) = B_R \cap B_R'$, the latter intersection being characterized by 
\[ \pi - 2 \arctan R < \zeta < 2 \arctan R \]
and non-empty if and only if $R>1$ (and approaching the full range $(0,\pi)$ of $\zeta$ as $R \to \infty$). Denote by $\Sigma_\tau'$ the image of $\Sigma_\tau$ under $J$ and consider the two solutions on the overlap $\mathcal{R}_{R,\tau_*} \cap \mathcal{R}_{R,\tau_*}'$. As in the proof of \Cref{lem:local_solution_cylinder}, changing the gauges of both solutions to
\begin{align}
\label{gauge_change_on_overlap}
\begin{split}
    & \nabla^a A_a = 2 \Upsilon^a A_a, \\
    & \nabla^a A_a' = 2 \Upsilon^a A_a'
\end{split}
\end{align}
preserves regularity, i.e. we have
\begin{align*} 
    & F_{ab}, \, F_{ab}' \in \mathcal{C}^0([0,\tau_*];L^2(\Sigma_\tau \cap \Sigma_\tau')), \\
    & \phi, \, \phi' \in \mathcal{C}^0([0,\tau_*];H^{1-\delta}(\Sigma_\tau \cap \Sigma_\tau')) \cap \mathcal{C}^1([0,\tau_*];H^{-\delta}(\Sigma_\tau \cap \Sigma_\tau')), \\
    & A_a, \, A_a'  \in \mathcal{C}^0([0,\tau_*];H^{\frac{1}{2}-\delta}(\Sigma_\tau \cap \Sigma_\tau')) \cap \mathcal{C}^1([0,\tau_*];H^{-\frac{1}{2}-\delta}(\Sigma_\tau \cap \Sigma_\tau')),
    \end{align*}
and under the pullback along $\iota_\Omega$, both $A_a$ and $A_a'$ are in the Minkowskian Lorenz gauge on $\mathcal{R}_{R,\tau_*} \cap \mathcal{R}_{R,\tau_*}' \subset \mathbb{M}$. The gauge change \eqref{gauge_change_on_overlap} also modifies the initial gauge by the inverse of the change in \Cref{rmk:Lemma91_initial_gauge}, but, by the argument in the previous paragraph, we can fix $(\tilde{a}_0, \dot{\tilde{a}}_0) = (0,0) = (\tilde{a}_0', \dot{\tilde{a}}_0')$ while preserving the above regularity (note that $B_R \cap B_R'$ is bounded away from both spatial infinities $i^0$ and $(i^0)'$). By construction, these solutions have the data on $B_R \cap B_R'$ given by $(\mathbf{E}_0, \mathbf{B}_0, U, \phi_0) \in (L^2)^4$ (the data being the same for both the primed and unprimed solutions), and hence the conformally related Minkowskian solutions
\[ \tilde{F}_{ab} = F_{ab}, \quad \tilde{F}_{ab}' = F_{ab}', \quad \tilde{\phi} = \Omega \phi, \quad \tilde{\phi}' = \Omega \phi' \]
have data on $B_R \cap B_R'$ given by $(\tilde{\mathbf{E}}_0, \tilde{\mathbf{B}}_0, \tilde{\phi}_1, \tilde{\phi}_0) \in (L^2)^3 \times H^1$, by \Cref{lem:equivalence_of_Minkowskian_initial_data_Lorenz_gauge_and_temporal_initially} and our choice of initial residual gauge fixing. Write $\mathcalboondox{O} = \mathcal{R}_{R,\tau_*} \cap \mathcal{R}_{R,\tau_*}'$ and
\[ \mathcalboondox{E}_k^s(\mathcalboondox{O}) = \bigcap_{l=0}^k \mathcal{C}^l([0,\tau_*];H^{s-l}(\Sigma_\tau \cap \mathcalboondox{O})) \]
and 
\[ \tilde{\mathcalboondox{E}}^s_k(\mathcalboondox{O}) = \bigcap_{l=0}^k \mathcal{C}^l([0,t_*];H^{s-l}(\Sigma_t \cap \mathcalboondox{O})), \]
where $t_* = \sup_{[0,\tau_*]} t$, and $\{ \Sigma_t \}_t$ is the foliation of $\mathcalboondox{O}$ by the leaves of constant $t$. Then by \Cref{Selberg_Tesfahun_theorem},
\[ F_{ab}, \, F_{ab}' \in \mathcalboondox{E}^1_0(\mathcalboondox{O}) \cap \tilde{\mathcalboondox{E}}^1_0(\mathcalboondox{O}), \]
\[ \phi, \, \phi' \in \mathcalboondox{E}^{1-\delta}_1(\mathcalboondox{O}) 
\cap \tilde{\mathcalboondox{E}}^1_1(\mathcalboondox{O}), \]
and
\[ A_a, \, A_a' \in \mathcalboondox{E}^{\frac{1}{2}-\delta}_1(\mathcalboondox{O}) \cap \tilde{\mathcalboondox{E}}^{1-\delta}_1(\mathcalboondox{O}). \]
By the uniqueness clause in \Cref{Selberg_Tesfahun_theorem} (in the spaces $\tilde{\mathcalboondox{E}}^1_0$, $\tilde{\mathcalboondox{E}}^1_1$, and $\tilde{\mathcalboondox{E}}^{1-\delta}_1$ for the Maxwell field, scalar field, and potential, respectively), we have $F_{ab} = F_{ab}'$, $\phi = \phi'$ and $A_a = A_a'$.

\emph{Uniqueness.} Uniqueness follows by domain of dependence arguments and precisely the same argument as for agreement on the overlap region $\mathcalboondox{O}$ in the above paragraph.

\emph{Conservation of energy.} Regularize the initial data to obtain a set of smooth data 
\[
(\mathbf{E}_0^{(n)}, \mathbf{B}^{(n)}_0, U^{(n)}, \phi_0^{(n)})
\]
which converges to $(\mathbf{E}_0, \mathbf{B}_0, U, \phi_0)$ in $L^2(\mathbb{S}^3)^4$. Integrating 
\[ \nabla^a( T^b \mathbf{T}_{ab}^{(n)} ) = 0 
\]
on the strip $[0,\tau] \times \mathbb{S}^3$, $0 \leq \tau \leq \tau_*$, where $\mathbf{T}_{ab}^{(n)}$ is the stress energy tensor \eqref{general_stress_tensor} on $\mathbb{R} \times \mathbb{S}^3$ corresponding to the (smooth) solution with index $n$, gives
\[ \mathscr{E}^{(n)}(\tau) = \mathscr{E}^{(n)}(0) \]
for the energy \eqref{cylinder_energy} corresponding to the solution with index $n$. By construction, we immediately have $\mathscr{E}^{(n)}(0) \to \mathscr{E}(0)$, so it remains to show that $\mathscr{E}^{(n)}(\tau)$ approaches $\mathscr{E}(\tau)$; for this, it is enough to show that this is true locally, i.e. in each Minkowskian region. For each $\tau$, write $\{ \tau \} \times \mathbb{S}^3$ as a union of disjoint (incomplete) hypersurfaces $\overline{\Sigma_\tau^1} \cup \Sigma_\tau^2$ such that $\Sigma_\tau^1 \in \mathcal{R}_{R,\tau_*}$ and $\Sigma_\tau^2 \in \mathcal{R}_{R,\tau_*}'$. We show that on each piece $\Sigma_{\tau}^k$
\begin{align*}
    & \| F_{ab}^{(n)} \|_{L^2(\Sigma_{\tau}^k)}  \to \| F_{ab} \|_{L^2(\Sigma_{\tau}^k)}, \\
    & \| (\D_a \phi)^{(n)} \|_{L^2(\Sigma_{\tau}^k)} \to \| \D_a \phi \|_{L^2(\Sigma_{\tau}^k)}, \\
    & \| \phi^{(n)} \|_{L^2(\Sigma_{\tau}^k)} \to \| \phi \|_{L^2(\Sigma_{\tau}^k)}.
\end{align*}
Indeed, note that in $\mathcal{R}_{R,\tau_*}$, the solution $(\phi^{(n)}, A_a^{(n)}, F_{ab}^{(n)})$ has an associated smooth Minkowskian solution $(\tilde{\phi}^{(n)}, \tilde{A}_a^{(n)}, \tilde{F}_{ab}^{(n)})$ which converges in the energy norm with respect to the Minkowskian foliation on $\mathcal{R}_{R,\tau_*}$ (cf. \Cref{rmk:continuous_dependence_on_data}). Observe that for any function $f \in L^\infty_t L^2_x$ on $\mathbb{R}^{1+3}$,
\begin{align*}
    \| f \|^2_{L^2(\Sigma^k_\tau)} & = \int_{\Sigma^k_\tau} |f(t(\tau,\zeta),r(\tau,\zeta),\omega)|^2\, \d \zeta \, \d^2 \omega \\
    & \leq \int_{\Sigma^k_\tau} \sup_t |f(t,r(\tau,\zeta), \omega)|^2\, \d \zeta \, \d^2 \omega \\
    & \la \sup_t \int |f(t,r(\tau,\zeta),\omega)|^2 \, \d r|_\tau \, \d^2 \omega \\
    & \la \| f \|^2_{L^\infty_t L^2_x},
\end{align*}
where the penultimate inequality follows from the fact that locally $C^{-1} \leq \frac{\d r}{\d \zeta}|_{\tau} \leq C$ for some $C>0$. We therefore have, using the uniform boundedness of all metric components and the conformal factor $\Omega$ on and in a neighbourhood of $\Sigma_{\tau}^1$,
\begin{equation} 
    \label{limit_in_L2_Maxwell_field}
\| F_{ab}^{(n)} - F_{ab} \|_{L^2(\Sigma_{\tau}^1)} \la \| \tilde{F}_{ab}^{(n)} - \tilde{F}_{ab} \|_{L^\infty_t L^2_x} \to 0,
\end{equation}
\begin{equation}
    \label{limit_in_L2_scalar_field}
\| \phi^{(n)} - \phi \|_{L^2(\Sigma_{\tau}^1)} \la \| \tilde{\phi}^{(n)} - \tilde{\phi} \|_{L^\infty_t L^2_x} \to 0,
\end{equation}
and
\begin{align}
\label{limit_in_L2_gauge_covariant_derivative}
\begin{split}
    \| (\D_a \phi)^{(n)} -\D_a \phi\|_{L^2(\Sigma_{\tau}^1)} & \leq \| \Omega^{-1} ( (\tilde{\D}_a \tilde{\phi})^{(n)} - \tilde{\D}_a \tilde{\phi}) \|_{L^2(\Sigma_{\tau}^1)} + \| \nabla_a(\Omega^{-1}) (\tilde{\phi}^{(n)} - \tilde{\phi})\|_{L^2(\Sigma_{\tau}^1)} \\
    & \la \| (\tilde{\D}_a \tilde{\phi})^{(n)} - \tilde{\D}_a \tilde{\phi} \|_{L^\infty_t L^2_x} + \| \tilde{\phi}^{(n)} - \tilde{\phi} \|_{L^\infty_t L^2_x} \\
    &\to 0.
\end{split}
\end{align}
The same argument (but instead pulling the solution back to $\mathbb{M}'$) applies on $\Sigma_\tau^2$, and hence we have in particular $\mathscr{E}^{(n)}(\tau) \to \mathscr{E}(\tau)$. Moreover, the final estimate also shows that $\D_a \phi \in L^\infty([0,\tau_*];L^2(\mathbb{S}^3))$ is a limit in $L^\infty([0,\tau_*];L^2(\mathbb{S}^3))$ of smooth functions $(\D_a \phi)^{(n)}$, so in particular is in $\mathcal{C}^0([0,\tau_*];L^2(\mathbb{S}^3))$.
\end{proof}

\begin{remark}
    \label{rmk:limit_of_smooth_solutions_cylinder}
    The estimates \eqref{limit_in_L2_Maxwell_field}, \eqref{limit_in_L2_scalar_field} and \eqref{limit_in_L2_gauge_covariant_derivative} show that the solution obtained in \Cref{thm:local_strip_solution_cylinder} is a limit of smooth solutions in the energy norm given by \eqref{cylinder_energy}.
\end{remark}

\subsection{Global solution}
\label{sec:global_solution}

To extend the solution globally, we iterate the solution obtained in \Cref{thm:local_strip_solution_cylinder} with the same time of existence $\tau_*$; this is justified by the conservation of energy. Applying \Cref{thm:local_strip_solution_cylinder} from $\tau = \tau_*$, the solution on the next strip will satisfy
\[ (A_0, \partial_\tau A_0)|_{\tau = \tau_*} = (0,0), \]
so it only remains to check that the gauge transformation which sends $(A_0, \partial_\tau A_0)|_{\tau = \tau_*} \rightsquigarrow (0,0)$ for the solution in \Cref{thm:local_strip_solution_cylinder} preserves regularity. Once this is known, we then construct the solution on the time interval $[0, \tau_*]$ and change residual gauge to $(A_0, \partial_\tau A_0)|_{\tau=\tau_*} = (0,0)$, and construct the solution on $[\tau_*, 2\tau_*]$. We then reverse this gauge transformation to return to the initial gauge condition $(a_0, \dot{a}_0) = (0,0)$, preserving the continuity in time of the solution at $\tau=\tau_*$. The required gauge transformation is $A_a \rightsquigarrow A_a + \nabla_a \chi$, where $\chi$ satisfies
\begin{align*}
    &\Box \chi = 0, \\
    &(\chi, \partial_\tau \chi)|_{\tau=\tau_*} = (-\boldsymbol{\Delta}^{-1}(\partial_\tau A_0|_{\tau_*}), - A_0|_{\tau_*}) \in H^{\frac{3}{2}-\delta}(\mathbb{S}^3) \times H^{\frac{1}{2}-\delta}(\mathbb{S}^3).
\end{align*}
By the energy inequality \eqref{energy_inequality},
\begin{equation} \label{gauge_transformation_regularity_extension_to_global_solution} \chi \in \mathcal{C}^0([0,\tau_*];H^{\frac{3}{2}-\delta}(\mathbb{S}^3)) \cap \mathcal{C}^1([0,\tau_*];H^{\frac{1}{2}-\delta}(\mathbb{S}^3)).
\end{equation}
This clearly preserves the regularity of the potential, i.e.
\[ A_a \in \mathcal{C}^0([0,\tau_*];H^{\frac{1}{2}-\delta}(\mathbb{S}^3)) \cap \mathcal{C}^1([0,\tau_*];H^{-\frac{1}{2}-\delta}(\mathbb{S}^3)) \]
in the new gauge. Since $F_{ab}$ is gauge-invariant, it immediately inherits the regularity $F_{ab} \in \mathcal{C}^0([0,\tau_*];L^2(\mathbb{S}^3))$. Notably, however, here $\chi(\tau,\cdot)$ is not regular enough to be bounded on $\mathbb{S}^3$. By \Cref{lem:composition_Sobolev_functions_2} (using a partition of unity on $\mathbb{S}^3$ and applying the lemma to $x \mapsto \cos x - 1$ and $x \mapsto \sin x$), we have
\[ \e^{-i\chi} \in \mathcal{C}^0([0,\tau_*]; H^{\varrho_1}(\mathbb{S}^3)), \]
where 
\[ \varrho_1 = \frac{3}{2(1+\delta)}. \]
Using \Cref{lem:product_Sobolev_estimates},
\[ \| \e^{-i\chi} \phi \|_{H^{1-4\delta}(\mathbb{S}^3)} \la \| \e^{-i\chi} \|_{H^{\varrho_1}(\mathbb{S}^3)} \| \phi \|_{H^{1-\delta}(\mathbb{S}^3)}, \]
and similarly
\begin{align*} \| \partial_\tau ( \e^{-i\chi} \phi ) \|_{H^{-4\delta}(\mathbb{S}^3)} &\la \| \e^{-i\chi} \partial_\tau \phi \|_{H^{-3\delta}(\mathbb{S}^3)} + \| \e^{-i\chi} \phi \partial_\tau \chi \|_{H^{-4\delta}(\mathbb{S}^3)} \\ 
& \la \| \e^{-i\chi} \|_{H^{\varrho_1}(\mathbb{S}^3)} \| \partial_\tau \phi \|_{H^{-\delta}(\mathbb{S}^3)} + \| \phi \|_{H^{1-\delta}(\mathbb{S}^3)} \|\e^{-i\chi} \partial_\tau \chi \|_{H^{\frac{1}{2}-3\delta}(\mathbb{S}^3)} \\
& \la \| \e^{-i\chi} \|_{H^{\varrho_1}(\mathbb{S}^3)} \| \partial_\tau \phi \|_{H^{-\delta}(\mathbb{S}^3)} +\| \phi \|_{H^{1-\delta}(\mathbb{S}^3)} \| \e^{-i\chi} \|_{H^{\varrho_1}(\mathbb{S}^3)} \|\partial_\tau \chi \|_{H^{\frac{1}{2}-\delta}(\mathbb{S}^3)}.
\end{align*}
Hence in the new gauge $ \phi \in \mathcal{C}^0([0,\tau_*];H^{1-4\delta}(\mathbb{S}^3)) \cap \mathcal{C}^1([0,\tau_*];H^{-4\delta}(\mathbb{S}^3))$. It remains to treat the gauge covariant derivative $\D_a \phi$. This transforms as $\D_a \phi \rightsquigarrow \e^{-i\chi} \D_a \phi$, so, given the regularity \eqref{gauge_transformation_regularity_extension_to_global_solution}, it may appear like some regularity in $\D_a \phi$ may be lost. This is not the case, however, since the $L^2$ norm of $\D_a \phi$ is gauge-invariant:
\[ \| \e^{-i\chi} \D_a \phi \|_{L^2(\mathbb{S}^3)} = \| \D_a \phi \|_{L^2(\mathbb{S}^3)}, \]
so $\D_a \phi \in \mathcal{C}^0([0,\tau_*];L^2(\mathbb{S}^3))$ in the new gauge.

By applying the above gauge transformation to the solution on $[0,\tau_*]$, then solving from $\tau = \tau_*$ using \Cref{thm:local_strip_solution_cylinder}, and subsequently reversing the above gauge transformation on $[0,2\tau_*]$, we obtain a solution on $[0,2\tau_*]$ which satisfies $(a_0, \dot{a}_0) = (0,0)$,
\[ A_a \in \mathcal{C}^0([0,2\tau_*];H^{\frac{1}{2}-\delta}(\mathbb{S}^3)) \cap \mathcal{C}^1([0,2\tau_*];H^{-\frac{1}{2}-\delta}(\mathbb{S}^3)), \]
\[ F_{ab}, ~ \D_a \phi \in \mathcal{C}^0([0,2\tau_*];L^2(\mathbb{S}^3)), \]
and
\[ \phi \in \mathcal{C}^0([0,2\tau_*];H^{1-16\delta}(\mathbb{S}^3)) \cap \mathcal{C}^1([0,2\tau_*];H^{-16\delta}(\mathbb{S}^3)).\]
Iterating this procedure, the scalar field on $[0,n\tau_*]$ has spatial regularity $H^{1-4^n \delta}(\mathbb{S}^3)$. Pick a (sufficiently small) $\delta > 0$. Then, since the $\delta > 0$ in the above procedure is arbitrarily small (and, in particular, does not depend on $\tau_*$), we can construct a sequence of solutions indexed by $n$, with time of existence $[0,n\tau_*]$, such that for each $n \in \mathbb{N}$ the scalar field has spatial regularity $H^{1-2^{-n}\delta}(\mathbb{S}^3)$ (by choosing $\delta_n = 2^{-3n} \delta$). Since $\bigcup_{n=1}^{\infty} H^{1-2^{-n}\delta}(\mathbb{S}^3) \subset H^{1-\delta}(\mathbb{S}^3) $, by the uniqueness part of \Cref{thm:local_strip_solution_cylinder}, these are all the same solution in the regularity class given by the chosen $\delta > 0$, with time of existence $[0,n\tau_*]$. Induction on $n$ therefore gives a global solution in the regularity class given by $\delta > 0$.

This completes the proof of \Cref{thm:main_theorem}. \hspace{\fill} \qed

\begin{remark}
    We remark that the gauge transformation \eqref{gauge_transformation_regularity_extension_to_global_solution} does not preserve the local spacetime regularity $H^{1-\delta}_{\text{loc}}(\mathbb{R} \times \mathbb{S}^3)$ of the potential, only the weaker regularity $A_a \in H^{1/2-\delta}_{\text{loc}}(\mathbb{R} \times \mathbb{S}^3)$. For $\phi$ we can obtain $\phi \in H^{1-\delta}_{\text{loc}}(\mathbb{R} \times \mathbb{S}^3)$ by a localization, partition of unity, and Fourier argument as we have done at several points throughout the paper.
\end{remark}

\subsection*{Acknowledgements}

We thank the ICMS Edinburgh and the James Clerk Maxwell Foundation for their hospitality in the Summer of 2023, where a part of this paper was written, and the support of the Institut Henri Poincaré (UAR 839 CNRS-Sorbonne Université), and LabEx CARMIN (ANR-10-LABX-59-01). We also thank Lionel Mason, Claude Warnick, Sigmund Selberg and Kenji Nakanishi for useful discussions.

\section{Appendix}
\label{sec:appendix}

The following proofs follow a strategy similar to that used in \cite{WarnickReall2022}.

\subsection{Proof of \Cref{lem:foliation_change_positive_s}}
    The statement follows directly from the energy inequality for the linear wave equation \eqref{energy_inequality} with $\sigma = s$ and the fact that for $s \geq 0$
\[ H^s_{\text{loc}}(\mathcalboondox{M}) \hookrightarrow L^1_{\text{loc}}(\mathbb{R}_\tau; H^s(S_\tau)). \]

\subsection{Proof of \Cref{lem:foliation_change_negative_s}}
\label{sec:proof_of_foliation_change_negative_s}

We prove this using a duality argument. For $s \in [-1, 0)$, let 
\[ \eta \in L^1_{\text{loc}}(\mathbb{R}_\tau;H^{-s-1}(S_\tau)) \]
and consider the dual Cauchy problem
\[
\begin{cases}
    \Box_g \chi = \eta, \\
    (\chi, \partial_\tau \chi)|_{\tau = T} =(0,0)
\end{cases}
\]
for any given $T>0$. By the energy inequality, $\chi \in \mathcal{C}^0(\mathbb{R}_\tau; H^{-s}(S_\tau)) \cap \mathcal{C}^1(\mathbb{R}_\tau; H^{-s-1}(S_\tau))$. This in turn implies $\chi \in H^{-s}_{\text{loc}}(\mathcalboondox{M})$. Indeed, this property can be shown on $\mathbb{R}^{1+3}$ in Fourier space by observing that
\[ (1+ \tau^2 + |\xi|^2)^{|s|} \leq ( \langle \tau \rangle^2 + \langle \xi \rangle^2 )^{|s|} \leq \langle \xi \rangle^{2|s|} \left( 1+ \frac{\langle \tau \rangle^2}{\langle \xi \rangle^2} \right) = \langle \xi \rangle^{2|s|} + \langle \tau \rangle^2 \langle \xi \rangle^{2|s|-2}, \]
which proves the embedding
\[ L^2(\mathbb{R}; H^{|s|}(\mathbb{R}^3)) \cap H^1(\mathbb{R};H^{|s|-1}(\mathbb{R}^3)) \hookrightarrow H^{|s|}(\mathbb{R}^4). \]
Using cut-offs in time, we infer
\[ \mathcal{C}^0(\mathbb{R};H^{|s|}(\mathbb{R}^3)) \cap \mathcal{C}^1(\mathbb{R};H^{|s|-1}(\mathbb{R}^3)) \hookrightarrow H^{|s|}_{\text{loc}}(\mathbb{R}^4). \]
The embedding
\begin{equation} \label{HsEmbedd}
\mathcal{C}^0 (\R_\tau ; H^{|s|} (S_\tau)) \cap \mathcal{C}^1 (\R_\tau ; H^{|s|-1} (S_\tau)) \hookrightarrow H^{|s|}_\mathrm{loc} (\mathcalboondox{M} )
\end{equation}
can be obtained using local charts on $S_\tau \simeq S$ and a partition of unity. In what follows let us write $\langle \cdot, \cdot \rangle$ to denote the appropriate duality pairing; by $\langle u, \eta \rangle$, for example, we mean the duality pairing between $L^\infty([0,T]_\tau;H^{s+1}(S_\tau))$ and $L^1([0,T]_\tau;H^{-s-1}(S_\tau))$. We then have
\begin{align*}
    \langle u, \eta \rangle & = \langle u, \Box \chi \rangle\\
    & = \langle f , \chi \rangle + \langle u, \partial_\tau \chi \rangle|_{S_0} - \langle \partial_\tau u , \chi\rangle|_{S_0},
\end{align*}
so 
\begin{align*}
    | \langle u, \eta \rangle | & \la \| f \|_{H^{s}([0,T]_\tau \times S)} \| \chi \|_{H^{-s}([0,T]_\tau \times S)}  + \| u_0 \|_{H^{s+1}(S_0)} \| \partial_\tau \chi \|_{H^{-s-1}(S_0)}  + \| u_1 \|_{H^{s}(S_0)} \| \chi \|_{H^{-s}(S_0)}.
\end{align*}
Since $\chi \in \mathcal{C}^0(\mathbb{R}_\tau;H^{-s}(S_\tau)) \cap \mathcal{C}^1(\mathbb{R}_\tau;H^{-s-1}(S_\tau))$, we have, by \eqref{HsEmbedd} and the energy inequality \eqref{energy_inequality}, that
\begin{align*} \| \chi \|_{H^{-s} ([0,T]_\tau \times S)} &\la \| \chi \|_{L^\infty([0,T]_\tau;H^{-s}(S_\tau))} + \| \partial_\tau \chi \|_{L^\infty([0,T]_\tau;H^{-s-1} (S_\tau))} \\
& \la \| \eta \|_{L^1([0,T]_\tau;H^{-s-1}(S_\tau))}, 
\end{align*}
and so, again using the energy inequality for $\chi$,
\[ | \langle u , \eta \rangle | \la \left( \| u_0 \|_{H^{s+1}(S_0)} + \| u_1 \|_{H^{s}(S_0)} + \| f \|_{H^{s}([0,T]_\tau \times S)} \right) \| \eta \|_{L^1([0,T]_\tau;H^{-s-1}(S_\tau))}. \]
This implies that
\[ \phi \in L^\infty([0,T]_\tau ; H^{s+1}(S_\tau)) \]
and
\begin{equation}
\label{foliation_change_negative_s_estimate}
     \| u \|_{L^\infty([0,T]_\tau ; H^{s+1}(S_\tau))} \la \| u_0 \|_{H^{s+1}(S_0)} + \| u_1 \|_{H^{s}(S_0)} + \| f \|_{H^{s}([0,T]_\tau \times S)} .
\end{equation}
Continuity in $\tau$ then follows easily by approximating $u$ by smooth functions and noting the above bound for its $L^\infty([0,T]_\tau;H^{s+1}(S_\tau))$ norm. 
\hspace{\fill}$\qed$

\printbibliography

\end{document}